\documentclass[,reqno]{amsart}
\usepackage{comment}
\usepackage{amssymb}
\usepackage{amsmath}
\usepackage{latexsym,color}
\usepackage{a4wide}

\usepackage{amsmath}
\usepackage{amsthm}
\usepackage{amssymb}
\usepackage{cancel}
\usepackage{xcolor}
\usepackage{color}
\usepackage{calrsfs}
\usepackage[pdfborder={0 0 0}, colorlinks=true, pdfborderstyle={}, linkcolor=dblue, citecolor=dblue, urlcolor=blue]{hyperref}
\usepackage{txfonts,bbm}
\usepackage{pxfonts}
\usepackage{epsfig}
\usepackage{psfrag}
\usepackage{subfigure}
\usepackage{graphicx}
\usepackage[mathscr,mathcal]{eucal}
\usepackage{enumerate}
\usepackage{t1enc}
\usepackage[latin2]{inputenc}
\usepackage[normalem]{ulem}
\definecolor{dblue}{rgb}{0.09,0.32,0.44} 

\DeclareMathOperator{\essup}{essup}

\def \ud{\mathrm d}

\newtheorem{defi}{Definition}[section]

\newtheorem{lemm}[defi]{Lemma}
\newtheorem{thm}[defi]{Theorem}
\newtheorem{remark}[defi]{Remark}
\newtheorem{cor}[defi]{Corollary}

\newtheorem{prop}[defi]{Proposition}

\numberwithin{equation}{section}
\numberwithin{figure}{section}

\renewcommand{\rho}{\varrho}
\renewcommand{\phi}{\varphi}

\newcommand{\be}{\begin{equation}}

	 \let\P=\Pi

	\def\({\left(}
	\def\){\right)}
	\title{Rearranged Stochastic Heat Equation with an Entropy Gradient Structure}
    \subjclass[2020]{Primary 60H15, 60G57; Secondary 47D07, 60H50}
\author[F. Delarue]{Francois Delarue}
\author[R. Likibi Pellat]{Rhoss Likibi Pellat}

\address{Universit\'e C\^ote d'Azur, CNRS, Laboratoire J.A. Dieudonn\'e, 06108 Nice, France
}\email{francois.delarue@univ-cotedazur.fr}
\email{rhoss.likibi-pellat@univ-cotedazur.fr}

    \thanks{F. Delarue
    and R. Likibi Pellat acknowledge the financial support of the European Research Council (ERC) under the European Union's Horizon Europe research and innovation program (ELISA project, Grant agreement No. 101054746). Views and opinions expressed are however
those of the author(s) only and do not necessarily reflect those of the European Union or the
European Research Council Executive Agency. Neither the European Union nor the granting
authority can be held responsible for them.}

\keywords{Rearranged stochastic heat equation, 
Wasserstein diffusion with idiosyncratic noise, 
Stochastic Fokker-Planck equation, 
Dean-Kawasaki equation, 
Strong Feller property,
Smoothing estimates.}

\begin{document}
\begin{abstract}
{We extend a previously introduced one-dimensional diffusion model on the space of probability measures, defined via the rearranged stochastic heat equation by, penalizing the dynamics with an additional entropy-driven gradient-descent term. By means of a splitting argument, we prove that despite the opposite effects of rearrangement and entropy minimization, the resulting penalized stochastic heat equation is well defined. We study several properties of the associated dynamics and show, in particular, that solutions admit a density satisfying a corrected version of the Dean--Kawasaki equation. The smoothing properties established for the stochastic heat equation are shown to persist.}
\end{abstract}

\maketitle

\section{Introduction}
{\subsection{From the rearranged SHE to its entropic regulazised version.} The rearranged stochastic heat equation is a reflected version, introduced in \cite{DelHam25}, of the stochastic heat equation on the one-dimensional torus
$\mathbb S := \mathbb R / \mathbb Z$.
Its solution is a stochastic process $(X_t)_{t \ge 0}$ taking values in the space $\mathcal U^2(\mathbb S)$ of square-integrable quantile functions on $\mathbb S$. This space consists of real-valued measurable functions on $\mathbb S$ that are even and non-increasing on $[0,1/2)$; a more precise definition of $\mathcal U^2(\mathbb S)$ and related functional spaces is given in Subsection~\ref{Notations}. 
The equation reads
\begin{align}\label{0.1}
\mathrm d X_t(\mathbf{x})
=
\Delta_x X_t(\mathbf{x})\,\mathrm dt
+ \mathrm d W_t(\mathbf{x})
+ \mathrm d \eta_t(\mathbf{x}),
\qquad t \ge 0,\, \mathbf{x} \in \mathbb{S},
\end{align}
where $(W_t)_{t \ge 0}$ is a colored noise defined on a filtered probability space
$(\Omega,\mathfrak F,(\mathfrak F_t)_{t \ge 0},\mathbb P)$, and admits the expansion
\[
W_t(\mathbf{x})
=
\sum_{k \ge 0} \lambda_k B_t^k e_k(\mathbf{x}),
\qquad \mathbf{x} \in \mathbb S,\quad t \ge 0.
\]
The sequence $(\lambda_k)_{k \ge 0}$ is square-summable, $(e_k)_{k \ge 0}$ denotes the cosine Fourier basis of $L^2(\mathbb S)$ and $\{(B_t^k)_{t \ge 0}\}_{k \ge 0}$ is a family of independent one-dimensional $(\mathcal F_t)_{t \geq 0}$-Brownian motions (see \eqref{eq:noise:1.8}).
In \eqref{0.1}, 
the initial condition $X_0$ is an $\mathcal F_0$-measurable random variable with values in $\mathcal U^2(\mathbb S)$, which satisfies
\[
\mathbb E\bigl[\|X_0\|_{L^2(\mathbb S)}^{2p}\bigr] < \infty
\qquad \text{for all } p \ge 1.
\]
The distinctive feature of \eqref{0.1} is the presence of the term $(\eta_t)_{t \ge 0}$, 
which is a distribution-valued reflection process that enforces the constraint 
$X_t \in \mathcal U^2(\mathbb S)$ for all $t \ge 0$.

In \cite{DelHam25}, existence and uniqueness of solutions to \eqref{0.1} are established. Existence is proved using a splitting method, which alternates over infinitesimal time intervals between the free dynamics of the heat equation and a rearrangement operation. After each free evolution, the rearrangement maps the state variable --which may no longer be a quantile function-- onto the unique quantile function with the same distribution on the torus. The resulting dynamics can be viewed as a diffusion taking values in the space of probability measures, because the set $\mathcal U^2(\mathbb S)$, equipped with the $L^2$ distance on $\mathbb S$ (with respect to the Lebesgue measure), is isometric to the space 
${\mathcal P}_2({\mathbb R})$
of probability measures on $\mathbb R$ equipped with the 2-Wasserstein distance
${\mathscr W}_2$; see \eqref{wasserstein} for the definition of the latter.
The process $\eta$ in the dynamics \eqref{0.1} encodes, at the level of the infinitesimal increments of $X$, the action of the rearrangement.
It is also shown in \cite{DelHam25} that, thanks to the presence 
of the Laplace operator in 
\eqref{0.1},
the semi-group generated by $X$, which acts on real-valued functions defined on 
${\mathcal P}_2({\mathbb R})$, is strong Feller, as it maps bounded measurable functions onto ${\mathscr W}_2$-Lipschitz continuous functions.

Since the process $(X_t)_{t \ge 0}$ describes the random evolution of a probability measure, it can be viewed as a type of systemic noise commonly used in McKean-Vlasov theory; see, for example, \cite{CarmonaDelarueII}. In particular, it can serve as a systemic (or common) noise for a first-order McKean-Vlasov equation, that is, a deterministic differential equation initialized from a random initial condition and driven by coefficients depending on the law of the solution. For instance, a drifted version of \eqref{0.1} is studied in \cite{DelHamarxiv1}.  
However, the approach in \cite{DelHamarxiv1} breaks down if the diffusion in \eqref{0.1} is used in a genuinely diffusive McKean-Vlasov equation. In contrast with the former, the latter includes an additional noise term, typically called idiosyncratic, which acts pathwise on the solution; as a consequence, it affects the statistical state of the system only in a deterministic manner, the evolution of the law resulting solely from the individual action of the noise on each realization.
This observation constitutes the starting point of the present article. As a first step toward a general McKean--Vlasov-type diffusion driven by a noise of the form \eqref{0.1}, we aim here to provide a mathematical description of the action of \eqref{0.1} on a system of independent Brownian motions, or, equivalently, of a system subjected to both an idiosyncratic Brownian motion and a common noise of the type \eqref{0.1}. The issue is in fact more subtle than it may first appear. Indeed, simply adding a Brownian motion to \eqref{0.1} in order to account for the effect of an idiosyncratic noise would not address the problem at all, since the law of the random variable $X_t$ on which the noise $W$ acts is precisely the law of the mapping ${\bf x} \in \mathbb S \mapsto X_t({\bf x})$. To incorporate the statistical effect of an idiosyncratic Brownian motion, one would therefore need to construct such a motion on the torus, which is possible in theory but leads to very challenging observability issues.

The approach adopted in this article is therefore different. 
The main idea is therefore to encode the idiosyncratic Brownian component not by adding a pathwise Brownian motion to the quantile equation, but by inserting, at the level of the law 
of the mapping $x \in \mathbb S \mapsto X_t(x)$, the heat flow generated by such a Brownian motion.
 Let us emphasize that the action of this semigroup should be distinguished from that of the Laplacian in \eqref{0.1}, which acts on the quantile function itself. As we shall see below, the action of the heat flow on a probability measure and the action of the heat flow on the quantile function associated with the same measure are not only different, but in fact have opposite effects.}
 
{\subsection{From a new splitting scheme to existence and uniqueness.}
Our approach relies on a new splitting scheme, in which we alternate between two steps. The first one is a random step in the space of quantile functions, given by the dynamics \eqref{0.1} over a time interval of length~$h$. The second one is a deterministic step in the space of probability measures, obtained by convolving the law of the quantile function at the end of the previous step with a Gaussian density of variance~$h$.

The first challenge is to show that this scheme admits a weak limit as the step size $h$ tends to~$0$, in the sense that the law of the resulting process, viewed as a random variable taking values in a suitable functional space, converges as $h$ vanishes. More precisely, we prove that, as the time step tends to zero, the scheme converges to the limiting equation
\begin{align} \label{eq:new:RSHE} \mathrm{d}X_t(\mathbf{x}) &= -\frac{1}{2}D_{\mathbf{x}} \Big(\frac{1}{D_{\mathbf{x}} X_t(\mathbf{x})}\Big)\,\mathrm{d}t + \Delta_{\mathbf{x}} X_t(\mathbf{x})\,\mathrm{d}t + \mathrm{d}\eta_t(\mathbf{x}) + \mathrm{d}W_t(\mathbf{x}), \qquad \mathbf{x} \in \mathbb S, \end{align}
  see Proposition~\ref{Propo 5.13}. Here, $(\eta_t)_{t \ge 0}$ is a reflection process which, as in \eqref{0.1}, forces the solution to remain a quantile function. The novelty lies in the first term on the right-hand side. A simple computation, given in Remark~\ref{Remark 5.16}, shows that this term coincides with the gradient of the entropy when the latter is expressed in terms of the quantile function rather than the associated density. In other words, this term is generated by the Gaussian convolution step in the splitting scheme, which is thereby explaining the title of the article.

In Theorem~\ref{Main 1}, we prove existence and uniqueness of solutions to \eqref{eq:new:RSHE} in a suitable weak formulation. This result represents the first main contribution of the paper.

A key difficulty in the analysis is that the two deterministic mechanisms entering the splitting scheme do not act in the same direction. On the one hand, the Laplacian in the rearranged stochastic heat equation, together with the rearrangement operation itself, tends to damp oscillations of the quantile function. On the other hand, the heat flow generated by the idiosyncratic Brownian motion acts on the associated probability density, rather than on the quantile function. Thus, the scheme combines two competing regularizing mechanisms: one acts on the quantile function, while the other acts on the corresponding law.

This phenomenon can be made explicit through the smoothing properties of the two dynamics. If $X$ is a symmetric differentiable function on the torus, the Polya--Szego inequality states, in one of its more general forms, that for any increasing convex function $\Phi : \mathbb{R} \to \mathbb{R}$,
\begin{equation*} \int_{\mathbb S} \Phi\!\left( D_{\mathbf{x}} X(\mathbf{x}) \right)\,\mathrm{d}\mathbf{x} \;\ge\; \int_{\mathbb S} \Phi\!\left( D_{\mathbf{x}} X^*(\mathbf{x}) \right)\,\mathrm{d}\mathbf{x}, \end{equation*}
where $X^*$ denotes the rearrangement of $X$, which is differentiable almost everywhere. A similar effect is produced by the Laplacian in \eqref{0.1}, in the sense that, for any $t>0$,\begin{equation*} \int_{\mathbb S} \Phi\!\left( D_{\mathbf{x}} \big[ {\rm e}^{t \Delta} X \big](\mathbf{x}) \right)\,\mathrm{d}x \;\le\; \int_{\mathbb S} \Phi\!\left( D_{\mathbf{x}} X(\mathbf{x}) \right)\,\mathrm{d}\mathbf{x}. \end{equation*}
Thus, both deterministic components of \eqref{0.1}, excluding the noise, act to decrease quantities of the form
\[ \int_{\mathbb S} \Phi\!\left( D_{\mathbf{x}} X(x) \right)\,\mathrm{d}\mathbf{x}. \]

If $X$ is a quantile function and its law admits a density $p$ on $\mathbb{R}$, the above quantity can be rewritten as
\begin{equation*} \int_{\mathbb S} \Phi\!\left( D_{\mathbf{x}} X(\mathbf{x}) \right)\,\mathrm{d}\mathbf{x} = \int_{\mathbb R} \Phi\!\left( \frac{2}{p(z)} \right) p(z)\,\mathrm{d}z. \end{equation*}
By contrast, when heat flow acts on density $p$, it leads to the decay of
\[ \int_{\mathbb R} \Psi\!\left( p(z) \right)\,\mathrm{d}z, \]
whenever $\Psi$ is convex. Reconciling these two forms of dissipation would require identifying functions $\Phi$ and $\Psi$ such that
$\Psi(r)=\Phi(2/r)r$
but this is incompatible with the convexity requirements, except in trivial cases. Consequently, the splitting scheme does not seem to admit a simple Lyapunov functional. This absence of a common monotone structure reflects the underlying analytical difficulties of the problem.

{\subsection{Existence of a density, and related 
Dean-Kawasaki equation}
Another main contribution of this article is to show that, for almost every
$(t,\omega) \in [0,+\infty) \times \Omega$, the law of the mapping
$\mathbb S \ni \mathbf{x} \mapsto X_t(\mathbf{x})$ --with the dependence on $\omega$ left implicit-- has a density
$\mathbb R \ni z \mapsto p_t(z)$, which is square integrable. Here,
$(X_t)_{t \ge 0}$ denotes the solution to \eqref{eq:new:RSHE}, whose existence and uniqueness are guaranteed under the assumptions of Theorem~\ref{Prop5.1}.
Compared with the regularity properties known for \eqref{0.1}, this result constitutes a clear improvement and highlights the regularizing effect of the entropic correction. Indeed, the strongest result currently available for \eqref{0.1}, due to \cite{DelOukarxiv1}, only ensures that, for almost every $(t,\omega) \in [0,+\infty) \times \Omega$, the map $\mathbb S \ni \mathbf{x} \mapsto X_t(\mathbf{x})$ --where $(X_t)_{t \ge 0}$ now denotes the solution to \eqref{0.1}-- has no atoms.

The existence of a density, denoted by $(p_t)_{t \ge 0}$, naturally raises two further questions:
\begin{enumerate}
\item What can be said about the regularity of $(p_t)_{t \ge 0}$ with respect to the variable $y$?
\item Which Fokker--Planck equation is satisfied by $(p_t)_{t \ge 0}$?
\end{enumerate}

Concerning the first question, our results remain limited. We prove that, for almost every $(t,\omega)$, the density $p_t$ belongs to $L^2(\mathbb R)$ and that its inverse $p_t^{-1}$, restricted to the set $\{z \in \mathbb R : p_t(z) > 0\}$, is integrable. The difficulty in pushing the analysis further and in establishing local continuity properties stems once again from the structure of equation~\eqref{eq:new:RSHE}. While the entropic regularization tends to spread out the density, the rearrangement and the Laplacian inherited from \eqref{0.1} have the opposite effect.
In particular, one of the more surprising results of the article is that the density necessarily has compact support. This compactness results from a form of balance between the two competing dynamics underlying the splitting scheme that leads to equation~\eqref{eq:new:RSHE}. Despite numerous attempts to go further, it remains unclear to us whether the action of the heat semigroup on the density --during the entropic descent phase-- can yield additional regularity.

Concerning the equation satisfied by $(p_t)_{t \ge 0}$, we prove in
Theorem~\ref{thm:eq:density} by means of a variant of the It\^o formula established in 
\cite{DelHam25_2}
that it solves, in a weak sense, the following stochastic Fokker--Planck equation:
\begin{equation}
\label{eq:intro:DK}
\begin{split}
{\rm d}_t p_t(x)
&= \frac{1}{2}\Delta_x p_t(x)
 - 4\Delta_x\!\left( \mathbbm{1}_{\{p_t(x)>0\}}\frac{1}{p_t(x)}\right)
 + \frac{1}{2}\sum_{k \in \mathbb N}
 \Delta_x\!\left(\lambda_k^2\, e_k^2\!\left(\frac{\tilde F_t(x)}{2}\right) p_t(x)\right)  \\
&\quad
 - \frac{1}{2}\sum_{k \in \mathbb N}
 D_x\!\left(\lambda_k\, e_k\!\left(\frac{\tilde F_t(x)}{2}\right) p_t(x)\right)
 \,\mathrm{d}B_t^k ,
\end{split}
\end{equation}
where $\tilde F_t$ denotes the cumulative distribution function of the quantile function $X_t$ solution to \eqref{eq:new:RSHE}
This equation should be viewed as a variant of the Dean--Kawasaki equation, with the following modifications:
\begin{enumerate}
\item
The noise driving the equation is not white but coloured. This is not surprising: the original Dean--Kawasaki equation, driven by space--time white noise, is in fact ill posed (except trivial cases). The fact that the Laplacian in \eqref{0.1} is not sufficient to compensate for the singularities of white noise was already explained in \cite{DelHam25}, and the same difficulties arise here.

\item The noise is expanded along the family $(y \mapsto e_k(\tilde F_t(y)/2))_{k \in \mathbb N}$.
Such a composition of the Fourier basis with the cumulative distribution function does not appear in the classical formulation of the Dean--Kawasaki equation. In fact, the resulting family $(y \mapsto e_k(\tilde F_t(y)/2))_{k \in \mathbb N}$ forms an orthonormal basis of the space $L^2(\mathbb R, p_t)$.
From the geometric viewpoint of the space of probability measures, this reflects the fact that the noise inserted in 
\eqref{eq:intro:DK}
naturally acts on the tangent space to ${\mathcal P}_2(\mathbb R)$ at the point $p_t$.
Indeed, it should be noted that, at a formal level, the stochastic term in \eqref{0.1} and in \eqref{eq:new:RSHE} can be written (by composing the cumulative distribution function with the quantile function) as
\begin{equation*}
\sum_{k \in \mathbb N}
e_k(x)\,\mathrm{d}B_t^k
=
\sum_{k \in \mathbb N}
e_k\!\left(\tilde  F_t(\tfrac12 X_t(x) )\right)\,\mathrm{d}B_t^k
=
\sum_{k \in \mathbb N}
\bigl(e_k \circ \tilde F_t\bigr)\!\left(\tfrac12 X_t(x)\right)\,\mathrm{d}B_t^k .
\end{equation*}
In other words, the noise in \eqref{0.1} and 
\eqref{eq:new:RSHE} corresponds to a motion along the distributional vector field
\begin{equation*}
x \;\longmapsto\;
\sum_{k \in \mathbb N}
\lambda_k\, (e_k \circ \tilde F_t)(\tfrac12 x)\, \frac{\mathrm{d}B_t^k}{\mathrm{d}t},
\end{equation*}
obtained by anisotropic forcing along the orthonormal basis $(e_k \circ F_t(\tfrac12 \cdot))_{k \in \mathbb N}$ of 
$L^2({\mathbb R},p_t)$.

Notice that the third term on the right-hand side corresponds to a bracket term arising from the stochastic forcing. Since the stochastic term differs from that of the classical Dean--Kawasaki equation, this contribution also differs in form; nevertheless, it plays the same conceptual role.

\item The second term on the right-hand side of \eqref{eq:intro:DK}, namely the Laplacian of the inverse of $p_t$, does not appear in the standard Dean--Kawasaki equation. As already explained, this term is introduced through \eqref{0.1} in order to obtain additional regularization properties for the semigroup generated by $(X_t)_{t \ge 0}$ when acting on functions defined on ${\mathcal P}_2(\mathbb R)$. We explain below (see Remark \ref{rem:30-06:1}) that the regularization properties established in \cite{DelHam25} for \eqref{0.1} remain valid for \eqref{eq:new:RSHE}.
\end{enumerate}
}
\subsection{Connection with the existing literature}

Our work is part of a line of results initiated roughly twenty years ago by the article \cite{vRenesseSturm,Sturm} on Wasserstein diffusions. A fairly comprehensive survey was provided in the previous work \cite{DelHam25} on the rearranged stochastic heat equation; here we give a brief overview. The approach of \cite{vRenesseSturm} relies on Dirichlet forms. Building on this, the works \cite{Konarovsky0}, and later \cite{KonarovskyivRenesse, KonarovskyivRenesseTobias}, adopt a more trajectorial perspective; all of these are restricted to one dimension. 
In higher dimensions, \cite{DelloSchiavo0,DelloSchiavo24} developed an alternative approach, also based on Dirichlet forms, but readily interpretable from a trajectorial viewpoint; however, their results are limited to random walks on purely atomic measures. Other constructions in higher dimensions are proposed in \cite{Sturm26} and \cite{RenWang}. In particular, the construction of \cite{RenWang} is further exploited in \cite{RenRocknerWangWittman25} for applications to gradient flows on the Wasserstein space, a topic also discussed in \cite{DelHamarxiv1}.
The approach initiated by \cite{DelHamarxiv1} was also revisited in 
\cite{vRenesseWangWeiss}.

To the best of our knowledge, the combination of a Wasserstein diffusion with an idiosyncratic noise component (treated here in the form of a gradient descent term derived from the entropy) has not been studied previously. As explained above, this question is motivated by the extensive literature on mean-field systems with common noise, as surveyed in \cite{CarmonaDelarueII}, which itself traces back to the pioneering work of \cite{KurtzXiong},  subsequently complemented by those of \cite{gess2019,LackerShkolnikovZhang}. Similarly, questions regarding the regularity of the associated semigroup remain largely unexplored in the literature. However, in the case of the Fleming--Viot process, some regularity results have been obtained by \cite{Stannat}. 
Moreover, \cite{Marx2} also obtained regularization results for the semigroup generated by one-dimensional McKean--Vlasov equations driven by infinite-dimensional noise, though only in certain directions.

The study of the Dean--Kawasaki equation has also seen significant recent progress. In general, the equation is ill-posed, and only regularized or discretized models have been studied in depth; see, for instance, \cite{Cornalba_2020, FehrmanGess24, Perkowski,KonarovskyivRenesse3}. The results obtained for \eqref{eq:intro:DK} are complementary: in the absence of the Laplacian acting on $1/p$, the equation resembles (up to the change of basis discussed in the second item following \eqref{eq:intro:DK}) the one studied in \cite{Perkowski} in a multi-dimensional setting. As mentioned above, the additional Laplacian ensures rather general smoothing properties, which, to our knowledge, have not been established for other forms of the Dean--Kawasaki equation. Beyond these properties, the derivation of \eqref{eq:intro:DK} provides a clear link between the rearranged stochastic heat equation and the Dean--Kawasaki equation.

\subsection{Organization.} The paper is organized as follows. We provide a number of preliminary reminders in Section \ref{se:2}, in particular concerning rearrangement and the notion of solution introduced in \cite{DelHam25}. In Section \ref{se:3}, we introduce the time-discretized scheme that serves as the basis for the construction of solutions to the rearranged equation driven by the entropy gradient. In Section \ref{se:4}, we establish the tightness properties needed to pass to the limit as the time step of the scheme tends to $0$. The passage to the limit is studied in Section \ref{se:5}, and the properties of the limiting equation are proved in Section \ref{se:6}, including in particular the main statement, Theorem \ref{Main 1}. Section \ref{se:7} is devoted to the study of the density and the derivation of the regularized Dean--Kawasaki stochastic partial differential equation.


\section{Preliminaries}
\label{se:2}
This section is devoted to the introduction of notation, definitions, and preliminary  results that will be instrumental in the development of this work

\subsection{Notations}\label{Notations}

\paragraph{\it Standard spaces of regular functions.}
\begin{quote}
\setlength{\leftskip}{-2em} 
    For two metric spaces $\mathcal{X}$ and $\mathcal{Y}$, we denote by
$\mathcal{C}(\mathcal{X},\mathcal{Y})$ the space of continuous functions from
$\mathcal{X}$ to $\mathcal{Y}$. For $k \geq 1$, $\mathcal{C}_0^{\infty}(\mathbb{R}^k)$
denotes the space of real-valued infinitely differentiable functions on
$\mathbb{R}^k$ with compact support.
{Throughout, we use the bold letter ${\mathbf x}$
for generic elements of ${\mathbb S}$, and $y$ and $z$ for generic elements of ${\mathbb R}$. Derivatives with respect to ${\mathbf x}$, $y$ and $z$ are respectively denoted by 
$D_{\mathbf x}$, $D_y$ and $D_z$. Similarly, the Laplacian on ${\mathbb S}$
(which is also $D^2_{\mathbf x}$) is denoted by $\Delta_{\mathbf x}$. When clear from the context, we just write $D$ or $\Delta$ for $D_{\mathbf x}$ or $\Delta_{\mathbf x}$.}
\end{quote}

\paragraph{\it Elements of Fourier analysis.}
\begin{quote}
\setlength{\leftskip}{-2em} 
The Lebesgue measure on $\mathbb{S}$ is denoted by $\mathrm{Leb}_{\mathbb{S}}$,
and integration with respect to this measure is written as $\mathrm{d}x$.
For $p \geq 1$, $\|\cdot\|_p$ denotes the usual $L^p$-norm on $(\mathbb{S},\mathrm{Leb}_{\mathbb{S}})$, with $\int_{\mathbb{S}}|f(\mathbf{x})|^p\mathrm{d}\mathbf{x}< \infty$,
 while $\|\cdot\|_{\infty}$ denotes the essential supremum norm i.e. $\|f\|_{\infty}:= \essup\{f(\mathbf{x}): \mathbf{x}\in \mathbb{S}\}$.
The inner product in $L^2(\mathbb{S})$ is written as $\langle \cdot, \cdot \rangle$.
The collection of equivalence classes in $L^2(\mathbb{S})$ containing a non decreasing symmetric function 
{(i.e.,
a symmetric periodic function that is non-decreasing on $(0,1/2)+{\mathbb Z}$)}
will be denoted by $\mathcal{U}^2(\mathbb{S})$.

For $k \in \mathbb{N}\setminus \{0\}$, we define
\[
e_k(\mathbf{x}) := \sqrt{2}\cos(2k\pi \mathbf{x}), \qquad \mathbf{x} \in \mathbb{S},
\]
and, for $k=0$, we let $e_0 \equiv 1$.
The family $(e_k)_{k \in \mathbb{N}}$ forms a complete orthonormal basis of
the subspace $L^2_{\mathrm{Sym}}(\mathbb{S})$ of $L^2(\mathbb{S})$, consisting of square integrable functions on $\mathbb{S}$ that are
almost everywhere symmetric.
 For any function  $f \in L^1(\mathbb{S})$ and $k \in \mathbb{N}$, the $k\text{-th}$ (cosine) Fourier coefficient of $f$ is given by
 $$\hat{f}_k := \langle f, e_k \rangle. $$
For $\gamma \in \mathbb{R}$, we denote by $\mathcal{H}^{\gamma}_{\mathrm{Sym}}(\mathbb{S})$
the Sobolev space of symmetric functions or distributions such that
\[
\|f\|_{2,\gamma}^2 := \sum_{k \in \mathbb{N}} (k \vee 1)^{2\gamma} \vert \hat{f}_k \vert^2 < \infty,
\]
with associated inner product
\[
\langle f,g \rangle_{2,\gamma}
:= \sum_{k \in \mathbb{N}} (k \vee 1)^{2\gamma} \hat{f}_k \hat{g}_k.
\]
When  $\gamma=0$, we just write 
$\| \cdot \|_{2}$
for $\| \cdot \|_{2,0}$
and $\langle \cdot, \cdot \rangle$ for 
$\langle \cdot, \cdot \rangle_{2,0}$.

At last, the periodic heat semigroup on the circle $\mathbb{S}$, which is commonly denoted by $(e^{t\Delta/2})_{t\geq 0}$, has the following kernel at time $t>0$: 
\begin{align*}
	{\mathbb S} \ni {\mathbf x} \mapsto  \frac{1}{\sqrt{2\pi t}}\sum_{n\in \mathbb{Z}}\exp\left\{-\frac{({\mathbf x}-n)^2}{2t}\right\}. 
\end{align*}
We will also make use of the heat kernel on 
${\mathbb R}$, which we denote by 
\begin{align}
	\Gamma_t(x):= \frac{1}{\sqrt{2\pi t}} \exp\left\{-\frac{x^2}{2t}\right\},\quad t>0,\,\, x\in \mathbb{R}. 
    \label{eq:heat:semigroup:torus}
\end{align}

\end{quote}

\paragraph{\it Probabilistic set-up}
\begin{quote}
\setlength{\leftskip}{-2em} 
The tuple $(\Omega,\mathfrak{F},{\mathbb F}=\{\mathfrak{F}_t\}_{t\geq 0},\mathbb{P})$
denotes a generic probability space equipped with a filtration ${\mathbb F}$ satisfying the usual conditions. 
The space is always equipped with 
a collection of 
independent 
${\mathbb F}$-Brownian motions $\{(B_t^k)_{t\geq 0}\}_{k\in \mathbb{N}}$.
For a real $\lambda >1/2$, we then let 
\begin{align}
\label{eq:noise:1.8}
			W_t:= B_t^0e_0 + \sum_{k\in \mathbb{N}}k^{-\lambda} B_t^k e_k \equiv \sum_{k\in \mathbb{N}} \lambda_k B_t^k e_k, \quad t\geq 0.
		\end{align}
		The process $(W_t)_{t\geq 0}$ is an $L^2_{\mathrm{Sym}}(\mathbb{S})\text{-valued}$ Brownian motion with covariance function 
		\begin{align}
			Q: (f,g) \in \left(L^2_{\mathrm{Sym}}(\mathbb{S})\right)^2\mapsto s\wedge t \sum_{k\in \mathbb{N}} \lambda_k^2\hat{f}^k\hat{g}^k = s\wedge t\,\, \langle f,g\rangle_{2,-\lambda},
		\end{align}
where  $(\lambda_k)_{k\in \mathbb{N}}$ is given by $\lambda_0 =1$ and $\lambda_k:= k^{-\lambda}$ for $k \in \mathbb{N}^{*}$.

The tuple $(\Omega,\mathfrak{F},{\mathbb F}=\{\mathfrak{F}_t\}_{t\geq 0},\mathbb{P},(W_t)_{t \geq 0})$
is called a probabilistic set-up. 
{Throughout, we use the generic notation ${\mathscr L}$ to denote the law of a random variable taking values in a Polish space.}
\end{quote}

\subsection{Rearrangement and quantile functions}
\label{subse:rearrangement}
All the material in this subsection is taken from \cite{DelHam25}.
 We recall that, for any measurable function $f:\mathbb{S} \rightarrow \mathbb{R}$, there is a unique function, called \text{\it symmetric non decreasing rearrangement of} $f$\footnote{Here, we make a slight modification compared with \cite{DelHam25}, where the authors instead considered functions that are non-increasing on $[0,1/2]$ and non-decreasing on $[-1/2,0]$. This change is made solely for notational convenience and has no impact on the results.} denoted by $f^{*}: \mathbb{S} \rightarrow [-\infty,\infty]$ satisfying:
\begin{itemize}
	\item[(1)] $f^{*}$ is symmetric with respect to $0$, non decreasing and {left-continuous on the interval $(0,1/2]$ and right-continuous at $0$}; in particular, it belongs to ${\mathcal U}^2({\mathbb S})$.
	\item[(2)] $f^*$ and $f$ have the same distribution under $\text{Leb}_{\mathbb S}$, which is known as Cavalieri's principle, i.e., 
	\begin{align*}
		\text{Leb}_{\mathbb{S}}\left(\{{\mathbf x} \in \mathbb{S}: f^{*}({\mathbf x}) \leq a\}\right) = \text{Leb}_{\mathbb{S}}\left(\{{\mathbf x}\in \mathbb{S}: f({\mathbf x}) \leq a\}\right)
	\end{align*}
\end{itemize}	
The collection of functions $f^{*}$ satisfying the point $(1)$ above is one to one with the set $\mathcal{P}_2(\mathbb{R})$ of probability measures 
on ${\mathbb R}$ that have a finite second moment. The bijection 
is defined by 
$\mathcal{U}^2(\mathbb{S}) \ni f^{*} \mapsto \text{Leb}_{
			\mathbb{S}}\circ(f^{*})^{-1} \in \mathcal{P}_2(\mathbb{R}) $. It is an isometry when $\mathcal{P}_2(\mathbb{R})$ is equipped with the $\mathscr{W}_2\text{-Wasserstein}$ distance, i.e., for any $f^{*}, g^{*} \in \mathcal{U}^2(\mathbb{S}),$
		\begin{align}\label{eq1.7}
			\|f^{*}-g^{*}\|_2 = \mathscr{W}_2\left({\rm Leb}_{
				\mathbb{S}}\circ (f^{*})^{-1}, {\rm Leb}_{
				\mathbb{S}}\circ (g^{*})^{-1}\right),
		\end{align}
		where for any measures $\mu,\nu \in \mathcal{P}_2(\mathbb{R})$ we define the $2\text{-Wasserstein}$ distance by
        \begin{equation}\label{wasserstein}
            \mathscr{W}_2^2(\mu,\nu):= \inf_{\pi \in \mathcal{P}(\mathbb{R}^2):\pi\circ e_x^{-1}=\mu,\pi\circ e_y^{-1}=\nu} \int_{\mathbb{R}^2}|x-y|^2\pi(\mathrm{d}x,\mathrm{d}y),
        \end{equation}
  $e_x:(x,y) \in \mathbb{R}^2\mapsto x$ and  $e_y:(x,y) \in \mathbb{R}^2\mapsto y$ being two evaluation mappings on $\mathbb{R}^2$. 

When $\mu \in {\mathcal P}({\mathbb R})$ is given, the pre-image $f^*$ of $\mu$ by the canonical isometry plays the role of {(periodic and symmetric)} quantile function. We denote  by
$F_{\mu}^{-1}$. It is given by the following 1-periodic function:
\begin{equation}\label{inverse}
	{F}_{\mu}^{-1}(\mathbf{x}) = \tilde{F}_{\mu}^{-1}(2\mathbf{x}) \mathbbm{1}_{[0,1/2]+{\mathbb Z}} (\mathbf{x})+ \tilde{F}_{\mu}^{-1}(-2\mathbf{x})\mathbbm{1}_{(-1/2,0)+{\mathbb Z}}(\mathbf{x}), \quad \mathbf{x} \in \mathbb{S},
\end{equation}
where, $\tilde F_{\mu}^{-1}$ is the standard quantile function of $\mu$, i.e., 
    \begin{align}
    \label{fonction:quantile:sur:R}
	\tilde F_{\mu}^{-1}(y):=\inf\{t \in \mathbb{R}: \tilde F_{\mu}(t) \geq y\}, \,\, \text{ for any } y\in [0,1].
\end{align}

The following result can be found in \cite[Proposition 3]{DelHam25}:
\begin{prop}\label{prop1.4}
The spaces $L^2_{\mathrm{Sym}}(\mathbb{S})$ and $\mathcal{U}^2(\mathbb{S})$ are closed subsets of $L^2(\mathbb{S})$ equipped with the norm $\|\cdot\|_2$. Additionally, for any $\alpha >0$, $\{f \in L_{\mathrm{Sym}}^2(\mathbb{S}): \|f\|_2 \leq \alpha\}$ and $\{f \in \mathcal{U}^2(\mathbb{S}): \|f\|_2 \leq \alpha\}$ are closed subsets of $\mathcal{H}^{-1}_{\mathrm{Sym}}(\mathbb{S})$ equipped with the norm $\|\cdot\|_{2,-1}$. Moreover, for any $\beta>\alpha$, any bounded sets in $\mathcal{H}_{\mathrm{Sym}}^{-\beta}(\mathbb{S})$ is relatively compact in $\mathcal{H}_{\mathrm{Sym}}^{-\alpha}(\mathbb{S}).$
\end{prop}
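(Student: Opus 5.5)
The proof splits into three parts: closedness in $L^2$, closedness of the balls in $\mathcal H^{-1}$, and the compact embedding.

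\emph{Closedness in $L^2$.} For $L^2_{\mathrm{Sym}}(\mathbb S)$, I would use that it is the kernel of the bounded linear map $f\mapsto f-f(-\cdot)$, hence closed. For $\mathcal U^2(\mathbb S)$, take $f_n\to f$ in $L^2$ with $f_n\in\mathcal U^2(\mathbb S)$. Pass to a subsequence converging almost everywhere. Symmetry passes to the limit almost everywhere. Monotonicity on $(0,1/2)$ also passes: for $\mathbf x<\mathbf y$ in a full-measure set of convergence points, $f_n(\mathbf x)\le f_n(\mathbf y)$ gives $f(\mathbf x)\le f(\mathbf y)$. A function that is non-decreasing on a full-measure subset has a non-decreasing representative, for instance its left-continuous envelope. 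Hence $f\in\mathcal U^2(\mathbb S)$.

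\emph{Closedness of the balls in $\mathcal H^{-1}$.} Let $B_\alpha$ denote either ball and let $f_n\in B_\alpha$ with $f_n\to f$ in $\|\cdot\|_{2,-1}$. Since $(f_n)$ is bounded in $L^2$, a subsequence converges weakly in $L^2$ to some $g$ with $\|g\|_2\le\alpha$ by weak lower semicontinuity of the norm. Weak $L^2$ convergence implies convergence of each Fourier coefficient, and so does $\mathcal H^{-1}$ convergence. Therefore $\hat g_k=\hat f_k$ for every $k$, and $f=g\in L^2_{\mathrm{Sym}}$ with $\|f\|_2\le\alpha$. For $\mathcal U^2$, weak convergence alone does not preserve monotonicity, so this is the step requiring care. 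Two routes are available. The first uses that $\mathcal U^2(\mathbb S)$ is convex, since sums and nonnegative multiples of symmetric non-decreasing functions remain so; being convex and closed in $L^2$, it is weakly closed by Mazur's lemma, so $g\in\mathcal U^2$. The second upgrades to strong convergence: a bounded $L^2$ sequence of symmetric monotone functions is bounded on compact subsets of $(0,1/2)$, since $|f_n(\mathbf x)|\le\alpha/\sqrt{\min(\mathbf x,1/2-\mathbf x)}$. Helly's selection theorem then gives almost-everywhere convergence along a subsequence, and uniform integrability follows from the $L^2$ bound. I would present the Mazur argument, which is shortest.

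\emph{Compact embedding.} Let $(f_n)$ be bounded by $M$ in $\mathcal H^{-\beta}_{\mathrm{Sym}}$. Each coefficient sequence satisfies $|\hat f_{n,k}|\le M(k\vee1)^\beta$, so a diagonal extraction gives coefficientwise convergence to some $\hat f_k$. The limit $f$ satisfies $\|f\|_{2,-\beta}\le M$ by Fatou's lemma. Then split at level $K$:
\[
\|f_n-f\|_{2,-\alpha}^2\le\sum_{k\le K}(k\vee1)^{-2\alpha}|\hat f_{n,k}-\hat f_k|^2+K^{-2(\beta-\alpha)}(2M)^2 .
\]
Letting $n\to\infty$ and then $K\to\infty$ shows the right-hand side tends to zero, which proves relative compactness. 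None of the three steps is hard; the only delicate point is the weak closedness of $\mathcal U^2$ in the second step, which convexity resolves.
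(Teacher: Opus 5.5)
Your first two parts are correct. The paper gives no argument of its own here and only cites \cite[Proposition 3]{DelHam25}. Using convexity of $\mathcal U^2(\mathbb S)$ together with Mazur's lemma is a clean way to get the weak closedness needed in the second part.

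The gap is in the third part, at the tail estimate. For $k>K$ and $\beta>\alpha$ you have $(k\vee1)^{-2\alpha}=(k\vee1)^{2(\beta-\alpha)}(k\vee1)^{-2\beta}$, and the factor $(k\vee1)^{2(\beta-\alpha)}$ is unbounded in $k$. So a bound on $\|f_n-f\|_{2,-\beta}$ gives no control at all on $\sum_{k>K}(k\vee1)^{-2\alpha}|\hat f_{n,k}-\hat f_k|^2$, let alone the bound $K^{-2(\beta-\alpha)}(2M)^2$.

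This cannot be patched, because the claim as literally written is false. For $\beta>\alpha$ the inclusion is $\mathcal H^{-\alpha}_{\mathrm{Sym}}(\mathbb S)\subset\mathcal H^{-\beta}_{\mathrm{Sym}}(\mathbb S)$, not the reverse. Take $f_n:=n^{\beta}e_n$. Then $\|f_n\|_{2,-\beta}=1$ while $\|f_n\|_{2,-\alpha}=n^{\beta-\alpha}\to\infty$. So this bounded subset of $\mathcal H^{-\beta}_{\mathrm{Sym}}(\mathbb S)$ is not relatively compact in $\mathcal H^{-\alpha}_{\mathrm{Sym}}(\mathbb S)$, and your coefficientwise limit need not even lie in $\mathcal H^{-\alpha}_{\mathrm{Sym}}(\mathbb S)$.

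The roles of $\alpha$ and $\beta$ in the last sentence of the statement are swapped. The true statement is: for $\beta>\alpha$, bounded subsets of $\mathcal H^{-\alpha}_{\mathrm{Sym}}(\mathbb S)$ are relatively compact in $\mathcal H^{-\beta}_{\mathrm{Sym}}(\mathbb S)$. This is also what the paper actually uses: in the proof of Proposition~\ref{propo:3.8}, bounded sets of $\mathcal H^{-(3+\delta/2)}_{\mathrm{Sym}}(\mathbb S)$ are taken to be relatively compact in $\mathcal H^{-(3+\delta)}_{\mathrm{Sym}}(\mathbb S)$.

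Your diagonal-extraction scheme proves exactly this version once the weights are placed correctly. Assume $\sup_n\|f_n\|_{2,-\alpha}\le M$. Extract a coefficientwise limit $f$, which satisfies $\|f\|_{2,-\alpha}\le M$ by Fatou. Then use $(k\vee1)^{-2\beta}\le K^{-2(\beta-\alpha)}(k\vee1)^{-2\alpha}$ for $k>K$ to obtain
\[
\|f_n-f\|_{2,-\beta}^2\le\sum_{k\le K}(k\vee1)^{-2\beta}|\hat f_{n,k}-\hat f_k|^2+K^{-2(\beta-\alpha)}(2M)^2 .
\]
Checking the direction of this weight inequality would have exposed the typo in the statement, instead of producing a ``proof'' of a false claim.
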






{We will also make use of the following standard result from Fourier analysis: a function $f \in {\mathcal H}^1_{\rm Sym}({\mathbb S})$ is (up to the choice of an almost everywhere representative) absolutely continuous with respect to the Lebesgue measure on ${\mathbb S}$. Its Radon-Nykodym derivative is 
(almost everywhere) equal to its Sobolev derivative. In particular, when $f \in  {\mathcal H}^1_{\rm Sym}({\mathbb S}) \cap {\mathcal U}^2({\mathbb S})$, the Sobolev derivative $D_{\mathbf x}f$ is almost everywhere non-negative on $(0,1/2)+{\mathbb Z}$, and almost everywhere non-positive on $(-1/2,0)+{\mathbb Z}$.}

{As another important property of quantile functions on the torus, we mention that,}
for any function $f\in {\mathcal U}^2(\mathbb{S})$ and any $t>0$, the {function ${\rm e}^{t \Delta} f$} belongs to {${\mathcal U}^2(\mathbb{S})$}, i.e. remains a quantile function (see, for instance, \cite[Lemma 5 and Lemma 6]{DelHam25}).

\subsection{Definition and properties of solution to RSHE}
We recall
from \cite{DelHam25}
\eqref{0.1}
the definition of  
a solution to 
\eqref{0.1}:
\begin{defi}\label{Defi 0.1}
		On a given 
        probabilistic set-up $(\Omega,\mathfrak{F},{\mathbb F}=\{\mathfrak{F}_t\}_{t\geq 0},\mathbb{P},(W_t)_{t \geq 0})$, and for
        an $\mathfrak{F}_0\text{-measurable}$ initial condition $X_0$ with values in $\mathcal{U}^2(\mathbb{S})$, satisfying 
        ${\mathbb E}[\| X_0\|_2^p]<+\infty$ for all $p \geq 1$, we say that a pair of processes $(X_t,\eta_t)_{t\geq 0}$ solves the rearranged SHE \eqref{0.1}, initialized at $X_0$, if
		\begin{itemize}
			\item[1.] $(X_t)_{t\geq 0}$ is a continuous $\mathbb{F}\text{-adapted}$ process with values in $\mathcal{U}^2(\mathbb{S})$;
			\item [2.]  $(\eta_t)_{t\geq 0}$ is a continuous $\mathbb{F}\text{-adapted}$ process with values in $\mathcal{H}^{-2}_{\mathrm{Sym}}(\mathbb{S})$, starting from $0$ at $0$, such that, with probability one, for any $\phi \in \mathcal{H}^{2}_{\mathrm{Sym}}(\mathbb{S})\cap \mathcal{U}^2(\mathbb{S})$, the path $(\langle \eta_t,\phi\rangle)_{t\geq 0}$ is non-decreasing;
			\item[3.] with probability one, for any $\phi \in \mathcal{H}^{2}_{\mathrm{Sym}}(\mathbb{S})$, for all $t\geq 0$,
			\begin{equation}
				\langle X_t,\phi\rangle = \langle X_0,\phi\rangle + \int_0^t \langle X_s,\Delta \varphi\rangle \mathrm{d}s + \langle W_t,\phi\rangle + \langle \eta_t,\phi \rangle.
			\end{equation}
			\item[4.] for any $t\geq 0$,
			\begin{align}\label{refl1}
				\lim_{\epsilon\searrow 0 }\mathbb{E} \int_0^t {\rm e}^{\epsilon \Delta} X_s\cdot \mathrm{d} \eta_s = 0.
			\end{align}
		\end{itemize}
		\end{defi}
Existence and uniqueness of a solution to the rearranged SHE in the sense of Definition \ref{Defi 0.1} are ensured by 
\cite[Theorem 1]{DelHam25}.

The following It\^o's formula is in order (see, for instance,\cite[Proposition 2.4]{DelHamarxiv1})
\begin{lemm}\label{Ito} Let $X_t$ be the solution to \eqref{0.1} in the sense of Definition \ref{Defi 0.1}. Then, we have
\begin{equation*}
	\mathrm{d}_t\|X_{t}\|_2^{2}= - 2 \|D_{\mathbf{x}}X_{t}\|_2^2 \mathrm{d}t +
	 c_{\lambda} \mathrm{d}t + 2\langle X_t,\mathrm{d}W_t\rangle
\end{equation*} 
where $c_{\lambda}:=\sum_{k \in \mathbb{N}} \lambda_k^2.$
\end{lemm}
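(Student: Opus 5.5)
The plan is to prove the formula first for a smoothed version of the state and then remove the smoothing. Fix $\epsilon>0$ and set $X^\epsilon_t:={\rm e}^{\epsilon\Delta}X_t$. We cannot apply It\^o's formula to $\|X_t\|_2^2$ directly, because $\eta$ is only $\mathcal H^{-2}_{\rm Sym}$-valued.

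\textbf{Step 1: It\^o's formula for the smoothed process.} Testing item 3 of Definition \ref{Defi 0.1} against $\phi={\rm e}^{\epsilon\Delta}e_k$, each Fourier coefficient $\langle X^\epsilon_t,e_k\rangle$ is a real semimartingale. Its dynamics are
\begin{equation*}
\mathrm d\langle X^\epsilon_t,e_k\rangle=-4\pi^2k^2\langle X^\epsilon_t,e_k\rangle\,\mathrm dt+{\rm e}^{-4\pi^2k^2\epsilon}\lambda_k\,\mathrm dB^k_t+\mathrm d\langle \eta_t,{\rm e}^{\epsilon\Delta}e_k\rangle .
\end{equation*}
The last term has finite variation: write ${\rm e}^{\epsilon\Delta}e_k$ as a difference of two elements of $\mathcal H^2_{\rm Sym}\cap\mathcal U^2$ (a multiple of $e_0$ plus a smooth quantile-type function) and use the monotonicity in item 2. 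Applying It\^o's formula to the square, multiplying by appropriate weights and summing over $k$ then gives
\begin{equation*}
\mathrm d\|X^\epsilon_t\|_2^2=-2\|DX^\epsilon_t\|_2^2\,\mathrm dt+\sum_k{\rm e}^{-8\pi^2k^2\epsilon}\lambda_k^2\,\mathrm dt+2\langle X^\epsilon_t,\mathrm dW^\epsilon_t\rangle+2\,{\rm e}^{\epsilon\Delta}X_t\cdot\mathrm d\eta_t .
\end{equation*}
Here $W^\epsilon={\rm e}^{\epsilon\Delta}W$, and the last term is understood exactly as in \eqref{refl1}, namely as $2\int{\rm e}^{2\epsilon\Delta}X_s\cdot \mathrm d\eta_s$ up to redefining $\epsilon$. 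Since the exponential weights decay, the sums converge.

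\textbf{Step 2: remove the smoothing.} We let $\epsilon\searrow0$ term by term.
\begin{itemize}
\item $\|X^\epsilon_t\|_2^2\to\|X_t\|_2^2$ by continuity of the heat semigroup on $L^2$.
\item The bracket term tends to $c_\lambda t$ by monotone convergence.
\item The stochastic integral converges in $L^2(\mathbb P)$ to $2\int_0^t\langle X_s,\mathrm dW_s\rangle$ by the It\^o isometry. We use $\sum_k\lambda_k^2\langle X^\epsilon_s-X_s,e_k\rangle^2\le\|X^\epsilon_s-X_s\|_2^2$, dominated convergence, and the moment bounds $\mathbb E\sup_{s\le t}\|X_s\|_2^2<\infty$ from \cite{DelHam25}.
\item $\int_0^t\|DX^\epsilon_s\|_2^2\,\mathrm ds$ increases monotonically, as $\epsilon\searrow0$, to $\int_0^t\|DX_s\|_2^2\,\mathrm ds$, with the convention that the latter may be $+\infty$. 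This holds because ${\rm e}^{-8\pi^2k^2\epsilon}\uparrow1$ in the Fourier expansion.
\end{itemize}

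\textbf{Step 3: the reflection term, which is the main obstacle.} By \eqref{refl1}, $\mathbb E\int_0^t{\rm e}^{\epsilon\Delta}X_s\cdot\mathrm d\eta_s\to0$. We need this term to vanish almost surely, and in a form compatible with taking expectations and then the limit.

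First we prove $\int_0^t{\rm e}^{\epsilon\Delta}X_s\cdot\mathrm d\eta_s\ge0$. Since ${\rm e}^{\epsilon\Delta}X_s\in\mathcal H^2_{\rm Sym}\cap\mathcal U^2$ and $\eta$ is nondecreasing against such test functions, we approximate the integral by Riemann sums $\sum_i\langle\eta_{t_{i+1}}-\eta_{t_i},{\rm e}^{\epsilon\Delta}X_{t_i}\rangle$, each summand being nonnegative.

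Nonnegativity together with convergence in expectation gives $L^1$ convergence to $0$, hence almost sure convergence along a subsequence $\epsilon_n\searrow0$. Along that subsequence, the identity from Step 1, integrated on $[0,t]$, passes to the limit. In particular $\int_0^t\|DX_s\|_2^2\,\mathrm ds<\infty$ almost surely, and the stated formula follows in integrated form for each $t$. Continuity in $t$ of both sides extends it to all $t$ simultaneously.

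The delicate point is justifying the Riemann-sum and finite-variation interpretation of $\int{\rm e}^{\epsilon\Delta}X_s\cdot\mathrm d\eta_s$. I would handle it by decomposing test functions into differences of smooth quantile functions plus constants.
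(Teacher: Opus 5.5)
Your argument is correct and is essentially the route the paper relies on. The paper does not prove this lemma itself; it cites \cite{DelHamarxiv1}. It does, however, run exactly your scheme elsewhere: in the derivation of \eqref{254}--\eqref{261}, in Remark \ref{rem:lp:integrability}, and in the proof of Proposition \ref{propo:4.17}. That scheme is to test against ${\rm e}^{\epsilon\Delta}e_k$, sum over $k$ to obtain the integral $\int_0^t {\rm e}^{2\epsilon\Delta}X_s\cdot\mathrm d\eta_s$ in the sense of \cite{DelHam25}, and remove it via \eqref{refl1} together with its nonnegativity. The only slip is cosmetic: in your It\^o-isometry bound the integrand difference against $\mathrm dW$ should be ${\rm e}^{2\epsilon\Delta}X_s-X_s$ rather than $X^\epsilon_s-X_s$, which changes nothing since $\lambda_k\le 1$.
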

\section{Splitting scheme alternating rearranged SHE and Gaussian convolution}
\label{se:3}

\subsection{Definition of the scheme}
For a time horizon $T>0$ and an integer $N \in {\mathbb N}^*$,
we consider 
a time mesh $(t_n)_{n=0,\cdots,N}$ of the interval $[0,T]$ with uniform step size $h:=T/N \in (0,1)$. 
Iterating with respect to the values of $n=0,1,\cdots, N$,  we 
define the process $(X_t^{(h)})_{0 \le t \le T}$ as the solution of the following sequence of rearranged SHEs,   each posed on an interval of length $h$:
\begin{align}\label{scheme1}
	\begin{cases}
		\mathrm{d}X_t^{(h)}(\mathbf{x}) =\displaystyle \Delta_{\mathbf{x}} X_t^{(h)}(\mathbf{x}) \mathrm{d}t +\mathrm{d}W_t(\mathbf{x}) + \mathrm{d}
        \left(         
        \eta_t^{(h)}
        -
{\eta_{t_n^-}^{(h)}}
        \right)        
        (\mathbf{x}),\quad \forall t\in [t_n,t_{n+1}) \\
		X_{t_{n}}^{(h)}(\mathbf{x}) = [\displaystyle\Phi^{(h)}(X_{t_{n}^{-}}^{(h)})](\mathbf{x})  ,\quad  \mathbf{x}\in \mathbb{S},
        \end{cases}
\end{align}
with the convention that 
$\eta_{0-}^{(h)} = 0$, 
and with the following comments:
\begin{enumerate}
\item[i.] In the initial condition
stated on the second line, 
$X_{t_{n}^{-}}^{(h)}$ is defined as
the left-limit
$$X_{t_{n}^{-}}^{(h)}:=\lim_{t \uparrow t_n} X_t^{(h)}.$$
\item [ii.] On the same line, 
$\Phi^{(h)}$ is defined as  the function
\begin{align}\label{eq:Phi}
	\Phi^{(h)}:\mathcal{U}^2({\mathbb S}) \ni \bigl(\mathbf{x} \in {\mathbb S} \mapsto X(\mathbf{x})\bigr) \mapsto \bigl(\mathbf{x} \in {\mathbb S} \mapsto F^{-1}_{{\rm Leb}_{\mathbb S} \circ (X)^{-1} \star \Gamma_h}(\mathbf{x})\bigr),
\end{align}
where, here and afterwards, the symbol $\star$ represents the convolution product on $\mathbb{R}$ and $F^{-1}_{{\rm Leb}_{\mathbb S} \circ (X)^{-1} \star \Gamma_h}$ is the quantile function associated to the probability measure ${{\rm Leb}_{\mathbb S} \circ (X)^{-1} \star \Gamma_h} \in \mathcal{P}_2(\mathbb{R})$.
\item[iii.] On the first line, 
the term 
$\eta_{t_n^-}^{(h)}$  
in the subtraction 
$\eta_t^{(h)} - \eta_{t_n^-}^{(h)}$ 
is a way to cumulate, in a continuous manner, the variations of 
$(\eta_t^{(h)})_{t \in [0,T]}$ over each of the successive intervals. Indeed, recall from point 2 in Definition~\ref{Defi 0.1} that
$\eta_{t_n}^{(h)} - \eta_{t_n^-}^{(h)} = \lim_{t \searrow t_n} [\eta_t^{(h)} - \eta_{t_n^-}^{(h)} ] = 0$.
The process $(\eta_t^{(h)})_{t \in [0,T]}$
constructed this way is continuous (with values in $\mathcal{H}^{-2}_{\mathrm{Sym}}(\mathbb{S})$).
\item[iv.]
 The construction scheme can be iterated 
along the lines of \eqref{scheme1},
because \cite[Proposition 4 \& Theorem 1]{DelHam25}
guarantee that, at the end of each time step, ${\mathbb E}[\| X_{t_n^-}^{(h)}\|_2^p] < + \infty$ for all
$p \geq 1$, and then
${\mathbb E}[\| X_{t_n} \|_2^p] < + \infty$  at the beginning of each time step.
\end{enumerate}
 \color{black}

As described above, the value of the scheme $X_{t}^{(h)}$, at time $t_n$, should be understood as the quantile function of the convolution product between the law induced by the solution of the rearranged SHE on the previous step and  the Gaussian kernel $\Gamma_h$.


Our purpose is to study the  asymptotic behavior of 
$(X_t^{(h)})_{0 \leq t \le T}$ as $h$ tends to $0$. In order to do so, it is convenient to reformulate 
the sequence of equations 
\eqref{scheme1} in the form of a single equation with jumps. 
Indeed, by setting
\begin{align}
\label{eq:def:zeta:t:h}
	\zeta_{t}^{(h)} := \sum_{j=1}^{N_t} \left(\Phi^{(h)}(X_{t_j^-}^{(h)}) -X_{t_j^-}^{(h)}
    \right) + 
    \eta_{t}^{(h)}=: {\mathscr J}_t^{(h)}(X^{(h)}) +  \eta^{(h)}_{t},
\end{align}
with $N_t= \lfloor t/h\rfloor$,  the scheme \eqref{scheme1} can be rewritten as
\begin{align}\label{scheme0}
\langle X_{t}^{(h)} -X_{0}^{(h)},\phi \rangle = \langle \zeta_{t}^{(h)},\phi\rangle  + \int_{0}^{t} \langle X_s^{(h)},\Delta_{\mathbf{x}} \phi \rangle \mathrm{d}s + \langle W_{t},\phi \rangle, \quad t \in [0,T],
\end{align} 
for any symmetric and smooth function $\phi$. 

\subsection{Properties of the quantile of the convolution }
The process $(\zeta_t^{(h)})_{0 \le t \le T}$ plays a key role in our analysis. 
{It is a jump process.
By continuity of 
$(\eta^{(h)}_t)_{0 \le t \le T}$ in
$\mathcal{H}^{-2}_{\mathrm{Sym}}(\mathbb{S})$, the jumps of 
$(\zeta_t^{(h)})_{0 \le t \le T}$ occur at nodes $t_n$, $n =0,\cdots,N$, of the mesh.} At time $t_n$ the jump (in $\mathcal{H}^{-2}_{\mathrm{Sym}}(\mathbb{S})$) is given by 
\begin{equation}\label{eq:jump}
\zeta_{t_n}^{(h)} - 
\zeta_{t_n^-}^{(h)} 
= \Phi^{(h)}(X_{t_n^-}^{(h)}) - X_{t_n^-}^{(h)}. 
\end{equation}
To better understand the structure of the jumps, we first collect a series of results on the mapping $\Phi^{(h)}$. We first provide an
identity connecting the $L^2(\mathbb{S})\text{-norms}$ of $\Phi^{(h)}(X)$ and  $X$,
for a given element $X \in L^2({\mathbb S})$. 
We also derive further bounds, which turn to be very useful in the sequel. 

\begin{lemm} For fixed $h>0$, consider the function $\Phi^{(h)}$ as given by \eqref{eq:Phi}. Then, for any $X \in L^2({\mathbb S})$,  we have
    \begin{align}\label{eq1.16}
	\|\Phi^{(h)}(X)\|_2^2 = \|X\|_2^2  +h.
\end{align}Moreover,  
\begin{align}
\| \Phi^{(h)}(X) - X \|_2^2 \leq h,\label{eq:1.16:bis} \\
2 \langle X,\Phi^{(h)}(X)-X \rangle \leq h. \label{eq:2.7}
\end{align}
\end{lemm}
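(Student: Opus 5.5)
The plan is to translate everything into statements about the probability measure $\mu := {\rm Leb}_{\mathbb S}\circ X^{-1}$ and then use the isometry \eqref{eq1.7} between $\mathcal U^2(\mathbb S)$ and $(\mathcal P_2(\mathbb R),\mathscr W_2)$. I will take $X\in\mathcal U^2(\mathbb S)$, since $\Phi^{(h)}$ is defined on $\mathcal U^2(\mathbb S)$ in \eqref{eq:Phi}. If $X$ is only in $L^2(\mathbb S)$, one can replace $X$ by its rearrangement $X^*$ in \eqref{eq1.16}, which leaves both sides unchanged. However, \eqref{eq:1.16:bis} genuinely requires $X$ to be a quantile function, because only then does $\|\Phi^{(h)}(X)-X\|_2$ equal a Wasserstein distance. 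This is the only delicate point, and I will state the lemma for $X\in\mathcal U^2(\mathbb S)$, which is all that is used in the scheme \eqref{scheme1}.

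For \eqref{eq1.16}, note that $\|X\|_2^2=\int_{\mathbb S}|X(\mathbf x)|^2\,\mathrm d\mathbf x=\int_{\mathbb R} z^2\,\mu(\mathrm dz)$ by the definition of the image measure. Likewise, $\Phi^{(h)}(X)$ is the quantile function of $\mu\star\Gamma_h$, so by Cavalieri's principle $\|\Phi^{(h)}(X)\|_2^2=\int z^2\,(\mu\star\Gamma_h)(\mathrm dz)$. Let $Y\sim\mu$ and $G\sim\mathcal N(0,1)$ be independent. Then $Y+\sqrt h\,G\sim\mu\star\Gamma_h$, and
\[
\mathbb E[(Y+\sqrt h G)^2]=\mathbb E[Y^2]+2\sqrt h\,\mathbb E[Y]\,\mathbb E[G]+h\,\mathbb E[G^2]=\mathbb E[Y^2]+h .
\]
This gives \eqref{eq1.16}.

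For \eqref{eq:1.16:bis}, both $X$ and $\Phi^{(h)}(X)$ belong to $\mathcal U^2(\mathbb S)$, so \eqref{eq1.7} yields $\|\Phi^{(h)}(X)-X\|_2^2=\mathscr W_2^2(\mu,\mu\star\Gamma_h)$. Bound this by the cost of the coupling $(Y,Y+\sqrt h G)$ in \eqref{wasserstein}:
\[
\mathscr W_2^2(\mu,\mu\star\Gamma_h)\le\mathbb E\big[|\sqrt h G|^2\big]=h .
\]

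Finally, \eqref{eq:2.7} is pure algebra. Expand $\|\Phi^{(h)}(X)\|_2^2=\|X\|_2^2+2\langle X,\Phi^{(h)}(X)-X\rangle+\|\Phi^{(h)}(X)-X\|_2^2$ and use \eqref{eq1.16} to get
\[
2\langle X,\Phi^{(h)}(X)-X\rangle=h-\|\Phi^{(h)}(X)-X\|_2^2\le h .
\]
This last step does not even need \eqref{eq:1.16:bis}, only the nonnegativity of the norm. No step is a real obstacle; the only care needed is the domain issue discussed above.
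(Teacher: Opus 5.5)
Your proof is correct and follows the paper's argument essentially step for step. You get the second-moment identity from the independent sum $Y+\sqrt h\,G$, the bound \eqref{eq:1.16:bis} from the isometry \eqref{eq1.7} together with the coupling $(Y,Y+\sqrt h\,G)$, and \eqref{eq:2.7} by expanding the square and using \eqref{eq1.16}.

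Your remark on the domain is well founded. The paper's appeal to \eqref{eq1.7} tacitly requires $X\in\mathcal U^2(\mathbb S)$. Indeed, \eqref{eq:1.16:bis} fails for small $h$ when $X\in L^2(\mathbb S)$ is far from its rearrangement. By contrast, \eqref{eq1.16} and \eqref{eq:2.7} hold for every $X\in L^2(\mathbb S)$, since they depend only on the law of $X$ and on the norm identity.
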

\begin{proof}
    From the definition of the scalar product in $L^2(\mathbb{S})$, we observe that 
\begin{align*}
\|\Phi^{(h)}(X)\|_2^2 &= \int_{\mathbb{S}} |\Phi^{(h)}(X)|^2(\mathbf{x})\mathrm{d}\mathbf{x}
= \int_{\mathbb{R}} |\mathbf{x}|^2\mathrm{d}\left({\rm Leb}_{\mathbb{S}}\circ(X)^{-1}\star\Gamma_h\right)(\mathbf{x})\\
&= \int_{\mathbb{R}} |\mathbf{x}|^2\mathrm{d}\left({\rm Leb}_{\mathbb{S}}\circ(X)^{-1}\right)(\mathbf{x}) + \int_{\mathbb{R}} |\mathbf{x}|^2\Gamma_h(\mathbf{x})\mathrm{d}\mathbf{x} ,
\end{align*}
where the last equality 
follows from the interpretation of the convolution as the law of the sum of two independent  random variables, one is equal to $X$ and another is centered Gaussian. This leads to \eqref{eq1.16}. 

The 
bound \eqref{eq:1.16:bis}
follows from \eqref{eq1.7}: 
\begin{align*}
    \| \Phi^{(h)}(X) - X\|_2^2 &= \mathscr{W}_2^2\left( {\rm Leb}_{\mathbb{S}}\circ (X)^{-1}\star \Gamma_h, {\rm Leb}_{\mathbb{S}}\circ (X)^{-1}  \right)\\
    &\leq \int_{\mathbb{S}\times \mathbb{R}}|X(\mathbf{x}) + z - X(\mathbf{x})|^2\Gamma_h(z)\mathrm{d}\mathbf{x}\mathrm{d}z = h.
\end{align*}

As for the bound \eqref{eq:2.7}, it  follows by expanding 
$\Phi^{(h)}(X)$ in the form 
$ X+\Phi^{(h)}(X)- X$ in the $L^2(\mathbb{S})\text{-norm}$, together with the identity \eqref{eq1.16} and the fact that $\|\Phi^{(h)}(X)- X\|_2^2 \geq 0$.
\end{proof}

The second statement in this subsection
addresses the difference between the quantile functions of $\mu \star \Gamma_h$ and $\mu$, for a given probability measure
$\mu \in {\mathcal P}_2({\mathbb R})$. 
In particular, when $\mu = \mathrm{Leb}_{\mathbb S} \circ X^{-1}$,  for some $X \in \mathcal{U}^2({\mathbb S})$, the quantile function
$F_\mu^{-1}$ coincides with $X$, and 
$F_{\mu \star \Gamma_h}^{-1}$ with 
$\Phi^{(h)}(X)$; hence, the difference 
$F_{\mu \star \Gamma_h}^{-1}- F_\mu^{-1}$ coincides 
exactly with the \textit{jump} 
$\Phi^{(h)}(X) - X$. 
\begin{lemm}\label{lemm1.11}
Let $\mu \in \mathcal{P}_2(\mathbb{R})$ and $F_{\mu}^{-1}$ be its representative in $\mathcal{U}^2(\mathbb{S})$ as defined in \eqref{inverse}. Then, the following holds for    any fixed {$h\in T/{\mathbb N}^*$}:
	\begin{itemize}
	\item[i.] The difference ${F}^{-1}_{\mu*\Gamma_h}-{F}^{-1}_{\mu}$ is monotone: \text{ if} $\phi \in \mathcal{U}^2(\mathbb{S})$, then $\langle \phi,{F}^{-1}_{\mu*\Gamma_h}-{F}^{-1}_{\mu}\rangle \geq 0$;
	\item[ii.] In addition, if $\phi$ is also in $\mathcal{C}^1(\mathbb{S})$, then the following identity holds:
	\begin{align*}
		\left\langle \phi,F^{-1}_{\mu\star\Gamma_h}-F^{-1}_{\mu}\right\rangle :=\frac{1}{4}\int_{\mathbb{R}}\int_{0}^h \phi'\bigg(\frac{(\tilde{F}_{\mu}\star\Gamma_t)(z)}{2}\bigg)\big[D_z (\tilde{F}_{\mu}\star\Gamma_t)(z)\big]^2\mathrm{d}t\mathrm{d}z.
	\end{align*}
	\end{itemize}
\end{lemm}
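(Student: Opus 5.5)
The plan is to interpolate in $t\in[0,h]$ between $\mu$ and $\mu\star\Gamma_h$ along the heat flow $\mu_t:=\mu\star\Gamma_t$, and to differentiate $t\mapsto\langle\phi,F^{-1}_{\mu_t}\rangle$. Write $G_t:=\tilde F_\mu\star\Gamma_t$. This is the cumulative distribution function of $\mu_t$, and for $t>0$ it is smooth and strictly increasing, with $\partial_t G_t=\tfrac12 D_z^2 G_t$.

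First step: rewrite the pairing in terms of $G_t$. By symmetry of $\phi$ and the definition \eqref{inverse},
\[
\langle \phi,F^{-1}_{\mu_t}\rangle=2\int_0^{1/2}\phi(\mathbf x)\tilde F^{-1}_{\mu_t}(2\mathbf x)\,\mathrm d\mathbf x=\int_0^1\phi(u/2)\tilde F^{-1}_{\mu_t}(u)\,\mathrm du .
\]
I change variables $u=G_t(z)$ and use the layer-cake representation. Setting $\Psi(u):=\int_0^u\phi(v/2)\,\mathrm dv$, this gives
\[
\langle\phi,F^{-1}_{\mu_t}\rangle=\int_0^{\infty}\bigl(\Psi(1)-\Psi(G_t(z))\bigr)\mathrm dz-\int_{-\infty}^0\Psi(G_t(z))\,\mathrm dz .
\]
This is an integral of a $C^1$ (resp. $C^2$ when $\phi\in\mathcal C^1$) function of the CDF. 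The formula holds for every $t\ge0$, including $t=0$, where the CDF is only right-continuous.

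Second step: differentiate in $t$ for $t>0$ and integrate by parts in $z$. Using $\Psi'(u)=\phi(u/2)$ and $\partial_t G_t=\tfrac12 D_z^2G_t$, I get
\[
\frac{\mathrm d}{\mathrm dt}\langle\phi,F^{-1}_{\mu_t}\rangle=-\frac12\int_{\mathbb R}\phi\Big(\frac{G_t}{2}\Big)D_z^2G_t\,\mathrm dz=\frac14\int_{\mathbb R}\phi'\Big(\frac{G_t}{2}\Big)(D_zG_t)^2\,\mathrm dz .
\]
Integrating from $0$ to $h$ yields ii. For the boundary terms in the integration by parts, note that $D_zG_t=p_t$ is the Gaussian-smoothed density. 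It is bounded by $(2\pi t)^{-1/2}$ and integrable, so $D_zG_t\to0$ at $\pm\infty$. For differentiation under the integral sign, the tails $1-G_t(z)$ and $G_t(z)$ are dominated by second-moment bounds that are uniform for $t\in[\varepsilon,h]$, since $\mu\in\mathcal P_2(\mathbb R)$.

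Third step: prove i. For $\phi\in\mathcal U^2(\mathbb S)\cap\mathcal C^1$, we have $\phi'\ge0$ on $(0,1/2)$, so the integrand in ii is nonnegative and the pairing is $\ge0$. For general $\phi\in\mathcal U^2(\mathbb S)$, I approximate $\phi$ by $\mathrm e^{\epsilon\Delta}\phi$. These functions are smooth, lie in $\mathcal U^2(\mathbb S)$ (as recalled in Subsection~\ref{subse:rearrangement}), and converge to $\phi$ in $L^2$. Since $F^{-1}_{\mu\star\Gamma_h}-F^{-1}_\mu\in L^2$, the pairing passes to the limit.

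The main obstacle I expect is the endpoint $t\to0$. I need continuity of $t\mapsto\langle\phi,F^{-1}_{\mu_t}\rangle$ at $0$ even though $G_0$ may have jumps, together with integrability of $\int_0^h\!\int(\phi'(G_t/2))(D_zG_t)^2$ near $t=0$. I would handle the first point with \eqref{eq:1.16:bis}: $\|F^{-1}_{\mu_t}-F^{-1}_\mu\|_2^2\le t$, so the pairing is continuous at $0$. Integrating the derivative on $[\varepsilon,h]$ and letting $\varepsilon\to0$ then identifies the $t$-integral as an improper integral. For $\phi\in\mathcal U^2$ the integrand has a sign, so monotone convergence applies. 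For general $\phi\in\mathcal C^1$, I would split $\phi'$ into its positive and negative parts. This may require showing that $\int_0^h\!\int (D_zG_t)^2\,\mathrm dz\,\mathrm dt<\infty$. I would take that from the case $\phi(\mathbf x)=\cos(2\pi\mathbf x)$-type test functions, where $\phi'$ has a sign on $(0,1/2)$ and is bounded below there away from the endpoints. Alternatively, I would read the identity simply as the $\varepsilon\to0$ limit.
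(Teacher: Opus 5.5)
Your argument is correct for the statement as posed, since item ii concerns $\phi\in\mathcal U^2(\mathbb S)\cap\mathcal C^1(\mathbb S)$ and your monotone-convergence step closes the $t\to0$ issue in that case. For item ii your route is essentially the paper's: the same layer-cake identity $\langle\phi,F^{-1}_{\mu\star\Gamma_h}-F^{-1}_\mu\rangle=\int_{\mathbb R}[\Psi(\tilde F_\mu(z))-\Psi((\tilde F_\mu\star\Gamma_h)(z))]\,\mathrm dz$, differentiation along the heat flow for $t>0$, and one integration by parts in $z$.

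The two routes differ in two places.
\begin{itemize}
\item \emph{Item i.} The paper proves it directly, with no regularity on $\phi$. The coupling $(\phi(\mathbf x),F^{-1}_\mu(\mathbf x)+Z)$, with $Z\sim\Gamma_h$ independent, gives $\|\phi-F^{-1}_{\mu\star\Gamma_h}\|_2^2\le\|\phi-F^{-1}_\mu\|_2^2+h$. Expanding and using $\|F^{-1}_{\mu\star\Gamma_h}\|_2^2=\|F^{-1}_\mu\|_2^2+h$ yields the sign. You instead obtain i as a corollary of ii plus smoothing by $\mathrm e^{\epsilon\Delta}$. This works, but it costs the full heat-flow computation and the invariance of $\mathcal U^2(\mathbb S)$ under the heat semigroup.
\item \emph{The limit $\varepsilon\to0$.} The paper proves the energy bound $\int_0^h\int_{\mathbb R}[D_z(\tilde F_\mu\star\Gamma_t)]^2\,\mathrm dz\,\mathrm dt<\infty$, by testing the heat equation against $\tilde F_\mu\star\Gamma_t$ on $[-A,A]$. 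It then uses dominated convergence with only $\|\phi'\|_\infty$, so its identity holds for every symmetric $\mathcal C^1$ function, monotone or not. You use the sign of $\phi'$ together with $\|F^{-1}_{\mu\star\Gamma_t}-F^{-1}_\mu\|_2^2\le t$. This also makes explicit the convergence of the left-hand side, which the paper leaves implicit.
\end{itemize}

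If you want the non-monotone version, the $\cos$-type test function will not give the energy bound. Its derivative vanishes at $\mathbf x\in\{0,1/2\}$, so it gives no control where $\tilde F_\mu\star\Gamma_t$ is close to $0$ or $1$. Instead, run your own argument with $\phi_0(\mathbf x):=|\mathbf x|$ on $[-1/2,1/2]$, i.e.\ $\Psi(u)=u^2/4$. This is admissible because $\phi_0\in\mathcal U^2(\mathbb S)$ and only its values on $(0,1/2)$ enter, since $\tilde F_\mu\star\Gamma_t\in(0,1)$ for $t>0$. It gives $\frac14\int_0^h\int_{\mathbb R}[D_z(\tilde F_\mu\star\Gamma_t)]^2\,\mathrm dz\,\mathrm dt=\langle\phi_0,F^{-1}_{\mu\star\Gamma_h}-F^{-1}_\mu\rangle\le\sqrt{h/12}$, which is the paper's energy estimate in another guise.

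There are also two small slips.
\begin{itemize}
\item Boundedness plus integrability does not by itself force $D_z(\tilde F_\mu\star\Gamma_t)\to0$ at infinity. It does hold here, because $\int_{\mathbb R}\Gamma_t(z-y)\,\mu(\mathrm dy)\to0$ as $|z|\to\infty$ by dominated convergence.
\item To differentiate under $\int\mathrm dz$, you need domination of $\partial_t(\tilde F_\mu\star\Gamma_t)$ uniformly in $t\in[\varepsilon,h]$, not of the tails. Alternatively, integrate in $t$ first and apply Fubini, as the paper does.
\end{itemize}
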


\begin{proof} 
Throughout, the function 
$\varphi \in \mathcal{U}^2({\mathbb S})$ is fixed. By \eqref{eq1.7}, 
we obtain (the argument is very similar to 
the derivation of 
\eqref{eq1.16})
\begin{align*}
		\|\phi - F^{-1}_{\mu\star\Gamma_h}\|_2^2 = {\mathscr W}_2^2\left({\rm Leb}_{\mathbb{S}}\circ (\varphi)^{-1}, F_{\mu\star\Gamma_h} \right) 
        &\leq \int_{\mathbb{S}\times\mathbb{R}} \left|\phi(\mathbf{x})-z-F_{\mu}(\mathbf{x})\right|^2\Gamma_h(z)\mathrm{d}\mathbf{x}\mathrm{d}z 
        \\
        & = \color{black} \|\phi - F^{-1}_{\mu}\|_2^2 + h.
	\end{align*}
 By further developing both sides of the above inequality, we obtain
	\begin{align*}
		\|\phi\|_2^2 -2\langle \phi,F^{-1}_{\mu\star\Gamma_h}\rangle +\|F^{-1}_{\mu\star\Gamma_h}\|_2^2 \leq \|\phi\|_2^2 -2\langle \phi,F^{-1}_{\mu}\rangle +  \|F^{-1}_{\mu}\|_2^2 + h.
	\end{align*}
From the identity $\|F^{-1}_{\mu\star\Gamma_h}\|_2^2 = \|F^{-1}_{\mu}\|_2^2 +h$ (see \eqref{eq1.16}), we  derive the first point in the statement.

Let us now turn to the second point (assuming that $\varphi \in {\mathcal C}^1({\mathbb S})$). 
Recalling 
the notation 
\eqref{fonction:quantile:sur:R}, we first notice that, for any $y \in [0,1]$,
\begin{align*}
		(\tilde{F}^{-1}_{\mu\star\Gamma_h}-\tilde{F}^{-1}_{\mu})(y)&= \int_{\mathbb{R}}\mathrm{d}z\mathbbm{1}_{\{\tilde{F}^{-1}_{\mu}(y)\leq z < \tilde{F}^{-1}_{\mu\star\Gamma_h}(y) \}} - \int_{\mathbb{R}}\mathrm{d}z\mathbbm{1}_{\{\tilde{F}^{-1}_{\mu\star\Gamma_h}(y)\leq z < \tilde{F}^{-1}_{\mu}(y)  \}}.
        \end{align*}
        We also recall that, for any $z \in {\mathbb R}$, 
        $\tilde F_\mu^{-1}(y) \leq z \Leftrightarrow y \leq \tilde F_\mu(z)$, and (thus) 
        $\tilde F_\mu^{-1}(y) > z \Leftrightarrow y > \tilde F_\mu(z)$. Therefore, 
        \begin{align*}
		(\tilde{F}^{-1}_{\mu\star\Gamma_h}-\tilde{F}^{-1}_{\mu})(y)= \int_{\mathbb{R}}\mathrm{d}z\mathbbm{1}_{\{(\tilde{F}_{\mu}\star\Gamma_h)(z) < y\leq \tilde{F}_{\mu}(z) \}} - \int_{\mathbb{R}}\mathrm{d}z\mathbbm{1}_{\{\tilde{F}_{\mu}(z) < y\leq (\tilde{F}_{\mu}\star\Gamma_h)(z) \}},
	\end{align*}
where we used the fact, for any $z\in \mathbb{R}$, $\tilde{F}_{\mu\star \Gamma_h}(z)= \int_{\mathbb{R}}\tilde{F}_{\mu}(z-z')\Gamma_h(z')\mathrm{d}z'= (\tilde{F}_{\mu}\star \Gamma_h)(z)$.

Moreover, by returning to \eqref{inverse}, by using the fact that $\varphi$ is odd to get the second line, and then by performing a change of variable to derive the third line below, we have
	\begin{align*}
		\langle \phi,F^{-1}_{\mu\star\Gamma_h}-F^{-1}_{\mu}\rangle&=\int_{\mathbb{S}}\phi(\mathbf{x}) (F^{-1}_{\mu\star\Gamma_h}-F^{-1}_{\mu})(\mathbf{x})\mathrm{d}\mathbf{x}
        \\
		 &= \int_{-1/2}^{0}\phi(-x) (\tilde{F}^{-1}_{\mu\star\Gamma_h}-\tilde{F}^{-1}_{\mu})(-2x)\mathrm{d}x + \int_{0}^{1/2}\phi(x) (\tilde{F}^{-1}_{\mu\star\Gamma_h}-\tilde{F}^{-1}_{\mu})(2x)\mathrm{d}x \\ &=\int_{[0,1]}\phi(\frac{y}{2}) (\tilde{F}^{-1}_{\mu\star\Gamma_h}-\tilde{F}^{-1}_{\mu})(y)\mathrm{d}y.
	\end{align*}    
Combining the last two identities, and then applying Fubini's theorem, we obtain 
\begin{align*}
\langle \phi,F^{-1}_{\mu\star\Gamma_h}-F^{-1}_{\mu}\rangle &=\int_{\mathbb{R}}\mathrm{d}z\left(\int_{[0,1]}\phi(\frac{y}{2})\mathbbm{1}_{\{(\tilde{F}_{\mu}\star\Gamma_h)(z) <  y\leq \tilde{F}_{\mu}(z) \}}\mathrm{d}y - \int_{[0,1]}\phi(\frac{y}{2})\mathbbm{1}_{\{\tilde{F}_{\mu}(z) <  y\leq (\tilde{F}_{\mu}\star\Gamma_h)(z)\}}\mathrm{d}y \right)\\
		&= \int_{\mathbb{R}}\mathrm{d}z\Bigg( \int_{(\tilde{F}_{\mu}\star\Gamma_h)(z)}^{\tilde{F}_{\mu}(z)}\phi(\frac{y}{2})\mathrm{d}y\Bigg).
\end{align*}
Notice that Fubini's theorem can be applied indeed because
\begin{equation}
\label{eq:Fubini:justification}
\begin{split}
&\int_{\mathbb R}
\left\vert 
 \left( \tilde{F}_{\mu}\star\Gamma_h\right)(z)
 - \tilde{F}_{\mu}(z)  \right\vert {\mathrm d}z
 \\
 &\leq \int_{-\infty}^0 
\left( \left(\tilde{F}_{\mu}\star\Gamma_h\right)(z)+  \tilde{F}_{\mu}(z) \right){\mathrm d}z
  + \int_0^{+\infty}
\left( 
\left( 1 - 
 \tilde{F}_{\mu}\star\Gamma_h\right)(z)
 + \left( 1 - \tilde{F}_{\mu}\right) (z)  \right) {\mathrm d}z
 \\
 &= \int_{-\infty}^0 y^- {\mathrm d} \mu(y) + 
 \int_{-\infty}^0 y^- {\mathrm d} \left( \mu \star \Gamma_h \right) (y) + \int_0^{+\infty}
 y^+ {\mathrm d} \mu(y)
 +
 \int_0^{+\infty}
 y^+ {\mathrm d} \left( \mu \star \Gamma_h \right) (y) < + \infty.
\end{split}
\end{equation}
Now, by continuity 
    of $\varphi$, it holds, for any $t>0$
\begin{equation*}
\partial_t \Biggl[\int_0^{(\tilde{F}_{\mu}\star\Gamma_t)(z)}
\varphi (\frac{y}2) {\mathrm d}y \Biggr] = 
\phi\big(\frac{(\tilde{F}_{\mu}\star\Gamma_t)(z)}{2}\big)\partial_t (\tilde{F}_{\mu}\star\Gamma_t)(z),
\end{equation*}
and then, by integrating in time, we obtain, for any $\epsilon >0$ 
	\begin{align*}
		\int_{\mathbb{R}}\mathrm{d}z\int_{0}^{(\tilde{F}_{\mu}\star\Gamma_h)(z)}\phi(\frac{y}{2})\mathrm{d}y = \int_{\mathbb{R}}\mathrm{d}z\int_{0}^{(\tilde{F}_{\mu}\star\Gamma_{\epsilon})(z)}\phi(\frac{y}{2})\mathrm{d}y + \int_{\mathbb{R}}\mathrm{d}z\int_{\epsilon}^h \phi\big(\frac{(\tilde{F}_{\mu}\star\Gamma_t)(z)}{2}\big)\partial_t (\tilde{F}_{\mu}\star\Gamma_t)(z)\mathrm{d}t.
	\end{align*}
	In addition, we recall that the convolution $(\tilde{F}_{\mu}\star\Gamma_t)$  is solution to the heat equation, that is, $\partial_t (\tilde{F}_{\mu}\star\Gamma_t)(z) = \frac{1}{2} \Delta_z (\tilde{F}_{\mu}\star\Gamma_t)(z)$, for $t>0$ and $z \in {\mathbb R}$. Then, recalling that 
    $\varphi \in {\mathcal C}^1({\mathbb S})$, and using the integration by part formula and the latter identity, we arrive at
    \begin{equation}
	\label{eq:ipp:convolution:eps}
    \begin{split}
&\int_{\mathbb{R}}\mathrm{d}z\int_{(\tilde{F}_{\mu}\star\Gamma_{\epsilon})(z)}^{(\tilde{F}_{\mu}\star\Gamma_h)(z)}\phi(\frac{y}{2})\mathrm{d}y 
    =  -\frac{1}{4} \int_{\mathbb{R}}\mathrm{d}z\int_{\epsilon}^h \phi'\big(\frac{(\tilde{F}_{\mu}\star\Gamma_t)(z)}{2}\big)[{D_z} (\tilde{F}_{\mu}\star\Gamma_t)(z)]^2\mathrm{d}t.
	\end{split}
    \end{equation}
As
$(t,z) \mapsto (\tilde{F}_\mu \star \Gamma_t)(z)$
solves the heat equation,  
it satisfies, for $\varepsilon \in (0,h)$ and for any $A>0$,
\begin{equation*}
\begin{split}
&\int_{-A}^A 
\mathrm{d}z\int_{\varepsilon}
^h
[D_z (\tilde{F}_{\mu}\star\Gamma_t)(z)]^2\mathrm{d}t 
\\
&= \int_{\varepsilon}
^h \biggl[ 
 \int_{-A}^A 
[D_z (\tilde{F}_{\mu}\star\Gamma_t)(z)]^2  \mathrm{d}z \biggr] \mathrm{d}t
\\
&= \int_{\varepsilon}
^h \biggl[ D_z (\tilde{F}_{\mu}\star\Gamma_t)(z) 
(\tilde{F}_{\mu}\star\Gamma_t)(z)
\biggr]_{-A}^A \ud t 
 - 
\int_{\varepsilon}
^h \biggl[  \int_{-A}^A   \Delta_{z} (\tilde{F}_{\mu}\star\Gamma_t)(z) 
(\tilde{F}_{\mu}\star\Gamma_t)(z) \mathrm{d}z
\biggr] \ud t
\\
&= \frac12 \int_{\varepsilon}
^h \biggl[ D_z \Bigl[ (\tilde{F}_{\mu}\star\Gamma_t)^2 \Bigr](A)
- D_z \Bigl[ (\tilde{F}_{\mu}\star\Gamma_t)^2 \Bigr](-A) 
\biggr] \ud t 
-
\int_{\varepsilon}
^h    \int_{-A}^A   \partial_t \Bigl[ [ (\tilde{F}_{\mu}\star\Gamma_t)(z)
]^2 \Bigr]
\mathrm{d}z
\biggr] \ud t,
\end{split}
\end{equation*}
and then, 
\begin{equation*}
\begin{split}
 \int_{-A}^A 
\mathrm{d}z\int_{\varepsilon}
^h
[D_z (\tilde{F}_{\mu}\star\Gamma_t)(z)]^2\mathrm{d}t 
&\leq 
 \int_{-A}^A 
 \left\vert 
[(\tilde{F}_{\mu}\star\Gamma_\varepsilon)(z)]^2
-
[(\tilde{F}_{\mu}\star\Gamma_h)(z)]^2
\right\vert 
\mathrm{d}z 
\\
&\hspace{15pt} +
\left\vert \int_{\varepsilon}^h 
D_z \left[(\tilde{F}_{\mu}\star\Gamma_t)^2\right](A)
\mathrm{d}t
\right\vert 
+  \left\vert 
\int_{\varepsilon}^h 
D_z \left[ (\tilde{F}_{\mu}\star\Gamma_t)^2\right](-A)
\mathrm{d}t \right\vert. 
\end{split}
\end{equation*}
Since $\tilde F_{\mu}$ tends to $0$ in $-\infty$ and to $1$ in $+\infty$, we can easily deduce that the second line in the above display tends to $0$ as $A$ tends to $+\infty$. 
Following 
\eqref{eq:Fubini:justification}, we see that the right-hand side on the first line is bounded, uniformly in $A$ and in 
$\varepsilon \in (0,h)$.
\color{black}
We deduce that 
\begin{equation*}
\int_{\mathbb{R}}\mathrm{d}z\int_{0}^h [D_z (\tilde{F}_{\mu}\star\Gamma_t)(z)]^2\mathrm{d}t < + \infty.
\end{equation*}
Then, since $\varphi'$ is bounded, we can let $\epsilon\searrow 0$ in \eqref{eq:ipp:convolution:eps} to obtain 
	\begin{align*}
		\int_{\mathbb{R}}\mathrm{d}z\int_{(\tilde{F}_{\mu}\star\Gamma_h)(z)}^{\tilde{F}_{\mu}(z)}\phi(\frac{y}{2})\mathrm{d}y = \frac{1}{4}\int_{\mathbb{R}}\mathrm{d}z\int_{0}^h \phi'\big(\frac{(\tilde{F}_{\mu}\star\Gamma_t)(z)}{2}\big)\big[D_z (\tilde{F}_{\mu}\star\Gamma_t)(z)\big]^2\mathrm{d}t. 
	\end{align*}
	This concludes the proof.
\end{proof}

\begin{remark}
    As a consequence of 
{item \textrm{\rm i}} in the statement of Lemma \ref{lemm1.11}, 
we observe that, 
for every $\varphi \in
    \mathcal{H}^{2}_{\mathrm{Sym}}(\mathbb{S})
    \cap U^2(\mathbb{S})$
    and every index $j \in {\mathbb N}$, 
    \begin{equation}
    \label{ineq:2.5:0}
\left\langle \Phi^{(h)}(X_{t_j^-}^{(h)}) -X_{t_j^-}^{(h)},\varphi\right\rangle \geq 0.
\end{equation}
Recalling that 
the trajectories of 
$(\eta^{(h)})_{t \in [0,T]}$ are non-decreasing, 
we deduce that, for every $\varphi \in
    \mathcal{H}^{2}_{\mathrm{Sym}}(\mathbb{S})
    \cap U^2(\mathbb{S})$, 
        the path  $t\mapsto \langle \zeta^{(h)}_t, \varphi \rangle$ is 
    (almost-surely) non-decreasing, i.e., for any $0\leq s \leq t \leq T$,
\begin{equation}\label{ineq:2.5}
    \langle \zeta^{(h)}_t -\zeta^{(h)}_s, \varphi \rangle = \sum_{j=N_s+1}^{N_t}\left\langle \Phi^{(h)}(X_{t_j^-}^{(h)}) -X_{t_j^-}^{(h)},\varphi\right\rangle + \left\langle \eta^{(h)}_{t}
    - \eta^{(h)}_{s}, \varphi \right\rangle \geq 0 .
\end{equation}
Moreover, 
\begin{equation}
\label{ineq:2.5:b}
\begin{split} 
&0 \leq \sum_{j=N_s+1}^{N_t}
\left\langle \Phi^{(h)}(X_{t_j^-}^{(h)}) -X_{t_j^-}^{(h)},\varphi\right\rangle \leq \|\zeta^{(h)}_t -\zeta^{(h)}_s\|_{2,-2} \| \varphi\|_{2,2}, 
\\
&0 \leq \left\langle \eta^{(h)}_{t}
    - \eta^{(h)}_{s}, \varphi \right\rangle
\leq \|\zeta^{(h)}_t -\zeta^{(h)}_s\|_{2,-2} \| \varphi\|_{2,2}.
\end{split}
\end{equation}
\end{remark}

\section{Tightness of the laws of the schemes}
\label{se:4}
This section is devoted to establishing the tightness of the family of schemes under consideration. 
In contrast with the approach developed in \cite{DelHam25}, where tightness was analyzed in the functional space 
$\mathcal{C}([0,T],\mathcal{H}^{-1}_{\mathrm{Sym}}(\mathbb{S}))$, we shall instead work in the Skorokhod space 
$\mathbb{D}([0,T],\mathcal{H}^{-1}_{\mathrm{Sym}}(\mathbb{S}))$, consisting of all 
$\mathcal{H}^{-1}_{\mathrm{Sym}}(\mathbb{S})$-valued c\`adl\`ag functions on $[0,T]$.
This choice is dictated by the intrinsic jump structure of the 
dynamics
\eqref{scheme0}. 

For completeness, we recall some notions about the compactness and tightness criterion in the space of c\`adl\`ag functions (equipped with the $J1$ topology), which can be found in \cite{Billingsley,JoffeMetiv86}.
\subsection*{The Aldous criteria} Let $(\mathbb{A},\rho)$ be a complete separable metric space. Let $(\Omega,
\mathfrak{F},\mathbb{P})$ be a probability space that satisfies the usual conditions. The following is a consequence of 
\cite[p.34]{JoffeMetiv86} (or equivalently in  \cite[p.117]{Billingsley}) 

\begin{thm}
\label{thm:tightness:D}
Let $(X^n)_{n\in \mathbb{N}}$ be a sequence of $\mathfrak{F}\text{-adapted}$ and $\mathbb{A}\text{-valued}$ processes. The sequence $(\mathbb{P}^n)_{n\in \mathbb{N}}$ of laws of $(X^n)_{n\in \mathbb{N}}$ on $\mathbb{D}([0,T];\mathbb{A})$ is tight if and only if the following conditions are satisfied:
\begin{itemize}
    \item[{\bf[T1]}] For every $t \in [0,T]$, the laws of the random variable $(X^n_t)_{n\in \mathbb{N}}$ form a tight sequence of law in $\mathbb{A}$. 
    \item[{\bf [T2]}] {For all $\epsilon>0$ and $\delta >0$, there exists} $\gamma_0 > 0$: $$\sup_{n\in \mathbb{N}}\mathbb{P}\Bigl( \bigl\{\omega_{[0,T],\mathbb{A}}(X^n,\gamma_0) > {\delta} \bigr\}\Bigr) \leq \epsilon,$$ where, for ${\bf v} \in \mathbb{D}([0,T],\mathbb{A})$ and $\gamma >0$,\begin{align}\label{A1}
        \omega_{[0,T],\mathbb{A}}({\bf v},\gamma):= \inf_{\Pi_{\gamma}}\max_{ r_i \in \tilde{\pi} }\sup_{r_i\leq s<t<r_{i+1}} \rho({\bf v}(t),{\bf v}(s)),
    \end{align}
$\Pi_{\gamma}$ denoting the set of all increasing sequences {$\tilde{\pi}=\{0=r_0<r_1<\cdots<r_n =T\}$ such that $r_{i+1}-r_i \geq \gamma, \, i=0,1,\cdots,n-1.$}    
\end{itemize}
\end{thm}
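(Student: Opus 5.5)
The natural route is through Prohorov's theorem together with the Arzel\`a--Ascoli-type characterization of relatively compact subsets of the Skorokhod space (Ethier--Kurtz, Theorem 3.6.3; Billingsley, Theorem 13.2 for the real-valued case). Since $(\mathbb{A},\rho)$ is Polish, $\mathbb{D}([0,T];\mathbb{A})$ equipped with the $J1$ topology is Polish as well. Tightness of $(\mathbb{P}^n)_{n\in\mathbb{N}}$ is therefore equivalent to the following: for every $\epsilon>0$ there is a relatively compact set $K\subset \mathbb{D}([0,T];\mathbb{A})$ with $\inf_n \mathbb{P}(X^n\in K)\geq 1-\epsilon$. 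The characterization I would use states that $K$ is relatively compact if and only if two conditions hold. First, for every $t$ in a dense subset of $[0,T]$ containing $T$, the set $\{{\bf v}(t):{\bf v}\in K\}$ is relatively compact in $\mathbb{A}$. Second, $\lim_{\gamma\searrow 0}\sup_{{\bf v}\in K}\omega_{[0,T],\mathbb{A}}({\bf v},\gamma)=0$.

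\textbf{Sufficiency.} Assume {\bf [T1]} and {\bf [T2]}, and fix $\epsilon>0$. Enumerate $\mathbb{Q}\cap[0,T]\cup\{T\}=\{s_1,s_2,\dots\}$. By {\bf [T1]}, for each $i$ there is a compact $K_i\subset\mathbb{A}$ with $\sup_n\mathbb{P}(X^n_{s_i}\notin K_i)\leq \epsilon 2^{-i-1}$. By {\bf [T2]}, applied with $\delta=1/k$ and $\epsilon 2^{-k-1}$, there is $\gamma_k>0$ with $\sup_n\mathbb{P}(\omega_{[0,T],\mathbb{A}}(X^n,\gamma_k)>1/k)\leq \epsilon2^{-k-1}$. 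We may take $\gamma_k$ decreasing in $k$.

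Now set
\[
K:=\Bigl\{{\bf v}:\ {\bf v}(s_i)\in K_i\ \forall i,\quad \omega_{[0,T],\mathbb{A}}({\bf v},\gamma_k)\leq 1/k\ \forall k\Bigr\}.
\]
A union bound gives $\mathbb{P}(X^n\notin K)\leq\epsilon$ uniformly in $n$. The modulus $\omega_{[0,T],\mathbb{A}}({\bf v},\gamma)$ is non-increasing as $\gamma$ decreases, so on $K$ we get $\sup_{K}\omega_{[0,T],\mathbb{A}}(\cdot,\gamma)\to 0$ as $\gamma\searrow0$. The characterization then shows that $K$ is relatively compact.

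A small point concerns the endpoint. The partitions in \eqref{A1} only control oscillations on $[r_{n-1},T)$, so the value at $T$ is governed separately by the inclusion ${\bf v}(T)\in K_i$ for the index with $s_i=T$. Together these are enough to apply the Ethier--Kurtz version of the modulus.

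\textbf{Necessity.} If the laws are tight, choose $K$ compact with $\inf_n\mathbb{P}(X^n\in K)\geq1-\epsilon$. Part of the characterization (Ethier--Kurtz, Theorem 3.6.3) is that $\bigcup_{t\in[0,T]}\{{\bf v}(t),{\bf v}(t^-):{\bf v}\in K\}$ has compact closure $\Gamma_\epsilon$ in $\mathbb{A}$. This gives $\mathbb{P}(X^n_t\notin\Gamma_\epsilon)\leq\epsilon$ for every $t$, which is {\bf [T1]}. The same theorem gives $\sup_K\omega_{[0,T],\mathbb{A}}(\cdot,\gamma)\leq\delta$ for $\gamma$ small, which yields {\bf [T2]}.

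The main obstacle is the compactness characterization itself, in particular its sufficiency direction. There one must build, from the relatively compact values at the dense times and the uniform control of $\omega_{[0,T],\mathbb{A}}$, a $J1$-convergent subsequence. This is done by extracting, via a diagonal argument, subsequences along which the partition points of near-optimal partitions converge, and then constructing time changes $\lambda_n$ close to the identity that align the jumps. I would not redo this argument but quote it from \cite{Billingsley} or \cite{JoffeMetiv86}, which is consistent with the statement being presented as a consequence of those references.
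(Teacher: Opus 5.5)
Your proposal is correct, and it matches what the paper relies on. The paper gives no proof of its own and simply cites Billingsley and Joffe--M\'etivier, where the criterion is obtained as you do: you pass from the definition of tightness (Prohorov's theorem is not actually needed, since the statement concerns tightness itself) to the Arzel\`a--Ascoli-type characterization of relatively compact subsets of $\mathbb{D}([0,T];\mathbb{A})$.

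The only thing to tidy is the reference. Ethier--Kurtz's Theorem 3.6.3 is stated on $[0,\infty)$ and does not carry over to $[0,T]$ by constant extension; for example, $\mathbf{1}_{[T-1/n,T]}$ is relatively compact after extension but not in $\mathbb{D}([0,T];\mathbb{R})$. You should therefore cite the finite-horizon criterion, Billingsley's Theorem 12.3 and its analogue for complete separable $\mathbb{A}$. That is exactly the form you wrote down, with the modulus $\omega_{[0,T],\mathbb{A}}$ and the separate pointwise condition at $t=T$.
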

\begin{defi} 
\label{def:aldous}
A sequence $(X^n)_{n\in\mathbb{N}}$ of $\mathbb{A}\text{-valued}$ random variables satisfies the {\bf Aldous criterion} if and only if the following holds
\begin{itemize}
    \item[{\bf [A]}]  
    {For all
    $\epsilon>0$ and $\delta >0$}, there exist $\theta_0 > 0$ and $n_0 \in \mathbb{N}$ such that if $\theta \leq \theta_0$ and $n\geq n_0$ and if $\tau$ is a discrete  $\mathfrak{F}\text{-stopping}$ time satisfying  $\tau \leq T$, one has 
    \begin{align*}
        \sup_{n\geq n_0}\sup_{0\leq \theta \leq \theta_0}\mathbb{P}\Big\{\rho\big( X^n_{(\tau+\theta)\wedge T},X^n_{\tau}\big)\geq {\delta} \Big\} \leq \epsilon.
    \end{align*}
\end{itemize}
\end{defi}
\begin{remark}
  \label{rem:aldous}
  From \cite[Theorem 2.2.2]{JoffeMetiv86}, condition {\bf [A]} implies condition {\bf [T2]}. 
\end{remark}

\subsection{%
  \texorpdfstring{%
    Tightness of the laws of
    $\{(X_t^{(h)})_{0\leq t\leq T}\}_{h\in T/\mathbb{N}^{*}}$%
  }{%
    Tightness of the laws of the approximating processes%
  }%
}
Here is the main result of this subsection:
\begin{prop}\label{Prop 3.1} The sequence $\{(X_{t}^{(h)})_{0 \le t \le T}\}_{{h \in T/{\mathbb N}^*}}$ satisfying \eqref{scheme0} induces a tight family of probability measures on $\mathbb{D}([0,T],\mathcal{H}^{-1}_{\mathrm{Sym}}(\mathbb{S}))$.
\end{prop}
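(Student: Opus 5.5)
We verify conditions {\bf [T1]} and {\bf [A]} of Theorem~\ref{thm:tightness:D} and Definition~\ref{def:aldous}, with $\mathbb A = \mathcal H^{-1}_{\rm Sym}(\mathbb S)$; by Remark~\ref{rem:aldous}, {\bf [A]} gives {\bf [T2]}. The basic input is a uniform moment bound
\[
\sup_{h}\mathbb E\Bigl[\sup_{0\le t\le T}\|X^{(h)}_t\|_2^{2p}\Bigr] + \sup_h \mathbb E\int_0^T \|D_{\mathbf x}X_t^{(h)}\|_2^2\,{\rm d}t<\infty .
\]
To get it, apply Lemma~\ref{Ito} on each interval $[t_n,t_{n+1})$. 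This gives $\|X^{(h)}_{t_{n+1}^-}\|_2^2 = \|X^{(h)}_{t_n}\|_2^2 - 2\int \|DX\|_2^2 + c_\lambda h + 2\int\langle X,{\rm d}W\rangle$. By \eqref{eq1.16}, each jump adds exactly $h$ to the squared norm. Summing over $n$ yields $\|X^{(h)}_t\|_2^2 \le \|X_0\|_2^2 + (c_\lambda+1)t - 2\int_0^t\|DX^{(h)}_s\|_2^2{\rm d}s + 2\int_0^t\langle X^{(h)}_s,{\rm d}W_s\rangle$. Burkholder--Davis--Gundy and Gronwall then give the bound for all $p$. For {\bf [T1]}, the sets $\{\|f\|_2\le\alpha\}\cap \mathcal U^2(\mathbb S)$ are compact in $\mathcal H^{-1}_{\rm Sym}$, by Proposition~\ref{prop1.4} (bounded in $L^2$, hence relatively compact in $\mathcal H^{-1}$, and closed). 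Markov's inequality applied to the moment bound concludes.

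For {\bf [A]}, fix a stopping time $\tau\le T$ and $\theta\le\theta_0$. From \eqref{scheme0},
\[
X^{(h)}_{\tau+\theta}-X^{(h)}_\tau = \int_\tau^{\tau+\theta}\Delta X^{(h)}_s{\rm d}s + (W_{\tau+\theta}-W_\tau) + (\zeta^{(h)}_{\tau+\theta}-\zeta^{(h)}_\tau).
\]
The drift term is bounded in $\mathcal H^{-1}$ by $\int_\tau^{\tau+\theta}\|DX_s\|_2{\rm d}s\le \theta^{1/2}(\int_0^T\|DX\|_2^2)^{1/2}$. The noise increment has $\mathbb E\|\cdot\|_2^2 = c_\lambda\theta$ by the strong Markov property. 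Both terms are small in probability uniformly in $h$.

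The main obstacle is $\zeta^{(h)}$, since it has no a priori sign-free bound. I would use monotonicity, as in \cite{DelHam25}. Choose a test function $\varphi_0\in\mathcal H^2_{\rm Sym}\cap\mathcal U^2$ that is strictly increasing on $(0,1/2)$, for instance $\varphi_0(\mathbf x)=-\cos(2\pi\mathbf x)$. For a general $\psi\in \mathcal H^2_{\rm Sym}$, the nonnegative measures underlying $\zeta$ (from \eqref{ineq:2.5} and the monotonicity of $\eta$) should be controlled by their mass against $\varphi_0$. Concretely, I would test \eqref{scheme0} against $\varphi_0$. This shows that $\langle \zeta^{(h)}_{\tau+\theta}-\zeta^{(h)}_\tau,\varphi_0\rangle$ equals $\langle X_{\tau+\theta}-X_\tau,\varphi_0\rangle$ minus drift and noise terms, so it is small in probability once we know that $t\mapsto\langle X_t^{(h)},\varphi_0\rangle$ is equicontinuous in the Aldous sense. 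That equicontinuity is circular, however. Instead I would bound the nonnegative increment via an Itô/energy identity. Taking the inner product with $X$ and using the reflection condition \eqref{refl1} together with \eqref{eq:2.7} controls $\langle X,{\rm d}\zeta\rangle$, namely (jump contributions) $\le h$ per step, i.e.\ $\le \theta+h$ over the window.

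Failing a clean version of that, an alternative is to work in $\mathcal H^{-2}$ first and get {\bf [A]} for $\zeta$ through the bound $\|\zeta_{t}-\zeta_s\|_{2,-2}\lesssim \langle\zeta_t-\zeta_s,\varphi_0\rangle$ plus a separate treatment of constants. Then I would interpolate between $\mathcal H^{-2}$ smallness and the uniform $L^2$ bound to obtain smallness in $\mathcal H^{-1}$: $\|f\|_{2,-1}\le\|f\|_{2,-2}^{1/2}\|f\|_2^{1/2}$. The jumps themselves are harmless, since \eqref{eq:1.16:bis} gives jump size $\le\sqrt h\to0$ in $L^2$. This leaves the continuous part $\eta^{(h)}$, which I would treat exactly as in the tightness proof of \cite{DelHam25}.
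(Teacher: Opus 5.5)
\textbf{Verdict: there is a genuine gap in the Aldous condition.} Your moment bound and your verification of \textbf{[T1]} are fine and coincide with the paper's first step. The problem is \textbf{[A]}: neither of your two routes gives a non-circular bound on the increment of $\zeta^{(h)}$ (equivalently of $X^{(h)}$) over $[\tau,(\tau+\theta)\wedge T]$.

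\emph{Route 1.} Controlling $\int \langle X^{(h)}_{t^-},{\rm d}\zeta^{(h)}_t\rangle$ through \eqref{refl1} and \eqref{eq:2.7} only controls the change of $\|X^{(h)}\|_2^2$ over the window. It does not control the increment $X^{(h)}_{\tau+\theta}-X^{(h)}_\tau$. Pairing a monotone distribution against $X$ itself, which may be flat, says nothing about the size of that distribution. For $\eta^{(h)}$ this pairing is asymptotically zero by \eqref{refl1}, whatever the size of $\eta^{(h)}$.

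\emph{Route 2.} The $\varphi_0$ route needs smallness of $\langle \zeta^{(h)}_{\tau+\theta}-\zeta^{(h)}_\tau,\varphi_0\rangle$. That is exactly the circular quantity you point out yourself. Separately, the inequality $\|\nu\|_{2,-2}\lesssim\langle\nu,\varphi_0\rangle$ is false in $\mathcal{H}^{-2}$: dominating $|\psi'|$ by a multiple of $\varphi_0'$ near $0$ and $1/2$ costs $\|\psi''\|_\infty$, so you only get a weaker norm such as $\mathcal{H}^{-(5/2+\delta)}$.

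\emph{Jumps and $\eta^{(h)}$.} The jumps are not harmless. Each has $L^2$ size at most $\sqrt h$, but there are about $\theta/h$ of them in the window, so the crude bound is $\theta/\sqrt h$. Nor can $\eta^{(h)}$ be split off and handled by citing \cite{DelHam25}: the only available control on it goes through the equation, and the equation contains the jumps.

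\textbf{The missing idea.} Measure the increment against a $\mathcal{U}^2(\mathbb{S})$-valued reference close to $X^{(h)}_\tau$, so that the cross term with the reference has a sign. The paper takes $Z^{(h)}_t={\rm e}^{(t-\tau)\Delta}X^{(h)}_\tau$. It applies It\^o's formula to $\|{\rm e}^{\epsilon\Delta}(X^{(h)}_t-Z^{(h)}_t)\|_2^2$ on $[\tau,(\tau+\theta)\wedge T]$, expanding the square at each node $t_n$. The terms are then handled as follows.
\begin{enumerate}
\item The cross term with the reference satisfies $-\int\langle {\rm e}^{2\epsilon\Delta}Z^{(h)}_t,{\rm d}\zeta^{(h)}_t\rangle\le 0$, by \eqref{ineq:2.5} and, at the nodes, Lemma \ref{lemm1.11}(i).
\item The cross term with $X^{(h)}$ is handled by precisely your two ingredients: the conditional form of \eqref{refl1} for $\eta^{(h)}$, and \eqref{eq:2.7} for the jumps, which contribute at most $h$ each.
\item The squared jumps sum to at most $\theta+h$ by \eqref{eq:1.16:bis}.
\item The noise contributes $c_\lambda\theta$.
\end{enumerate}
This gives $\mathbb{E}\bigl[\|X^{(h)}_{(\tau+\theta)\wedge T}-Z^{(h)}_{(\tau+\theta)\wedge T}\|_2^2\,\big|\,\mathfrak{F}_\tau\bigr]\le C\theta+\mathcal{O}_h(1)$. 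Combined with $\|({\rm e}^{\theta\Delta}-I)X^{(h)}_\tau\|_{2,-1}^2\le C\theta\|X^{(h)}_\tau\|_2^2$, this yields \eqref{216}, hence \textbf{[A]} directly in $\mathcal{H}^{-1}$ with no interpolation. The tightness of $\zeta^{(h)}$ is deduced afterwards from the equation, not used as an input.
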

\begin{proof} The proof relies on the verification of \textbf{[T1]} and \textbf{[T2]} in the statement of Theorem~\ref{thm:tightness:D}.
\vskip 4pt

\textit{First Step.}
We first prove that for all $t \in [0,T]$, the family $\{X_t^{(h)}\}_{{h \in T/{\mathbb N}^*}}$ forms a tight sequence of law on $\mathcal{H}^{-1}_{\mathrm{Sym}}(\mathbb{S})$. 
As the embedding from $L^2_{\mathrm{Sym}}({\mathbb S})$ 
into $\mathcal{H}^{-1}_{\mathrm{Sym}}(\mathbb{S})$ is compact, 
it suffices to show that
\begin{align}\label{bound 2.8}
		\sup_{h \in T/ {\mathbb N}^*}\mathbb{E}\Big[\sup_{t\in [0,T]}\|X_t^{(h)}\|_2^{2}\Big] \leq C_{T,\lambda}\left(1 +\mathbb{E}[ \|X_{0}\|_2^{2}]\right ).
\end{align}
From It\^o's formula (see Lemma \ref{Ito}), we have,
{for 
$n \in \{0,\cdots,N-1\}$
and 
$t \in [t_n,t_{n+1})$,}
  \begin{align}
	\mathrm{d}_t\|X_{t}^{(h)}\|_2^2= - 2 \|D_{\mathbf{x}}X_{t}^{(h)}\|_2^2 \mathrm{d}t
	+ \sum_{k \in \mathbb{N}} \lambda_k^2\mathrm{d}t +  2\langle X_t^{(h)},\mathrm{d}W_t\rangle.
    \label{eq:ito:square:Xth}
\end{align}
{Moreover, we know from \eqref{eq1.16} that, 
at time $t=t_{n+1}$, 
$\| X_{t_{n+1}}^{(h)} \|_2^2 =\| \Phi^{(h)}(X_{t_{n+1}^-}^{(h)})\|_2^2 = 
\| X_{t_{n+1}^-}^{(h)} \|_2^2 + h$, 
from which we deduce that
\begin{equation}
\label{eq:ito:square:Xth:45}
\begin{split}
\| X_{t_{n+1}}^{(h)}\|_2^2 &= 
\| X_{t_{n+1}^-}^{(h)}\|_2^2 
+h 
\\
&= \| X_{t_{n}}^{(h)}\|_2^2
- 2 \int_{t_n}^{t_{n+1}} \| D_{\mathbf{x}} X_s^{(h)}\|_2^2
{\mathrm d} s 
+ 
2 \int_{t_n}^{t_{n+1}}
\langle X_s^{(h)} , 
{\mathrm d} W_s 
\rangle 
+ \Bigg( 1 + \sum_{k \in {\mathbb N}} \lambda_k^2 \Bigg) h. 
\end{split}
\end{equation}
By summing the above display from $0$ to $N_t-1$, and then by integrating
\eqref{eq:ito:square:Xth}  from $t_{N_t}$ to $t$, we obtain, ${\mathbb P}$-a.s., for every $t \in [0,T]$,}
\begin{align}\label{bound 2.9}
	\|X_{t}^{(h)}\|_2^2 + 2\int_{0}^{t} \|D_{\mathbf{x}}X_{s}^{(h)}\|_2^2 \mathrm{d}s \leq \|X_0\|_2^2 +  2\int_{0}^{t}  \langle X_s^{(h)},\mathrm{d}W_s\rangle +
    \Bigg( 1 +\sum_{k \in \mathbb{N}} \lambda_k^2\Bigg) t.
\end{align}
Since $(X_t^{(h)})_{t\in[0,T]}$ belongs to $L^2_{\mathrm{Sym}}(\mathbb{S})$, the stochastic integral above vanishes under the expectation value. Then, we obtain the following bound
\begin{align}\label{bound 1.26}
		\sup_{t\in [0,T]}\mathbb{E}\Big[\|X_t^{(h)}\|_2^{2}\Big] + 2\int_{0}^{T} \mathbb{E}\|D_{\mathbf{x}}X_{s}^{(h)}\|_2^2 \mathrm{d}s &\leq C_{T,\lambda}\bigl(1 +\mathbb{E}\|X_{0}\|_2^{2}\bigr).
\end{align}
Taking now the supremum over $t \in [0,T]$ in \eqref{bound 2.9}, the expectation value, and then applying BDG's inequality leads to the desired bound \eqref{bound 2.8}.
This proves 
$\mathbf {[T1]}$ in the statement of Theorem \ref{thm:tightness:D}. 
\vskip 4pt

\textit{Second Step.}
As a second step, we will first prove the existence of a constant $C>0$ and an integrable positive-valued random variable $\Xi$, such that, for  any discrete $\mathfrak{F}\text{-stopping}$ time $\tau \leq T$ and any real $\theta >0$,
\begin{align*}
    \mathbb{E}\left[\|X^{(h)}_{(\tau +\theta) \wedge T}-X^{(h)}_{\tau}\|_{2,-1}^2/\mathfrak{F}_{\tau}\right] \leq C\theta (1+\Xi) +h.
\end{align*}
For this, we consider the following process $({Z}_{t}^{(h)}:= \mathrm{e}^{\Delta (t-\tau)}X^{(h)}_{\tau})_{\tau \leq t \leq T}$, which is an element of $\mathcal{U}^2(\mathbb{S})$ (see Subsection \ref{subse:rearrangement}).
We further notice 
from 
\eqref{scheme0}
that both processes $(X_t^{(h)})_{\tau \leq t \leq T}$ and $({Z}_t^{(h)})_{\tau \leq t \leq T}$ satisfy, respectively,
\begin{align}
	\begin{cases}
		\mathrm{d}X_t^{(h)}(\mathbf{x}) =\Delta_{\mathbf{x}} X_t^{(h)}(\mathbf{x}) \mathrm{d}t + \mathrm{d}W_t(\mathbf{x}) +\mathrm{d}\zeta_t^{(h)}(\mathbf{x})\\
		\mathrm{d}Z_t^{(h)}(\mathbf{x}) = \Delta_{\mathbf{x}} Z_t^{(h)}(\mathbf{x})\mathrm{d}t,
	\end{cases}
\end{align}
with the same initial condition 
$X_{\tau}^{(h)} = Z_{\tau}^{(h)}$.
Thus, 
the difference process $(X_t^{(h)}-Z_t^{(h)})_{\tau \leq t \leq T}$
satisfies, 
for any sufficiently regular test function $\varphi$ 
and 
any $t \in [\tau,T]$,
\begin{align}
	\langle X_t^{(h)}-Z_t^{(h)},\varphi \rangle = \int_\tau^t \langle X_s^{(h)}-Z_s^{(h)},\Delta_{\mathbf{x}} \varphi \rangle\mathrm{d}s + \langle W_t-W_{\tau},\varphi\rangle + \langle \zeta_t^{(h)}- \zeta_{\tau}^{(h)},\varphi\rangle.
\end{align}
Choosing in particular $\varphi= \mathrm{e}^{\epsilon \Delta}e_k \in \mathcal{H}^2_{\mathrm{Sym}}(\mathbb{S})$, 
for some 
$\varepsilon >0$
with  $(e_k)_{k\in \mathbb{N}}$ 
as in Subsection \ref{Notations}, we obtain, from It\^o's formula, for any 
$n \in \{0,\cdots,N-1\}$ and any $t \in [\tau \wedge t_n,\tau \wedge t_{n+1})$,
\begin{align}\label{eq:Ito:3.8}
	\mathrm{d}\langle X_t^{(h)}-Z_t^{(h)}, \mathrm{e}^{\epsilon \Delta}e_k \rangle^2 
    &= 2\langle X_t^{(h)}-Z_t^{(h)}, \mathrm{e}^{\epsilon \Delta}e_k \rangle \langle X_t^{(h)}-Z_t^{(h)}, \Delta_{\mathbf{x}} \mathrm{e}^{\epsilon \Delta}e_k \rangle 
    {\mathrm{d} t}
    + 2\langle X_t^{(h)}-Z_t^{(h)}, \mathrm{e}^{\epsilon \Delta}e_k \rangle \mathrm{d}\langle \zeta_t^{(h)}, \mathrm{e}^{\epsilon \Delta}e_k \rangle \notag
    \\
    &\quad +  
    2\langle X_t^{{(h)}}-Z_t^{{(h)}},\mathrm{e}^{\epsilon \Delta}e_k \rangle \mathrm{d}\langle W_t, \mathrm{e}^{\epsilon \Delta}e_k \rangle+ \mathrm{d}[\langle X_{{\cdot}}^{(h)}-Z_{{\cdot}}^{(h)}, \mathrm{e}^{\epsilon \Delta}e_k \rangle]_{t},
    \end{align}
{where we used the fact that the dynamics do not jump between $t_n$ and $t_{n+1}^-$, and where the last term on the right-hand side is a bracket equal to 
$\mathrm{d}[\langle X_{{\cdot}}^{(h)}-Z_{{\cdot}}^{(h)}, \mathrm{e}^{\epsilon \Delta}e_k \rangle]_{t} = \lambda_k^2 \exp(-8 \pi^2 \epsilon^2 k^2)
\ud t$.}
    
At time $t_{n+1}$ (if greater than $\tau$), 
\begin{equation}\label{eq:Ito:3.8:bis}
\begin{split}
\langle X_{t_{n+1}}^{(h)}-Z_{t_{n+1}}^{(h)}, \mathrm{e}^{\epsilon \Delta}e_k \rangle^2
&= \langle X_{t_{n+1}^-}^{(h)}-Z_{t_{n+1}}^{(h)}, \mathrm{e}^{\epsilon \Delta}e_k \rangle^2
\\
&\hspace{5pt} +
2 
\langle X_{t_{n+1}}^{(h)}-X_{t_{n+1}^-}^{(h)}, \mathrm{e}^{\epsilon \Delta}e_k \rangle
\langle X_{t_{n+1}^-}^{(h)}-Z_{t_{n+1}}^{(h)}, \mathrm{e}^{\epsilon \Delta}e_k \rangle
+
\langle X_{t_{n+1}}^{(h)}-X_{t_{n+1}^-}^{(h)}, \mathrm{e}^{\epsilon \Delta}e_k \rangle^2.
\end{split}
\end{equation}
{Above, the jump $X_{t_{n+1}}^{(h)}-X_{t_{n+1}^-}^{(h)}$ is equal to 
$\zeta_{t_{n+1}}^{(h)}-\zeta_{t_{n+1}^-}^{(h)}$, see \eqref{eq:jump}.}
By integrating both equations \eqref{eq:Ito:3.8} and \eqref{eq:Ito:3.8:bis}
from $
\tau$ to $(\tau +\theta) \wedge T$, and
then summing over $k \in  \mathbb{N}$, we arrive at the inequality
\begin{align*}
    &\|\mathrm{e}^{\epsilon \Delta}(X^{(h)}_{(\tau +\theta) \wedge T}- {Z}_{(\tau+\theta) \wedge T}^{(h)})\|_{2}^2 +2 \int_{\tau}^{(\tau +\theta) \wedge T}
    \|\mathrm{e}^{\epsilon \Delta}D_{\mathbf{x}}(X_t^{(h)}-{Z}_t^{(h)})\|_2^2\mathrm{d}t\\
    &\leq 2 \int_{\tau}^{(\tau +\theta) \wedge T}
    {\langle 
    \mathrm{e}^{2 \epsilon \Delta}(X_t^{(h)}-{Z}_t^{(h)}), \mathrm{d}W_t\rangle} +2\int_{\tau}^{(\tau +\theta) \wedge T}  {\langle \mathrm{e}^{2\epsilon \Delta}(X_{t^-}^{(h)}- {Z}_t^{(h)}),\mathrm{d}\zeta_t^{(h)} \rangle}
    \\
&\hspace{15pt}
+
\sum_{n : t_n \in (\tau,(\tau+\theta) \wedge T]}
\| \mathrm{e}^{\epsilon \Delta} ( X_{t_n}^{(h)}
- 
X_{t_n^-}^{(h)}
) \|_2^2
    + c_{\lambda,\epsilon}\theta,
\end{align*}
with $c_{\lambda,\epsilon}= \sum_{k \in \mathbb{N}} \lambda_k^2 e^{-8\pi k^2 \epsilon}$ (see \cite{DelHam25}).
The stochastic integral above, is a true martingale and therefore vanishes under the conditional expectation with respect to the $\sigma\text{-field}$ $\mathfrak{F}_{\tau}$.
As for the second term on the right-hand side, we use the property {that $(Z_{t}^{(h)})_{\tau \le t \le T}$ takes values in $  \mathcal{U}^2({\mathbb S})$ (see the last sentence in Subsection \ref{subse:rearrangement})}, and \eqref{ineq:2.5}, which says
that the path $t\mapsto \zeta^{(h)}_t$ is non-decreasing, to deduce that
\begin{equation*}
\int_{\tau}^{(\tau+ \theta) \wedge T} {\langle} \mathrm{e}^{2\epsilon \Delta} Z_t^{h}
,  {\mathrm d} \zeta_t^{(h)} \rangle  \geq 0. 
\end{equation*}
Regarding the third term on the right-hand side, we obtain from \eqref{eq:1.16:bis} (together with the contraction property of the heat semigroup) the bound:
\begin{equation*}
\sum_{n : t_n \in (\tau,(\tau+\theta) \wedge T]}
\| \mathrm{e}^{2\epsilon \Delta} ( X_{t_n}^{(h)}
- 
X_{t_n^-}^{(h)}
) \|_2^2
\leq h \left( N_{(\tau + \theta) \wedge T} - N_{\tau} + 1\right) \leq \theta + h.
\end{equation*}
Hence, by
combining the last three displays and further letting $\epsilon \searrow 0$ (by means of Fatou's lemma), we deduce thereof 
\begin{align}
    \mathbb{E}\left[\|X^{(h)}_{(\tau +\theta) \wedge T}-{Z}_{(\tau + \theta) \wedge T}^{(h)}\|_{2}^2/\mathfrak{F}_{\tau}\right]&\leq 2\lim_{\epsilon \searrow 0} \mathbb{E} \left[\int_{\tau}^{(\tau +\theta) \wedge T}\mathrm{e}^{2\epsilon \Delta}X_t^{(h)}\cdot\mathrm{d}\zeta_t^{(h)}\Big/\mathfrak{F}_{\tau}\right] + (c_{\lambda}+1) \theta + h \notag
    \\
    &=
    2 {\mathbb E}
    \Biggl[ \sum_{n : t_n \in (\tau,(\tau+\theta) \wedge T]}
    \langle X_{t_n^-}^{(h)},X_{t_n}^{(h)} - X_{t_n^-}^{(h)} \rangle
    \Big/ {\mathfrak F}_\tau
    \Biggr] + 
(c_{\lambda}+1) \theta + h \notag
    \\
    &\leq (c_{\lambda}+1) \theta + {\mathcal O}_{h}(1),\label{bound:2.16}
\end{align}
where we used the conditional version of the reflection principle \eqref{refl1} to move from the first to the second line above, and we used the bound \eqref{eq:2.7} to derive the last line,
with $c_{\lambda}:=\sum_{k \in \mathbb{N}} \lambda_k^2.$
Above, 
the Landau term 
${\mathcal O}_h(1)$ is deterministic; it tends to $0$ with $h$, uniformly with respect to the other parameters (including in particular $\theta$).
Apply now a mean inequality and the fact that for any $\varphi\in L^2(\mathbb{S})$, $\|({\rm e}^{\Delta (t-s)}-I)\varphi\|_{2,-1}^2 \leq C(t-s)\|\varphi\|_2^2$, to obtain
\begin{align*}
    \|X^{(h)}_{(\tau +\theta)\wedge T}-X^{(h)}_{\tau}\|_{2,-1}^2 &\leq C\Big(\|X^{(h)}_{(\tau +\theta)\wedge T}-{Z}_{(\tau +\theta)\wedge T}^{(h)}\|_{2,-1}^2 + \|(\mathrm{e}^{\Delta( (\tau +\theta)\wedge T -\tau)}-I)X^{(h)}_{\tau}\|_{2,-1}^2\Big)\\
    &\leq C\Big(\|X^{(h)}_{(\tau +\theta)\wedge T}-{Z}_{(\tau +\theta)\wedge T}^{(h)}\|_{2}^2 + \theta \|X^{(h)}_{\tau}\|_2^2\Big),
\end{align*}
the value of the constant $C$ being allowed to vary from line to line (as long as it remains independent of $\tau$, $\theta$ and $h$). 
Taking the conditional expectation with respect to the $\sigma$-field $\mathfrak{F}_{\tau}$ in the above inequality, 
together with the bound \eqref{bound:2.16}, we get the bound
\begin{align*}
    \mathbb{E}\left[\|X^{(h)}_{(\tau +\theta)\wedge T}-X^{(h)}_{\tau}\|_{2,-1}^2/\mathfrak{F}_{\tau}\right] \leq C\theta \Big(1+\sup_{0\leq s \leq T}\|X_s^{(h)}\|_2^2\Big) + {\mathcal O}_h(1).
\end{align*}
In particular, using  the bound \eqref{bound 2.8} and the fact that the stopping-time $\tau$ is bounded  by $T$, we deduce the existence of a constant $C$, independent of $\tau$, $\theta$ and $h$, such that
\begin{align}\label{216}
   {\mathbb E}
    \left[
    \|X^{(h)}_{(\tau +\theta)\wedge T}-X^{(h)}_{\tau}\|_{2,-1}^2 
    \right] \leq C\theta + {\mathcal O}_h(1).
\end{align}
Consequently, {for all $\epsilon>0$ and $\delta >0$}, one has 
    \begin{align*}
        \limsup_{h\searrow 0}
        \lim_{\theta_0 \searrow 0} \sup_{\theta \in (0,\theta_0]} \mathbb{P}\left\{ \|X^{(h)}_{(\tau +\theta)\wedge T}-X^{(h)}_{\tau}\|_{2,-1} \geq {\delta} \right\} \leq \epsilon.
    \end{align*}
The condition {\bf [A]} in Definition 
\ref{def:aldous}
is then satisfied. 
By 
Remark 
\ref{rem:aldous}, this implies $\mathbf{[T2]}$ in the statement of Theorem \ref{thm:tightness:D}.
\vskip 4pt

\textit{Conclusion.}
In conclusion, the law of the family $\{(X_t^{(h)})_{0 \le t \le T}\}_{{h \in T/{\mathbb N}^*}}$ forms a tight family of probability measures in the space $\mathcal{H}^{-1}_{\mathrm{Sym}}(\mathbb{S})\text{-valued}$ c\`adl\`ag processes in $[0,T].$
\end{proof}

\subsection{%
  \texorpdfstring{%
   Tightness of the laws of $\{(\zeta_t^{(h)})_{0 \le t \le T}\}_{{h \in T/{\mathbb N}^*}}$%
  }{%
    Tightness of the laws of the approximating reflection processes%
  }%
}
Using 
\eqref{eq:def:zeta:t:h} and 
\eqref{scheme0},
we can reformulate $({\zeta}_t^{(h)})_{0 \le t \le T}$ as 
\begin{equation}\label{interpo1}
	{\zeta}_t^{(h)}(\mathbf{x}) = {X}_t^{(h)}(\mathbf{x})-X_{0}(\mathbf{x}) -\int_{0}^{t} \Delta_{\mathbf{x}}{X}_s^{(h)}(\mathbf{x})\mathrm{d}s -W_t(\mathbf{x}), \quad t \in [0,T], \ \mathbf{x}\in \mathbb{S}.
\end{equation}
Here is the main result of this subsection:
\begin{prop}\label{propo 2.5} The family $\{({\zeta}_t^{(h)})_{0 \leq t \le T}\}_{{h \in T/{\mathbb N}^*}}$ given by \eqref{interpo1} induces a tight family of probability measures on the space $\mathbb{D}([0,T],\mathcal{H}^{-2}_{\mathrm{Sym}}(\mathbb{S}))$ of c\`adl\`ag functions from $[0,T]$ to $\mathcal{H}^{-2}_{\mathrm{Sym}}(\mathbb{S})$.
In addition, there exists a constant $C$ such that, for any $h \in {T/{\mathbb N}^*}$, 
    \begin{equation}\label{eq1.44}
    \begin{split}
	 \mathbb{E}\Big[ \sup_{0\leq t\leq T}\|{\zeta}_t^{(h)}\|_{2,-2}^{2} \Big]&\leq C (1+\mathbb{E}\|X_0\|_2^{2})
    \\
	\mathbb{E}\|\zeta_{t}^{(h)}-\zeta_{s}^{(h)}\|_{2,-2}^{2} &\leq  C(t-s)  + {\mathcal O}_h(1), \quad 0 \leq s < t \leq T,
    \end{split}
\end{equation}
where the Landau term 
${\mathcal O}_h(1)$ tends to $0$ with $h$, and is independent of the parameters 
$s$ and $t$. 
\end{prop}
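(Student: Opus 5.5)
The plan is to read off all the needed bounds from the representation \eqref{interpo1},
\[
\zeta_t^{(h)} = X_t^{(h)} - X_0 - \int_0^t \Delta_{\mathbf x} X_s^{(h)}\,\mathrm ds - W_t ,
\]
together with the estimates already obtained in the proof of Proposition~\ref{Prop 3.1}. Tightness in $\mathbb D([0,T],\mathcal H^{-2}_{\mathrm{Sym}}(\mathbb S))$ will then follow again from Theorem~\ref{thm:tightness:D} and the Aldous criterion (Remark~\ref{rem:aldous}). The key observation is that $\Delta_{\mathbf x}$ maps $L^2_{\mathrm{Sym}}$ continuously into $\mathcal H^{-2}_{\mathrm{Sym}}$ with $\|\Delta f\|_{2,-2}\le C\|f\|_2$. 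Hence every term of \eqref{interpo1} is controlled in $\|\cdot\|_{2,-2}$ by $L^2$-norms of $X^{(h)}$, $X_0$ and $W$.

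For the first bound in \eqref{eq1.44}, I estimate each term of \eqref{interpo1} separately:
\[
\sup_t\|\zeta^{(h)}_t\|_{2,-2}\le \sup_t\|X^{(h)}_t\|_2+\|X_0\|_2+CT\sup_t\|X_s^{(h)}\|_2+\sup_t\|W_t\|_2 .
\]
Squaring, taking expectations, and using \eqref{bound 2.8} for $X^{(h)}$ and Doob's inequality for $W$ (whose second moment is $c_\lambda t$) gives the claim.

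For the increment bound, I write
\[
\zeta^{(h)}_t-\zeta^{(h)}_s=(X^{(h)}_t-X^{(h)}_s)-\int_s^t\Delta X^{(h)}_r\,\mathrm dr-(W_t-W_s).
\]
\begin{itemize}
\item The noise term contributes $c_\lambda(t-s)$.
\item For the drift term, Cauchy--Schwarz gives $(t-s)\int_s^t\|X^{(h)}_r\|_2^2\,\mathrm dr$, whose expectation is at most $C(t-s)^2$ by \eqref{bound 1.26}.
\item For the first term, note $\|\cdot\|_{2,-2}\le\|\cdot\|_{2,-1}$, so \eqref{216} with the deterministic stopping time $\tau=s$ and $\theta=t-s$ yields $C(t-s)+\mathcal O_h(1)$.
\end{itemize}
This gives the second line of \eqref{eq1.44}.

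For tightness, condition [T1] follows from the same decomposition. By \eqref{bound 2.8}, $X^{(h)}_t$ lies in an $L^2$ ball with high probability, and such balls are compact in $\mathcal H^{-1}$, hence in $\mathcal H^{-2}$ (Proposition~\ref{prop1.4}). The integral $\int_0^t X_s^{(h)}\,\mathrm ds$ is likewise bounded in $L^2$, so applying $\Delta$ puts it in a bounded set of $\mathcal H^{-2}$. That set is not compact in $\mathcal H^{-2}$, which is the main subtlety. I would fix it by using the $L^2(\mathrm dt;\mathcal H^1)$ bound from \eqref{bound 1.26}, which makes $\int_0^t X^{(h)}_s\,\mathrm ds$ bounded in $\mathcal H^1$, hence $\Delta\int_0^t X_s^{(h)}\,\mathrm ds$ bounded in $\mathcal H^{-1}$ and compact in $\mathcal H^{-2}$. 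The $W_t$ term is a single tight random variable.

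For [A], I repeat the increment computation with a discrete stopping time $\tau$ in place of $s$. The bound on $X^{(h)}$ is already available from \eqref{216}. The drift part is controlled by $\theta\sup_r\|X^{(h)}_r\|_2^2$ together with \eqref{bound 2.8}, and the noise increment by the strong Markov property of $W$. Chebyshev's inequality then gives the Aldous condition, and Remark~\ref{rem:aldous} concludes. The step I expect to be delicate is [T1]: the Laplacian term needs the $\mathcal H^1$ energy estimate to gain compactness.
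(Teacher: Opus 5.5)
Your proof is correct. The two moment bounds in \eqref{eq1.44} are obtained essentially as in the paper: the triangle inequality on \eqref{interpo1}, $\|\Delta f\|_{2,-2}\le C\|f\|_2$, \eqref{bound 2.8}, and \eqref{216} with a deterministic $\tau=s$. Your tightness argument, however, follows a different route.

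The paper does not re-verify the Aldous criterion for $\zeta^{(h)}$. Instead it builds compact sets directly in path space:
\begin{itemize}
\item $K_1$ comes from the already established tightness of $X^{(h)}$ in $\mathbb{D}([0,T],\mathcal{H}^{-1}_{\mathrm{Sym}}(\mathbb{S}))$ (Proposition~\ref{Prop 3.1});
\item $K_2$ is a compact set for $W$ in $\mathcal{C}([0,T],L^2_{\mathrm{Sym}}(\mathbb{S}))$;
\item $K_3$ comes from Lemma~\ref{A3}, an Arzel\`a--Ascoli argument. It shows that $\{\int_0^\cdot \Delta\mathcal{Y}_s\,\mathrm{d}s\}$ is relatively compact in $\mathcal{C}([0,T],\mathcal{H}^{-2})$ under the energy bound $\int_0^T\|D\mathcal{Y}_t\|_2^2\,\mathrm{d}t\le\upsilon$, via a uniform $\mathcal{H}^{-1}$ bound plus $\tfrac12$-H\"older equicontinuity.
\end{itemize}
It then takes the image of $K_1\times K_2\times K_3$ under $(x,y,z)\mapsto x-x_0-y-z$.

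Your \textbf{[T1]} step uses the same $\mathcal{H}^1$ gain as Lemma~\ref{A3}, but only at a fixed time. You replace the equicontinuity part by a stopping-time increment estimate, which comes almost for free since \eqref{216} is already stated for discrete stopping times.

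What the paper's route buys is economy. Tightness of $\zeta^{(h)}$ becomes a continuous-image statement inheriting everything from Proposition~\ref{Prop 3.1}, with no new stopping-time computation. It does tacitly use that addition is continuous on $\mathbb{D}\times\mathcal{C}$ for the Skorokhod topology, which holds because convergence to a continuous limit is uniform.

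Your route is more self-contained and avoids that subtlety. It also obtains the increment bound and condition \textbf{[A]} from a single computation, of which the second line of \eqref{eq1.44} is the special case $\tau=s$.
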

{A careful inspection of the proof would show that the first line in 
\eqref{eq1.44} remains true with 
the $\mathcal{H}^{-2}_{\mathrm{Sym}}(\mathbb{S})$-norm replaced by 
the $\mathcal{H}^{-1}_{\mathrm{Sym}}(\mathbb{S})$-norm, but we will not make use of this improved version of 
\eqref{eq1.44}.}

Before proving the above proposition, we first establish
the following result, which provides a compactness criterion in the space $\mathcal{C}([0,T],\mathcal{H}^{-2}(\mathbb{S}))$:
\begin{lemm}\label{A3}
 Choose ${\mathbb A}= 
 L^2({\mathbb S})$ and, for a fixed $\upsilon>0$, define $$\P_{\upsilon}:=\left\{\mathcal{Y}\in \mathbb{D}([0,T],L^2(\mathbb{S}));\, 
 {\int_0^T \|D\mathcal{Y}_t\|_2^2\mathrm{d}t \leq \upsilon} \right\}.$$
    Then, $\{ t \in [0,T] \mapsto \int_0^t \Delta \mathcal{Y}_s\mathrm{d}s, \ 
    {\mathcal Y} \in \P_{\upsilon}\}$ is relatively compact in $\mathcal{C}([0,T],\mathcal{H}^{-2}(\mathbb{S})).$
\end{lemm}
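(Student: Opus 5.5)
The plan is to apply the Arzelà--Ascoli theorem in $\mathcal{C}([0,T],\mathcal{H}^{-2}(\mathbb{S}))$. For $\mathcal{Y}\in \P_\upsilon$, set $\mathcal{I}^{\mathcal Y}_t:=\int_0^t \Delta \mathcal{Y}_s\,\mathrm{d}s$. Two things must be checked: the family is equicontinuous in time, uniformly over $\P_\upsilon$, and for each fixed $t$ the set $\{\mathcal{I}^{\mathcal Y}_t,\ \mathcal{Y}\in\P_\upsilon\}$ is relatively compact in $\mathcal{H}^{-2}(\mathbb{S})$.

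\emph{Equicontinuity.} First I will use the elementary Fourier bound $\|\Delta f\|_{2,-1}\le C\|Df\|_2$, which holds because $\widehat{(\Delta f)}_k=-4\pi^2k^2\hat f_k$ and $|\widehat{(Df)}_k|\sim 2\pi k|\hat f_k|$, with the mode $k=0$ vanishing. Then, for $0\le s<t\le T$, Cauchy--Schwarz in time gives
\begin{equation*}
\|\mathcal{I}^{\mathcal Y}_t-\mathcal{I}^{\mathcal Y}_s\|_{2,-1}\le \int_s^t\|\Delta\mathcal{Y}_r\|_{2,-1}\,\mathrm{d}r\le C\sqrt{t-s}\Bigl(\int_0^T\|D\mathcal{Y}_r\|_2^2\,\mathrm{d}r\Bigr)^{1/2}\le C\sqrt{\upsilon}\sqrt{t-s}.
\end{equation*}
Since $\|\cdot\|_{2,-2}\le\|\cdot\|_{2,-1}$, this is a uniform H\"older-$1/2$ modulus of continuity in $\mathcal{H}^{-2}$. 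The same estimate shows the paths $t\mapsto\mathcal{I}^{\mathcal Y}_t$ are indeed continuous. The Bochner integral makes sense because $\mathcal Y$ is càdlàg in $L^2$, hence measurable and bounded, and $\int_0^T\|D\mathcal Y_t\|_2^2\,\mathrm{d}t<\infty$.

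\emph{Pointwise compactness.} Taking $s=0$ in the display gives $\sup_{t\in[0,T]}\|\mathcal{I}^{\mathcal Y}_t\|_{2,-1}\le C\sqrt{\upsilon T}$, uniformly in $\mathcal{Y}\in\P_\upsilon$. By the last assertion of Proposition~\ref{prop1.4} (bounded sets of $\mathcal{H}^{-\beta}$ are relatively compact in $\mathcal{H}^{-\alpha}$ for $\beta>\alpha$, here with $\beta=2$, $\alpha=1$ read in reverse order of indices, i.e.\ bounded in $\mathcal{H}^{-1}$ implies relatively compact in $\mathcal{H}^{-2}$), each section $\{\mathcal{I}^{\mathcal Y}_t\}$ is relatively compact in $\mathcal{H}^{-2}$. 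This is the standard compact embedding $\mathcal{H}^{-1}\hookrightarrow\mathcal{H}^{-2}$ on the torus: the weights $k^{-4}/k^{-2}\to0$, so a diagonal-Fourier-tail argument applies. If the functions are not assumed symmetric, the same argument runs with the full Fourier basis.

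With these two steps, Arzelà--Ascoli for continuous maps from the compact interval $[0,T]$ into a complete metric space concludes the proof. The only point needing care is the compact-embedding step. This amounts to the tail estimate $\sum_{k>K}k^{-4}|\hat f_k|^2\le K^{-2}\|f\|_{2,-1}^2$, which makes the sections totally bounded, so I expect no real obstacle.
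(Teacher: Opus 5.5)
Your proof is correct and follows the paper's argument. You apply Arzel\`a--Ascoli using a uniform $\mathcal{H}^{-1}$ bound at each fixed time, which gives relative compactness in $\mathcal{H}^{-2}$ through the compact embedding, together with the uniform modulus $\sqrt{\upsilon}\sqrt{t-s}$ in $\mathcal{H}^{-1}$. The only cosmetic difference is that you obtain $\|\Delta f\|_{2,-1}\le 2\pi\|Df\|_2$ directly from the Fourier coefficients, while the paper tests against $\varphi$ with $\|\varphi\|_{2,1}\le 1$ and integrates by parts. You were also right to use Proposition~\ref{prop1.4} in the direction ``bounded in $\mathcal{H}^{-1}$ $\Rightarrow$ relatively compact in $\mathcal{H}^{-2}$'', which is the correct form of that embedding.
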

\begin{proof} We  use the Arzel\`a-Ascoli theorem. As a first step, we need to prove that, for a fixed $t \in [0,T]$, the set $\{\int_0^t \Delta \mathcal{Y}_s\mathrm{d}s, \ 
    {\mathcal Y} \in \P_{\upsilon} \}$ is relatively compact in $\mathcal{H}^{-2}(\mathbb{S})$. For this, we notice 
    that it is bounded in 
    $\mathcal H^{-1}({\mathbb S})$, which indeed follows from
    the inequalities\begin{align*}
        \Big\|\int_0^t \Delta \mathcal{Y}_s\mathrm{d}s \Big\|_{2,-1} = \sup_{\substack{\varphi \in \mathcal{C}^{\infty}\\
        \|\varphi\|_{2,1}\leq 1}} \Big|\Big\langle \int_0^t \Delta \mathcal{Y}_s\mathrm{d}s,\varphi\Big\rangle \Big| \leq \sup_{\substack{\varphi \in \mathcal{C}^{\infty}\\
        \|\varphi\|_{2,1}\leq 1}} \int_0^t |\langle D\mathcal{Y}_s, D\varphi \rangle| \mathrm{d}s \leq \biggl(\int_0^t \|D\mathcal{Y}_s\|_2^2\mathrm{d}s\biggr)^{1/2}\leq \sqrt{{\upsilon}}.
    \end{align*}
   On the other hand, we also have, for any $0\leq s\leq t$, 
   \begin{align*}
       \Big\|\int_s^t \Delta \mathcal{Y}_r\mathrm{d}r \Big\|_{2,-1} = \sup_{\substack{\varphi \in \mathcal{C}^{\infty}\\
        \|\varphi\|_{2,1}\leq 1}} \Big|\langle \int_s^t \Delta \mathcal{Y}_r\mathrm{d}r,\varphi\rangle \Big| &\leq \sup_{\substack{\varphi \in \mathcal{C}^{\infty}\\
        \|\varphi\|_{2,1}\leq 1}} \int_s^t |\langle \mathcal{Y}_r,
        \Delta \varphi \rangle| \mathrm{d}r 
        \\ 
        &{\leq 
        \sqrt{t-s} \sup_{\substack{\varphi \in \mathcal{C}^{\infty}\\
        \|\varphi\|_{2,1}\leq 1}}
        \biggl( 
        \int_s^t \vert \langle D {\mathcal Y}_r, D \varphi \rangle |^2 \ud r 
        \biggr)^{1/2} 
        \leq 
\sqrt{\upsilon} \sqrt{t-s},}
   \end{align*}
which proves that the function $t \in [0,T] \rightarrow \int_0^t \Delta \mathcal{Y}_s\mathrm{d}s \in \mathcal{H}^{-1}({\mathbb S})$ is continuous, uniformly with respect to ${\mathcal Y} \in \P_{\upsilon}$.
\end{proof}

\begin{proof}[Proof of Proposition \ref{propo 2.5}] 
The proof is divided in several steps.
\vskip 4pt

\textit{First Step.} As for the first part of the statement, the goal is to prove that, for
any $\epsilon >0$, there exists a compact set $K \subset \mathbb{D}([0,T],\mathcal{H}^{-2}_{\mathrm{Sym}}(\mathbb{S}))$ such that ${\mathbb P}(\{  \zeta^{(h)} \in K\})\geq 1-\epsilon$, for all $h \in {T/{\mathbb N}^*}$.  

We first notice from Proposition \ref{Prop 3.1}
that there exists a compact set $K_1 \subset \mathbb{D}([0,T],\mathcal{H}^{-1}_{\text{Sym}}(\mathbb{S}))$ such that ${\mathbb P}(\{ X^{(h)}  \in K_1\})\geq 1-\epsilon/3$, for all {$h \in T/{\mathbb N}^*$}. Moreover, since the  noise has continuous trajectories with values in 
$L^2({\mathbb S})$, there exists a compact subset $K_2 \subset \mathcal{C}([0,T],L^2_{\mathrm{Sym}}(\mathbb{S}))$ such that $
{\mathbb P}(\{W\in K_2\}) \geq 1 - \epsilon/3$. It remains to see that there exists a 
compact subset 
$K_3 \subset \mathcal{C}([0,T],\mathcal{H}^{-2}_{\mathrm{Sym}}(\mathbb{S}))$
such that $
{\mathbb P}(\{t \in [0,T]\mapsto \int_0^t \Delta_{\mathbf{x}} X_s^{(h)}\mathrm{d}s \in K_3\}) \geq 1 - \epsilon/3$, for all {$h \in T/{\mathbb N}^*$}: this is a consequence of Lemma \ref{A3}
and of 
the estimate \eqref{bound 1.26}. 

Regarding $K_1$, $K_2$ and $K_3$ as (compact) subsets of 
$\mathbb{D}([0,T],\mathcal{H}^{-2}_{\text{Sym}}(\mathbb{S}))$
and then defining 
$K$ as the image of $K_1 \times K_2 \times K_3$ by the mapping 
\begin{equation*}
(x,y,z) \in 
\mathbb{D}([0,T],\mathcal{H}^{-2}_{\mathrm{Sym}}(\mathbb{S}))^3 \mapsto (t \in [0,T] \mapsto x_t - x_0 - y_t - z_t) \in 
\mathbb{D}([0,T],\mathcal{H}^{-2}_{\mathrm{Sym}}(\mathbb{S})),
\end{equation*}
we observe that 
$K$ is compact. 
Then, using \eqref{interpo1},
we have, for all $h \in (0,1)$,\begin{equation*}
{\mathbb P}\left(
\{\zeta^{(h)} \in K \}\right) \geq 
{\mathbb P}\left(
\{X^{(h)} \in K_1\}
\cap 
\{W \in K_2\}
\cap 
\left\{\int_0^\cdot \Delta X_s^{(h)} {\mathrm d} s\in K_3 \right\}
\right) \geq 1 - \epsilon,
\end{equation*}
from which the tightness property follows.
\vskip 4pt

\textit{Second Step.}
We now prove \eqref{eq1.44}.
From \eqref{interpo1} and  the triangle inequality, there exists a universal constant $C$ such that, for any $t \in [0,T]$,
\begin{align*}
\|	{\zeta}_t^{(h)}\|_{2,-2}^2 &\leq C\Biggl(\|{X}_t^{(h)}-X_0\|_{2,-2}^2 + \Big\|\int_0^t \Delta_{\mathbf{x}} {X}_s^{(h)}\mathrm{d}s \Big\|_{2,-2}^2 + \|W_t\|_{2,-2}^2\Biggr)\\
 &\leq C\Biggl(\|{X}_t^{(h)}-X_0\|_{2,-1}^2 + \sum_{k\in\mathbb{N}}(k\vee 1)^{-4} \int_0^t \langle\Delta_{\mathbf{x}} {X}_s^{(h)},e_k \rangle^2\mathrm{d}s + \|W_t\|_{2}^2\Biggr) \\
&\leq C \Biggl( \|{X}_t^{(h)}-X_0\|_{2,-1}^2 + \int_0^T  \sum_{k\in\mathbb{N}}\langle {X}_s^{(h)},e_k \rangle^2\mathrm{d}s + \|W_t\|_{2}^2\Biggr).
\end{align*}
Therefore, taking first the supremum over $t$, the expectation value and using \eqref{bound 2.8}, we obtain
\begin{align*}
\mathbb{E}\Big[\sup_{0\leq t\leq T}\|{\zeta}_t^{(h)}\|_{2,-2}^{2}\Big] &\leq C_{\lambda,T}\big( 1+  \mathbb{E} \sup_{s \in [0, T]} \|{X}_s^{(h)}\|_2^{2}\big)\leq C_{\lambda,T}(1+\mathbb{E}\|X_0\|_2^2),
\end{align*}
which leads to the first bound in \eqref{eq1.44}. 
\vskip 4pt

\textit{Third Step.}
To prove the second bound in \eqref{eq1.44}, consider now  $0\leq s < t \leq T$ and let $\phi \in \mathcal{H}^2_{\mathrm{Sym}}(\mathbb{S})$. The scheme \eqref{scheme0} writes:
\begin{align}\label{scheme2}
\langle X_{t}^{(h)} -X_{s}^{(h)},\phi \rangle = \langle \zeta_{t}^{(h)}-\zeta_{s}^{(h)},\phi\rangle  + \int_{s}^{t} \langle X_r^{(h)},\Delta_{\mathbf{x}} \phi \rangle \mathrm{d}r + \langle W_{t}-W_{s},\phi \rangle.
\end{align} 
Applying the Cauchy-Schwartz inequality and assuming that 
$\| \varphi \|_{2,2} \leq 1$, we obtain
\begin{align*}
	\|\zeta_t^{(h)}-\zeta_s^{(h)}\|_{2,-2}^{2} \leq C\left( \| X_t^{(h)} -X_s^{(h)}\|_{2,-1}^2 +\int_{{s}}^{t} \|X_r^{(h)}\|_{2}^2\mathrm{d}r + \|  W_{t}-W_{s}\|_{2}^2\right). 
	\end{align*}
Taking the expectation value on both sides, and combining \eqref{216} and  \eqref{bound 2.8}, we get  the result.
\end{proof}
We conclude this subsection with the following remark
\begin{remark}\label{remark:dynamics}[Dynamics of the weak limits-- A naive attempt]Here, we provide a preliminary discussion of the equation that any weak limit of the family $\{(X_t^{(h)},W_t)_{0 \le t \le T}\}_{{h \in T/{\mathbb N}^*}}$ is expected to satisfy as $h \searrow 0$. For simplicity, we denote such a limit by $(X_t,W_t)_{0 \le t \le T}$ and assume that it is constructed on the same stochastic basis $(\Omega,\mathfrak{F},\{\mathfrak{F}_t\}_{t\ge 0},\mathbb{P},(W_t)_{t \geq 0})$ as before.
The existence of a limit follows from the tightness results established so far. The challenge thus is to  characterize  the limiting dynamics.


Hence, letting $h\searrow 0$ (up to a subsequence) in \eqref{scheme2}, the dynamics of the limit process is expected to satisfy, with probability one, for any $0\leq s \leq t\leq T$ and any smooth enough $\varphi \in L^{2}_{\mathrm{Sym}}(\mathbb{S})$,
\begin{align}\label{eq2.39}
	\langle X_t-X_s, \phi \rangle = \langle \zeta_t-\zeta_s, \phi \rangle + \int_s^t \langle X_r,\Delta_{\mathbf{x}} \phi \rangle \mathrm{d}r + \langle W_t-W_s,\phi\rangle.
\end{align}
Nevertheless, at the present stage, the equation presented above does not fully reveal the precise nature of the process $(\zeta_t)_{0 \le t \le T}$, which must be viewed as a weak-limit of the family $\{(\zeta_t^{(h)})_{0 \le t \le T}\}_{{h \in T/{\mathbb N}^*}}$. 
The identification of this process as a new reflection term
(distinct from 
$\eta$ in \eqref{0.1})
is the main objective of the next section.
{Before proceeding, we need an additional tightness property on the pure jump parts of the family $(X^{(h)})_{{h \in T/{\mathbb N}^*}}$ (even though these jumps are, for each ${h \in T/{\mathbb N}^*}$, already included in the process $\zeta^{(h)}$).}
\end{remark}

\subsection{%
  \texorpdfstring{%
    Tightness of the laws of $\{(\mathscr{J}_{t}(X^{(h)}))_{0 \le t \le T}\}_{{h \in T/{\mathbb N}^*}}$
  }{%
    Tightness of the laws of the pure jump parts%
  }%
}

We recall the definition of 
$({\mathscr J}_t(X^{(h)}))_{0 \le t \le T}$ given in 
\eqref{eq:def:zeta:t:h}:
\begin{equation}
\label{237}
{\mathscr J}_t(X^{(h)}) 
= \sum_{j=1}^{N_t}
\left(\Phi^{(h)}(X_{t_j^-}^{(h)}) -X_{t_j^-}^{(h)}
    \right), \quad t \in [0,T]. 
    \end{equation}
The analysis of the jumps is carried out using Lemma \ref{lemm1.11}, which in turn suggests decomposing general test functions into non-decreasing and non-increasing parts.

\begin{lemm}\label{lemm1.12} Fix $\delta >0$.
	Let $\phi \in \mathcal{H}^{3+\delta}_{\mathrm{Sym}}(\mathbb{S})$. Then, there exist two non-decreasing functions $\phi^{\uparrow}$ and $\phi^{\downarrow}$, both in the space  $\mathcal{H}^2_{\mathrm{Sym}}(\mathbb{S})$, such that $\phi = \phi^{\uparrow} - \phi^{\downarrow}$ and $\sup(\|\phi^{\uparrow}\|_{2,2} + \|\phi^{\downarrow}\|_{2,2} ) \leq \tilde{C} \|\phi\|_{2,(3+\delta)}$, {for a constant} $\tilde{C}>0$ {independent of $\varphi$}.
\end{lemm}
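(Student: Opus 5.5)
The plan is to build $\phi^{\uparrow}$ and $\phi^{\downarrow}$ directly from the derivative of $\phi$ on the half-period $[0,1/2]$. Since $\phi$ is symmetric (even and $1$-periodic), it is determined by its restriction to $[0,1/2]$, and there $\phi'$ splits into positive and negative parts. These parts are only Lipschitz, however, which would give merely $\mathcal H^{2}$-type regularity with a bad constant at the junction points $0$ and $1/2$. So instead of $(\phi')^{\pm}$ we use a smooth dominating function. Choose a constant $M\geq \|\phi'\|_\infty$ and set, for $\mathbf x\in[0,1/2]$,
\begin{equation*}
\phi^{\downarrow}(\mathbf x) := \int_0^{\mathbf x} \bigl(M\, g(y) - \phi'(y)\bigr)\,\mathrm dy, \qquad \phi^{\uparrow} := \phi - \phi(0) + \phi^{\downarrow},
\end{equation*}
extended evenly and periodically. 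Here $g$ is a fixed smooth nonnegative function to be chosen. With this construction $(\phi^{\uparrow})' = M g \geq 0$ and $(\phi^{\downarrow})'=Mg-\phi'\geq 0$ on $(0,1/2)$, provided $g\geq 1$ wherever we need domination. The constant $\phi(0)$ can be absorbed into either part, since constants are both non-decreasing and non-increasing in the sense of $\mathcal U^2(\mathbb S)$.

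For the even periodic extensions to lie in $\mathcal H^2_{\mathrm{Sym}}(\mathbb S)$, the derivatives must vanish at $0$ and $1/2$ (odd derivative, continuous). The Sobolev norm $\|\cdot\|_{2,2}$ is equivalent to the $H^2$ norm of the extension, so we need an $L^2$ bound on the second derivative on $[0,1/2]$ plus continuity of the first derivative across the junctions.

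The obstacle is reconciling domination with vanishing at the endpoints. Since $\phi'(0)=\phi'(1/2)=0$, we can take $\phi^{\uparrow\prime}=\psi$ with $\psi(\mathbf x)=|\phi'(\mathbf x)|$ smoothed, namely $\psi=\sqrt{(\phi')^2+\rho^2}-\rho$ for a cutoff $\rho$, or more simply take $\psi := \kappa\, \sin(2\pi \mathbf x)$-type profiles times a constant. I would try $(\phi^{\uparrow})'(\mathbf x) = K\sin(2\pi\mathbf x)$ on $[0,1/2]$, which is smooth, odd, vanishes at both junctions, and makes $\phi^{\uparrow}$ a multiple of $-\cos(2\pi\mathbf x)=-e_1/\sqrt2$ up to a constant, so it lies in $\mathcal H^\infty$. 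Domination $K\sin(2\pi\mathbf x)\geq \phi'(\mathbf x)$ requires $|\phi'(\mathbf x)|\leq C\,\mathrm{dist}(\mathbf x,\{0,1/2\})$, which follows from $\phi'(0)=\phi'(1/2)=0$ and $\|\phi''\|_\infty<\infty$.

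This is where the regularity $3+\delta$ enters. By Sobolev embedding in one dimension, $\mathcal H^{3+\delta}\hookrightarrow \mathcal C^{2}$ with $\|\phi''\|_\infty\leq C\sum_k k^{2}|\hat\phi_k|\leq C\|\phi\|_{2,3+\delta}$ via Cauchy--Schwarz, since $\sum_k k^{-2-2\delta}<\infty$. Hence $K := C\|\phi''\|_\infty$ works, because $\sin(2\pi\mathbf x)\geq 4\,\mathrm{dist}(\mathbf x,\{0,1/2\})$ on $[0,1/2]$.

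Then $\phi^{\downarrow} := \phi^{\uparrow}-\phi$, with $\phi^{\uparrow}(\mathbf x)= \phi(0) - \frac{K}{2\pi}(\cos(2\pi\mathbf x)-1)$. It has derivative $K\sin(2\pi\mathbf x)-\phi'\geq0$ on $(0,1/2)$, so it is non-decreasing in the sense of $\mathcal U^2$. Its $\mathcal H^2$ norm is bounded by $\|\phi\|_{2,2}+\|\phi^{\uparrow}\|_{2,2}$. Finally, $\|\phi^{\uparrow}\|_{2,2}\leq |\phi(0)|+CK\leq C\|\phi\|_{2,3+\delta}$, using $|\phi(0)|\leq\sum_k|\hat\phi_k|$. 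The only delicate check is the linear vanishing of $\phi'$ at the junctions, which I expect to be the main point to verify carefully. It holds because $\phi'$ is odd and $1$-periodic, hence vanishes at $0$ and $1/2$, and is $\mathcal C^1$.
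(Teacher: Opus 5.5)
Your argument is correct, and it takes a different route from the paper's.

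The paper works mode by mode. It starts from the splitting $e_k=e_k^+-e_k^-$ of \eqref{decomp 33}, with $\|e_k^\pm\|_{2,2}\le C(k^2\vee 1)$ by \eqref{decomp34}, and splits each coefficient as $\hat\phi_k=\hat\phi_k^+-\hat\phi_k^-$. It then sets $\phi^{\uparrow}=\sum_k(\hat\phi_k^+e_k^++\hat\phi_k^-e_k^-)$ and $\phi^{\downarrow}=\sum_k(\hat\phi_k^-e_k^++\hat\phi_k^+e_k^-)$. Both series converge normally in $\mathcal H^2_{\mathrm{Sym}}(\mathbb S)$ because $\sum_k(k\vee1)^2|\hat\phi_k|\le C\|\phi\|_{2,(3+\delta)}$.

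You instead dominate $\phi'$ by a single profile. Since $\phi'$ is odd, periodic and Lipschitz, it vanishes linearly at $0$ and $1/2$. Hence $K\sin(2\pi\mathbf x)$ with $K\ge\|\phi''\|_\infty/4$ dominates it on $(0,1/2)$, and $\phi^{\uparrow}$ is just a constant minus a multiple of $e_1$.

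Both proofs consume the same quantity, $\sum_k k^2|\hat\phi_k|$. It bounds $\|\phi''\|_\infty$ in your argument and $\sum_k\|\hat\phi_k e_k^\pm\|_{2,2}$ in the paper's. The paper's version reuses the modewise splitting that reappears, as $e_k^{\star,\pm}$, to build integrals against $\varsigma$ in Subsection~\ref{subsection 4.2}. Yours is fully explicit and gives slightly more: $\phi^{\uparrow}$ is a trigonometric polynomial, and $\phi^{\downarrow}=\phi^{\uparrow}-\phi$ is exactly as regular as $\phi$.

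The first idea you discarded also works. The statement only asks for $\mathcal H^2$ regularity, and there is no bad behaviour at the junction points because $\phi'(0)=\phi'(1/2)=0$. Consequently, the odd periodic extensions of $(\phi')^{\pm}$ restricted to $(0,1/2)$ are continuous and have zero mean. They lie in $H^1(\mathbb S)$, with derivatives bounded in $L^2$ by $\|\phi''\|_2$. Their primitives, with the constant $\phi(0)$ added to one of them, give a decomposition with $\|\phi^{\uparrow}\|_{2,2}+\|\phi^{\downarrow}\|_{2,2}\le C\|\phi\|_{2,2}$. So the lemma in fact holds for every $\phi\in\mathcal H^2_{\mathrm{Sym}}(\mathbb S)$.

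In a final write-up, drop the exploratory constructions with $Mg-\phi'$ and $\sqrt{(\phi')^2+\rho^2}-\rho$ and keep only the sine construction.
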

To prove the above lemma, we shall rely on the fact that each element of the sequence $(e_k)_{k \in \mathbb{N}}$  can be decomposed as the difference of two symmetric, non-decreasing functions, denoted by $e_k^{+}$ and $e_k^{-}$, respectively, and defined by the following expressions (see \cite{DelHam25}) 
\begin{align}\label{decomp 33}
	e_k^{\pm}(x)= e_k(0)\iota +\int_0^x \mathbbm{1}_{[-1/2,0]}(y)\left[D_ye_k(y)\right]^{\pm}\mathrm{d}y -\int_0^x \mathbbm{1}_{[0,1/2]}(y)\left[D_ye_k(y)\right]^{\mp}\mathrm{d}y,
\end{align}
with $\iota =1 \text{ if } \pm =+$ and $0$ elsewhere, 
{where the superscripts $\pm$ and $\mp$ on the right-hand side denote the positive and negative parts of the real number at hand.}
Clearly, 
$e_k=e_k^+-e_k^-$. Moreover, once again from \cite{DelHam25}, we have $e_k^{+}, e_k^{-} \in \mathcal{H}^{2}_{\mathrm{Sym}}(\mathbb{S})\cap \mathcal{U}^2(\mathbb{S})$, and there exists a universal constant $C>0$ such that
\begin{align}\label{decomp34}
    \|e_k^{\pm}\|_{2,2} \leq C (k^2\vee 1),
\end{align}
where the symbol $e_k^{\pm}$ indicates that the above bound holds for both $e_k^+$ and $e_k^-$.
\begin{proof}[Proof of Lemma \ref{lemm1.12}] Let $\varphi \in \mathcal{H}^{3+\delta}_{\mathrm{Sym}}(\mathbb{S})$.
Denoting by 
$(\hat{\varphi}_k)_{k \in {\mathbb N}}$  the (real-valued) Fourier coefficients of $\varphi$ and using \eqref{decomp34}, we first notice that 
\begin{equation}
    \label{eq:06-07:1}
\begin{split}
 \sum_{k\in \mathbb{N}}    \big\|\hat{\varphi}_k e_k^{+}\big\|_{2,2}
    &\leq \sum_{k\in \mathbb{N}}\frac{|\langle (k\vee 1)^{(3+\delta)}{\varphi},e_k\rangle|}{(k\vee 1)^{(3+\delta)}}\|e_k^{+}\|_{2,2} \leq C\|{\varphi}\|_{2,(3+\delta)}\sum_{k\in \mathbb{N}}\frac{1}{(k\vee 1)^{(1+\delta)}}.
\end{split}
\end{equation}
Then, the two series on the right-hand side below are normally convergent and, for any ${\mathbf x} \in {\mathbb S}$, 
    \begin{equation*}
    \begin{split}
        \varphi(\mathbf{x}) &= \sum_{k\in \mathbb{N}} \hat{\varphi}_k  e_k^{+}(\mathbf{x}) - \sum_{k\in \mathbb{N}} \hat{\varphi}_k e_k^{-}(\mathbf{x})\\
        \hspace{20pt}
        &=\Big(\sum_{k\in \mathbb{N}} \hat{\varphi}_k^{+}  e_k^{+}(\mathbf{x}) +\sum_{k\in \mathbb{N}} \hat{\varphi}_k^{+} e_k^{-}(\mathbf{x}) \Big) -\Big( \sum_{k\in \mathbb{N}} \hat{\varphi}_k^{-} e_k^{+}(\mathbf{x}) + \sum_{k\in \mathbb{N}} \hat{\varphi}_k^{+} e_k^{-}(\mathbf{x}) \Big)\\
        &=: \varphi^{\uparrow}(\mathbf{x}) - \varphi^{\downarrow}(\mathbf{x}),
        \end{split}
    \end{equation*}
    where we used the decomposition $\hat\varphi_k = \hat\varphi_k^{+}-\hat\varphi_k^{-}$, with $\hat\varphi_k^{+}$ and $\hat\varphi_k^{-}$ representing, respectively, the positive and negative part of the (real-valued) Fourier coefficient $\hat\varphi_k$. It is readily seen that both $\varphi^{\uparrow}$ and $\varphi^{\downarrow}$ are symmetric and non-decreasing functions. 
    
   We conclude the proof by  establishing the bound $\sup(\|\phi^{\uparrow}\|_{2,2} + \|\phi^{\downarrow}\|_{2,2} ) \leq \tilde{C}\|\phi\|_{2,(3+\delta)}$, for $\tilde C$ possibly depending on $\delta$, but not on $\varphi$. We  only derive a bound for the first term on the left hand side of the latter display, as the second one can be handled in an entirely similar manner. 
   Following 
   \eqref{eq:06-07:1}, we obtain
\begin{equation*}
\begin{split}
    \Big\|\sum_{k\in \mathbb{N}} \hat{\varphi}_k^{+} e_k^{+}\Big\|_{2,2}
    &\leq \sum_{k\in \mathbb{N}}\frac{|\langle (k\vee 1)^{(3+\delta)}{\varphi},e_k\rangle|}{(k\vee 1)^{(3+\delta)}}\|e_k^{+}\|_{2,2} \leq C\|{\varphi}\|_{2,(3+\delta)}\sum_{k\in \mathbb{N}}\frac{1}{(k\vee 1)^{(1+\delta)}}.
\end{split}
\end{equation*}
The proof is easily completed. 
\end{proof}
Now we are in a position to establish the main result of this subsection.
\begin{prop}\label{propo:3.8} Fix $\delta>0$. Then, the family $\{(\mathscr{J}_{t}(X^{(h)}))_{0 \le t \le T}\}_{{h \in {\mathbb T}/{\mathbb N}^*}}$ given by \eqref{237}, induces a tight family of probability measures on the space $\mathbb{D}([0,T],\mathcal{H}^{-(3+\delta)}_{\mathrm{Sym}}(\mathbb{S}))$ of c\`adl\`ag functions from $[0,T]$ to $\mathcal{H}^{-(3+\delta)}_{\mathrm{Sym}}(\mathbb{S})$.  
\end{prop}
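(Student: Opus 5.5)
We verify \textbf{[T1]} and the Aldous criterion \textbf{[A]} (hence \textbf{[T2]} by Remark \ref{rem:aldous}) from Theorem \ref{thm:tightness:D}, with $\mathbb A=\mathcal{H}^{-(3+\delta)}_{\mathrm{Sym}}(\mathbb{S})$. The basic tool is a bound on $\|\mathscr J_t(X^{(h)})-\mathscr J_s(X^{(h)})\|_{2,-(3+\delta)}$ in terms of the increments of $\zeta^{(h)}$ in $\mathcal{H}^{-2}_{\mathrm{Sym}}(\mathbb{S})$. Fix $\varphi\in\mathcal{H}^{3+\delta}_{\mathrm{Sym}}(\mathbb{S})$ with $\|\varphi\|_{2,3+\delta}\le 1$, and decompose $\varphi=\varphi^\uparrow-\varphi^\downarrow$ via Lemma \ref{lemm1.12}. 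Here $\varphi^\uparrow,\varphi^\downarrow\in\mathcal{H}^2_{\mathrm{Sym}}(\mathbb{S})\cap\mathcal U^2(\mathbb S)$ and $\|\varphi^\uparrow\|_{2,2}+\|\varphi^\downarrow\|_{2,2}\le\tilde C$. Apply the first line of \eqref{ineq:2.5:b} to each of $\varphi^\uparrow$ and $\varphi^\downarrow$ separately and subtract. This gives
\begin{equation*}
\bigl|\langle \mathscr J_t(X^{(h)})-\mathscr J_s(X^{(h)}),\varphi\rangle\bigr|\le \tilde C\,\|\zeta^{(h)}_t-\zeta^{(h)}_s\|_{2,-2},
\end{equation*}
and after taking the supremum over $\varphi$,
\begin{equation*}
\|\mathscr J_t(X^{(h)})-\mathscr J_s(X^{(h)})\|_{2,-(3+\delta)}\le \tilde C\,\|\zeta^{(h)}_t-\zeta^{(h)}_s\|_{2,-2}.
\end{equation*}
This is the key step. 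Monotonicity, through Lemma \ref{lemm1.11}, lets the jump part be controlled by the total process $\zeta^{(h)}$, at the price of one extra derivative plus $\delta$ in the dual norm.

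\textbf{[T1]}: taking $s=0$ with $\mathscr J_0=0$, the first line of \eqref{eq1.44} yields $\sup_h\mathbb E\sup_t\|\mathscr J_t(X^{(h)})\|_{2,-(3+\delta)}^2<\infty$. This only gives boundedness in $\mathcal{H}^{-(3+\delta)}$, and bounded sets there are not compact. To get compactness, we run the same argument with $\delta$ replaced by $\delta/2$. This bounds $\mathscr J_t$ in $\mathcal{H}^{-(3+\delta/2)}_{\mathrm{Sym}}$, which by Proposition \ref{prop1.4} embeds compactly into $\mathcal{H}^{-(3+\delta)}_{\mathrm{Sym}}$. Markov's inequality then gives tightness of the one-dimensional marginals.

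\textbf{[A]}: this is the step I expect to be most delicate. The second line of \eqref{eq1.44} is stated only for deterministic times $s<t$, whereas Aldous requires a discrete stopping time $\tau$. I would redo the increment estimate of Proposition \ref{propo 2.5} at $(\tau,(\tau+\theta)\wedge T)$. The representation \eqref{scheme2} holds pathwise. The term $\|X^{(h)}_{(\tau+\theta)\wedge T}-X^{(h)}_\tau\|_{2,-1}^2$ is controlled by \eqref{216}, which was proved precisely for stopping times. The drift term is bounded by $\theta\sup_r\|X_r^{(h)}\|_2^2$, whose expectation is controlled via \eqref{bound 2.8}. The noise term has expectation $c_\lambda\theta$ by the strong Markov property of $W$. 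Together these give
\begin{equation*}
\mathbb E\|\zeta^{(h)}_{(\tau+\theta)\wedge T}-\zeta^{(h)}_\tau\|_{2,-2}^2\le C\theta+\mathcal O_h(1).
\end{equation*}
Combining with the displayed bound and Markov's inequality, for given $\epsilon,\delta'>0$ we choose $h_0$ small so that the $\mathcal O_h(1)$ term is negligible for $h\le h_0$, then $\theta_0$ small. The finitely many $h>h_0$ are handled individually, since each single càdlàg process is tight. This verifies \textbf{[A]}, hence \textbf{[T2]}, and Theorem \ref{thm:tightness:D} concludes.
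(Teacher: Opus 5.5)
Your proof is correct. It shares with the paper the key inequality \eqref{239}, obtained exactly as you do from Lemma \ref{lemm1.12} and \eqref{ineq:2.5:b}. It also handles \textbf{[T1]} the same way, via the compact embedding of $\mathcal{H}^{-(3+\delta/2)}_{\mathrm{Sym}}(\mathbb{S})$ into $\mathcal{H}^{-(3+\delta)}_{\mathrm{Sym}}(\mathbb{S})$.

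Where you differ is \textbf{[T2]}, and the paper never uses stopping times there. Since \eqref{239} holds pathwise for all $s<t$, it gives directly, for the modulus in \eqref{A1},
$\omega_{[0,T],\mathcal{H}^{-(3+\delta)}}(\mathscr{J}(X^{(h)}),\gamma)\le \tilde C\,\omega_{[0,T],\mathcal{H}^{-2}}(\zeta^{(h)},\gamma)$.
Condition \textbf{[T2]} for $\mathscr{J}(X^{(h)})$ is then inherited from the tightness of $\zeta^{(h)}$ in $\mathbb{D}([0,T],\mathcal{H}^{-2}_{\mathrm{Sym}}(\mathbb{S}))$ (Proposition \ref{propo 2.5}), through the ``only if'' direction of Theorem \ref{thm:tightness:D}.

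You instead re-derive a stopping-time version of the second line of \eqref{eq1.44}, using \eqref{216}, \eqref{bound 2.8} and optional stopping for $W$, and then check Aldous' criterion directly. You are right that the deterministic-time bound in \eqref{eq1.44} alone would not suffice for \textbf{[A]}, and your fix is sound.

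The paper's route is shorter because it needs no new probabilistic estimate. Yours is self-contained and does not depend on the Skorokhod-space compactness argument used for $\zeta^{(h)}$ in Proposition \ref{propo 2.5}.
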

\begin{proof} 
Fix $\delta >0$ and let $\varphi \in \mathcal{H}^{3+\delta/2}_{\mathrm{sym}}(\mathbb{S})$ with $\|\varphi\|_{2,(3+\delta/2)} \leq 1$. Then, combining 
\eqref{eq:def:zeta:t:h}
and 
Lemma \ref{lemm1.12} with the fact that for all  $0\leq s\leq t \le T$, for any $\psi \in \mathcal{U}^2(\mathbb{S})\cap \mathcal{H}^{2}_{\mathrm{Sym}}(\mathbb{S})$, 
$\langle \eta_t^{(h)}-\eta_s^{(h)},\psi \rangle \geq 0$
and 
$\langle 
\mathscr{J}_{t}(X^{(h)})-\mathscr{J}_{s}(X^{(h)}),\psi \rangle \geq 0$ (see \eqref{ineq:2.5:b}), we have
\begin{align}
	&|\langle \mathscr{J}_{t}(X^{(h)})-\mathscr{J}_{s}(X^{(h)}),\phi^{\uparrow}-\phi^{\downarrow} \rangle|
    \notag
    \\
    &\leq |\langle \mathscr{J}_{t}(X^{(h)})-\mathscr{J}_{s}(X^{(h)}),\phi^{\uparrow}\rangle| + |\langle \mathscr{J}_{t}(X^{(h)})-\mathscr{J}_{s}(X^{(h)}),\phi^{\downarrow} \rangle|\notag
    \\
    &= \langle \mathscr{J}_{t}(X^{(h)})-\mathscr{J}_{s}(X^{(h)}),\phi^{\uparrow}\rangle + \langle \mathscr{J}_{t}(X^{(h)})-\mathscr{J}_{s}(X^{(h)}),\phi^{\downarrow} \rangle
    \notag
    \\
	&\color{black} \leq \|\zeta_t^{(h)}-\zeta_s^{(h)}\|_{2,-2}\left(\|\phi^{\uparrow} \|_{2,2} + \|\phi^{\downarrow} \|_{2,2} \right), \label{ineq:3.18}
\end{align}
Consequently,
using Lemma \ref{lemm1.12}, 
we rewrite\eqref{ineq:3.18}in the form
\begin{align}\label{239}
	\| \mathscr{J}_{t}(X^{(h)})-\mathscr{J}_{s}(X^{(h)})\|_{2,-(3+\delta/2)} \leq \tilde{C}\|\zeta_t^{(h)}-\zeta_s^{(h)}\|_{2,-2},
    \end{align}
where $\tilde C$ depends on $\delta$.    
Observing $\mathscr{J}_{0}(X^{(h)})= \zeta_0^{(h)} =0$, letting $s=0$ in \eqref{239}
and using the first
bound \eqref{eq1.44}, we deduce thereof
\begin{align}\label{bound 3.6}
\sup_{t \in [0,T]}\mathbb{E}\bigl[\|\mathscr{J}_{t}(X^{(h)})\|_{2,-(3+\delta/2)}^2\bigr] \leq C (1+\mathbb{E}\|X_0\|^{2}_2),
\end{align}
for $C$ depending on $\delta$. 
Using the fact that, for any $\delta>0$, bounded sets in $\mathcal{H}^{-(3+\delta/2)}_{\text{sym}}(\mathbb{S})$ are relatively compact in the space $\mathcal{H}^{-(3+\delta)}_{\text{sym}}(\mathbb{S})$  (see Proposition \ref{prop1.4}),  the bound \eqref{bound 3.6} implies that, for all $t\in [0,T]$, the laws of $\{\mathscr{J}_{t}(X^{(h)})\}_{{h \in T/{\mathbb N}^*}}$ form a tight sequence of laws in $\mathcal{H}^{-(3+{\delta})}_{\mathrm{Sym}}(\mathbb{S})$. The condition {\bf [T1]} of  Theorem \ref{thm:tightness:D} is then satisfied.

To conclude, we now need to establish that the family $\{\mathscr{J}_{t}(X^{(h)})\}_{{h \in T/{\mathbb N}^*}}$ satisfies the condition {\bf [T2]} of Theorem \ref{thm:tightness:D}. For this, we recall that the modulus of continuity \eqref{A1} of the process $t\mapsto \mathscr{J}_t(X^{(h)})$ is precisely given by
\begin{align}
        \omega_{[0,T],\mathcal{H}^{-(3+\delta)}}(\mathscr{J}(X^{(h)}),\gamma):= \inf_{\Pi_{\gamma}}\max_{{r_i \in \tilde{\pi}} }\sup_{{r_i\leq s<t<r_{i+1}}} \|\mathscr{J}_{t}(X^{(h)})-\mathscr{J}_{s}(X^{(h)})\|_{2,-(3+\delta)},
    \end{align}
where, $\Pi_{\gamma}$ stands for the set of all increasing sequences {$\tilde{\pi}=\{0=r_0<r_1<\cdots<r_n =T\}$ satisfying $r_{i+1}-r_i \geq \gamma,$ for $i=0,1,\cdots,n-1$.}
Then, for any $\gamma>0$, the bound \eqref{239}, leads to $\omega_{[0,T],\mathcal{H}^{-(3+\delta)}}(\mathscr{J}(X^{(h)}),\gamma) \leq \tilde{C}\omega_{[0,T],\mathcal{H}^{-2}}(\zeta^{(h)},\gamma)$.
Consequently, for any $\epsilon >0$
\begin{align*}
   \lim_{\gamma \searrow 0} \limsup_{h \searrow 0} \mathbb{P}\Big\{ \omega_{[0,T],\mathcal{H}^{-(3+\delta)}}(\mathscr{J}(X^{(h)}),\gamma)\geq \epsilon  \Big\} \leq \lim_{\gamma \searrow 0} \limsup_{h \searrow 0}\mathbb{P}\Big\{ \omega_{[0,T],\mathcal{H}^{-2}}(\zeta^{(h)},\gamma)\geq \frac{\epsilon}{\tilde{C}} \Big\} = 0,
\end{align*}
where the above equality follows from the tightness of $(\zeta^{(h)}_t)_{{h \in T/{\mathbb N}^*}}$ in $\mathcal{H}^{-2}_{\mathrm{Sym}}(\mathbb{S})$ (see Proposition \ref{propo 2.5}). This
completes the proof.
%
\end{proof}
\section{Weak limits}
\label{se:5}
We identify the stochastic dynamics arising in the limit of the  scheme \eqref{scheme0} as the solution of a 
rearranged stochastic heat equation
drifted by the derivative of the entropy on ${\mathcal P}({\mathbb R})$.

\subsection{Continuity of the limiting dynamics}

We start this section with the following result.
\begin{prop}\label{remark 4.1}
    For $\delta >0$, let $(X_t, W_t,\zeta_t,{\mathscr J}_t)_{0 \le t \le T}$ be a weak limit in $\mathbb{D}([0,T], \mathcal{H}^{-1}_{\mathrm{Sym}}(\mathbb{S})\times L^2_{\mathrm{Sym}}(\mathbb{S})
\times 
{\mathcal H}^{-2}_{\rm Sym}(\mathbb{S})
    \times 
    \mathcal{H}^{-(3+\delta)}_{\mathrm{Sym}}(\mathbb{S})
    )$ of the family $\{(X_t^{(h)},W_t,\zeta_t^{(h)},{\mathscr J}_t(X^{(h)})_{0 \le t \le T}\}_{{h \in T/{\mathbb N}^*}}$ when $h\searrow 0$. Assume (without any loss of generality) that it is constructed on the same probability space $(\Omega,\mathfrak{F},\mathbb{P})$ as the approximation scheme. Then: 
    \begin{itemize}
    \item[1.] With probability 1, $(X_t)_{0 \le t \le T}$ is in $\mathcal{C}([0,T], \mathcal{H}^{-1}_{\mathrm{Sym}}(\mathbb{S}))
    \cap L^\infty([0,T], 
    \mathcal{U}^2(\mathbb{S}))
    \cap L^2([0,T],\mathcal{H}^{1}_{\mathrm{Sym}}(\mathbb{S}))$. It satisfies  
\begin{equation}
    \label{260}
\mathbb{E}\bigl[\|X_{t}\|_2^2\bigr] + 
2\int_0^{t} \mathbb{E} \bigl[\|D_{\mathbf{x}}X_{r}\|_2^2\bigr] \mathrm{d}r \leq \mathbb{E}\bigl[\|X_0\|_2^2\bigr] +t  + c_{\lambda}t, \quad t \in [0,T]. 
\end{equation}
Moreover,
    $$\mathbb{E}
    \left[ \sup_{t \in [0,T]}
    \| X_t \|_2^2 + 
    \int_0^T \|D_{\mathbf{x}}X_t\|_2^2\mathrm{d}t 
    \right] \leq C(1+\mathbb{E}\|X_0\|_2^2),$$ for a constant $C$ independent of $X_0$.  
    \item[2.] The process $(W_t)_{0 \le t \le T}$ is an $L^2_{\mathrm{Sym}}(\mathbb{S})\text{-valued}$ $Q\text{-Brownian}$ motion.
    \item[3.] With probability 1, $(\zeta_t)_{0 \le t \le T}$ takes values in $\mathcal{C}([0,T], \mathcal{H}^{-2}_{\mathrm{Sym}}(\mathbb{S}))$.
Moreover, if $\varphi \in \mathcal{H}^{2}_{\mathrm{Sym}}(\mathbb{S})$ is non-decreasing, then, almost-surely, the path $(\langle \zeta_t,\varphi\rangle)_{0 \le t \le T}$ is non-decreasing on $[0,T]$.
    \item[4.] With probability 1,  for any $\varphi \in \mathcal{H}^{2}_{\mathrm{Sym}}(\mathbb{S})$, for all $0\leq s\leq t\leq T$,
    \begin{align*}
 \langle X_t -X_s,\phi \rangle = \langle \zeta_t-\zeta_s,\phi\rangle  + \int_{s}^{t} \langle X_r,\Delta_{\mathbf{x}} \phi \rangle \mathrm{d}r + \langle W_{t}-W_{s},\phi \rangle.
\end{align*}
\item[5.] With probability 1, 
$({\mathscr J}_t)_{0 \le t \le T}$ is in 
$\mathcal{C}([0,T],\mathcal{H}_{\mathrm{Sym}}^{-(3+\delta)}(\mathbb{S}))$. Moreover, if $\varphi \in \mathcal{H}^{3+\delta}_{\mathrm{Sym}}(\mathbb{S}) \cap {\mathcal U}^2({\mathbb S})$, then, almost-surely, the paths $(\langle {\mathscr J}_t,\varphi\rangle)_{0 \le t \le T}$
and $(\langle 
\eta_t ,\varphi \rangle := \langle \zeta_t - {\mathscr J}_t, \varphi \rangle)_{0 \le t \le T}$
are non-decreasing on $[0,T]$, and 
\begin{equation*} 
\forall s,t \in [0,T] \ {\rm with} \ s <t, \quad 
\max \left( \langle {\mathscr J}_t - {\mathscr J}_s,\varphi \rangle, 
\langle \eta_t - \eta_s,\varphi \rangle \right) \leq 
\| \zeta_t - \zeta_s \|_{2,-2}
\| \varphi \|_2.
\end{equation*}
\end{itemize}
\end{prop}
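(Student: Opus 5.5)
The plan is to obtain each item by passing to the limit in the corresponding property of the scheme \eqref{scheme0}, using the estimates of Section~\ref{se:4}. We may assume (by Skorokhod's representation, as the statement allows) that the convergence of $(X^{(h)},W,\zeta^{(h)},{\mathscr J}(X^{(h)}))$ holds almost surely in the product Skorokhod space along a subsequence.

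\textbf{Continuity.} Continuity of the limits will come from the fact that the jumps of the approximations vanish uniformly. The only jumps of $X^{(h)}$, $\zeta^{(h)}$ and ${\mathscr J}(X^{(h)})$ are the increments $\Phi^{(h)}(X_{t_n^-}^{(h)})-X_{t_n^-}^{(h)}$ at the mesh points. By \eqref{eq:1.16:bis} these have $L^2$-norm at most $\sqrt h$, hence also norm at most $\sqrt h$ in $\mathcal H^{-1}$, $\mathcal H^{-2}$ and $\mathcal H^{-(3+\delta)}$. A Skorokhod limit of càdlàg paths whose maximal jump tends to $0$ is continuous, which gives the continuity statements in items 1, 3 and 5.

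\textbf{Item 1: bounds and support.} The membership $X_t\in\mathcal U^2(\mathbb S)$ follows from closedness: by Proposition~\ref{prop1.4}, $L^2$-balls of $\mathcal U^2(\mathbb S)$ are closed in $\mathcal H^{-1}$. The estimate \eqref{260} and the $\sup$ bound pass to the limit by lower semicontinuity (Fatou) applied to \eqref{bound 1.26} and \eqref{bound 2.8}. The norms $\|X_t\|_2^2$ and $\int_0^T\|DX_t\|_2^2\,\mathrm dt$ are lower semicontinuous for the $\mathcal H^{-1}$ convergence at continuity times, which here means all times. To get \eqref{260} with the constant $t+c_\lambda t$, I would instead take expectations directly in \eqref{bound 2.9}. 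This yields $\mathbb E\|X_t\|_2^2\le \mathbb E\|X_0\|_2^2 + (1+c_\lambda)t$ at the discrete level, and then I pass to the limit. The bound on $\int_0^T\|DX\|_2^2$ also shows $X\in L^2([0,T],\mathcal H^1)$ almost surely.

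\textbf{Items 2 and 4.} The process $W$ is unchanged along the sequence, so it is still a $Q$-Brownian motion. The point needing care is the filtration: I would use the filtration generated by the limit processes and check that increments of $W$ are independent of the past. This holds because that independence is a property of finite-dimensional laws, which pass to the limit. Item 4 follows by passing to the limit in \eqref{scheme2} for fixed $\varphi$. The term $\int_s^t\langle X_r^{(h)},\Delta\varphi\rangle\,\mathrm dr$ converges thanks to the uniform convergence in $\mathcal H^{-1}$ and the $L^2$ bounds. I would first work with a countable dense family of $\varphi$ and then extend to all $\varphi\in\mathcal H^2_{\rm Sym}$ by density, using continuity in $\mathcal H^{-2}$.

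\textbf{Monotonicity, items 3 and 5.} For $\varphi\in\mathcal H^2\cap\mathcal U^2$, the increments $\langle\zeta^{(h)}_t-\zeta^{(h)}_s,\varphi\rangle$ are nonnegative by \eqref{ineq:2.5}, and nonnegativity is preserved under pointwise convergence. The same argument applies to ${\mathscr J}$ and to $\eta=\zeta-{\mathscr J}$ via \eqref{ineq:2.5:b}, for $\varphi\in\mathcal H^{3+\delta}\cap\mathcal U^2$ so that all pairings make sense. The bound in item 5 also follows by taking limits in \eqref{ineq:2.5:b}. As written it has $\|\varphi\|_2$ where \eqref{ineq:2.5:b} gives $\|\varphi\|_{2,2}$, so I expect the intended statement to be the $\|\varphi\|_{2,2}$ version.

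\textbf{Main obstacle.} I expect the main subtlety to be the joint convergence of ${\mathscr J}$ and $\zeta$ in different spaces. One has to make sure that the limit of $\zeta^{(h)}-{\mathscr J}(X^{(h)})$ is $\zeta-{\mathscr J}$, which requires embedding both in the common space $\mathcal H^{-(3+\delta)}$. The other subtle point is the identification of the filtration in item 2.
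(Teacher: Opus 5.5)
Your proposal is correct and follows essentially the same route as the paper: continuity of the limits comes from the jumps $\Phi^{(h)}(X^{(h)}_{t_n^-})-X^{(h)}_{t_n^-}$ being uniformly $O(\sqrt h)$ by \eqref{eq:1.16:bis}. Item~1 follows from lower semicontinuity applied to \eqref{bound 2.9} and \eqref{bound 2.8} together with Proposition~\ref{prop1.4}, and the monotonicity properties pass to the limit from \eqref{ineq:2.5} and \eqref{ineq:2.5:b}. You are also right that the last display of item~5 can only be obtained with $\|\varphi\|_{2,2}$ in place of $\|\varphi\|_2$: the paper's argument (both nonnegative terms sum to $\langle \zeta_t-\zeta_s,\varphi\rangle$) yields exactly that.
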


 Notice from a standard separability argument that the quantifiers ``with probability 1'' and ``for any 
$\varphi$'' appearing in items 3, 4 and 5, can be exchanged.

\begin{proof}
The existence of a weak limit follows from Propositions \ref{Prop 3.1} and \ref{propo 2.5}. Item 2 of the statement is obvious. 
\vskip 4pt

\textit{First Step.} We first establish item 1. 
We consider the maximal jump of the scheme \eqref{scheme0}, measured in the $\mathcal{H}^{-1}_{\mathrm{Sym}}(\mathbb{S})\text{-norm}$:  
$$
\mathfrak{J}(X^{(h)}):= \sup_{0<t\leq T}\|X_{t}^{(h)}-X_{t^-}^{(h)}\|_{2,-1}^2.$$
From \eqref{scheme0} together with \eqref{eq:1.16:bis}, we obtain the estimate: 
\begin{align}
\label{239b}
   \mathfrak{J}(X^{(h)}) \leq \sup_{0<t\leq T}\|\zeta_t^{(h)}-\zeta_{t^-}^{(h)}\|_{2}^2 = \sup_{0<t\leq T} \|\Phi^{(h)}(X_{t^-}^{(h)})-X_{t^-}^{(h)}\|_{2}^2 \leq  h.
\end{align}
In particular, the scheme \eqref{scheme0} admits uniformly bounded jumps and the functional $\mathfrak{J}(X^{(h)})$ converges to zero as $h\searrow 0$. By the continuous mapping theorem, we deduce that $\mathfrak{J}(X)= \sup_{0<t\leq T}\|X_{t}-X_{t^-}\|_{2,-1}^2 =0$ (see for instance
\cite[Example 12.1 and Theorem 13.4]{Billingsley}, 
which are stated for real-valued processes but which may be easily extended to the Hilbert-valued setting). Hence, with probability one, $(X_t)_{t\in[0,T]} \in \mathcal{C}([0,T],\mathcal{H}^{-1}_{\mathrm{Sym}}(\mathbb{S})).$ 
By a  lower semi-continuity argument (see the proof of \cite[Proposition 6]{DelHam25}), we can deduce 
from 
the bound 
\eqref{bound 2.8} (in the proof of Proposition \ref{Prop 3.1}) that 
\begin{equation*}
{\mathbb E}\bigl[ 
\sup_{t \in [0,T]}
\| X_t \|_2^2
\bigr]
\leq C  \left(1 +\mathbb{E}[ \|X_{0}\|_2^{2}]\right ).
\end{equation*}
Moreover, by Proposition \ref{prop1.4} and the portemanteau theorem, 
we see that, with probability 1, for any 
$t \in [0,T]$, $X_t \in {\mathcal U}^2({\mathbb S})$. It remains to prove that, almost-surely, $(X_t)_{0 \le t \le T}$ belongs to the space $L^2([0,T],\mathcal{H}^{1}_{\mathrm{Sym}}(\mathbb{S})).$
Using \eqref{bound 2.9} and the contraction principle of the heat operator, we arrive at
\begin{align}\label{259}
	\mathbb{E}\|\mathrm{e}^{\epsilon \Delta}X_{t}^{(h)}\|_2^2 + 2\int_{0}^{t}\mathbb{E} \|D_{\mathbf{x}}(\mathrm{e}^{\epsilon \Delta}X_{r}^{(h)})\|_2^2\mathrm{d}r \leq \mathbb{E}\|X_0\|_2^2+ t +  c_{\lambda}t .
\end{align}
Additionally, since the mapping $\mathcal{Y}\mapsto D_{\mathbf{x}} (\mathrm{e}^{\epsilon \Delta}\mathcal{Y})$ is continuous from $\mathcal{H}^{-1}(\mathbb{S})$ into $L^2(\mathbb{S})$, it follows that \eqref{259} also holds for the weak limit as $h\searrow 0$. More precisely, we have
\begin{align*}
	\mathbb{E}\|\mathrm{e}^{\epsilon \Delta}X_{t}\|_2^2 + 2\int_{0}^{t}\mathbb{E} \|D_{\mathbf{x}}(\mathrm{e}^{\epsilon \Delta}X_{r})\|_2^2\mathrm{d}r \leq \mathbb{E}\|X_0\|_2^2+ t +  c_{\lambda}t 
\end{align*}
and the above bound is 
true for any value of the
regularization parameter $\epsilon>0$.
Now, letting $\epsilon\searrow 0$, respectively, in \eqref{259}, we deduce thereof
\begin{align*}
\mathbb{E}\|X_{t}\|_2^2 + 
2\int_0^{t} \mathbb{E} \|D_{\mathbf{x}}X_{r}\|_2^2 \mathrm{d}r \leq \mathbb{E}\|X_0\|_2^2 +t  + c_{\lambda}t, \quad t \in [0,T], 
\end{align*}
which is \eqref{260}

\textit{Second Step.} 
By weak convergence, we know that, for any 
$\varphi \in {\mathcal H}_{\mathrm{Sym}}^4({\mathbb S})$, 
\eqref{eq2.39} holds true almost surely. By the first step, the duality product $\langle X_r,\Delta_{\mathbf{x}} \varphi \rangle$ appearing therein is well-defined for 
$\varphi \in {\mathcal H}_{\mathrm{Sym}}^2({\mathbb S})$, 
which permits to regard 
$(\zeta_t)_{0 \le t \le T}$ as a continuous process with values in ${\mathcal H}_{\mathrm{Sym}}^{-2}({\mathbb S})$, i.e., with probability 1,  $(\zeta_t)_{0 \le t \le T}\in \mathcal{C}([0,T],\mathcal{H}^{-2}_{\mathrm{Sym}}(\mathbb{S}))$.
When $\varphi$ is smooth and non-decreasing,  monotonicity of the path $t\mapsto \langle \zeta_t,\varphi\rangle$  follows from \eqref{ineq:2.5};
by a density argument, this remains true when $\varphi$ is just in 
$\mathcal{H}^{2}_{\mathrm{Sym}}(\mathbb{S})$.
This proves items 3 and 4. 
 
By \eqref{239}
and 
\eqref{239b}, 
jumps vanish asymptotically. We deduce that
$(\mathscr{J}_t)_{0 \le t \le T}$ admits continuous trajectories
with values in 
$\mathcal{H}_{\mathrm{Sym}}^{-(3+\delta)}(\mathbb{S})$.
This is the first part of item 5.
Monotonicity of 
$({\mathscr J}_t)_{0 \le t \le T}$ 
is proven by means of 
\eqref{237}
and
\eqref{ineq:2.5:b}, from which we know that, for any stepsize $h>0$, the trajectories of 
$({\mathscr J}_t(X^{(h)}))_{0 \le t \le T}$ are non-decreasing.
Similarly, 
monotonicity of 
$(\eta_t)_{0 \le t \le T}$ follows the fact that, for any stepsize $h>0$, 
the trajectories of 
$(\eta_t^{(h)})_{0 \le t \le T}$ are non-decreasing. The last display in the statement follows just from the decomposition:
\begin{equation*}
\langle \zeta_t - \zeta_s,\varphi \rangle 
= 
\langle {\mathscr J}_t - 
{\mathscr J}_s,\varphi\rangle
+
\langle \eta_t - \eta_s,\varphi \rangle,
\qquad 
\varphi \in {\mathcal H}_{\mathrm{Sym}}^{3+\delta}({\mathbb S}),
\end{equation*}
together with the fact that both terms on the right-hand side are non-negative. 
\end{proof}

\subsection{%
  \texorpdfstring{%
  On weak limits of  $\{(\mathscr{J}_t(X^{(h)}))_{0 \le t \le T}\}_{{h \in T/{\mathbb N}^*}}$%
  }{%
    On weak limits of the pure jump parts%
  }%
}

In order to identify the properties of the process $({\mathscr J}_t)_{0 \le t \le T}$ in the statement of 
Proposition \ref{remark 4.1}, we first prove
 the following result:

\begin{prop}
\label{prop:5.2:06-07:1}
Let 
$h \in T/{\mathbb N}^*$
and 
$\varphi \in \mathcal{C}^1(\mathbb{S}) \cap {\mathcal U}^2(\mathbb{S})$.
Then, 
for all $0\leq s\leq t\leq T$,
	\begin{align}\label{2.39}
		0\leq \langle \varphi, \mathscr{J}_{t}(X^{(h)}) -\mathscr{J}_{s}(X^{(h)})\rangle =\frac{1}{2}\sum_{j=N_s+1}^{N_t} \int_0^h\int_{\mathbb{S}} \varphi'(\mathbf{x})\frac{1}{D_{\mathbf{x}}F_{\mu_{j},r}^{-1}(\mathbf{x})}\mathrm{d}\mathbf{x}\mathrm{d}r,
	\end{align}
    where, for all $r>0$,
\begin{align}\label{3.8}
\mathbb{S}\ni \mathbf{x}\mapsto F_{\mu_j,r}^{-1}(\mathbf{x}) =(F_{\mu_j\star\Gamma_r})^{-1}(\mathbf{x}):= \tilde{F}_{\mu_{j},r}^{-1}(2{x}){\mathbbm 1}_{{(0,1/2)}}(x) + \tilde{F}_{\mu_{j},r}^{-1}(-2{x}) {\mathbbm 1}_{{(-1/2,0)}}({x}),
\end{align}
with $x$ denoting the representative of 
${\mathbf x}$ in $(-1/2,1/2]$, and 
with $\mu_{j}$ denoting the law of $X_{t_j^{-}}^{(h)}$, that is,  $\mu_{j}:= \mathrm{Leb}_{\mathbb{S}}\circ (X_{t_j^{-}}^{(h)})^{-1}$, for $j\in \{1,\ldots, N\}$,
and 
$\tilde{F}_{\mu_{j},t}(z):= (\tilde{F}_{\mu_{j}}\star\Gamma_t)(z)= (\tilde{F}_{\mu_{j}\star\Gamma_t})(z) $, for $z \in {\mathbb R}$. 

{Here, we impose 
$F_{\mu_j,r}^{-1}({\mathbf x})$ to be equal to 
$0$ when $x=0$ and $x=1/2$, but, due to the Gaussian convolution, $\mu_j \star \Gamma_r$ has the whole ${\mathbb R}$ as support. In particular, 
$\tilde{F}^{-1}_{\mu_j,r}(0)$
is equal to $-\infty$
and 
$\tilde{F}^{-1}_{\mu_j,r}(1)$
to $+ \infty$. 
Our choice of letting 
$F_{\mu_j,r}^{-1}({\mathbf x})$ be equal to $0$ when 
$x=0$ or $1/2$ is for simplicity. This does not have an impact on the proof.}
\end{prop}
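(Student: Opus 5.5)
The plan is to reduce the statement to a single jump and then rewrite the formula of Lemma~\ref{lemm1.11}(ii) in quantile variables. By \eqref{237},
\[
\langle \varphi, \mathscr{J}_{t}(X^{(h)}) -\mathscr{J}_{s}(X^{(h)})\rangle = \sum_{j=N_s+1}^{N_t} \langle \varphi, \Phi^{(h)}(X_{t_j^-}^{(h)}) - X_{t_j^-}^{(h)}\rangle .
\]
Since $X_{t_j^-}^{(h)}\in\mathcal U^2(\mathbb S)$, it coincides with $F^{-1}_{\mu_j}$, and $\Phi^{(h)}(X_{t_j^-}^{(h)})=F^{-1}_{\mu_j\star\Gamma_h}$. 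Nonnegativity of each summand is then item~i of Lemma~\ref{lemm1.11}. It therefore suffices to prove, for a fixed $\mu\in\mathcal P_2(\mathbb R)$ and $\varphi\in\mathcal C^1(\mathbb S)\cap\mathcal U^2(\mathbb S)$,
\[
\langle\varphi,F^{-1}_{\mu\star\Gamma_h}-F^{-1}_\mu\rangle=\frac12\int_0^h\int_{\mathbb S}\varphi'(\mathbf x)\frac{1}{D_{\mathbf x}F^{-1}_{\mu,r}(\mathbf x)}\,\mathrm d\mathbf x\,\mathrm d r .
\]

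By item~ii of Lemma~\ref{lemm1.11}, the left-hand side equals
\[
\frac14\int_0^h\int_{\mathbb R}\varphi'\Bigl(\tfrac{\tilde F_{\mu,r}(z)}{2}\Bigr)\,[D_z\tilde F_{\mu,r}(z)]^2\,\mathrm dz\,\mathrm dr .
\]
For fixed $r>0$, the function $\tilde F_{\mu,r}=\tilde F_\mu\star\Gamma_r$ is smooth with density $p_r=\mu\star\Gamma_r>0$ everywhere. Hence $\tilde F_{\mu,r}$ is a smooth increasing bijection $\mathbb R\to(0,1)$, and its inverse $\tilde F^{-1}_{\mu,r}$ is smooth on $(0,1)$ with derivative $1/p_r(\tilde F^{-1}_{\mu,r}(y))$. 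Substitute $y=\tilde F_{\mu,r}(z)$, so that $\mathrm dy=p_r(z)\,\mathrm dz$. Then
\[
\int_{\mathbb R}\varphi'(\tilde F_{\mu,r}/2)\,p_r^2\,\mathrm dz=\int_0^1\varphi'(y/2)\,p_r(\tilde F^{-1}_{\mu,r}(y))\,\mathrm dy=\int_0^1\frac{\varphi'(y/2)}{(\tilde F^{-1}_{\mu,r})'(y)}\,\mathrm dy .
\]
Next set $y=2x$ with $x\in(0,1/2)$. From \eqref{3.8}, $D_{\mathbf x}F^{-1}_{\mu,r}(x)=2(\tilde F^{-1}_{\mu,r})'(2x)$ there, so the last integral becomes
\[
\int_0^{1/2}\varphi'(x)\,\frac{2\cdot 2}{D_{\mathbf x}F^{-1}_{\mu,r}(x)}\,\mathrm dx .
\]

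It remains to handle the other half of the torus. On $(-1/2,0)$, $\varphi'$ is odd, since $\varphi$ is even, and $D_{\mathbf x}F^{-1}_{\mu,r}$ is also odd. Their ratio is therefore even, so the integral over $(0,1/2)$ is half of the integral over $\mathbb S$. Collecting constants gives $\tfrac14\cdot 4\cdot\tfrac12=\tfrac12$, which is the factor in \eqref{2.39}. The endpoints $x=0,1/2$ have measure zero, so the convention imposed there is irrelevant.

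The main obstacle is justifying the change of variables as an identity between possibly improper integrals. The map $1/D_{\mathbf x}F^{-1}_{\mu,r}$ vanishes near $x=0$ and $x=1/2$ (where $p_r\to0$), so it is in fact bounded near the endpoints. Moreover, the $z$-integrand is absolutely integrable in $(z,r)$: the proof of Lemma~\ref{lemm1.11} shows that $\int_0^h\int_{\mathbb R}p_r^2\,\mathrm dz\,\mathrm dr<\infty$, and $\varphi'$ is bounded. The substitution is a diffeomorphism of open intervals applied to a nonnegative weight times a bounded function, so it holds by monotone convergence on exhausting compacts. Fubini in $(r,\mathbf x)$ then follows from the same integrability. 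Summing over $j$ concludes.
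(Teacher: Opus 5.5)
Your proposal is correct and follows essentially the same route as the paper: apply Lemma~\ref{lemm1.11}(ii) to each jump, substitute $y=\tilde F_{\mu_j,r}(z)$ using $D_z\tilde F_{\mu_j,r}(z)=1/D_y\tilde F^{-1}_{\mu_j,r}(\tilde F_{\mu_j,r}(z))$, pass to the torus variable via \eqref{3.8} and the parity of $\varphi'$ and $D_{\mathbf x}F^{-1}_{\mu_j,r}$, and get nonnegativity from Lemma~\ref{lemm1.11}(i). Your direct positivity argument ($p_r=\mu\star\Gamma_r>0$) and your integrability justification via $\int_0^h\int_{\mathbb R}p_r^2\,\mathrm dz\,\mathrm dr<\infty$ are somewhat more explicit than the paper's treatment, but the argument is the same.
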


\begin{proof}
Let $t>0$. 
By Lemma~\ref{lemm1.11} (summing over 
$j \in \{1,\cdots,N_t\}$), we know that
	\begin{align*}
		\Bigl\langle \varphi, \sum_{j=1}^{N_t}(F_{\mu_j\star \Gamma_h}^{-1}-F_{\mu_j}^{-1}) \Bigr\rangle = \frac{1}{4} \sum_{j=1}^{N_t} \int_{\mathbb{R}}\int_0^h \phi'\Big(\frac{(\tilde{F}_{\mu_{j}}\star\Gamma_r)(z)}{2}\Big)[D_z( \tilde{F}_{\mu_{j}}\star\Gamma_r)]^2(z)\mathrm{d}r\mathrm{d}z. 
	\end{align*} 
Recalling the notation $\tilde{F}_{\mu_{j},r}(z)= (\tilde{F}_{\mu_{j}}\star\Gamma_r)(z)= (\tilde{F}_{\mu_{j}\star\Gamma_r})(z) $, we have the identity $\tilde{F}_{\mu_{j},r}^{-1}(\tilde{F}_{\mu_{j},r}(z))=z$ for any $z\in \mathbb{R}$. Moreover,  $\tilde{F}_{\mu_{j},r}$ is continuously differentiable
and (strictly) increasing (strict monotonicity follows from the identity 
$D_z \tilde{F}_{\mu_{j},r}(z) 
= (D_z \tilde{F}_{\mu_{j},r/2} \star \Gamma_{r/2})(z)$; if the left-hand were equal to 0 at some point $z$, then $D_z \tilde{F}_{\mu_{j},r/2}$ would be identically null). And then, the converse $\tilde{F}^{-1}_{\mu_{j},r}$
is also continuously differentiable and
\begin{align*}
D_z\tilde{F}_{\mu_{j},r}(z)D_z\tilde{F}_{\mu_{j},r}^{-1}\left( \tilde{F}_{\mu_{j},r}(z)\right)=1 \Leftrightarrow  D_z\tilde{F}_{\mu_{j},r}(z)= \frac{1}{D_z\tilde{F}_{\mu_{j},r}^{-1}\left(\tilde{F}_{\mu_{j},r}(z)\right)}.
\end{align*}
 Therefore, plugging back the above identity into the previous one, we obtain:
	\begin{align*}
		\Bigl\langle \varphi, \sum_{j=1}^{N_t}(F_{\mu_j\star \Gamma_h}^{-1}-F_{\mu_j}^{-1})  \Bigr\rangle &= \frac{1}{4} \sum_{j=1}^{N_t} \int_{\mathbb{R}}\int_0^h \phi'\big(\frac{\tilde{F}_{\mu_{j},r}(z)}{2}\big)\frac{1}{\big[D_z\tilde{F}_{\mu_{j},r}^{-1}\left(\tilde{F}_{\mu_{j},r}(z)\right)\big]^2}\mathrm{d}r\mathrm{d}z \\
		&=\frac{1}{4} \sum_{j=1}^{N_t}\int_0^h\int_0^1 \phi'\big(\frac{y}{2}\big)\frac{1}{D_y\tilde{F}_{\mu_{j},r}^{-1}(y)}\mathrm{d}y\mathrm{d}r,
	\end{align*}
where we used Fubini's lemma and the change of variable $y = \tilde{F}_{\mu_{j},t}(z)$ to derive the last line above.

We now make use of 
\eqref{3.8}. 
Expanding the above expression together with another change of variable, we obtain 
\begin{align*}
\Bigl\langle \varphi, \sum_{j=1}^{N_t}(F_{\mu_j\star \Gamma_h}^{-1}-F_{\mu_j}^{-1}) \Bigr\rangle=\frac{1}{2} \sum_{j=1}^{N_t}\int_0^h\Bigg[ \int_{-1/2}^0 \frac{-\phi'(-x)}{-2D_x\tilde{F}_{\mu_{j},r}^{-1}(-2x)}\mathrm{d}x + \int_0^{1/2} \frac{\phi'(x)}{2D_x\tilde{F}_{\mu_{j},r}^{-1}(2x)}\mathrm{d}x\Bigg]\mathrm{d}r. 
\end{align*}
And then, by \eqref{3.8}, we get the equality in \eqref{2.39}. The inequality
in \eqref{2.39}
(i.e., the fact that the term in the middle is non-negative) 
follows from Lemma \ref{lemm1.11}.
\end{proof}

We deduce the following corollary: 

\begin{cor}
\label{cor:E_eps}
Let $h \in {T/{\mathbb N}^*}$
and $\varphi \in {\mathcal C}^1({\mathbb S}) \cap {\mathcal U}^2({\mathbb S})$ such that $\varphi'({\mathbf x})=0$ if,  for some $\varsigma \in (0,1/4)$, ${\mathbf x} 
\in \cup_{k \in {\mathbb Z}/2}
(k-\varsigma,k+\varsigma)$. For a given $\varepsilon >0$, let 
\begin{equation}\label{eq:varepsilon}
   E_\varepsilon(\mathbf{x}) := {\varepsilon}\begin{cases} 1& \,\,{x}\in [0,1/2)+{\mathbb Z},
   \\
   -1& \,\,  {x}\in [-1/2,0)+{\mathbb Z},
    \end{cases}
\end{equation} 
Then, for $g$, a smooth symmetric density on ${\mathbb R}$ with support included in $(-\varsigma,\varsigma)$, and for all $0 \leq s \leq t$,

\begin{equation}\label{eq1.53}
	\langle \varphi\star g, \mathscr{J}_{t}(X^{(h)}) -\mathscr{J}_{s}(X^{(h)}) \rangle\geq\frac{1}{2}\sum_{j=N_s+1}^{N_t} \int_0^h\int_{\mathbb{S}} \phi'(\mathbf{x})\frac{1}{E_\varepsilon(\mathbf{x}) +[g\star D_{\mathbf{x}}F_{\mu_{j},r}^{-1}](\mathbf{x})}\mathrm{d}\mathbf{x}\mathrm{d}r.
\end{equation}
\end{cor}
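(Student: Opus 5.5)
The plan is to apply Proposition~\ref{prop:5.2:06-07:1} with the test function $\varphi\star g$ in place of $\varphi$, and then to get \eqref{eq1.53} pointwise in $r$ and $j$ from Jensen's inequality for $u\mapsto 1/u$, using the sign structure of $D_{\mathbf x}F^{-1}_{\mu_j,r}$ on the two half-periods. Throughout, $\star$ on $\mathbb S$ means periodic convolution, with $g$ extended periodically (its support is in $(-\varsigma,\varsigma)$ with $\varsigma<1/4$).

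\emph{Step 1: $\varphi\star g$ is admissible.} Clearly $\varphi\star g\in\mathcal C^1(\mathbb S)$, and it is symmetric because both $\varphi$ and $g$ are. Its derivative is $(\varphi\star g)'=\varphi'\star g$. For $x\in(0,1/2)$ we have
\begin{equation*}
(\varphi'\star g)(x)=\int\varphi'(x-y)g(y)\,\mathrm dy,
\end{equation*}
and the points $x-y$ lie in $(x-\varsigma,x+\varsigma)$. Whenever such a point falls outside $(0,1/2)$ modulo $1$, it lies within $\varsigma$ of $0$ or of $1/2$, where $\varphi'$ vanishes. Elsewhere $\varphi'\ge 0$. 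Hence $(\varphi\star g)'\ge0$ on $(0,1/2)$, and $\varphi\star g\in\mathcal U^2(\mathbb S)$. Proposition~\ref{prop:5.2:06-07:1} then gives
\begin{equation*}
\langle\varphi\star g,\mathscr J_t-\mathscr J_s\rangle=\frac12\sum_{j}\int_0^h\int_{\mathbb S}(\varphi'\star g)(\mathbf x)\,\frac{1}{D_{\mathbf x}F^{-1}_{\mu_j,r}(\mathbf x)}\,\mathrm d\mathbf x\,\mathrm dr .
\end{equation*}
Since $g$ is even, Fubini moves the convolution onto the other factor. Writing $u:=D_{\mathbf x}F^{-1}_{\mu_j,r}$, the inner integral becomes $\int\varphi'(\mathbf x)\,\bigl(g\star(1/u)\bigr)(\mathbf x)\,\mathrm d\mathbf x$.

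For Fubini I need local integrability of $1/u$. On $(0,1/2)$ we have $1/u(x)=\tfrac12\,\tilde p_r\bigl(\tilde F^{-1}_{\mu_j,r}(2x)\bigr)$, where $\tilde p_r$ is the bounded Gaussian-smoothed density, so $1/u$ is bounded. It is also the derivative of $\tilde F_{\mu_j,r}\circ \tilde F^{-1}_{\mu_j,r}$.

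\emph{Step 2: Jensen pointwise.} Only points $\mathbf x$ in the support of $\varphi'$ matter, so $x\in[\varsigma,1/2-\varsigma]$ or its mirror image. For $x\in[\varsigma,1/2-\varsigma]$, all points $x-y$ with $y\in\operatorname{supp}g$ lie in $(0,1/2)$, where $u>0$. Because $g$ is a probability density and $1/u$ is convex on $(0,\infty)$,
\begin{equation*}
g\star(1/u)\ge \frac{1}{g\star u}\ge \frac{1}{\varepsilon+g\star u}=\frac{1}{E_\varepsilon+g\star u},
\end{equation*}
and multiplying by $\varphi'\ge0$ preserves the inequality. On the mirror interval, $u<0$ and $1/u$ is concave on $(-\infty,0)$, so $g\star(1/u)\le 1/(g\star u)$. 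Moreover $g\star u<0$, so $1/(g\star u)\le 1/(g\star u-\varepsilon)=1/(E_\varepsilon+g\star u)$. Multiplying by $\varphi'\le0$ reverses both inequalities, which is the direction we want. Integrating in $\mathbf x$ and $r$ and summing over $j$ yields \eqref{eq1.53}. The role of $\varepsilon$ is only to keep the denominator away from $0$, which will matter later when passing to the limit.

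\emph{Main obstacle.} The delicate part is the support bookkeeping. One must check that the $\varsigma$-neighbourhoods of $0$ and $1/2$ in which $\varphi'$ vanishes prevent $g$ from mixing the two half-periods where $u$ changes sign. It must also be checked that the convention $F^{-1}_{\mu_j,r}=0$ at $x=0,1/2$, together with the blow-up of $u$ there, is harmless. This holds because those points are at distance at least $\varsigma$ from the support of $\varphi'$ after convolution. The integrability needed for Fubini is the secondary issue, and it is handled by the boundedness of $1/u$ noted in Step 1.
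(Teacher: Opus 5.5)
Your proof is correct and follows the paper's argument: the sign bookkeeping shows $\varphi\star g\in\mathcal U^2(\mathbb S)$, you apply Proposition~\ref{prop:5.2:06-07:1} to $\varphi\star g$, move the convolution by Fubini using the evenness of $g$, and finish with Jensen for $u\mapsto 1/u$ and insertion of $E_\varepsilon$ through the common sign. The differences are cosmetic: you split into the convex and concave branches where the paper takes absolute values, and you justify Fubini by boundedness of $1/D_{\mathbf x}F^{-1}_{\mu_j,r}$ rather than by the sign of the integrand. Your aside that $1/u$ is the derivative of $\tilde F_{\mu_j,r}\circ\tilde F^{-1}_{\mu_j,r}$ is wrong, since that composition is the identity, but it plays no role in the argument.
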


\begin{proof}\label{remk2.10}
{By Proposition 
\ref{prop:5.2:06-07:1},} 
we first observe that, for all ${\mathbf x},{\mathbf y} \in {\mathbb S}$ {and $n \in \{1,\cdots,N\}$},
$D_{\mathbf{x}}F_{\mu_{n},t}^{-1}(\mathbf{x})$ and $\varphi'(\mathbf{y})$
have the same sign if ${\mathbf x}$ and ${\mathbf y}$ both belong to $(0,1/2)+{\mathbb Z}$,
or if they both belong to $(-1/2,0)+{\mathbb Z}$.
Moreover, we notice that for ${\mathbf x} \in {\mathbb S}$ and $x' \in {\mathbb R}$
such that $g(x') \neq 0$ and $\varphi'({\mathbf x}-x') \neq 0$,
we necessarily have $x' \in (-\varsigma,\varsigma)$, and one of the following two assertions holds:
either
${\mathbf x}-x' \in (\varsigma,1/2-\varsigma)+{\mathbb Z}$,
or
${\mathbf x}-x' \in (-1/2+\varsigma,-\varsigma)+{\mathbb Z}$.
In particular, either ${\mathbf x}-x'$ and ${\mathbf x}$ both belong to $(0,1/2)+{\mathbb Z}$,
or they both belong to $(-1/2,0)+{\mathbb Z}$.
In any case,
$\varphi'({\mathbf x}-x')$ and $D_{\mathbf{x}}F_{\mu_{j},r}^{-1}(\mathbf{x})$
have the same sign. And then, 
$(\varphi \star g)'({\mathbf x})$ and 
$DF_{\mu_{j},r}^{-1}(\mathbf{x})$ have the same sign. 
Similarly, 
$\varphi'({\mathbf x})$ and $D_{\mathbf{x}}F_{\mu_{j},r}^{-1}(\mathbf{x}-x')$
have the same sign, 
and 
$\varphi'({\mathbf x})$ and 
$(D_{\mathbf{x}}F_{\mu_{j},r}^{-1}\star g)(\mathbf{x})$ have the same sign.

As 
{a consequence of the paragraph above},  $\varphi\star g$ belongs to $\mathcal{U}^2(\mathbb{S})$,
which makes it possible to apply 
\eqref{2.39}
with  
$\varphi\star g$ substituted for 
$\varphi$. We then observe that, for $j \in \{1,\cdots,N\}$,
\begin{equation*}
\begin{split}
\int_0^h \int_{{\mathbb S}}
(\varphi \star g)'({\mathbf x}) \frac1{ D_{\mathbf{x}} F_{\mu_j,r}^{-1}({\mathbf x})} {\mathrm d}  {\mathbf x} {\mathrm d}r
= \int_0^h \left[ \int_{{\mathbb S}}
\int_{{\mathbb R}}
\varphi'({\mathbf x}-x') g(x') \frac1{ D_{\mathbf{x}} F_{\mu_j,r}^{-1}({\mathbf x})} {\mathrm d}  {\mathbf x}
{\mathrm d}  x' \right] {\mathrm d}r.
\end{split}
\end{equation*} 
By applying Fubini's theorem, performing a change of variable and then using the symmetry of $g$, we get 
\begin{equation*}
\begin{split}
&\int_0^h \int_{{\mathbb S}}
(\varphi \star g)'({\mathbf x}) \frac1{ D_{\mathbf{x}} F_{\mu_j,r}^{-1}({\mathbf x})} {\mathrm d}  {\mathbf x} {\mathrm d}r
= 
\int_0^h \int_{{\mathbb R}} 
g(x') 
\left[ 
\int_{{\mathbb S}}
\varphi'({\mathbf x}-x')   \frac1{ D_{\mathbf{x}} F_{\mu_j,r}^{-1}({\mathbf x})} {\mathrm d}  {\mathbf x}
 \right] {\mathrm d}  x' {\mathrm d}r
 \\
 &= \int_0^h \int_{{\mathbb R}} 
g(x') 
\left[ 
\int_{{\mathbb S}}
\varphi'({\mathbf x})   \frac1{ D_{\mathbf{x}} F_{\mu_j,r}^{-1}({\mathbf x}+x')} {\mathrm d}  {\mathbf x}
 \right] {\mathrm d}  x' {\mathrm d}r
 =\int_0^h \int_{{\mathbb S}} 
\left[ 
\int_{{\mathbb R}}
g(x') 
  \varphi'({\mathbf x}) \frac1{ D_{\mathbf{x}} F_{\mu_j,r}^{-1}({\mathbf x}-x')} {\mathrm d}  x'
 \right] {\mathrm d} {\mathbf x} {\mathrm d}r.
\end{split}
\end{equation*} 
By the preliminary observation, 
$ \varphi'({\mathbf x})$
and $D_{\mathbf{x}} F_{\mu_j,r}^{-1}({\mathbf x}-x')$ have the same sign. We get 
\begin{equation*}
\begin{split}
\int_0^h \int_{{\mathbb S}}
(\varphi \star g)'({\mathbf x}) \frac1{ D_{\mathbf{x}} F_{\mu_j,r}^{-1}({\mathbf x})} {\mathrm d}  {\mathbf x} {\mathrm d}r
&=\int_0^h \int_{{\mathbb S}} 
\left[
\int_{{\mathbb R}}
g(x') 
\vert \varphi'({\mathbf x}) 
\vert 
  \frac1{ \vert D_{\mathbf{x}} F_{\mu_j,r}^{-1}({\mathbf x}-x') \vert} {\mathrm d}  x'
 \right] {\mathrm d} {\mathbf x} {\mathrm d}r
 \\
 &\geq \int_0^h 
\int_{\mathbb S}
\frac{\vert \varphi'({\mathbf x}) \vert}{\int_{\mathbb R} g(x') 
\vert D_{\mathbf{x}}F_{\mu_{j},r}^{-1}(\mathbf{x}-x')\vert \mathrm{d}x'}
\mathrm{d}\mathbf{x}\mathrm{d}r,
\end{split}
\end{equation*}
 we applied Jensen's inequality in the probability space $(\mathbb{R},\mathcal{B}(\mathbb{R}),g\mathrm{d}x)$ to obtain the last step.

By the preliminary step again, we can remove the absolute values 
in the last term on the last display. We obtain 
\begin{equation*}
\begin{split}
\int_0^h \int_{{\mathbb S}}
(\varphi \star g)'({\mathbf x}) \frac1{ D_{\mathbf{x}} F_{\mu_j,r}^{-1}({\mathbf x})} {\mathrm d}  {\mathbf x} {\mathrm d}r
&\geq \int_0^h 
\int_{\mathbb S}
\frac{ \varphi'({\mathbf x})  }{\int_{\mathbb R} g(x') 
 D_{\mathbf{x}}F_{\mu_{j},r}^{-1}(\mathbf{x}-x') \mathrm{d}x'}
\mathrm{d}\mathbf{x}\mathrm{d}r
\\
&= \int_0^h 
\int_{\mathbb S}
\frac{ \varphi'({\mathbf x})  }{[ g \star  
 D_{\mathbf{x}}F_{\mu_{j},r}^{-1}](\mathbf{x})}
\mathrm{d}\mathbf{x}\mathrm{d}r.
\end{split}
\end{equation*}
Since $\varphi'({\mathbf x}) $, $[ g \star  
 D_{\mathbf{x}}F_{\mu_{j},r}^{-1}](\mathbf{x})$ and $E_{\varepsilon}({\mathbf x})$ (as defined in the statement) have the same sign, we deduce
\begin{equation*}
\begin{split}
\int_0^h \int_{{\mathbb S}}
(\varphi \star g)'({\mathbf x}) \frac1{ D_{\mathbf{x}} F_{\mu_j,r}^{-1}({\mathbf x})} {\mathrm d}  {\mathbf x} {\mathrm d}r
&\geq \int_0^h 
\int_{\mathbb S}
\frac{ \varphi'({\mathbf x})  }{E_{\varepsilon}({\mathbf x}) + [ g \star  
 D_{\mathbf{x}}F_{\mu_{j},r}^{-1}](\mathbf{x})}
\mathrm{d}\mathbf{x}\mathrm{d}r.
\end{split}
\end{equation*}
By \eqref{2.39}, we obtain
	\begin{equation}
    \begin{split}
		\langle \varphi\star g, \mathscr{J}_{t}(X^{(h)}) -\mathscr{J}_{s}(X^{(h)}) \rangle \geq\frac{1}{2}\sum_{j=N_s+1}^{N_t} \int_0^h\int_{\mathbb{S}} \phi'(\mathbf{x})\frac{1}{E_\varepsilon(\mathbf{x})+ [g \star D_{\mathbf{x}}F_{\mu_{j},r}^{-1}]({\mathbf x})}\mathrm{d}\mathbf{x}\mathrm{d}r.
        \end{split}
	\end{equation}
This completes the proof.
\end{proof}
Letting $h$ tend to $0$, we derive the following statement:
\begin{prop}\label{lemm2.12} For $\delta >0$, 
let $(X_t, \mathscr{J}_t)_{0 \le t \le T}$ be a weak limit in $\mathcal{C}([0,T], \mathcal{H}^{-1}_{\mathrm{Sym}}(\mathbb{S})\times \mathcal{H}^{-(3+\delta)}_{\mathrm{Sym}}(\mathbb{S}))$  of the couple of processes $\{(X_t^{(h)},\mathscr{J}_t(X^{(h)}))_{0 \le t \le T}\}_{ h \in T/{\mathbb N}^*}$ when $h\searrow 0$. 
Let $\phi\in \mathcal{H}^{(3+\delta)}_{\mathrm{Sym}}(\mathbb{S})\cap \mathcal{U}^2(\mathbb{S})$
and $\varepsilon >0$. 
Then, 
with probability one, 
for all $0\leq s<t\leq T$, 
\begin{align}\label{57}
	\langle \phi, \mathscr{J}_{t}- \mathscr{J}_{s}\rangle \geq  \frac{1}{2}\int_s^t\int_{\mathbb{S}}\frac{\phi'(\mathbf{x})}{E_{\varepsilon}(\mathbf{x})+ D_{\mathbf{x}}{X}_{r}(\mathbf{x})}\mathrm{d}\mathbf{x}\mathrm{d}r,
\end{align}
where $E_\varepsilon(\mathbf{x})$ is given by \eqref{eq:varepsilon}.
\end{prop}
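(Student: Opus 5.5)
The plan is to pass to the limit $h\searrow 0$ in Corollary \ref{cor:E_eps}, first for test functions of the restricted form used there, and then to extend to general $\phi$ by approximation. I fix $\varsigma\in(0,1/4)$, a function $\varphi\in\mathcal C^\infty(\mathbb S)\cap\mathcal U^2(\mathbb S)$ with $\varphi'=0$ on $\cup_{k\in\mathbb Z/2}(k-\varsigma,k+\varsigma)$, and a smooth symmetric mollifier $g$ supported in $(-\varsigma,\varsigma)$. The right-hand side of \eqref{eq1.53} is then rewritten as a time integral over $[t_{N_s+1},t_{N_t+1})$ up to boundary errors of order $h$. Each summand is a function $\Psi_\varepsilon(F^{-1}_{\mu_j,r})$, where
\[
\Psi_\varepsilon(Y):=\int_{\mathbb S}\frac{\varphi'(\mathbf x)}{E_\varepsilon(\mathbf x)+[g\star D_{\mathbf x}Y](\mathbf x)}\,\mathrm d\mathbf x .
\]
Because $E_\varepsilon$ and $g\star D Y$ have the same sign on the support of $\varphi'$ whenever $Y\in\mathcal U^2(\mathbb S)$ (this is the same-sign argument from the proof of Corollary \ref{cor:E_eps}), the denominator has modulus at least $\varepsilon$. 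Hence $\Psi_\varepsilon$ is bounded by $\|\varphi'\|_\infty/\varepsilon$. It is also continuous on $\mathcal U^2(\mathbb S)$ for the $\mathcal H^{-1}$ topology, since $g\star DY$ depends continuously and pointwise on $Y\in\mathcal H^{-1}$, the denominator stays away from $0$, and dominated convergence applies.

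Next I replace $F^{-1}_{\mu_j,r}$ by $X^{(h)}_s$ for $s\in[t_{j-1},t_j)$. For $r\le h$, \eqref{eq:1.16:bis} applied with variance $r$ gives $\|F^{-1}_{\mu_j,r}-X^{(h)}_{t_j^-}\|_2^2\le r\le h$. Also $\|X^{(h)}_{t_j^-}-X^{(h)}_s\|_{2,-1}$ is small in probability, uniformly, by the modulus estimate \eqref{216} (equivalently, by tightness in $\mathbb D$ together with vanishing jumps). Using uniform continuity of $\Psi_\varepsilon$ on $\mathcal H^{-1}$-compacts of $\mathcal U^2$ together with its boundedness, the right-hand side of \eqref{eq1.53} becomes $\tfrac12\int_s^t\Psi_\varepsilon(X^{(h)}_r)\,\mathrm dr+o_{\mathbb P}(1)$. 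By Skorokhod representation I assume $(X^{(h)},\mathscr J(X^{(h)}))\to(X,\mathscr J)$ almost surely, with continuous limits by Proposition \ref{remark 4.1}. The left side then converges to $\langle\varphi\star g,\mathscr J_t-\mathscr J_s\rangle$, and the right side converges by bounded convergence. This gives \eqref{57} with $\varphi\star g$ on the left and $g\star DX_r$ in the denominator, almost surely for rational $s<t$, and hence for all $s<t$ by continuity.

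Then I remove $g$. Letting $g\to\delta_0$, the left side tends to $\langle\varphi,\mathscr J_t-\mathscr J_s\rangle$ by $\mathcal H^{3+\delta}$-convergence of $\varphi\star g$. On the right, $g\star DX_r\to DX_r$ in $L^2$ for a.e.\ $r$, since $X\in L^2([0,T],\mathcal H^1)$ by Proposition \ref{remark 4.1}. The integrand $\varphi'/(E_\varepsilon+g\star DX_r)$ is bounded by $\|\varphi'\|_\infty/\varepsilon$, so dominated convergence applies. Finally I remove the restrictions on $\phi$. Given a general $\phi\in\mathcal H^{3+\delta}_{\rm Sym}\cap\mathcal U^2$, I approximate it by admissible $\varphi_n\in\mathcal U^2$ that are flat near $\mathbb Z/2$ and satisfy $\varphi_n\to\phi$ in $\mathcal H^{3+\delta}$ with $\varphi_n'\to\phi'$ uniformly. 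For example, take $\varphi_n=\phi\circ\kappa_n$ after mollification, with $\kappa_n$ a monotone reparametrization collapsing small neighbourhoods of $0,1/2$. Both sides then pass to the limit, the right side again by bounded convergence.

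The main obstacle is this last density step. The difficulty is to produce approximants that stay in $\mathcal U^2$, vanish in derivative near $\mathbb Z/2$, and converge in the strong norm $\mathcal H^{3+\delta}$; composition with flattening maps may blow up higher derivatives. If this fails, I would instead use the bound $\langle\mathscr J_t-\mathscr J_s,\psi\rangle\le\|\zeta_t-\zeta_s\|_{2,-2}\|\psi\|_{2,2}$ from Proposition \ref{remark 4.1}, together with the decomposition of Lemma \ref{lemm1.12}. This would let me pass to the limit in the left side under weaker ($\mathcal H^2$-type) convergence of monotone approximants, which is much easier to arrange.
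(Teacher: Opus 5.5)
Your treatment of test functions whose derivative vanishes near $\mathbb Z/2$ is correct and matches the paper's Steps 1--4. You start from Corollary \ref{cor:E_eps}, replace $F^{-1}_{\mu_j,r}$ by $X^{(h)}_r$ at an $o_{\mathbb P}(1)$ cost, use continuity of the time-integrated functional for the $\mathcal H^{-1}$ topology, pass to the limit, and then let $g\to\delta_0$ using $X\in L^2([0,T],\mathcal H^1_{\rm Sym}(\mathbb S))$. Nothing there actually uses $\varphi\in\mathcal C^\infty$. You should run it for flat $\varphi\in\mathcal H^{3+\delta}_{\rm Sym}(\mathbb S)\cap\mathcal U^2(\mathbb S)$, as the paper does, because that is the class of test functions the fix below produces.

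The gap is the final extension to general $\phi$, and your primary plan cannot be repaired. A general $\phi$ is not an $\mathcal H^{3+\delta}$-limit of functions whose derivative vanishes near $\mathbb Z/2$, whatever flattening you use.
\begin{itemize}
\item The functional $\varphi\mapsto\varphi''(0)$ is continuous on $\mathcal H^{3+\delta}_{\rm Sym}(\mathbb S)$, since $\sum_k k^2|\hat\varphi_k|\le C\|\varphi\|_{2,3+\delta}$.
\item It vanishes on every such flat function.
\item Yet $\phi(\mathbf x)=-\cos(2\pi\mathbf x)\in\mathcal U^2(\mathbb S)$ has $\phi''(0)=4\pi^2$.
\end{itemize}
Your fallback does not close this either. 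Lemma \ref{lemm1.12} bounds the $\mathcal H^2$ norms of the monotone parts only by the $\mathcal H^{3+\delta}$ norm, which is exactly the quantity that does not go to zero.

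The missing idea is that no limit is needed on the left-hand side at all. A one-sided comparison through the monotonicity of $\mathscr J$ (item 5 of Proposition \ref{remark 4.1}) suffices. Take $\psi$ smooth, even, $[0,1]$-valued and vanishing near $\mathbb Z/2$, and set $\varphi^\psi(x):=\int_0^x\psi(y)\phi'(y)\,\mathrm dy$, which is periodic since $\psi\phi'$ is odd. This is an admissible flat test function. Moreover, $\phi-\varphi^\psi$ has derivative $(1-\psi)\phi'$, so it also lies in $\mathcal H^{3+\delta}_{\rm Sym}(\mathbb S)\cap\mathcal U^2(\mathbb S)$. Hence
\[
\langle\phi,\mathscr J_t-\mathscr J_s\rangle\ \ge\ \langle\varphi^\psi,\mathscr J_t-\mathscr J_s\rangle\ \ge\ \frac12\int_s^t\int_{\mathbb S}\frac{\psi(\mathbf x)\phi'(\mathbf x)}{E_\varepsilon(\mathbf x)+D_{\mathbf x}X_r(\mathbf x)}\,\mathrm d\mathbf x\,\mathrm dr .
\]
The last integrand is nonnegative, because $\phi'$ and $E_\varepsilon+D_{\mathbf x}X_r$ have the same sign, and it is bounded by $\|\phi'\|_\infty/\varepsilon$. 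Letting $\psi\uparrow 1$ along a sequence therefore gives \eqref{57} by monotone convergence. If you prefer approximants converging in $\mathcal H^2$, they must be chosen with $\phi-\varphi_n\in\mathcal U^2(\mathbb S)$; in that case the sign of $\langle\phi-\varphi_n,\mathscr J_t-\mathscr J_s\rangle$ alone already suffices, and Lemma \ref{lemm1.12} plays no role.
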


Following Theorem \ref{remark 4.1}, the quantifiers ``with probability 1'' and ``for any 
$\varphi$'' appearing in the statement can be exchanged.
In the proof of the above proposition, we shall use the following convergence result.
\begin{lemm}\label{lemm2.8} For fixed {$h \in T/{\mathbb N}^*$} and 
$\varepsilon >0$, let $E_\varepsilon$ be given by \eqref{eq:varepsilon}.
 Then,  for any $\delta >0$, 
\begin{align*}
	\lim_{h \searrow 0} \mathbb{P}\left\{\sup_{j\in\{1,\cdots, N\}}\sup_{\mathbf{x}\in \mathbb{S}} \sup_{0\leq t\leq h}\bigg| \frac{1}{E_\varepsilon(\mathbf{x})+[g\star D_{\mathbf{x}}F_{\mu_{j},t}^{-1}](\mathbf{x})} - \frac{1}{E_\varepsilon(\mathbf{x})+[g\star D_{\mathbf{x}}F_{\mu_{j,h}}^{-1}](\mathbf{x})} \bigg|\geq \delta\right\}  =0, 
\end{align*}
where $\mu_{j,h}$ is the value of $\mu_{j,t}$ at $t=h$ and $\mu_{j,t}=\mathrm{Leb}_{\mathbb{S}}\circ (X_{t_{j}^-}^{(h)})^{-1}\star \Gamma_t$.
\end{lemm}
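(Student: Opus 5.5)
The plan is to reduce the statement to a \emph{deterministic} estimate, uniform in $j$, ${\mathbf x}$ and $t\in[0,h]$, which tends to $0$ with $h$. Once this is done, the probability in the statement is identically zero for $h$ small enough.

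The first step is an elementary Lipschitz bound. Write $a_{j,t}({\mathbf x}):=[g\star D_{\mathbf x}F^{-1}_{\mu_j,t}]({\mathbf x})$. Whenever $a$ and $b$ have the sign of $E_\varepsilon({\mathbf x})$, we have $|E_\varepsilon+a|\ge\varepsilon$ and $|E_\varepsilon+b|\ge\varepsilon$, so that
\begin{equation*}
\Bigl|\frac{1}{E_\varepsilon+a}-\frac{1}{E_\varepsilon+b}\Bigr|\le \varepsilon^{-2}|a-b|.
\end{equation*}
The sign condition is exactly what was established in the proof of Corollary~\ref{cor:E_eps}: $g\star D_{\mathbf x}F^{-1}_{\mu_j,r}$ has the sign of $E_\varepsilon$ on the region where it is used there, namely where $\varphi'\neq 0$, away from the points of $\mathbb Z/2$. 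The statement therefore reduces to showing that $\sup_j\sup_{\mathbf x}\sup_{t\le h}|a_{j,t}({\mathbf x})-a_{j,h}({\mathbf x})|\to0$.

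The second step moves the derivative onto $g$. Interpreting the convolution distributionally, $g\star D_{\mathbf x}F^{-1}=g'\star F^{-1}$, and Cauchy--Schwarz on $\mathbb S$ gives
\begin{equation*}
|a_{j,t}({\mathbf x})-a_{j,h}({\mathbf x})|\le \|g'\|_{2}\,\|F^{-1}_{\mu_j,t}-F^{-1}_{\mu_j,h}\|_2 .
\end{equation*}
By the isometry \eqref{eq1.7}, the right-hand side equals $\|g'\|_2\,\mathscr W_2(\mu_j\star\Gamma_t,\mu_j\star\Gamma_h)$. Since $\mu_j\star\Gamma_h=(\mu_j\star\Gamma_t)\star\Gamma_{h-t}$, the coupling argument used for \eqref{eq:1.16:bis} bounds this Wasserstein distance by $\sqrt{h-t}\le\sqrt h$. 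This is uniform in $j$, ${\mathbf x}$, $t$ and $\omega$. Combining with the first step, the supremum in the statement is at most $\varepsilon^{-2}\|g'\|_2\sqrt h<\delta$ for small $h$, which proves the claim.

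The main obstacle lies in two related points. The first is justifying $g\star D F^{-1}=g'\star F^{-1}$ near $\mathbf x\in\{0,1/2\}$, where $F^{-1}_{\mu_j,t}$ blows up (its derivative is not integrable there). I would treat $D_{\mathbf x}F^{-1}$ as the distributional derivative of an $L^2$ function, which is what makes the convolution meaningful pointwise, and check that it agrees with the a.e.\ derivative away from $\mathbb Z/2$. The second is the sign condition needed for the Lipschitz bound when ${\mathbf x}$ is within $\varsigma$ of $\mathbb Z/2$. If a global sign statement fails there, I would restrict the supremum to $\{{\mathbf x}:\varphi'({\mathbf x})\neq0\}\subset\cup_{k\in\mathbb Z/2}(k+\varsigma,k+1/2-\varsigma)$. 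This restriction suffices for the application in Proposition~\ref{lemm2.12}, since the integrand in \eqref{eq1.53} carries the factor $\varphi'$. On that set the sign argument of Corollary~\ref{cor:E_eps} applies verbatim.
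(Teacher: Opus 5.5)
Your proposal is correct and follows the paper's proof step for step. The sign condition from the proof of Corollary~\ref{cor:E_eps} gives the Lipschitz bound (your $\varepsilon^{-2}$ is the paper's $C_\varepsilon$), and the derivative is then moved onto $g'$. H\"older's inequality combined with the isometry \eqref{eq1.7} and the coupling behind \eqref{eq:1.16:bis} yields the deterministic bound $\varepsilon^{-2}\|g'\|_2\sqrt h$. The paper then applies Markov's inequality, which, as you observe, is not needed.

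Your two caveats are legitimate points that the paper uses without comment. First, the convolution must be read as $g'\star F^{-1}$. Second, the sign condition is only justified where $\varphi'\neq 0$, so restricting the supremum as you propose is exactly what Proposition~\ref{lemm2.12} needs. Alternatively, taking $g$ non-increasing on $(0,\infty)$, as in the proof of Theorem~\ref{Prop5.1}, makes the sign condition hold on all of $\mathbb S$.
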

\begin{proof}[Proof of Lemma \ref{lemm2.8}.] 
Using first the triangle inequality, the {smoothness of the two functions $x \in (0,+ \infty) \mapsto \frac{1}{(\varepsilon+x)}$ and 
$x \in (- \infty,0) \mapsto \frac{1}{(-\varepsilon+x)}$} and then the fact
that 
$E_{\varepsilon}({\mathbf x})$ and $[g\star D_{\mathbf{x}} F_{\mu_j,t}^{-1}]({\mathbf x})$ have the same sign
(see the preliminary step in the proof of Corollary \ref{cor:E_eps}) , we obtain the existence of a constant  $C_{\varepsilon}>0$, depending on $\varepsilon$, such that, for any 
${\mathbf x} \in {\mathbb S}$,
\begin{align*}
	\bigg| \frac{1}{E_\varepsilon(\mathbf{x})+[g\star D_{\mathbf{x}}F_{\mu_{j},t}^{-1}](\mathbf{x})} - \frac{1}{E_\varepsilon(\mathbf{x})+[g\star D_{\mathbf{x}}F_{\mu_{j,h}}^{-1}](\mathbf{x})} \bigg|
	&\leq C_{\varepsilon} \Big\vert 
[g\star D_{\mathbf{x}}F_{\mu_{j,h}}^{-1}](\mathbf{x})
-
[g\star D_{\mathbf{x}}F_{\mu_{j,t}}^{-1}](\mathbf{x})
    \Big\vert
\\  	
	&\leq C_{\varepsilon} \int_{\mathbb{R}} |F_{\mu_j,t}^{-1}(\mathbf{x}-x')-F_{\mu_j,h}^{-1}(\mathbf{x}-x')| \vert g'(x')\vert \mathrm{d}x'.
\end{align*}
Using H\"older's inequality, 
and following the proof of the bound \eqref{eq:1.16:bis}, 
we obtain:
\begin{align*}
	&\bigg| \frac{1}{E_\varepsilon(\mathbf{x})+[g\star D_{\mathbf{x}}F_{\mu_{j},t}^{-1}](\mathbf{x})} - \frac{1}{E_\varepsilon(\mathbf{x})+[g\star D_{\mathbf{x}}F_{\mu_{j,h}}^{-1}](\mathbf{x})} \bigg|\\ 
	&\leq C_{\varepsilon}\|g'\|_{2}  \mathscr{W}_2\Big({\rm Leb}_{\mathbb{S}}\circ(X_{t_j^{-}}^{(h)})^{-1}\star\Gamma_t,{\rm Leb}_{\mathbb{S}}\circ(X_{t_j^{-}}^{(h)})^{-1}\star\Gamma_h\Big)
    \\
    &\leq C_{\varepsilon} \|g'\|_{2} \sqrt{h}.
\end{align*}
By Markov inequality, we get, for any ${\delta}>0$
\begin{align*}
	\mathbb{P}\left\{\sup_{j\in\{1,\cdots, N\}}\sup_{\mathbf{x}\in \mathbb{S}} \sup_{0\leq t\leq h}\bigg| \frac{1}{E_\varepsilon(\mathbf{x})+[g\star D_{\mathbf{x}}F_{\mu_{j},t}^{-1}](\mathbf{x})} - \frac{1}{E_\varepsilon(\mathbf{x})+[g\star D_{\mathbf{x}}F_{\mu_{j,h}}^{-1}](\mathbf{x})} \bigg| \geq {\delta}\right\}\leq \frac{C\|g'\|_{2}^2}{{\delta}^2} h.
\end{align*}
Letting $h\searrow 0$, we conclude the proof.
\end{proof}


We are now in a position to establish Proposition \ref{lemm2.12}.
\begin{proof}[Proof of Proposition \ref{lemm2.12}]
In the first four steps of the proof, we consider $\phi\in \mathcal{H}^{(3+\delta)}_{\mathrm{Sym}}(\mathbb{S})\cap \mathcal{U}^2(\mathbb{S})$ such that $\varphi'({\mathbf x})=0$ if,  for some $\varsigma \in (0,1/4)$, ${\mathbf x} 
\in \cup_{k \in {\mathbb Z}/2}
(k-\varsigma,k+\varsigma)$, and let $\varepsilon >0$.
\color{black}
\vskip 4pt

\textit{First Step.}
Consider $g$, a smooth symmetric density on ${\mathbb R}$ with support included in $(-\varsigma,\varsigma)$.
Then, from \eqref{eq1.53}
and from the fact that 
$F_{\mu_j,h}^{-1}({\mathbf x}) = 
X_{t_j}^{(h)}({\mathbf x})$, we know that the following holds for all $(s,t) \in [0,T]^2$:
\begin{equation}\label{245}
	\langle \varphi\star g, \mathscr{J}_{t}(X^{(h)}) -\mathscr{J}_{s}(X^{(h)})\rangle -\frac{1}{2}\int_s^t \int_{\mathbb{S}}  \frac{\varphi'(\mathbf{x})}{E_\varepsilon(\mathbf{x})+ D_{\mathbf{x}}(X_{r}^{(h)}\star g)(\mathbf{x})}  \mathrm{d}\mathbf{x} \mathrm{d}r \geq {\mathcal T}^{(h)}(s,t) + \mathcal{R}^{(h)}(s,t),
\end{equation}
where:
\begin{equation*}
\begin{split}
\mathcal{R}^{(h)}(s,t)&= \frac{1}{2}\sum_{j=N_s+1}^{N_t}\int_0^h\int_{\mathbb{S}} 
\biggl[ \frac{\phi'(\mathbf{x})}{E_\varepsilon(\mathbf{x})+ D_{\mathbf{x}}(F^{-1}_{\mu_j,r}\star g)(\mathbf{x})}- \frac{\phi'(\mathbf{x})}{E_\varepsilon(\mathbf{x})+ D_{\mathbf{x}}(F^{-1}_{\mu_j,h}\star g)(\mathbf{x})}\biggr] \mathrm{d}\mathbf{x}\mathrm{d}r,\\
\hspace{15pt}
{\mathcal T}^{(h)}(s,t) &= \frac{1}{2}\sum_{j=N_s+1}^{N_t}  h\int_{\mathbb{S}} 
 \frac{\phi'(\mathbf{x})}{E_\varepsilon(\mathbf{x})+ D_{\mathbf{x}}(X_{t_{j}}^{(h)}\star g)(\mathbf{x})}\mathrm{d}\mathbf{x} -\frac{1}{2}\int_s^t \int_{\mathbb{S}}  \frac{\varphi'(\mathbf{x})}{E_\varepsilon(\mathbf{x})+ D_{\mathbf{x}}(X_r^{(h)}\star g)(\mathbf{x})}
 \mathrm{d}\mathbf{x} \mathrm{d}r.
\end{split}
\end{equation*}

\textit{Second Step.}
We first study the two terms on the right-hand side of \eqref{245}. 
Since $\phi \in \mathcal{H}^{3+\delta}(\mathbb{S})$, we deduce that
\begin{equation*}
    |\mathcal{R}^{(h)}(s,t)| \leq  \frac{\|\phi'\|_{\infty}}{2}T\sup_{j\in\{1,\cdots,N\}}\sup_{\mathbf{x} \in \mathbb{S}}\sup_{r\in [0,h]} \bigg| \frac{1}{E_\varepsilon(\mathbf{x})+ D_{\mathbf{x}}(F^{-1}_{\mu_j,r}\star g)(\mathbf{x})}- \frac{1}{E_\varepsilon(\mathbf{x})+ D_{\mathbf{x}}(F^{-1}_{\mu_j,h}\star g)(\mathbf{x})} \bigg|.
\end{equation*}
Hence, from Lemma \ref{lemm2.8} we have, for any ${\delta}>0$, 
\begin{equation}\label{eq:claim}
  \lim_{h\searrow 0}  \mathbb{P}\Big(\sup_{0\leq s< t\leq T}|\mathcal{R}^{(h)}(s,t)| > {\delta} \Big)=0 .
\end{equation}
We now claim that
\begin{align}\label{246}
	\forall \delta >0, \quad \lim_{h\searrow 0 }\mathbb{P}\Big(\sup_{0\leq s< t\leq T}|{\mathcal T}^{(h)}(s,t)| > {\delta} \Big) =0.
\end{align}
In fact, the triangle inequality leads to 
\begin{equation}\label{eq.4.11}
\begin{split}
&|{\mathcal T}^{(h)}(s,t)| \\
&\leq \frac{1}{2}\bigg|\sum_{j=N_s+1}^{N_t}\bigg(h\int_{\mathbb{S}} \frac{\phi'(\mathbf{x})}{E_\varepsilon(\mathbf{x})+ D_{\mathbf{x}}(X_{t_{j}}^{(h)}\star g)(\mathbf{x})}\mathrm{d}\mathbf{x} -\int_{t_{j-1}}^{t_j} \int_{\mathbb{S}}  \frac{\varphi'(\mathbf{x})}{E_\varepsilon(\mathbf{x})+ D_{\mathbf{x}}(X_r^{(h)}\star g)(\mathbf{x})}  \mathrm{d}\mathbf{x} \mathrm{d}r\bigg)\bigg|
\\
&\hspace{5pt} + \frac{1}{2}\int_{t_{N_t}}^t\int_{\mathbb{S}} \bigg| \frac{\varphi'(\mathbf{x})}{E_\varepsilon(\mathbf{x})+ D_{\mathbf{x}}(X_r^{(h)}\star g)(\mathbf{x})}\bigg| \mathrm{d}\mathbf{x} \mathrm{d}r + \frac{1}{2}\int_{t_{N_s}}^s\int_{\mathbb{S}} \bigg| \frac{\varphi'(\mathbf{x})}{E_\varepsilon(\mathbf{x})+ D_{\mathbf{x}}(X_r^{(h)}\star g)(\mathbf{x})}\bigg| \mathrm{d}\mathbf{x} \mathrm{d}r.
\end{split}
\end{equation}
Following the proof of Lemma \ref{lemm2.8}, there exists a constant $C_{\varepsilon}$, depending on $\varepsilon$, such that 
the first term on the right hand side of the above inequality, denoted by ${\mathcal T}_1^{(h)}(s,t)$, satisfies
\begin{align*}
    |{\mathcal T}_1^{(h)}(s,t)| 
    &\leq  C_\varepsilon{\|\phi'\|_{\infty}}\sum_{j=1}^{N_t}\int_{t_{j-1}}^{t_j}\int_{\mathbb{S}} \big|[(X_{t_{j}}^{(h)}-{X}_{r}^{(h)})\star D_{\mathbf{x}}g](\mathbf{x})\big|\mathrm{d}\mathbf{x}\mathrm{d}r.
\end{align*}
Applying Holder's inequality together with the bound \eqref{216} we obtain
\begin{equation*}
\begin{split}
    \mathbb{E} \Big[\sup_{0\leq s< t\leq T}|{\mathcal T}_1^{(h)}(s,t)| \Big]
    &\leq C\|g\|_{2,2}{\|\phi'\|_{\infty}}\sum_{j=1}^{N_t} \int_{t_{j-1}}^{t_j} \mathbb{E}\bigl[ \|X_r^{(h)}-X_{t_{j}}^{(h)}\|_{2,-1} \bigr] \mathrm{d}r\\
    \hspace{15pt}
    &\leq {C}{\|\phi'\|_{\infty}}\|g\|_{2,2} T\left(\sqrt{h} +\mathcal{O}_h(1)\right),
    \end{split}
\end{equation*}
where the Landau symbol $\mathcal{O}_h(1)$ is independent of the parameters $s,t$
and tends to 0 with $h$.

In order to handle the last two terms on the right hand side of \eqref{eq.4.11}, denoted respectively, by ${\mathcal T}_2^{(h)}(t)$ and ${\mathcal T}_3^{(h)}(s)$, 
we use the preliminary step established in the proof {of Corollary \ref{cor:E_eps}}. 
It says that 
$\varphi'({\mathbf x})$ and 
$E_{\varepsilon}({\mathbf x}) + D_{\mathbf{x}}(X_r^{(h)} \star g)({\mathbf x})$ have the same sign, from which we deduce that 
\begin{equation*}
\biggl\vert 
\frac{\varphi'({\mathbf x})}{E_{\varepsilon}({\mathbf x})+ D_{\mathbf{x}} (X_r^{(h)} \star g)({\mathbf x})}
\biggr\vert \leq \frac{\| \varphi' \|_\infty}{\varepsilon}. 
\end{equation*}
And then, 
\begin{equation*}
\begin{split}
  {\mathcal T}_2^{(h)}(t) :=      \frac{1}{2}\int_{t_{N_t}}^t\int_{\mathbb{S}} \bigg| \frac{\varphi'(\mathbf{x})}{E_\varepsilon(\mathbf{x})+ D_{\mathbf{x}}(X_r^{(h)}\star g)(\mathbf{x})}\bigg| \mathrm{d}\mathbf{x} \mathrm{d}r
   \leq 
\frac{\| \varphi' \|_\infty}{2 \varepsilon}h,   
   \end{split}
\end{equation*}
and similarly 
for 
$  {\mathcal T}_2^{(h)}(s)$. 
By combining the above displays, the {claim} \eqref{246} is then proved. 
\vskip 4pt

\textit{Third Step.}
We now address the second term on the left-hand side of \eqref{245}. 
Furthermore, let space $\mathcal{U}^2(\mathbb{S})$ be equipped with the topology induced by the norm in space $\mathcal{H}^{-1}_{\mathrm{Sym}}(\mathbb{S})$. Then, the function defined by: $$\mathbb{D}\left([0,T], \mathcal{U}^2(\mathbb{S})\right)\ni (\mathcal{Y}_r)_{0\leq r\leq t}\mapsto \Biggl(\int_0^t \int_{\mathbb{S}} \phi'(\mathbf{x})\frac{1}{E_\varepsilon(\mathbf{x})+(\mathcal{Y}_r\star D_{\mathbf{x}}g)(\mathbf{x})}\mathrm{d}\mathbf{x}\mathrm{d}r\Biggr) \in \mathcal{C}([0,T],\mathbb{R})$$ 
is continuous with respect to this topology. Consider indeed a sequence $(\mathcal{Y}^{(n)})_{n \in {\mathbb N}}$ of elements of $\mathbb{D}\left([0,T], \mathcal{U}^2(\mathbb{S})\right)$ that converges to $\mathcal{Y}$ in $\mathbb{D}\left([0,T], \mathcal{U}^2(\mathbb{S})\right)$, with $\mathcal{U}^2(\mathbb{S})$ equipped with the $\mathcal{H}^{-1}_{\mathrm{Sym}}(\mathbb{S})$ topology. Then, proceeding as in the derivation of the preceding estimates and as in the proof of Lemma \ref{lemm2.8},  one readily checks that 
\begin{align*}
    &\sup_{0\leq t\leq T}\left| \int_0^t \int_{\mathbb{S}} \phi'(\mathbf{x})\frac{1}{E_\varepsilon(\mathbf{x})+(\mathcal{Y}_r^{(n)}\star D_{\mathbf{x}}g)(\mathbf{x})}\mathrm{d}\mathbf{x}\mathrm{d}r- \int_0^t \int_{\mathbb{S}} \phi'(\mathbf{x})\frac{1}{E_\varepsilon(\mathbf{x})+(\mathcal{Y}_r\star D_{\mathbf{x}}g)(\mathbf{x})}\mathrm{d}\mathbf{x}\mathrm{d}r \right|\\
    &\leq C \int_0^T \left(\|\mathcal{Y}_r^{(n)}-\mathcal{Y}_r\|_{2,-1}\wedge 1 \right)\mathrm{d}r,
\end{align*}
{with $C$ depending on $\varphi$ and 
$\varepsilon$.}
Then, the claim follows from the dominated convergence theorem and from the fact that, 
for almost every 
$r \in [0,T]$, 
$\lim_{n \rightarrow \infty} \|\mathcal{Y}_r^{(n)}-\mathcal{Y}_r\|_{2,-1}
= 0$ {(see (12.14) in \cite{Billingsley} with an obvious adaptation to the Hilbert valued setting, and use the fact that discontinuity points are at most countable)}. Therefore, by
combining the latter claim and the tightness of the family $\{(X_t^{(h)})_{0 \le t \le T}\}_{{h \in T/{\mathbb N}^*}}$ in $\mathbb{D}([0,T],\mathcal{H}^{-1}_{\mathrm{Sym}}(\mathbb{S}))$
together with the fact that 
any weak limit takes values in 
${\mathcal U}^2({\mathbb S})$ (see Proposition 
\ref{remark 4.1}), we deduce that, along any subsequence (still indexed by 
$h$) 
$\{(X_t^{(h)},{\mathscr J}_t(X^{(h)})_{0 \le t \le T}\}_{{h \in T/{\mathbb N}^*}}$  
weakly converges
in $\mathbb{D}([0,T], \mathcal{H}^{-1}_{\mathrm{Sym}}(\mathbb{S}))
\times 
{\mathcal H}^{-(3+\delta)}_{\rm Sym}(\mathbb{S}))$ to some  $(X_t, {\mathscr J}_t)_{0 \le t \le T}$, it also holds 
\begin{equation}\label{340}
\begin{split}
&\biggl( 
\Bigl( X_t^{(h)}, {\mathscr J}_t(X^{(h)})\Bigr)_{0 \le t \le T}, 
\Bigl(
    \frac{1}{2}\int_0^t\int_{\mathbb{S}}\frac{\phi'(\mathbf{x})}{E_\varepsilon(\mathbf{x})+ ({X}_{r}^{(h)}\star D_{\mathbf{x}}g)(\mathbf{x})}\mathrm{d}\mathbf{x}\mathrm{d}r \Bigr)_{0\le t \le T}
    \biggr) 
    \\
&\hspace{15pt}    \Rightarrow 
    \biggl(
    \Bigl( X_t, {{\mathscr J_t}}\Bigr)_{0 \le t \le T}, \Bigl( \frac{1}{2}\int_0^t\int_{\mathbb{S}}\frac{\phi'(\mathbf{x})}{E_\varepsilon(\mathbf{x})+ ({X}_{r}\star D_{\mathbf{x}}g)(\mathbf{x})}\mathrm{d}\mathbf{x}\mathrm{d}r \Bigr)_{0 \le t \le T} \biggr),
    \end{split}
\end{equation}
with 
$\Rightarrow$ here denoting convergence in law in 
$\mathbb{D}([0,T], \mathcal{H}^{-1}_{\mathrm{Sym}}(\mathbb{S}))
\times 
{\mathcal H}^{-(3+\delta)}_{\rm Sym}(\mathbb{S}))
\times {\mathcal  C}([0,T],{\mathbb R})
$ (with the last factor denoting the space of continuous functions from $[0,T]$ to ${\mathbb R}$). 
\vskip 4pt

\textit{Fourth Step.} We let $h$ go to $0$ in \eqref{245}. Following the second step, we let $(X_t,{\mathscr{J}_{t}} )_{0\le t \le T}$ be a weak limit of the {pairs of} c\`adl\`ag processes $\{(X_t^{(h)},\mathscr{J}_{t}(X^{(h)}))_{0 \le t \le T}\}_{{h \in T/{\mathbb N}^*}}$; by 
Proposition \ref{remark 4.1}, 
it takes values 
in $\mathcal{C}([0,T], \mathcal{H}^{-1}_{\mathrm{Sym}}(\mathbb{S})\times \mathcal{H}^{-(3+\delta)}_{\mathrm{Sym}}(\mathbb{S})),$ for some $\delta >0$. Combining \eqref{eq:claim}, \eqref{246} and 
\eqref{340} with the porte-manteau theorem, we obtain 
\begin{equation*}
	\forall (s,t) \in [0,T]^2 \ {\rm with} \ s < t, \quad \langle \varphi \star g, \mathscr{J}_{t} -\mathscr{J}_{s}\rangle \geq  \frac{1}{2}\int_s^t \int_{\mathbb{S}}  \frac{\varphi'(\mathbf{x})}{E_\varepsilon(\mathbf{x})+ ({X}_{r}\star D_{\mathbf{x}}g)(\mathbf{x})}\mathrm{d}\mathbf{x} \mathrm{d}r 
\end{equation*}
with probability 1.

By Proposition \ref{remark 4.1}, the weak-limit process $({X}_t)_{0 \le t \le T}$ {takes values in}  $L^2([0,T],\mathcal{H}_{\mathrm{Sym}}^{1}(\mathbb{S}))$. In particular, we can write, 
for almost every 
$r \in [0,T]$, 
\begin{equation*}\int_{\mathbb{S}}\frac{\varphi'(\mathbf{x})}{E_\varepsilon(\mathbf{x})+ ({X}_{r}\star D_{\mathbf{x}}g)(\mathbf{x})}\mathrm{d}\mathbf{x} 
=
\int_{\mathbb S}
\frac{\varphi'({\mathbf x})}{E_{\varepsilon}({\mathbf x}) + 
(D_{\mathbf{x}} X_r \star g)({\mathbf x})}
{\mathrm d} {\mathbf x}.
\end{equation*}
As $g$ tends to the delta mass in $0$ (while maintaining the support 
    of $g$ disjoint from that of $\varphi'$), 
$D_{\mathbf{x}} X_r \star g({\mathbf x})$
tends to $D_{\mathbf{x}}X_r({\mathbf x})$ in Lebesgue measure on 
${\mathbb S}$. 
Meanwhile, the term
$\langle \varphi \star g , {\mathscr{J}_{t} -\mathscr{J}_{s}}\rangle$ 
converges to $\langle \varphi ,{\mathscr{J}_{t} -\mathscr{J}_{s}}\rangle$ as $g$ converges to the delta mass in $0$, uniformly in $(s,t)$. The proof is then complete when 
$\varphi$ satisfies the additional condition that $\varphi'({\mathbf x})=0$ if,  for some $\varsigma \in (0,1/4)$, ${\mathbf x} 
\in \cup_{k \in {\mathbb Z}/2}
(k-\varsigma,k+\varsigma)$, and let $\varepsilon >0$.
\vskip 4pt

\textit{Fifth Step.}
We now consider a generic element 
$\varphi  \in \mathcal{H}^{3+\delta}_{\mathrm{Sym}}(\mathbb{S})\cap \mathcal{U}^2(\mathbb{S})$. 
For a given $\varsigma \in (0,1/4)$, we consider 
a smooth, periodic, even and non-negative function 
$\psi$ such that 
$\psi({\mathbf x})=0$ if 
${\mathbf x} \in \cup_{k \in {\mathbb Z}/2} (k-\varsigma,k+\varsigma)$.
We then let:
\begin{equation*}
\varphi^{\psi}(x) 
:= 
\int_0^x 
\psi(y) {\varphi}^\prime(y) {\mathrm d}y, \quad x \in {\mathbb R}.
\end{equation*}
The function $\varphi^{\psi}$ is obviously periodic 
{(by symmetry arguments, 
$\int_{-1/2}^{1/2} \psi(y) \varphi'(y) \ud y=0$)}
and can be regarded as a function on ${\mathbb S}$. We then have 
$\varphi^{\psi} \in {\mathcal H}^{3+\delta} \cap {\mathcal U}^2({\mathbb S})$, and 
$\varphi^{\psi}$ satisfies the assumption used in the fourth step. 
We then have
\begin{equation*}
\forall s,t \in [0,T]
\quad {\rm with} \ s <t, \quad 
\langle {\mathscr J}_t - {\mathscr J}_s, 
\varphi^{\psi} \rangle 
\geq \frac12 
\int_s^t 
\int_{{\mathbb S}}
\frac{(\varphi^{\psi})'({\mathbf x})}{E_{\varepsilon}({\mathbf x})+D_{\mathbf{x}} X_r({\mathbf x})} {\mathrm d}{\mathbf x} {\mathbf d}r.
\end{equation*}
Assuming that $\psi$ takes values in [0,1], we also have 
$\varphi - \varphi^{\psi} \in {\mathcal H}^{3+\delta}({\mathbb S}) \cap {\mathcal U}^2({\mathbb S})$, from which we get 
\begin{equation*}
\forall s,t \in [0,T] \quad {\rm with} \ s < t, 
\quad 
\langle {\mathscr J}_t - 
{\mathscr J}_s , 
\varphi - \varphi^{\psi}
\rangle 
\geq 0.
\end{equation*}
This gives 
\begin{equation*}
\begin{split}
\forall s,t \in [0,T] \quad {\rm with} \ s < t, 
\quad 
\langle {\mathscr J}_t - 
{\mathscr J}_s , 
\varphi 
\rangle 
\geq 
\langle {\mathscr J}_t - 
{\mathscr J}_s , 
\varphi^\psi \rangle 
&\geq 
\int_s^t 
\int_{{\mathbb S}}
\frac{(\varphi^{\psi})'({\mathbf x})}{E_{\varepsilon}({\mathbf x})+D_{\mathbf{x}} X_r({\mathbf x})} {\mathrm d}{\mathbf x} {\mathbf d}r
\\
&=
\int_s^t 
\int_{{\mathbb S}}
\frac{\psi({\mathbf x}) \varphi'({\mathbf x})}{E_{\varepsilon}({\mathbf x})+D_{\mathbf{x}} X_r({\mathbf x})} {\mathrm d}{\mathbf x} {\mathbf d}r.
\end{split}
\end{equation*}
Letting 
$\psi$ tend pointwise to 1, we complete the proof. 
\color{black}
\end{proof}

As a consequence of Proposition 
\ref{lemm2.12}, we get: 

\begin{cor}\label{cor 3.9}
For $\delta >0$, 
let $(X_t, \mathscr{J}_t)_{0 \le t \le T}$ be a weak limit in $\mathcal{C}([0,T], \mathcal{H}^{-1}_{\mathrm{Sym}}(\mathbb{S})\times \mathcal{H}^{-(3+\delta)}_{\mathrm{Sym}}(\mathbb{S}))$  of the sequence of pair processes $\{(X_t^{(h)},\mathscr{J}_t(X^{(h)}))_{0 \le t \le T}\}_{{h \in T/{\mathbb N}^*}}$ when $h\searrow 0$. 
Then, for any $t\in [0,T]$,
\begin{align*}
\frac{1}{2}\mathbb{E}\int_0^T\int_{\mathbb{S}}\frac{1}{|D_{\mathbf{x}}{X}_{t}(\mathbf{x})|}\mathrm{d}\mathbf{x}\mathrm{d}t \leq C_{(T,\mathbb{E}\|X_0\|_2)},
\end{align*}
where $C_{(T,\mathbb{E}\|X_0\|_2)}$ is a constant only depending on $T,\lambda$ and  
${\mathbb E}[ \| X_0 \|_2]$. 
\end{cor}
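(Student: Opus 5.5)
The plan is to apply Proposition~\ref{lemm2.12} with a single well-chosen test function $\varphi\in\mathcal{H}^{3+\delta}_{\mathrm{Sym}}(\mathbb{S})\cap\mathcal{U}^2(\mathbb{S})$ whose derivative has a fixed sign on each half-circle. Ideally $\varphi'=1$ on $(0,1/2)$ and $\varphi'=-1$ on $(-1/2,0)$, i.e.\ $\varphi(\mathbf x)=|x|$, but this is not smooth enough. I would therefore use $\varphi^{\psi}$ as in the fifth step of the proof of Proposition~\ref{lemm2.12}, with $\psi$ a smooth cut-off taking values in $[0,1]$, vanishing near $\mathbb{Z}/2$ and equal to $1$ off a $\varsigma$-neighbourhood of $\mathbb{Z}/2$. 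Then $(\varphi^\psi)'(\mathbf x)=\psi(\mathbf x)\,\mathrm{sgn}(x)$, and $\varphi^\psi\in\mathcal{H}^{3+\delta}\cap\mathcal{U}^2$. Since $D_{\mathbf x}X_r$ has the sign of $x$ (it is nonnegative on $(0,1/2)$ and nonpositive on $(-1/2,0)$, by the remark after Proposition~\ref{prop1.4}), and $E_\varepsilon$ has the same sign, the integrand in \eqref{57} becomes
\[
\frac{\psi(\mathbf x)}{\varepsilon+|D_{\mathbf x}X_r(\mathbf x)|}.
\]

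Taking $s=0$, $t=T$ in \eqref{57} and then expectations gives
\[
\frac12\,\mathbb{E}\int_0^T\!\!\int_{\mathbb{S}}\frac{\psi(\mathbf x)}{\varepsilon+|D_{\mathbf x}X_r(\mathbf x)|}\,\mathrm d\mathbf x\,\mathrm dr\le \mathbb{E}\langle \varphi^\psi,\mathscr{J}_T\rangle .
\]
To bound the right-hand side uniformly in $\psi$ and $\varepsilon$, I would use item 5 of Proposition~\ref{remark 4.1}: $\langle\mathscr J_T,\varphi^\psi\rangle\le\|\zeta_T\|_{2,-2}\|\varphi^\psi\|_{2,2}$, although the $\mathcal H^2$ norm of $\varphi^\psi$ blows up as $\psi\to1$. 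A better route is to work at the level of the scheme, using $\langle \varphi^\psi,\mathscr{J}_T(X^{(h)})\rangle=\sum_j\langle \varphi^\psi,\Phi^{(h)}(X^{(h)}_{t_j^-})-X^{(h)}_{t_j^-}\rangle$, which only requires $\varphi^\psi\in L^2$. The bound $\|\Phi^{(h)}(X)-X\|_2\le\sqrt h$ alone gives $N\sqrt h\to\infty$, which is too weak. Instead I would use the identity \eqref{eq1.16}. Write $\langle\varphi,\Phi(X)-X\rangle$ and compare it with $\frac12(\|\Phi(X)\|^2-\|X\|^2)=h/2$. Since $\varphi^\psi$ is Lipschitz with constant $1$, Lemma~\ref{lemm1.11}(ii) gives $\langle\varphi^\psi,\Phi(X)-X\rangle\le\frac14\int_0^h\int_{\mathbb R}|D_z\tilde F_r|^2\,\mathrm dz\,\mathrm dr$. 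This is the Fisher-type quantity of the heat flow of the CDF, which is controlled through the integration by parts in that proof: it is bounded by $\frac12\int(\tilde F_0^2-\tilde F_h^2)\,\mathrm dz$ (suitably renormalised), a telescoping-type term. Summing over $j$ and combining with the energy identity \eqref{bound 2.9} should yield $\mathbb{E}\langle\varphi^\psi,\mathscr{J}_T(X^{(h)})\rangle\le C(T,\lambda,\mathbb{E}\|X_0\|_2)$ uniformly in $h$ and $\psi$. This passes to the limit by weak convergence, using uniform integrability from \eqref{bound 3.6}.

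Once this uniform bound is in hand, I would let $\psi\uparrow1$ and then $\varepsilon\downarrow0$ by monotone convergence, which gives the claim. The main obstacle is exactly this uniform control of $\mathbb{E}\langle\varphi,\mathscr J_T\rangle$ for a test function that is only Lipschitz. If the telescoping argument fails, I would fall back on the simple bound $\langle |x|,\Phi(X)-X\rangle\le \mathbb{E}|X+G|-\mathbb{E}|X|\le$ something summable. More directly: $\langle \varphi,F^{-1}_{\mu\star\Gamma_h}-F^{-1}_\mu\rangle$ for $\varphi=|x|$ equals $\frac12\int_0^1 y\,(\tilde F^{-1}_{\mu\star\Gamma_h}-\tilde F^{-1}_\mu)(y)\,\mathrm dy$, a covariance-like quantity. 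I would try to bound it by $Ch$ times a moment of $\mu$, using that the quantile of a Gaussian perturbation moves by $O(h)$ on average when integrated against the increasing weight $y$, via the same $W_2$/second-moment comparison as in \eqref{eq:2.7}.
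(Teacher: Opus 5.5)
Your reduction is sound. Choosing $\varphi^\psi$ with $(\varphi^\psi)'=\psi\,\mathrm{sgn}(x)$, applying \eqref{57} on $[0,T]$ and using the sign of $D_{\mathbf x}X_r$ gives $\frac12\mathbb E\int_0^T\!\int_{\mathbb S}\psi/(\varepsilon+|D_{\mathbf x}X_r|)\le \mathbb E\langle\varphi^\psi,\mathscr J_T\rangle$, and the final limits $\psi\uparrow 1$, $\varepsilon\downarrow 0$ are routine.

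The gap is the step you flag yourself: you never obtain a bound on $\mathbb E\langle\varphi^\psi,\mathscr J_T\rangle$ that is uniform in $\psi$, and both routes you sketch fail as stated.

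\emph{Scheme-level route.} Your Fisher-type quantity gains nothing. Taking $\varphi'=1$ on $(0,1/2)$ in Lemma~\ref{lemm1.11}(ii) (formally $\varphi=|x|$) shows that $\frac14\int_0^h\!\int_{\mathbb R}|D_z\tilde F_r|^2\,\mathrm dz\,\mathrm dr=\langle |x|,\Phi^{(h)}(X)-X\rangle$. So you are back to summing the jumps tested against $|x|$. That sum telescopes only if you control how $\langle X^{(h)},|x|\rangle$ evolves during the RSHE phases. The energy identity \eqref{bound 2.9} says nothing about this, since it controls $\|X^{(h)}\|_2^2$.

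\emph{Fallback per-step route.} This is false. If $X\equiv a$ is constant, then $\Phi^{(h)}(X)(\mathbf x)=a+\sqrt h\,q(2|x|)$, with $q$ the standard Gaussian quantile. Hence $\langle |x|,\Phi^{(h)}(X)-X\rangle=\frac{\sqrt h}{2}\int_0^1 y\,q(y)\,\mathrm dy=\frac{\sqrt h}{4\sqrt\pi}$. This is of order $\sqrt h$, not $h$ times a moment. The bound \eqref{eq:2.7} works only because its test function is $X$ itself. No step-by-step estimate can close; the dynamics between the jumps must enter.

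\emph{The missing idea (the paper's argument).} Bound $\mathscr J$ through the limiting equation, not through the jumps. By item~5 of Proposition~\ref{remark 4.1}, $\langle \eta_T,\varphi\rangle\ge 0$ for $\varphi\in\mathcal H^{3+\delta}_{\mathrm{Sym}}(\mathbb S)\cap\mathcal U^2(\mathbb S)$, so $0\le\langle\varphi,\mathscr J_T\rangle\le\langle\varphi,\zeta_T\rangle$. By item~4, since $X_r\in\mathcal H^1_{\mathrm{Sym}}(\mathbb S)$ for a.e.\ $r$, you can integrate by parts:
\[
\langle\varphi,\zeta_T\rangle=\langle X_T-X_0,\varphi\rangle+\int_0^T\langle D_{\mathbf x}X_r,\varphi'\rangle\,\mathrm dr-\langle W_T,\varphi\rangle\le\bigl(\|\varphi\|_\infty+\|\varphi'\|_\infty\bigr)\Bigl(\|X_T-X_0\|_2+\int_0^T\|D_{\mathbf x}X_r\|_2\,\mathrm dr+\|W_T\|_2\Bigr).
\]
This involves only the Lipschitz norm of $\varphi$, not its $\mathcal H^2$ norm. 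That norm is bounded uniformly for your $\varphi^\psi$, and also for the paper's choice $|x|\star\Gamma_\epsilon$. Taking expectations and using the energy bound in item~1 gives a constant depending only on $T$, $\lambda$ and $\mathbb E\|X_0\|_2^2$; your limits then finish the proof.

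This is exactly the telescoping you were after: test the equation against $|x|$ and drop the reflection term by its sign. The same argument also works at the level of the scheme, by testing \eqref{scheme0} against $\varphi^\psi$ and using \eqref{ineq:2.5:b}, \eqref{bound 2.8} and \eqref{bound 1.26}.
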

\begin{proof} 
By \eqref{57}, we know that, for any 
$\varphi \in 
 \mathcal{H}^{(3+\delta)}_{\mathrm{Sym}}(\mathbb{S})\cap \mathcal{U}^2(\mathbb{S})$,
any $\varepsilon >0$
 and all $0\leq s<t\leq T$, 
\begin{align}
	\langle \phi, \mathscr{J}_{t}- \mathscr{J}_{s}\rangle \geq  \frac{1}{2}\int_s^t\int_{\mathbb{S}}\frac{\phi'(\mathbf{x})}{E_{\varepsilon}(\mathbf{x})+ D_{\mathbf{x}}{X}_{r}(\mathbf{x})}\mathrm{d}\mathbf{x}\mathrm{d}r.
    \label{330:bbb}
\end{align}
We recall from the item 1 of Proposition \ref{remark 4.1} that, almost surely, for almost every $r \in [s,t]$, $X_r \in {\mathcal H}^1({\mathbb S})$. Having said this, we have, almost surely, for almost every $r \in [s,t]$, 
$D_{\mathbf{x}} X_r({\mathbf x}) \geq 0$ for almost every ${\mathbf x} \in (0,1/2)+{\mathbb Z}$, and 
$D X_r({\mathbf x}) \leq 0$ for almost every 
${\mathbf x} \in (-1/2,0)+{\mathbb Z}$. In particular, 
$\varphi'({\mathbf x})$ and 
$E_{\varepsilon}({\mathbf x}) + D_{\mathbf{x}} X_r({\mathbf x})$ have the same sign (for almost every 
${\mathbf x}$). We rewrite \eqref{330:bbb}
as
\begin{align}
	\langle \phi, \mathscr{J}_{t}- \mathscr{J}_{s}\rangle \geq  \frac{1}{2}\int_s^t\int_{\mathbb{S}}\frac{\vert \phi'(\mathbf{x})\vert}{\vert E_{\varepsilon}(\mathbf{x}) \vert + \vert D_{\mathbf{x}}{X}_{r}(\mathbf{x})\vert}\mathrm{d}\mathbf{x}\mathrm{d}r.
    \label{330:bb}
\end{align}
By items 5 and then 4 in the statement of Proposition \ref{remark 4.1}, 
\begin{equation*}
\begin{split}
0 \leq \langle \phi, \mathscr{J}_{t}- \mathscr{J}_{s}\rangle &\leq 
\langle \phi, \zeta_{t}- \zeta_{s}\rangle
\\
&=\langle X_t - X_s, \varphi \rangle 
+ \int_s^t \langle D_{\mathbf{x}} X_r,  \varphi' \rangle 
{\mathrm d}r + 
\langle W_t - W_s, \varphi \rangle.
\end{split}
\end{equation*}
And then, 
inserting the above in \eqref{330:bb}, we deduce that 
\begin{equation*}
\begin{split}
 \frac{1}{2} \int_{0}^{T} \int_{\mathbb{S}}  \frac{\vert \phi'(\mathbf{x})\vert}{\varepsilon+
\vert
D_{\mathbf{x}}X_r(\mathbf{x})\vert} \mathrm{d}\mathbf{x} \mathrm{d}r 
&\leq \|\varphi 
\|_{1,\infty}
\biggl[   \|X_{T}-X_{0}\|_{2} +   \int_0^T\|D_{\mathbf{x}}X_r\|_2\mathrm{d}r + \| W_T - W_0 \|_2
\biggr].
\end{split}
\end{equation*}
Taking expectation and using item 1 in Proposition \ref{remark 4.1}, we deduce that 
there exists a constant $C_T$, independent of 
$\varepsilon$, such that 
\begin{equation}
\label{ineq.4.20} \mathbb{E}\int_{0}^{T} \int_{\mathbb{S}} \frac{\vert \phi'(\mathbf{x}) \vert}{\varepsilon+
\vert D_{\mathbf{x}} X_r({\mathbf x})
\vert} \mathrm{d}\mathbf{x} \mathrm{d}r \leq C_T\|\varphi\|_{1,\infty}\left(1+\mathbb{E}[\|X_0\|_{2}^2]^{1/2} \right).
\end{equation}

Setting now $\varphi^{\epsilon}:= \Xi\star \Gamma_{\epsilon}$, where 
$\Xi$ is the periodic function equal to 
$\Xi(\mathbf{x}) = |\mathbf{x}|$ for $\mathbf{x}\in [-1/2,1/2]$ and $\Gamma_{\epsilon}$ is the Gaussian density with fixed variance $\epsilon >0$ {(which should be distinguished from $\varepsilon$)}. We observe that the bound \eqref{ineq.4.20} still holds for $\varphi^{\epsilon}$ in lieu of $\varphi$, since $\varphi^{\epsilon}$ belongs to $\mathcal{H}^{3+\delta}_{\mathrm{Sym}}(\mathbb{S})\cap \mathcal{U}^2(\mathbb{S})$. More precisely, the inequality \eqref{ineq.4.20} writes: 
\begin{equation*}
 \mathbb{E}\int_{0}^{T} \int_{\mathbb{S}}  \frac{\vert (\varphi^{\epsilon})'(\mathbf{x})\vert}{\varepsilon+ \vert D_{\mathbf{x}} X_r(\mathbf{x})\vert} \mathrm{d}\mathbf{x} \mathrm{d}r \leq C_T \left(1+\mathbb{E}[\|X_0\|_{2}^2]^{1/2}\right)
\end{equation*}
where the constant $C_T$ above is independent of $\epsilon$ and $\varepsilon$. 
Letting 
$\epsilon$ {(first)} and
{then}
$\varepsilon$
tend to $0$, we obtain from Fatou's lemma:
\begin{align}\label{ineq:420}
 \mathbb{E}\int_{0}^{T} \int_{\mathbb{S}}  \frac{1}{\vert D_{\mathbf{x}} X_r({\mathbf x}) \vert} \mathrm{d}\mathbf{x} \mathrm{d}r \leq C_T \left(1+\mathbb{E}[\|X_0\|_{2}^2]^{1/2}\right)
\end{align}
This completes the proof.
\end{proof}

\subsection{%
  \texorpdfstring{%
 Integration with respect to
$({\mathscr J}_t)_{t \geq 0}$, 
$(\eta_t)_{t \geq 0}$
and 
$(\zeta_t)_{t\geq0}$%
  }{%
    Integration with respect to the reflection processes%
  }%
}

\label{subsection 4.2} 

The purpose of this subsection is to provide a clear meaning to 
integrals driven by the processes 
$({\mathscr J}_t)_{0 \le t \le T}$, 
$(\eta_t)_{0 \le t \le T}$
and 
$(\zeta_t)_{0 \le t \le T}$ defined in 
the statement of Proposition \ref{remark 4.1}. 
The construction of the subsequent integrals is directly inspired from 
\cite{DelHam25},  and is carried out
in a pathwise sense (see for instance 
\cite[Remark 4]{DelHam25}). 
Still from Proposition \ref{remark 4.1}, 
we know that 
\begin{align}\label{eq:limit:zeta}
\zeta_t = \mathscr{J}_t  +  \eta_t,\quad t \in [0,T].
\end{align}
We deduce from 
the decomposition \eqref{eq:limit:zeta}
that
it suffices to construct integrals with respect to the trajectories of 
$({\mathscr J}_t)_{0 \le t \le T}$ and 
$(\eta_t)_{0 \le t \le T}$. 
Both satisfy the same properties, which we summarize in the following (pathwise) way. 
{For the same $T>0$ as before, and for}
$\delta >0$, a deterministic path 
$\varsigma : [0,T] \ni t \mapsto \mathcal{H}^{-(3+\delta)}_{\mathrm{Sym}}(\mathbb{S})
$ is said to satisfy {\bf C1}, 
{\bf C2} and {\bf C3} if the following three conditions are satisfied: 
\begin{itemize}
\item[{\fbox{\bf C1}}] The map $[0,T] \ni t \mapsto \varsigma_t$ is a continuous function from $[0,T]$ to $\mathcal{H}^{-(3+\delta)}_{\mathrm{Sym}}(\mathbb{S})$, with respect to the norm $\|\cdot\|_{2,-(3+\delta)}$;
\item[{\fbox{\bf C2}}] For any function $\varphi \in \mathcal{H}^{3+\delta}_{\mathrm{Sym}}(\mathbb{S})\cap \mathcal{U}^2(\mathbb{S})$, the path $[0,T]\ni t \mapsto \langle  \varsigma_t,\varphi\rangle$ is non-decreasing;
\item[{\fbox{\bf C3}}]
There exists a constant $C \geq 0$ such that, for any $\varphi \in \mathcal{H}^{3+\delta}_{\mathrm{Sym}}(\mathbb{S})\cap \mathcal{U}^2(\mathbb{S})$,
\begin{equation*}
\sup_{t \in [0,T]}
\bigl\vert \langle \varsigma_t,\varphi\rangle
\bigr\vert \leq C \| \varphi \|_{2,2}. 
\end{equation*}
\end{itemize}

Once again, the construction of an integral with $\varsigma$ as integrator 
is mainly inspired by
\cite{DelHam25}. 
One main change in the analysis consists in mollifying the decomposition 
\eqref{decomp 33}.
Recall from the latter that
\begin{equation*}
e_k = e_k^+ - e_k^{-}.
\end{equation*}
We deduce that, for every $t>0$,
\begin{equation*}
{\mathrm e}^{t \Delta} e_k = 
{\mathrm e}^{t \Delta}
e_k^+ -
{\mathrm e}^{t \Delta}
e_k^-, \quad t \geq 0. 
\end{equation*} 
The left-hand side is equal to 
${\rm e}^{- {4} \pi^2 k^2 t^2} e_k$.
By Subsection \ref{subse:rearrangement}, the two functions 
appearing in 
the right-hand side 
are still non-decreasing. We thus obtain the following decomposition
for $e_k$ into 
non-decreasing and non-increasing smooth parts:
\begin{equation*}
e_k = {\rm e}^{{4} \pi^2 k^2 t^2} 
\left( 
{{\mathrm e}^{t \Delta}}
e_k^+ - 
{{\mathrm e}^{t \Delta}}
e_k^-  \right). 
\end{equation*}
Choosing $t=1/k$, we get 
\begin{equation*}
e_k =  e_k^{\star,+} -  e_k^{\star,-}, 
\end{equation*}
with 
\begin{equation*}
e_k^{\star,+} = {\rm e}^{{4} \pi^2}
{{\rm e}^{\tfrac1k \Delta}}
e_k^{+}
\quad 
e_k^{\star,-} = {\rm e}^{{4} \pi^2}
{{\rm e}^{\tfrac1k \Delta}} e_k^- .
\end{equation*}
Since $\textrm{\rm e}^{\frac1k \Delta}$ is a contraction in ${\mathcal H}^2({\mathbb S})$, we observe that, up to a new constant 
$C$ therein, 
the bound \eqref{decomp34}
remains true
for $e_k^{\star,\pm}$. 

With these notations, we claim:

\begin{thm}
\label{thm:integral:varsigma}
Let 
$\varsigma=(\varsigma_t)_{t \in [0,T]}$ satisfy \textbf{\bf C1} and \textbf{\bf C2}, and 
$(\varphi : [0,T] \ni t \mapsto \varphi_t)_{t \in [0,T]}$ 
be a continuous curve from $[0,T]$
to  $\mathcal{H}^4_{\mathrm{Sym}}(\mathbb{S})$. Then, there exists an integral curve, denoted 
\begin{equation*}
\biggl( \int_0^t \varphi_s \cdot 
{\mathrm d} \varsigma_s 
\biggr)_{0 \leq t \leq T},
\end{equation*}
satisfying the following properties:
\begin{enumerate}
    \item the integral curve is continuous in $t \in [0,T]$;
    \item 
\begin{equation*}
\lim_{N \rightarrow \infty}
\sup_{t \in [0,T]}
\biggl\vert 
\int_0^t 
\varphi_s  \cdot 
{\mathrm d} \varsigma_s 
- 
\sum_{k=0}^N 
\biggl( 
\int_0^t \langle \varphi_s,e_k\rangle 
{\mathrm d} \langle 
\varsigma_s,e_k^{\star,+}\rangle
- 
\int_0^t \langle \varphi_s,e_k\rangle 
{\mathrm d} \langle 
\varsigma_s,e_k^{\star,-}\rangle
\biggr) 
\biggr\vert
=0,
\end{equation*}
with
the integral curves
$(\int_0^t \langle \varphi_s,e_k\rangle 
{\mathrm d} \langle 
\varsigma_s,e_k^{\star,\pm}\rangle)_{t \in [0,T]}$ being understood as 
Riemann-Stieltjes integrals;
\item there exists a universal constant $c>0$ such that, 
\begin{equation*}
\sup_{t \in [0,T]}
\biggl\vert 
\int_0^t
\varphi_s \cdot {\mathrm d} 
\varsigma_s 
\biggr\vert
\leq c \sup_{t \in [0,T]}
\| \varphi_t \|_{2,4}
\sup_{t \in [0,T]} \sup_{\psi \in {\mathcal H}^{3+\delta}({\mathbb S}) : \| \psi \|_{2,2} \leq 1}
\bigl\vert \langle \varsigma_t, \psi \rangle 
\bigr\vert;
\end{equation*}
\item 
\begin{equation*}
\lim_{N \rightarrow \infty}
\sup_{t \in [0,T]}
\biggl\vert 
\int_0^t 
\varphi_s  \cdot 
{\mathrm d} \varsigma_s 
- 
\sum_{k \in {\mathbb N} : k  \leq N t} 
\langle 
\varphi_{k/N}, \varsigma_{t \wedge (k+1)/N}
- 
\varsigma_{k/N}
\rangle  
\biggr\vert
=0;
\end{equation*}
\item if, in addition, $\varphi$ takes values in 
${\mathcal U}^2({\mathbb S})$, then 
$[0,T] \ni t \mapsto 
\int_0^t \varphi_s \cdot {\mathrm d} \varsigma_s$
is non-decreasing. 
    \end{enumerate}
\end{thm}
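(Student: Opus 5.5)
We define the integral as the limit of the truncated series in item 2 and show that this series converges uniformly in $t$. For each $k$, the scalar paths $t\mapsto \langle \varsigma_t,e_k^{\star,\pm}\rangle$ are well defined, because $e_k^{\star,\pm}$ is smooth and lies in $\mathcal U^2(\mathbb S)$. By \textbf{C1} they are continuous, and by \textbf{C2} they are non-decreasing. The coefficients $s\mapsto\langle\varphi_s,e_k\rangle$ are continuous. Hence each integral $\int_0^t\langle\varphi_s,e_k\rangle\,\mathrm d\langle\varsigma_s,e_k^{\star,\pm}\rangle$ exists as a Riemann--Stieltjes integral, is continuous in $t$, and satisfies
\[
\sup_t\Bigl|\int_0^t\langle\varphi_s,e_k\rangle\,\mathrm d\langle\varsigma_s,e_k^{\star,\pm}\rangle\Bigr|\le \sup_s|\langle\varphi_s,e_k\rangle|\,\bigl(\langle\varsigma_T,e_k^{\star,\pm}\rangle-\langle\varsigma_0,e_k^{\star,\pm}\rangle\bigr).
\]
Set $M:=\sup_t\sup_{\|\psi\|_{2,2}\le1}|\langle\varsigma_t,\psi\rangle|$, with $\psi$ ranging over $\mathcal H^{3+\delta}$. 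Using the bound $\|e_k^{\star,\pm}\|_{2,2}\le C(k^2\vee1)$ from \eqref{decomp34}, the total variation above is at most $2CM(k^2\vee1)$. Moreover $\sup_s|\langle\varphi_s,e_k\rangle|\le (k\vee1)^{-4}\sup_s\|\varphi_s\|_{2,4}$. The $k$-th term is therefore bounded by $2CM(k\vee1)^{-2}\sup_s\|\varphi_s\|_{2,4}$, which is summable. This gives normal, hence uniform, convergence of the series. Items 1, 2 and 3 follow at once, with $c=2C\sum_k(k\vee1)^{-2}$. If $M=\infty$, item 3 is trivial, but convergence then needs finiteness of $M$; I would deduce $M<\infty$ from \textbf{C1} and compactness of $[0,T]$, since $\|\psi\|_{2,2}\le\|\psi\|_{2,3+\delta}$ gives only the wrong direction, so we should really use \textbf{C1} with the $\mathcal H^{-(3+\delta)}$ norm and redo the estimate using $\|e_k^{\star,\pm}\|_{2,3+\delta}\lesssim k^{3+\delta}$ (smoothness from ${\rm e}^{\frac1k\Delta}$ costs at most a polynomial factor) together with decay of $\langle\varphi_s,e_k\rangle$. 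If needed, I would use $\varphi\in\mathcal H^4$ plus an interpolation to close the summability.

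For item 4 I would compare the Riemann sums with the series, term by term. The Riemann sum equals
\[
\sum_k\sum_{j\le Nt}\langle\varphi_{j/N},e_k\rangle\bigl(\langle\varsigma_{t\wedge(j+1)/N}-\varsigma_{j/N},e_k^{\star,+}\rangle-\langle\cdot,e_k^{\star,-}\rangle\bigr),
\]
where the expansion $\langle\varphi,\varsigma\rangle=\sum_k\hat\varphi_k(\langle\varsigma,e_k^{\star,+}\rangle-\langle\varsigma,e_k^{\star,-}\rangle)$ is justified by the same normal convergence. For each fixed $k$, the inner sum is a Riemann--Stieltjes sum against a monotone continuous integrator, so it converges uniformly in $t$ as $N\to\infty$. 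The tail $\sum_{k>K}$ is bounded uniformly in $N$ and $t$ by the same domination as before, because the inner sum is bounded by the sup of the coefficient times the total variation. An $\varepsilon/3$ argument then concludes.

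For item 5, assume $\varphi_s\in\mathcal U^2(\mathbb S)$ for every $s$. By item 4, the integral is the uniform limit of Riemann sums. In each increment $\langle\varphi_{j/N},\varsigma_{t'}-\varsigma_{j/N}\rangle$ the test function lies in $\mathcal H^4\cap\mathcal U^2\subset\mathcal H^{3+\delta}\cap\mathcal U^2$ (we may take $\delta<1$), so by \textbf{C2} the increment is non-negative. Each Riemann sum is therefore non-decreasing in $t$, and so is the limit.

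The main obstacle is the bookkeeping of Sobolev indices in the domination bound, so that the series is summable under \textbf{C1}--\textbf{C2} alone. Getting this right requires relating $\|e_k^{\star,\pm}\|_{2,3+\delta}$ to $\|e_k^{\pm}\|_{2,2}$ through the heat smoothing at time $1/k$, and checking that the resulting polynomial growth in $k$ is beaten by the $k^{-4}$ decay of the Fourier coefficients of $\varphi_s$.
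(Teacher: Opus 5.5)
Your main line is the paper's proof. You bound the $k$-th Riemann--Stieltjes integral by $\sup_s|\langle\varphi_s,e_k\rangle|\le (k\vee1)^{-4}\sup_s\|\varphi_s\|_{2,4}$ times the total variation $\langle\varsigma_T-\varsigma_0,e_k^{\star,\pm}\rangle\le 2M\|e_k^{\star,\pm}\|_{2,2}\lesssim Mk^2$, and then sum the resulting $k^{-2}$. Your treatment of items 4 and 5 via Riemann sums and \textbf{C2} also matches what the paper defers to its earlier work. The step where you need $M<\infty$ is exactly where the paper invokes \textbf{C3}. That condition is not among the stated hypotheses, but it holds for the processes the theorem is applied to (item 5 of Proposition~\ref{remark 4.1}).

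Your worry is legitimate: $\langle\varsigma_t,\psi\rangle:=t\,\psi''(0)$ satisfies \textbf{C1}--\textbf{C2}, since $\psi''(0)\geq 0$ for $\psi\in\mathcal{C}^2\cap\mathcal{U}^2(\mathbb{S})$, yet $M=\infty$. However, the repair you sketch does not work. From \textbf{C1} alone, the total variation is only bounded by $2\sup_t\|\varsigma_t\|_{2,-(3+\delta)}\|e_k^{\star,\pm}\|_{2,3+\delta}$. Since $e_k^{\star,+}-e_k^{\star,-}=e_k$ and $\|e_k\|_{2,3+\delta}=k^{3+\delta}$, at least one of $\|e_k^{\star,\pm}\|_{2,3+\delta}$ is at least $k^{3+\delta}/2$, so this bound is genuinely of order $k^{3+\delta}$. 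Against the $k^{-4}$ decay of $\sup_s|\langle\varphi_s,e_k\rangle|$, which is all a continuous $\mathcal{H}^4$-valued curve gives in general, you are left with terms of order $k^{-1+\delta}$. These are not summable, and no interpolation changes that.

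What closes the gap under \textbf{C1}--\textbf{C2} is \textbf{C2} itself, used to compare modes rather than mode by mode. Take $\psi\in\mathcal{H}^{3+\delta}_{\mathrm{Sym}}(\mathbb{S})$. Then $\psi\in\mathcal{C}^2$ and $\psi'$ is odd and periodic, so $\psi'(0)=\psi'(1/2)=0$. For $y\in[0,1/2]$ this gives $|\psi'(y)|\le\|\psi''\|_\infty\min(y,\tfrac12-y)$, and by concavity of the sine this is at most $\tfrac14\|\psi''\|_\infty\sin(2\pi y)$. Set $\psi_0(\mathbf{x}):=-\cos(2\pi\mathbf{x})/(2\pi)\in\mathcal{U}^2(\mathbb{S})$. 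Then both $\tfrac14\|\psi''\|_\infty\psi_0\pm\psi$ lie in $\mathcal{H}^{3+\delta}_{\mathrm{Sym}}(\mathbb{S})\cap\mathcal{U}^2(\mathbb{S})$, and \textbf{C2} gives, for $s\le t$,
\[
\bigl|\langle\varsigma_t-\varsigma_s,\psi\rangle\bigr|\le\tfrac14\|\psi''\|_\infty\,\langle\varsigma_t-\varsigma_s,\psi_0\rangle .
\]
Now $|D^2e_k^{\pm}|\le|D^2e_k|$ almost everywhere and the heat semigroup contracts $L^\infty$, so $\|D^2e_k^{\star,\pm}\|_\infty\le Ck^2$. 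Hence the total variation of $t\mapsto\langle\varsigma_t,e_k^{\star,\pm}\rangle$ is at most $Ck^2\langle\varsigma_T-\varsigma_0,\psi_0\rangle$. This restores your summable $k^{-2}$ bound without any appeal to \textbf{C3}. Item 3 then follows from $\langle\varsigma_T-\varsigma_0,\psi_0\rangle\le 2\|\psi_0\|_{2,2}M$, and it is vacuous when $M=\infty$.
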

In item (2), 
the fact that integral 
$\int_0^t \langle \varphi_s,e_k \rangle {\mathrm d} \langle \varsigma_s,e_k^{\star,\pm}\rangle$
can be defined as a Riemann-Stieltjes integral follows from the fact that 
$e_k^+$ and $e_k^-$
 both belong to $\mathcal{U}^2(\mathbb{S})$, and then 
 from 
 {\bf C2}. 

 \begin{proof}
We just provide
a sketch of the proof as most of the ingredients are similar to 
\cite{DelHam25}. 

Item (1) in the statement is a consequence of item (2). 

As for item (2), we can follow the derivation of \cite[Definition 2]{DelHam25}, which relies itself on 
\cite[Lemma 9]{DelHam25}. The main difference lies in 
\cite[(4.22)]{DelHam25}: 
\begin{enumerate}[(i)]
\item 
in 
\cite{DelHam25}, the authors use the decomposition 
$e_k=e_k^+ - e_k^{-}$; here  
we use  
$e_k = e_k^{\star,+} - e_k^{\star,-}$; 
\item in 
\cite{DelHam25}, the test function $\varphi$ is analytic in space (as it is obtained via convolution by the heat kernel); in comparison, the test function $\varphi$ used in the statement of Theorem \ref{thm:integral:varsigma}
just takes values in the space
$\mathcal{H}^4_{\mathrm{Sym}}(\mathbb{S})$; 
\item in 
\cite{DelHam25},
the integrator $\varsigma$ takes values in 
${\mathcal H}^{-2}_{\mathrm{Sym}}(\mathbb{S})$; here 
the integrator $\varsigma$ takes values in ${\mathcal H}^{-(3+\delta)}_{\mathrm{Sym}}({\mathbb S})$. 
\end{enumerate}
We thus rewrite \cite[(4.22)]{DelHam25} in the following form (using the fact that the integral in the left-hand side is a Riemann-Stieltjes integral): 
\begin{equation*}
\begin{split}
\sup_{t \in [0,T]}
\biggl\vert 
\int_0^t \langle \varphi_s,e_n\rangle 
{\mathrm d} \langle \varsigma_s,e_n^{\star,\pm}
\rangle 
\biggr\vert 
&\leq 
\sup_{t \in [0,T]}
\bigl\vert   \langle \varphi_t,e_n\rangle
\bigr\vert 
\times \langle \varsigma_T,e_n^{\star,\pm}
\rangle 
 \leq 
C (1+n)^{-4} 
\sup_{t \in [0,T]}
\| \varphi_t \|_{2,4}
\|e_n^{\star,\pm} \|_{2,2},
\end{split}
\end{equation*}
where $n$ is a fixed integer in ${\mathbb N}$, and with the last line following from 
\textbf{C3}. 

By \eqref{decomp34} (for $e_n^{\star,\pm}$), we get (for a new value of $C >0$)
\begin{equation*}
\begin{split}
\sup_{t \in [0,T]}
\biggl\vert 
\int_0^t \langle \varphi_s,e_n\rangle 
{\mathrm d} \langle \varsigma_s,e_n^{\star,\pm}
\rangle 
\biggr\vert 
&\leq 
C (1+n)^{-2} 
\sup_{t \in [0,T]}
\| \varphi_t \|_{2,4}.
\end{split}
\end{equation*}
This shows that the series with the left-hand side as generic term is summable over $n \in {\mathbb N}$. This permits to prove item (2) in the statement, following \cite[Definition 2]{DelHam25}.
Item (3) is a direct consequence of the above bound. 

Items (4) and   (5) are consequences of 
item (3), the proof following the arguments used to 
prove \cite[Lemma 10 \& Corollary 2]{DelHam25}. 
\end{proof}

By Proposition \ref{remark 4.1}, we can apply 
Theorem \ref{thm:integral:varsigma} to the trajectories of $(\zeta_t)_{0 \le t \le T}$, 
$(\eta_t)_{{0 \le t \le T}}$ and 
$({\mathscr J}_t)_{{0 \le t \le T}}$. 
For any progressively-measurable process $(z_t)_{{0 \le t \le T}}$ with continuous trajectories with values in ${\mathcal H}^{4}({\mathbb S}) \cap {\mathcal U}^2({\mathbb S})$, we can define 
the progressively-measurable processes 
$(\int_0^t z_r \cdot {\mathrm d} \zeta_r)_{{0 \le t \le T}}$, 
$(\int_0^t z_r \cdot {\mathrm d} \eta_r)_{{0 \le t \le T}}$
and 
$(\int_0^t z_r \cdot {\mathrm d} {\mathscr J}_r)_{{0 \le t \le T}}$. 
It is readily seen that 
\begin{align*}
\int_0^t z_r\cdot\mathrm{d}\zeta_r:=  \int_0^t z_r\cdot\mathrm{d}\mathscr{J}_r + \int_0^t z_r\cdot\mathrm{d}\eta_r. 
\end{align*}
As for the integral 
driven by $({\mathscr J}_t)_{{0 \le t \le T}}$, we have the following lower bound.
\begin{lemm}
Let $(z_t)_{{0 \le t \le T}}$ be a process with continuous trajectories with values in ${\mathcal H}^{4}({\mathbb S}) \cap {\mathcal U}^2({\mathbb S})$. 
Then, 
for all $\varepsilon >0$,
with probability 1, for 
all $s,t\in [0,T]$ with $s<t$,
\begin{align}\label{ineq:4.18}
    \int_s^t z_r\cdot\mathrm{d}\mathscr{J}_r \geq  \frac{1}{2}\int_s^t \int_{\mathbb{S}}\frac{z'_{r}(\mathbf{x})}{E_\varepsilon(\mathbf{x})+D_{\mathbf{x}}X_r(\mathbf{x})}\mathrm{d}\mathbf{x}\mathrm{d}r,
\end{align}
where $E_\varepsilon(\mathbf{x})$ is given by \eqref{eq:varepsilon}.
\end{lemm}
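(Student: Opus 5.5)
The plan is to reduce \eqref{ineq:4.18} to Proposition \ref{lemm2.12} through the Riemann--Stieltjes approximation of item (4) in Theorem \ref{thm:integral:varsigma}. For a fixed mesh size $1/N$, item (4) gives, uniformly in $t$,
\begin{equation*}
\int_s^t z_r\cdot \mathrm{d}\mathscr{J}_r = \lim_{N\to\infty} \sum_{k :\, s\le k/N \le t} \bigl\langle z_{k/N}, \mathscr{J}_{t\wedge (k+1)/N} - \mathscr{J}_{k/N}\bigr\rangle ,
\end{equation*}
up to boundary terms near $s$ that vanish as $N\to\infty$ by continuity of $\mathscr{J}$ (item 5 of Proposition \ref{remark 4.1}). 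Each $z_{k/N}$ is random, so I cannot apply \eqref{57} for one fixed deterministic $\varphi$. However, Proposition \ref{lemm2.12} holds almost surely simultaneously for all $\varphi \in \mathcal{H}^{3+\delta}_{\mathrm{Sym}}(\mathbb{S})\cap\mathcal{U}^2(\mathbb{S})$, as stated right after it, using a separability argument. Since $z_{k/N}\in \mathcal{H}^4\cap\mathcal{U}^2\subset \mathcal{H}^{3+\delta}\cap\mathcal{U}^2$ for $\delta\le 1$, on a single full-measure event each summand therefore satisfies
\begin{equation*}
\bigl\langle z_{k/N}, \mathscr{J}_{t\wedge (k+1)/N} - \mathscr{J}_{k/N}\bigr\rangle \ge \frac12\int_{k/N}^{t\wedge(k+1)/N}\int_{\mathbb S}\frac{z'_{k/N}(\mathbf{x})}{E_\varepsilon(\mathbf{x})+D_{\mathbf{x}}X_r(\mathbf{x})}\,\mathrm{d}\mathbf{x}\,\mathrm{d}r .
\end{equation*}
If needed, I would justify the exchange of quantifiers explicitly. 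Take a countable dense subset of $\mathcal{H}^4\cap\mathcal{U}^2$ for the $\mathcal{H}^4$ norm, apply \eqref{57} on it, and pass to general $\varphi$. The left side is continuous in $\varphi$ for the $\mathcal{H}^{3+\delta}$ norm, and the right side is continuous by the bound below.

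Summing these inequalities over $k$ produces a Riemann-type approximation of the right-hand side of \eqref{ineq:4.18}, with $z'_r$ replaced by the piecewise constant $z'_{\lfloor Nr\rfloor/N}$. It remains to pass to the limit $N\to\infty$ on the right-hand side. The key bound is that $z'_r(\mathbf{x})$ and $E_\varepsilon(\mathbf{x})+D_{\mathbf{x}}X_r(\mathbf{x})$ have the same sign for a.e.\ $\mathbf{x}$ and a.e.\ $r$, as observed in the proof of Corollary \ref{cor 3.9}. Hence $|E_\varepsilon + D_{\mathbf{x}}X_r|\ge \varepsilon$ a.e., and the integrand is dominated by $|z'_{\cdot}(\mathbf{x})|/\varepsilon$. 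Consequently,
\begin{equation*}
\biggl|\int_s^t\int_{\mathbb S}\frac{z'_{\lfloor Nr\rfloor/N}(\mathbf{x})-z'_r(\mathbf{x})}{E_\varepsilon(\mathbf{x})+D_{\mathbf{x}}X_r(\mathbf{x})}\,\mathrm{d}\mathbf{x}\,\mathrm{d}r\biggr| \le \frac{T}{\varepsilon}\sup_{|r-r'|\le 1/N}\|z'_r-z'_{r'}\|_\infty .
\end{equation*}
This tends to $0$ pathwise, because $z$ is uniformly continuous from $[0,T]$ to $\mathcal{H}^4\hookrightarrow\mathcal{C}^1$. Combining this with item (4) of Theorem \ref{thm:integral:varsigma} gives \eqref{ineq:4.18} for all $s<t$ on the full-measure event.

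The main obstacle is the quantifier issue: \eqref{57} must hold almost surely for all test functions at once, so that it can be applied to the random, time-dependent $z_{k/N}$. I would address it with the countable dense family together with continuity in $\varphi$, as described above. The sign compatibility is what makes the right side of \eqref{57} Lipschitz in $\varphi'$ with constant $1/\varepsilon$, which is why $\varepsilon>0$ is kept fixed throughout.
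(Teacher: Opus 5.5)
Your proposal is correct and follows the same route as the paper: a Riemann--Stieltjes approximation through item (4) of Theorem~\ref{thm:integral:varsigma}, the lower bound \eqref{57} applied to each random $z_{k/N}$ (via the exchange of quantifiers noted after Proposition~\ref{lemm2.12}), and passage to the limit using the continuity of $r\mapsto z'_r$. The differences are only in presentation: you spell out the separability argument and the $1/\varepsilon$ domination, which really comes from $D_{\mathbf{x}}X_r$ having the same sign as $E_\varepsilon$ rather than from the sign of $z'_r$; and you dispose of the boundary piece near $s$ by continuity of $\mathscr{J}$, whereas the paper applies \eqref{57} to that piece as well.
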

\begin{proof} 
By item (4) in the proof of Theorem \ref{thm:integral:varsigma},
we know that, almost surely, for all $s,t \in [0,T]$ with $s<t$,
\begin{equation*}
\begin{split}
\int_s^t z_r \cdot {\mathrm d} {\mathscr J}_r = \lim_{N \rightarrow \infty}
\biggl[
\sum_{k \in {\mathbb N} : 
N s < k \leq Nt}
\left\langle z_{k/N},{\mathscr J}_{t \wedge (k+1)/N} -  {\mathscr J}_{k/N} \right\rangle 
+ 
\left\langle z_{\lfloor N s \rfloor/N }, {\mathscr J}_{(\lfloor N s \rfloor+1)/N }
-
{\mathscr J}_{s} \right\rangle \biggr].
\end{split}
\end{equation*}
By Lemma \ref{lemm2.12}, we know that, almost surely, for each $k \in {\mathbb N}$ with $k \leq Nt$,
\begin{equation*}
\left\langle z_{k/N},{\mathscr J}_{t \wedge (k+1)/N} -  {\mathscr J}_{k/N} \right\rangle \geq 
\int_{k/N}^{t \wedge (k+1)/N}
\int_{{\mathbb S}}
\frac{z_{k/N}'({\mathbf x})}{E_{\varepsilon}({\mathbf x}) 
+ 
D_{\mathbf{x}} X_r({\mathbf x})
} {\mathrm d}
{\mathbf x} {\mathrm d} r.
\end{equation*}
Similarly, 
\begin{equation*}
\left\langle z_{\lfloor Ns \rfloor /N},
{\mathscr J}_{(\lfloor N s \rfloor+1)/N}
- 
{\mathscr J}_{s} 
\right\rangle \geq 
\int_{s}^{(\lfloor N s \rfloor+1)/N}
\int_{{\mathbb S}}
\frac{z_{\lfloor Ns \rfloor /N}'({\mathbf x})}{E_{\varepsilon}({\mathbf x}) 
+ 
D_{\mathbf{x}} X_r({\mathbf x})
} {\mathrm d}
{\mathbf x} {\mathrm d} r.
\end{equation*}
And then, 
\begin{equation*}
\begin{split}
\sum_{k \in {\mathbb N} : 
N s < k \leq Nt}
\left\langle z_{k/N},{\mathscr J}_{t \wedge (k+1)/N} -  {\mathscr J}_{k/N} \right\rangle 
+ 
\left\langle z_{\lfloor N s \rfloor/N }, {\mathscr J}_{(\lfloor N s \rfloor+1)/N }
-
{\mathscr J}_{s} \right\rangle 
\geq 
\int_s^t 
\int_{{\mathbb S}}
\frac{z_{\lfloor N r \rfloor/N}'({\mathbf x})}{E_{\varepsilon}({\mathbf x}) + D_{\mathbf{x}} X_r({\mathbf x})}
{\mathrm d} {\mathbf x}
{\mathbf d}r.
\end{split}
\end{equation*}
Using the fact that, almost surely, the trajectory
$[0,T] \ni r \mapsto z_r' \in {\mathcal H}^3({\mathbb S})$ is continuous, we get, for all $s,t \in [0,T]$ with $s < t$, 
with probability 1, 
\begin{equation*}
\lim_{N \rightarrow \infty}
\int_s^t 
\int_{{\mathbb S}}
\frac{z_{\lfloor N r \rfloor/N}'({\mathbf x})}{E_{\varepsilon}({\mathbf x}) + D_{\mathbf{x}} X_r({\mathbf x})}
{\mathrm d} {\mathbf x}
{\mathbf d}r
= 
\int_s^t 
\int_{{\mathbb S}}
\frac{z_{r}'({\mathbf x})}{E_{\varepsilon}({\mathbf x}) + D_{\mathbf{x}} X_r({\mathbf x})}
{\mathrm d} {\mathbf x}
{\mathbf d}r.
\end{equation*}
And then, for all $s,t \in [0,T]$ with 
$s<t$, with probability 1, 
\begin{equation*}
\int_s^t z_r \cdot {\mathrm d} {\mathscr J}_r \geq 
\int_s^t 
\int_{{\mathbb S}}
\frac{z_{r}'({\mathbf x})}{E_{\varepsilon}({\mathbf x}) + D_{\mathbf{x}} X_r({\mathbf x})}
{\mathrm d} {\mathbf x}
{\mathbf d}r.
\end{equation*}
Using continuity 
in $(s,t)$ of both sides, we deduce that the result holds true, with probability 1, for all 
$s,t \in [0,T]$ with $s < t$. This completes the proof. 
\end{proof}

\subsection{%
  \texorpdfstring{%
    The dynamics of the weak-limit process $(X_t)_{0 \le t \le T}$
  }{%
    The dynamics of the weak-limit process%
  }%
}

In this subsection, we finally characterize the dynamics of any weak-limit process $(X_t,W_t)_{0 \le t \le T}$ 
arising from 
Proposition \ref{remark 4.1}.

\begin{prop}\label{Propo 5.13}    For $T>0$ and $\delta >0$, let $(X_t, W_t)_{0 \le t \le T}$ be a weak limit in $\mathbb{D}([0,T], \mathcal{H}^{-1}_{\mathrm{Sym}}(\mathbb{S})\times L^2_{\mathrm{Sym}}(\mathbb{S})
    )$ of the family $\{(X_t^{(h)},W_t)_{0 \le t \le T}\}_{{h \in T/{\mathbb N}^*}}$ when $h\searrow 0$. Then, almost surely, the following equation holds in a weak sense:
\begin{align}\label{RHSE2}
\mathrm{d}X_t(\mathbf{x}) = -\frac{1}{2}D_{\mathbf{x}}\Big(\frac{1}{D_{\mathbf{x}}X_t(\mathbf{x})}\Big)\mathrm{d}t + \Delta_{\mathbf{x}} X_t(\mathbf{x})\mathrm{d}t +\mathrm{d}\nu_t(\mathbf{x}) + \mathrm{d}W_t(\mathbf{x}), \quad \forall \mathbf{x}\in \mathbb{S}, \ t \in  [0,T], 
\end{align}
where $(\nu_t)_{0 \le t \le T}$ stands for a process with values in ${\mathcal H}^{-2}_{\rm Sym}({\mathbb S})$ that is non-decreasing in the sense that, for any function $\varphi \in {\mathcal H}_{\rm Sym}^2({\mathbb S})$, 
$(\langle \varphi,\nu_t\rangle)_{0 \le t \le T}$ is non-decreasing. Moreover, for all $t \in [0,T]$,
\begin{equation*}
\lim_{\epsilon \searrow 0}
\mathbb{E}\int_{0}^{t} \mathrm{e}^{\epsilon \Delta}  X_r\cdot\mathrm{d}{\nu}_r =0,
\quad 
\lim_{\epsilon \searrow 0}
\mathbb{E} \biggl[ \biggl\vert \int_{0}^{t} \int_{\mathbb S} \frac{ \mathrm{e}^{\epsilon \Delta} 
D_{\mathbf{x}} X_r({\mathbf x})}{D_{\mathbf{x}} X_r({\mathbf x})} \mathrm{d}{\mathbf x} - t
\biggr\vert \biggr]=0.
\end{equation*}

In addition, the process $(X_t)_{0 \le t \le T}$ has time-continuous trajectories with respect to the norm $\|\cdot\|_2$.
\end{prop}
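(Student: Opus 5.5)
The plan is to identify the jump part $\mathscr J$ with the entropic drift. Once that is done, the reflection process is $\nu:=\eta$, the weak limit of the $\eta^{(h)}$, and the two asymptotic identities in the statement are the reflection/complementarity conditions.

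\emph{Step 1: lower bound for $\mathscr J$.} Passing to the limit $\varepsilon\searrow 0$ in \eqref{57} would give
\begin{equation*}
\langle\varphi,\mathscr J_t-\mathscr J_s\rangle\ \ge\ \tfrac12\int_s^t\int_{\mathbb S}\frac{\varphi'}{D_{\mathbf x}X_r}\,\mathrm d\mathbf x\,\mathrm dr
\end{equation*}
for non-decreasing $\varphi$. This is legitimate by monotone convergence, because $\varphi'$ and $E_\varepsilon+D_{\mathbf x}X_r$ have the same sign, together with the integrability of $1/|D_{\mathbf x}X_r|$ from Corollary \ref{cor 3.9}.

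\emph{Step 2: definition of $\nu$.} Set
\begin{equation*}
\nu_t:=\zeta_t-\tfrac12\int_0^t D_{\mathbf x}\big(1/D_{\mathbf x}X_r\big)\,\mathrm dr,
\end{equation*}
understood distributionally, i.e.\ $\langle\varphi,\cdot\rangle=\tfrac12\int\!\int\varphi'/DX$. Then $\nu_t=\eta_t+(\mathscr J_t-\text{drift})$ is non-decreasing on $\mathcal U^2$-test functions, by Step~1 and item~5 of Proposition \ref{remark 4.1}. The equation \eqref{RHSE2} then follows from item~4 of Proposition \ref{remark 4.1}, and it remains to prove the complementarity conditions.

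\emph{Step 3: complementarity, the main obstacle.} I would show that $\mathscr J$ produces no excess, i.e.\ equality in Step~1 when integrated against $X$ itself. Two identities drive this. At the discrete level, summing \eqref{eq:2.7} gives
\begin{equation*}
2\sum_j\langle X_{t_j^-},\Phi^{(h)}(X_{t_j^-})-X_{t_j^-}\rangle\le t.
\end{equation*}
Along the scheme, \eqref{eq:ito:square:Xth:45} shows that the jumps contribute exactly $h$ each to $\|X\|_2^2$. Passing to the limit with the lower semicontinuity argument of Proposition \ref{remark 4.1}, one expects
\begin{equation*}
\lim_{\epsilon\searrow0}\mathbb E\int_0^t e^{\epsilon\Delta}X_r\cdot \mathrm d\zeta_r\le t/2 .
\end{equation*}
On the other hand, by Step~1 applied to $z_r=e^{\epsilon\Delta}X_r$ (via \eqref{ineq:4.18}), and using $\int e^{\epsilon\Delta}X\cdot d\nu\ge0$,
\begin{equation*}
\int_0^t e^{\epsilon\Delta}X_r\cdot \mathrm d\zeta_r\ \ge\ \tfrac12\int_0^t\int_{\mathbb S}\frac{e^{\epsilon\Delta}D_{\mathbf x}X_r}{D_{\mathbf x}X_r}\,\mathrm d\mathbf x\,\mathrm dr .
\end{equation*}
The ratio is pointwise at most $1$ in the appropriate averaged sense and tends to $1$ almost everywhere. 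To obtain the convergence of $\int\!\int e^{\epsilon\Delta}DX/DX$ to $t$ in $L^1(\mathbb P)$, I would split it as $\int DX\cdot(e^{\epsilon\Delta}DX/DX^2)$ and use Fatou's lemma from below together with a Jensen-type upper bound obtained by combining Corollary \ref{cor 3.9} with an Itô energy identity for the limit. Squeezing the two sides then forces both the second identity in the statement and $\mathbb E\int e^{\epsilon\Delta}X\cdot d\nu\to0$.

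The delicate point is justifying the upper bound. This requires an energy identity $d\|X_t\|_2^2=-2\|DX\|^2dt+(c_\lambda+1)dt+2\langle X,dW\rangle$ for the limit, obtained by passing to the limit in \eqref{bound 2.9} with equality, and comparing it with the Itô formula for $\|e^{\epsilon\Delta}X_t\|_2^2$ driven by $\zeta$. I expect this step to be the hardest.

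\emph{Step 4: continuity in $L^2$.} Time continuity in $\|\cdot\|_2$ follows as in \cite{DelHam25}. The trajectory is continuous in $\mathcal H^{-1}$, and the energy identity makes $t\mapsto\|X_t\|_2^2$ continuous. Weak continuity in $L^2$ together with continuity of the norm then gives strong continuity.
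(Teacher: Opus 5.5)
Your architecture matches the paper's. Step~1 is \eqref{eq:identification:1}, your $\nu$ is \eqref{eq:def:nut}, the squeeze is \eqref{eq:identification:22}--\eqref{eq:cv:L1:t/2}, and Step~4 is the paper's third step.

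Watch the sign in Step~2. Since $\langle\varphi,D_{\mathbf x}f\rangle=-\langle D_{\mathbf x}\varphi,f\rangle$, you need $\nu_t=\zeta_t+\tfrac12\int_0^tD_{\mathbf x}(1/D_{\mathbf x}X_r)\,\mathrm dr$, i.e.\ $\langle\varphi,\nu_t\rangle=\langle\varphi,\zeta_t\rangle-\tfrac12\int_0^t\int_{\mathbb S}D_{\mathbf x}\varphi/D_{\mathbf x}X_r\,\mathrm d\mathbf x\,\mathrm dr$. This is what Step~1 makes non-decreasing, and what produces the drift $-\tfrac12D_{\mathbf x}(1/D_{\mathbf x}X_t)$ in \eqref{RHSE2}.

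The genuine gap is the step you single out as hardest: the upper bound $L:=\lim_{\epsilon\searrow0}\mathbb E\int_0^t\mathrm e^{2\epsilon\Delta}X_r\cdot\mathrm d\zeta_r\le t/2$. You plan to derive it from an energy \emph{identity} for the limit, obtained by passing to the limit in \eqref{bound 2.9} with equality. That step fails. Convergence in law in $\mathbb D([0,T],\mathcal H^{-1}_{\rm Sym}(\mathbb S))$ only gives lower semicontinuity of $\mathbb E\|X_t\|_2^2$ and of $\mathbb E\int_0^t\|D_{\mathbf x}X_r\|_2^2\,\mathrm dr$. Equality would amount to convergence of these energies, i.e.\ strong convergence of $D_{\mathbf x}X^{(h)}$. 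The paper obtains that only afterwards (Lemma~\ref{lemm.7.4}), using \eqref{261}, which is itself produced in this proof, so your route is circular. The discrete bound from \eqref{eq:2.7} does not transfer directly either, since $X^{(h)}$ and $\zeta^{(h)}$ converge only in negative Sobolev spaces.

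The missing observation is that the inequality you already have is enough. Take expectations in the It\^o expansion \eqref{254} of $\|\mathrm e^{\epsilon\Delta}X_t\|_2^2$ for the limit. Every other term converges monotonically as $\epsilon\searrow0$, so $L$ exists and
\begin{equation*}
2L=\mathbb E\|X_t\|_2^2+2\int_0^t\mathbb E\|D_{\mathbf x}X_r\|_2^2\,\mathrm dr-\mathbb E\|X_0\|_2^2-c_\lambda t\le t,
\end{equation*}
where the inequality is exactly \eqref{260} from Proposition~\ref{remark 4.1}.

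Consequences for the rest of your plan:
\begin{enumerate}
\item The energy identity \eqref{261} is an output of the squeeze, not an input. It is what you then use pathwise in Step~4.
\item There is no ``averaged ratio $\le 1$'' or Jensen-type bound to look for. The upper bound on $\tfrac12\mathbb E\int_0^t\int_{\mathbb S}\mathrm e^{2\epsilon\Delta}D_{\mathbf x}X_r/D_{\mathbf x}X_r$ is inherited from $L\le t/2$ through your Step~1 inequality and the non-negativity of the reflection integral.
\item Corollary~\ref{cor 3.9} belongs on the lower-bound side. There it makes the a.e.\ limit of the ratio equal to $1$ rather than $\mathbbm 1_{\{D_{\mathbf x}X_r\neq0\}}$.
\end{enumerate}
With that fixed, your Fatou argument works and gives the $L^1(\mathbb P)$ convergence. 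You take a subsequence along which $\mathrm e^{2\epsilon\Delta}D_{\mathbf x}X\to D_{\mathbf x}X$ a.e., then apply dominated convergence to the bounded quantity $(t-\cdot)_+$. This is a slightly leaner alternative to the paper's $E_\varepsilon$-regularisation.
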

\begin{proof}
Let $(X_t, W_t,\zeta_t,{\mathscr J}_t)_{0 \le t \le T}$
be as in the statement of Proposition \ref{remark 4.1}. 
\vskip 4pt

\textit{First Step.} We know from 
item 4 in the statement of Proposition \ref{remark 4.1} that, for any 
$\varphi \in {\mathcal H}^2_{\rm Sym}({\mathbb S})$, 
for all 
$0 \leq s \leq t \leq T$, 
\begin{align*}
	\langle X_t-X_s, \phi \rangle = \langle \zeta_t-\zeta_s, \phi \rangle + \int_s^t \langle X_r,\Delta_{\mathbf{x}} \phi\rangle \mathrm{d}r + \langle W_t-W_s,\phi\rangle.
\end{align*}
The goal is to provide a closed-form expression of the component process $t\mapsto \langle \zeta_t, \varphi \rangle$ that appears in the above equation. To this end, we will compute the $L^2\text{-norm}$ of $X_t$ in two different ways and then identify the result by comparison. 
For this, choose $\varphi = \mathrm{e}^{\epsilon \Delta}e_k$, with $k\in \mathbb{N}$ and then apply It\^{o}'s rule to obtain:
\begin{align*}
\mathrm{d}\langle X_t, \mathrm{e}^{\epsilon \Delta}e_k \rangle^2 &= 2\langle  X_t, \mathrm{e}^{\epsilon \Delta}e_k \rangle \mathrm{d}\langle  X_t, \Delta_{\mathbf{x}} \mathrm{e}^{\epsilon \Delta}e_k \rangle + 2\langle  X_t, \mathrm{e}^{\epsilon \Delta}e_k \rangle \mathrm{d}\langle \zeta_t, \mathrm{e}^{\epsilon \Delta}e_k \rangle\\&\quad +  2\langle  X_t, \mathrm{e}^{\epsilon \Delta}e_k \rangle \mathrm{d}\langle W_t, \mathrm{e}^{\epsilon \Delta}e_k \rangle+ \mathrm{d}[\langle  X_{\cdot}, \mathrm{e}^{\epsilon \Delta}e_k \rangle]_t,
\end{align*}
where the symbol $[\cdot]_t$ represents the bracket and $\epsilon >0$, here represents a regularization parameter, different from the parameter $\varepsilon$
introduced in \eqref{eq:varepsilon}.
Now, integrating from $0$ to $t$ and summing over $k \in \mathbb{N}$ the latter identity, we obtain the following
\begin{align}\label{254}
	\|\mathrm{e}^{\epsilon \Delta}X_t\|_2^2 + 2\int_{0}^{t} \|D_{\mathbf{x}}(\mathrm{e}^{\epsilon \Delta}X_r)\|_2^2 \mathrm{d}r 
	= \|\mathrm{e}^{\epsilon \Delta}X_0\|_2^2 + 2\int_{0}^{t} \mathrm{e}^{2\epsilon \Delta} X_r\cdot\mathrm{d}\zeta_r 
	+  2\int_{0}^{t} \mathrm{e}^{2\epsilon \Delta}X_r\cdot\mathrm{d}W_r
	+c_{\lambda,\epsilon} t,
\end{align}
where $c_{\lambda,\epsilon}:= \sum_{k \in \mathbb{N}} \lambda_k^2 \mathrm{e}^{-8 {\pi^2} k^2 \epsilon}$(see \cite{DelHam25}). The integrand $\psi_r:= \mathrm{e}^{2\epsilon \Delta} X_r $ fulfills the requirements of Theorem \ref{thm:integral:varsigma}, which in turn implies the well-posedness of the integral $\int_{0}^{t} \mathrm{e}^{2\epsilon \Delta} X_r\cdot\mathrm{d}\zeta_r$.
By taking the expectation on both sides of the latter, 
{we can get rid of} 
the stochastic integral. Therefore, using the 
fact that 
${\mathbb E} \int_0^T \| D_{\mathbf{x}} X_r \|^2_2 {\mathrm d} r <+\infty$, 
and then
letting $\epsilon\searrow 0$, we get 
\begin{align}\label{262}
	\mathbb{E} \bigl[ \|X_t\|_2^2 \bigr]
	=\mathbb{E} \bigl[ \|X_0\|_2^2 \bigr] +  2\lim_{\epsilon\searrow 0}\mathbb{E}\int_{0}^{t} \mathrm{e}^{2\epsilon \Delta}  X_r\cdot\mathrm{d}\zeta_r- 2\int_{0}^{t} \mathbb{E} \bigl[\|D_{\mathbf{x}}X_r\|_2^2 
    \bigr]\mathrm{d}r +c_{\lambda} t,
\end{align}
where $c_{\lambda}:=\sum_{k \in \mathbb{N}} \lambda_k^2$. 

Let us return to the expression \eqref{254}. From \eqref{eq:limit:zeta},  the integral with respect to $(\zeta_t)_{t \in [0,T]}$ is also understood as the sum of the integrals with respect to $(\mathscr{J}_t)_{t \in [0,T]}$ and $(\eta_t)_{t \in [0,T]}$, respectively. In addition, from item 5 in the statement of Proposition \ref{remark 4.1}, the integral with respect to the latter process is also non-negative, i.e. $\int_{0}^{t} {\rm e}^{2\epsilon \Delta} X_r\cdot\mathrm{d}\eta_r \geq 0$. Hence, \eqref{ineq:4.18} leads to: 
\begin{equation}\label{ineq.4.26}
\begin{split}
	\|\mathrm{e}^{\epsilon \Delta}X_t\|_2^2 &+ 2\int_{0}^{t} \|\mathrm{e}^{\epsilon \Delta}D_{\mathbf{x}}X_r\|_2^2 \mathrm{d}r 
	\\
    &\geq  \|\mathrm{e}^{\epsilon \Delta}X_0\|_2^2 + \int_0^t\!\!\!\int_{\mathbb{S}}\frac{\mathrm{e}^{2\epsilon \Delta}D_{\mathbf{x}}X_r(\mathbf{x})}{E_\varepsilon(\mathbf{x})+ D_{\mathbf{x}}{X}_{r}(\mathbf{x})}\mathrm{d}\mathbf{x}\mathrm{d}r
	+  2\int_{0}^{t} \mathrm{e}^{2\epsilon \Delta}X_r\cdot\mathrm{d}W_r
	+c_{\lambda,\epsilon} t.
    \end{split}
\end{equation}
Let us focus for a moment on the second term on the right-hand side of the above inequality. 
Since $X_r$ takes values in 
${\mathcal U}^2({\mathbb S})$, $D_{\mathbf{x}}X_r({\mathbf x})$ and $E_{\varepsilon}({\mathbf x})$ have the same sign for almost every ${\mathbf x} \in {\mathbb S}$.
In particular,
\begin{equation}
\label{ineq4.27:-1}
\begin{split}
\left\vert 
\frac{{\rm e}^{2 \epsilon \Delta} D_{\mathbf{x}} X_r ({\mathbf x})}{E_{\varepsilon}({\mathbf x}) 
+
D_{\mathbf{x}} X_r({\mathbf x})
}
\right\vert
= 
\frac{ \vert 
{\rm e}^{2 \epsilon \Delta} 
D_{\mathbf{x}} X_r({\mathbf x}) 
\vert}{\varepsilon + 
\vert D_{\mathbf{x}} X_r({\mathbf x})\vert}
\leq 
\frac{ \vert 
{\rm e}^{2 \epsilon \Delta} 
D_{\mathbf{x}} X_r({\mathbf x}) 
\vert}{\varepsilon}. 
\end{split}
\end{equation}
Using the fact that $\int_0^t \int_{\mathbb S} \vert D_{\mathbf{x}} X_r({\mathbf x}) \vert^2 {\mathrm d} {\mathbf x} 
{\mathrm d} r < + \infty$, we deduce from a standard uniform integrability argument that 
\begin{align*}
\lim_{\epsilon\searrow 0}\mathbb{E}\int_0^t\int_{\mathbb{S}}\frac{\mathrm{e}^{2\epsilon \Delta}D_{\mathbf{x}}X_r(\mathbf{x})}{E_\varepsilon(\mathbf{x})+ D_{\mathbf{x}}{X}_{r}(\mathbf{x})}\mathrm{d}\mathbf{x}\mathrm{d}r = 
\mathbb{E} \int_0^t \int_{\mathbb{S}}
\frac{ D_{\mathbf{x}}X_r(\mathbf{x})}{E_\varepsilon({\mathbf x}) + D_{\mathbf{x}} X_r({\mathbf x})}\mathrm{d}r\mathrm{d}\mathbf{x}.
\end{align*}
Observe that 
\begin{equation}
\label{ineq4.27:-2}
\left\vert \frac{D_{\mathbf{x}} X_r({\mathbf x})}{E_{\varepsilon}({\mathbf x}) + 
D_{\mathbf{x}} X_r({\mathbf x})
}\right\vert = \frac{\vert D_{\mathbf{x}} X_r({\mathbf x}) \vert}{\varepsilon + 
\vert  D_{\mathbf{x}} X_r({\mathbf x})\vert} \leq 1,
\end{equation}
we deduce from Lebesgue dominated convergence theorem that 
\begin{align}\label{ineq.4.27}
\lim_{\varepsilon \searrow 
0} \lim_{\epsilon\searrow 0}\mathbb{E}\int_0^t\int_{\mathbb{S}}\frac{\mathrm{e}^{2\epsilon \Delta}D_{\mathbf{x}} X_r(\mathbf{x})}{E_\varepsilon(\mathbf{x})+ D_{\mathbf{x}} {X}_{r}(\mathbf{x})}\mathrm{d}\mathbf{x}\mathrm{d}r 
= \mathbb{E} \int_0^t \int_{\mathbb{S}}{\bf 1}_{\{D_{\mathbf{x}} X_r(\mathbf{x})\not = 0\}}\mathrm{d}r\mathrm{d}\mathbf{x}.
\end{align}
Additionally, Corollary \ref{cor 3.9} says that, for almost every $r \in [0,T]$, $\mathbb{P}{\{D_{\mathbf{x}} X_r(\mathbf{x}) \not = 0\}}=1$. Therefore, the quantity on the right-hand side of \eqref{ineq.4.27} is equal to $t$.

Taking now the expectations in \eqref{ineq.4.26}, letting $\varepsilon, \epsilon \searrow 0$
{(using \eqref{ineq.4.27} to do so)}, and then combining the upper bound obtained with the lower bound  \eqref{260}, we finally arrive at the identity
\begin{align}\label{261}
\mathbb{E}\bigl[\| X_{t}\|_2^2\bigr] = \mathbb{E}\bigl[\|X_0\|_2^2\bigr] +t -2\int_0^{t} \mathbb{E} \bigl[ \|D_{\mathbf{x}} X_{r}\|_2^2
\bigr] \mathrm{d}r + c_{\lambda}t.
\end{align}
We now proceed to compare the two $L^2\text{-expansions}$ of the same limiting-process  $(X_t)_{t\in [0,T]}$, given respectively by \eqref{261} and \eqref{262}, to obtain the desired characterization of the weak limit process $(\zeta_t)_{t\geq 0} \in \mathcal{C}([0,T],\mathcal{H}^{-2}_{\mathrm{Sym}}(\mathbb{S}))$. This reads as follows:  for all 
$t \in [0,T]$,
\begin{align}\label{eq2.53}
\lim_{\epsilon\searrow 0}\mathbb{E}\int_{0}^{t} \mathrm{e}^{2\epsilon \Delta}  X_r\cdot\mathrm{d}\zeta_r =  \frac{1}{2}t.
\end{align}
We now prove that the limit, previously shown to hold in the mean, also holds in $L^1$. By \eqref{ineq:4.18} , we have, for any 
$\varepsilon >0$,
\begin{equation*}
\int_{0}^{t} \mathrm{e}^{2\epsilon \Delta}  X_r\cdot\mathrm{d}\zeta_r
\geq \frac12 \int_0^t \int_{\mathbb{S}}\frac{\mathrm{e}^{2\epsilon \Delta}D_{\mathbf{x}} X_r(\mathbf{x})}{E_\varepsilon(\mathbf{x})+ D_{\mathbf{x}} {X}_{r}(\mathbf{x})}\mathrm{d}\mathbf{x}\mathrm{d}r.
\end{equation*}
Denoting by $x_+=\max(x,0)$ the positive part of $x \in {\mathbb R}$, we get
\begin{equation*}
{\mathbb E} \biggl[ \biggl( \frac{t}2 - \int_0^t {\rm e}^{2 \epsilon \Delta} X_r \cdot \ud \zeta_r 
\biggr)_+ \biggr]
\leq {\mathbb E} \biggl[ \biggl( \frac{t}2 - \frac12 \int_0^t \int_{\mathbb S}
\frac{{\rm e}^{2 \epsilon \Delta} D_{\mathbf{x}} X_r({\mathbf x})}{E_{\varepsilon}({\mathbf x})+D_{\mathbf{x}} X_r({\mathbf x})} \ud {\mathbf x} \ud r \biggr)_+ \biggr],
\end{equation*}
from which we deduce that 
\begin{equation*}
\limsup_{\epsilon \searrow 0}
{\mathbb E} \biggl[ \biggl( \frac{t}2 - \int_0^t {\rm e}^{2 \epsilon \Delta} X_r \cdot \ud \zeta_r 
\biggr)_+ \biggr]
\leq \limsup_{\epsilon \searrow 0}
{\mathbb E} \biggl[ \biggl( \frac{t}2 - \frac12 \int_0^t \int_{\mathbb S}
\frac{{\rm e}^{2 \epsilon \Delta} D_{\mathbf{x}} X_r({\mathbf x})}{E_{\varepsilon}({\mathbf x})+D_{\mathbf{x}} X_r({\mathbf x})} \ud {\mathbf x} \ud r \biggr)_+ \biggr].
\end{equation*}
By \eqref{ineq4.27:-1} and the dominated convergence, this gives
\begin{equation*}
\limsup_{\epsilon \searrow 0}
{\mathbb E} \biggl[ \biggl( \frac{t}2 - \frac12 \int_0^t \int_{\mathbb S}
\frac{{\rm e}^{2 \epsilon \Delta} D_{\mathbf{x}} X_r({\mathbf x})}{E_{\varepsilon}({\mathbf x})+D_{\mathbf{x}} X_r({\mathbf x})} \ud {\mathbf x} \ud r \biggr)_+ \biggr]
= {\mathbb E} \biggl[ \biggl( \frac{t}2 - \frac12 \int_0^t \int_{\mathbb S}
\frac{D_{\mathbf{x}} X_r({\mathbf x})}{E_{\varepsilon}({\mathbf x})+D_{\mathbf{x}} X_r({\mathbf x})} \ud {\mathbf x} \ud r \biggr)_+ \biggr].
\end{equation*}
Using  \eqref{ineq4.27:-2}, 
we deduce that the right-hand side tends to $0$ as $\varepsilon$ tends to $0$. Combining the last two displays, we obtain 
\begin{equation*}
\limsup_{\epsilon \searrow 0}
{\mathbb E} \biggl[ \biggl( \frac{t}2 - \int_0^t {\rm e}^{2 \epsilon \Delta} X_r \cdot \ud \zeta_r 
\biggr)_+ \biggr]=0. 
\end{equation*}
Thanks to \eqref{eq2.53}, we deduce that
\begin{equation*}
\limsup_{\epsilon \searrow 0}
{\mathbb E} \biggl[ \biggl\vert \frac{t}2 - \int_0^t {\rm e}^{2 \epsilon \Delta} X_r \cdot \ud \zeta_r 
\biggr\vert \biggr]=0. 
\end{equation*}
\color{black}

\textit{Second Step.} We now address the construction of the process 
$(\nu_t)_{0 \le t \le T}$ in the statement. 
Letting $\varepsilon$ tend to $0$ in 
\eqref{ineq:4.18}, we know that, for any process $(z_t)_{0 \le t \le T}$ with continuous trajectories in 
${\mathcal H}^4({\mathbb S}) \cap {\mathcal U}^2({\mathbb S})$, it holds, with probability 1, for all $s,t \in [0,T]$ with $s<t$, 
\begin{equation}
\label{eq:identification:1}
\int_s^t z_r \cdot \mathrm{d} {\mathscr J}_r  \geq \frac{1}{2}
\int_s^t \int_{\mathbb{S}}\frac{z_r'(\mathbf{x})}{D_{\mathbf{x}} X_r(\mathbf{x})}\mathrm{d}\mathbf{x}\mathrm{d}r.
\end{equation}
Letting
\begin{equation}
\label{eq:def:nut}
\nu_t := \zeta_t + \frac12 \int_0^t D_{\mathbf{x}}\left( \frac1{D_{\mathbf{x}} X_r} \right) {\mathrm d} r, \quad t \in [0,T],
\end{equation}
we have, for any $\varphi \in {\mathcal H}^2_{\rm Sym}({\mathbb S})$,

\begin{equation*}
\langle \varphi, \nu_t \rangle 
= 
\langle \varphi,\zeta_t \rangle 
-
\frac12 
\int_0^t \int_{\mathbb S}
\frac{\varphi'({\mathbf x})}{D_{\mathbf{x}} X_r({\mathbf x})} {\mathrm d} {\mathbf x}
{\mathrm d} r, \quad t \in [0,T],
\end{equation*}
the integral on the right-hand side being well-defined since 
$\varphi'$ is bounded and $\int_0^T\int_{\mathbb S} 1/\vert D X_r({\mathbf x}) \vert {\mathrm d} {\mathbf x} {\mathrm d} r$ is 
(almost surely) finite. In particular, 
$(\nu_t)_{t \in [0,T]}$ is a continuous process with values in ${\mathcal H}^{-2}_{\rm Sym}({\mathbb S})$. By \eqref{eq:identification:1}, it satisfies, for all $\varphi \in {\mathcal H}^4({\mathbb S}) \cap {\mathcal U}^2({\mathbb S})$, with probability 1, for all $s,t \in [0,T]$ with $s< t$, 
\begin{equation*}
\begin{split}
\langle \varphi, \nu_t - \nu_s \rangle 
&= \langle \varphi, \zeta_t - \zeta_s \rangle - 
\frac12
\int_s^t \int_{\mathbb S}
\frac{\varphi'({\mathbf x})}{D_{\mathbf{x}} X_r({\mathbf x})} {\mathrm d} {\mathbf x} {\mathrm d}r
\\
&\geq \langle \varphi, {\mathscr J}_t - {\mathscr J}_s \rangle - 
\frac12
\int_s^t \int_{\mathbb S}
\frac{\varphi'({\mathbf x})}{D_{\mathbf{x}} X_r({\mathbf x})} {\mathrm d} {\mathbf x} {\mathrm d}r
\geq 0,
\end{split}
\end{equation*}
which proves that the process $(\nu_t)_{t \in [0,T]}$ is  non-decreasing in the sense indicated in the statement (by a standard separability argument, we easily get, with probability 1, for all $\varphi \in {\mathcal H}^2({\mathbb S}) \cap {\mathcal U}^2({\mathbb S})$, for all $s,t \in [0,T]$ with $s< t$,  $\langle \varphi, \nu_t - \nu_s \rangle \geq 0$). By \eqref{eq2.53} and \eqref{eq:identification:1}, we deduce that, with probability 1, for all $t \in [0,T]$,
\begin{equation}
\label{eq:identification:22}
\begin{split}
  \frac{1}{2}t &= \lim_{\epsilon \searrow 0}
{\mathbb E} \int_0^t 
    \mathrm{e}^{2\epsilon \Delta}  X_r\cdot\mathrm{d}\zeta_r
    \\
    &\geq  \limsup_{\epsilon \searrow 0}
    \frac12 {\mathbb E}
    \int_0^t \int_{\mathbb S} \frac{{\rm e}^{2 \epsilon \Delta} D_{\mathbf{x}} X_r({\mathbf x})}{D_{\mathbf{x}} X_r({\mathbf x})} {\mathrm d} {\mathbf x} {\mathrm d}r \geq 
    \liminf_{\epsilon \searrow 0}
    \frac12 {\mathbb E}
    \int_0^t \int_{\mathbb S} \frac{{\rm e}^{2 \epsilon \Delta} D_{\mathbf{x}} X_r({\mathbf x})}{D_{\mathbf{x}} X_r({\mathbf x})} {\mathrm d} {\mathbf x} {\mathrm d}r.
\end{split}
\end{equation}
Following the proof of \eqref{ineq4.27:-1}, 
the integrand in the last term is non-negative. 
And then, using the same notation as in \eqref{ineq.4.27}, we get, for any $\varepsilon >0$,
\begin{equation}
\label{eq:cv:to:t/2}
\begin{split}
\frac12 t 
&\geq  \limsup_{\epsilon \searrow 0}
    \frac12 {\mathbb E}
    \int_0^t \int_{\mathbb S} \frac{{\rm e}^{2 \epsilon \Delta} D_{\mathbf{x}} X_r({\mathbf x})}{D_{\mathbf{x}} X_r({\mathbf x})} {\mathrm d} {\mathbf x} {\mathrm d}r
    \\
    &\geq 
    \liminf_{\epsilon \searrow 0}
    \frac12 {\mathbb E}
    \int_0^t \int_{\mathbb S} \frac{{\rm e}^{2 \epsilon \Delta} D_{\mathbf{x}} X_r({\mathbf x})}{D_{\mathbf{x}} X_r({\mathbf x})} {\mathrm d} {\mathbf x} {\mathrm d}r
    \\
&\geq \liminf_{\varepsilon \searrow 0} \liminf_{\epsilon \searrow 0} \frac12 {\mathbb E}
\int_0^t \int_{\mathbb S}
\frac{{\rm e}^{2 \varepsilon \Delta} D_{\mathbf{x}} X_r({\mathbf x})}{E_{\varepsilon}({\mathbf x}) + D_{\mathbf{x}} X_r({\mathbf x})}
 \ud {\mathbf x} \ud r = \frac12 t.
 \end{split}
 \end{equation}
 This shows that 
\begin{equation}
\label{eq:mean:E:t/2}
\lim_{\varepsilon \searrow 0} \frac12 {\mathbb E}
\int_0^t \int_{\mathbb S} \frac{{\rm e}^{2 \varepsilon \Delta} D_{\mathbf{x}} X_r({\mathbf x})}{D_{\mathbf{x}} X_r({\mathbf x})} {\mathrm d} {\mathbf x} {\mathrm d}r 
= \frac{t}2. 
\end{equation}
Following the end of the first step, we prove that the limit, which has been proved to hold in the mean, also holds in $L^1$.
By the same argument as in the derivation of \eqref{eq:identification:22},  we have, for any $\epsilon>0$ and $\varepsilon >0$, 
\begin{equation*}
\begin{split}
{\mathbb E} \biggl[ \biggl( 
\int_0^t 
    \mathrm{e}^{2\epsilon \Delta}  X_r\cdot\mathrm{d}\zeta_r
- \frac{t}2
\biggr)_+ \biggr]
&\geq {\mathbb E} \biggl[ \biggl( 
\frac12 \int_0^t\int_{\mathbb S} \frac{{\rm e}^{2 \epsilon \Delta} D_{\mathbf{x}} X_r({\mathbf x})}{ D_{\mathbf{x}} X_r({\mathbf x})}
    \ud {\mathbf x}\mathrm{d} r
- \frac{t}2
\biggr)_+ \biggr]
\\
&\geq {\mathbb E} \biggl[ \biggl( 
\frac12 \int_0^t \int_{\mathbb S} \frac{{\rm e}^{2 \epsilon \Delta} D_{\mathbf{x}} X_r({\mathbf x})}{E_{\varepsilon}({\mathbf x})+ D_{\mathbf{x}} X_r({\mathbf x})}
    \ud {\mathbf x}\mathrm{d} r
- \frac{t}2
\biggr)_+ \biggr]
\end{split}
\end{equation*}
By \eqref{ineq4.27:-1} and \eqref{ineq4.27:-2}, and similar to \eqref{ineq.4.27} , we get 
\begin{equation*}
\lim_{\varepsilon \searrow 0}
\lim_{\epsilon \searrow 0}
{\mathbb E} \biggl[ \biggl( 
\frac12 \int_0^t \int_{\mathbb S} \frac{{\rm e}^{2 \epsilon \Delta} D_{\mathbf{x}} X_r({\mathbf x})}{E_{\varepsilon}({\mathbf x})+ D_{\mathbf{x}} X_r({\mathbf x})}
    \ud {\mathbf x}\mathrm{d} r
- \frac{t}2
\biggr)_+ \biggr] =
\lim_{\varepsilon \searrow 0}
{\mathbb E} \biggl[ \biggl( 
\frac12 \int_0^t\int_{\mathbb S} \frac{D_{\mathbf{x}} X_r({\mathbf x})}{E_{\varepsilon}({\mathbf x})+ D_{\mathbf{x}} X_r({\mathbf x})}
    \ud {\mathbf x}\mathrm{d} r
- \frac{t}2
\biggr)_+ \biggr] =0,
\end{equation*}
which proves that 
\begin{equation*}
\lim_{\epsilon \searrow 0}
{\mathbb E} \biggl[ \biggl( 
\frac12 \int_0^t\int_{\mathbb S} \frac{{\rm e}^{2 \epsilon \Delta} D_{\mathbf{x}} X_r({\mathbf x})}{  D_{\mathbf{x}} X_r({\mathbf x})}
    \ud {\mathbf x}\mathrm{d} r
- \frac{t}2
\biggr)_+ \biggr] =0.
\end{equation*}
Combining with \eqref{eq:mean:E:t/2}, this leads to 
\begin{equation}
\label{eq:cv:L1:t/2}
\lim_{\epsilon \searrow 0}
{\mathbb E} \biggl[ \biggl\vert 
\frac12 \int_0^t\int_{\mathbb S} \frac{{\rm e}^{2 \epsilon \Delta} D_{\mathbf{x}} X_r({\mathbf x})}{  D_{\mathbf{x}} X_r({\mathbf x})}
    \ud {\mathbf x}\mathrm{d} r
- \frac{t}2
\biggr\vert \biggr] =0.
\end{equation}
\color{black}
Returning to \eqref{eq:identification:22}, we deduce that, with probability 1, for all $t \in [0,T]$, 
\begin{equation*}
\lim_{\varepsilon \searrow 0} 
{\mathbb E} \int_0^t {\rm e}^{2 \epsilon \Delta} X_r \cdot\ud \nu_r =0.
\end{equation*}

\textit{Third Step.} It remains to establish the continuity of the trajectories of $(X_t)_{t \in [0,T]}$ in $L^2({\mathbb S})$. Returning once again to \eqref{254}, now with another time parameter $s \in [0,T]$, for $s\leq t$ we have
\begin{align*}
	\left| \|\mathrm{e}^{\epsilon \Delta}X_t\|_2^2 - \|\mathrm{e}^{\epsilon \Delta}X_s\|_2^2 \right| &\leq  2\int_{s}^{t} \|D_{\mathbf{x}} X_r\|_2^2 \mathrm{d}r  
 + 2\int_{s}^{t} \mathrm{e}^{2\epsilon \Delta} X_r\cdot\mathrm{d}\zeta_r 
	+  2\left|\int_{s}^{t} \mathrm{e}^{2\epsilon \Delta}X_r\cdot\mathrm{d}W_r\right|
	+c_{\lambda,\epsilon} (t-s),
\end{align*}
Using almost the same arguments as the ones displayed in \cite[Proof of Proposition 7, Pg 74-75]{DelHam25} and further letting $\epsilon \searrow 0$ in the above inequality along with \eqref{eq2.53} we deduce thereof
\begin{align*}
	\Big| \Vert X_t\Vert_2^2 - \Vert X_s\Vert_2^2 \Big| &\leq  2\int_{s}^{t} \|D_{\mathbf{x}} X_r\|_2^2 \mathrm{d}r  
 + (t-s)+  2\left|\int_{s}^{t} X_r\cdot\mathrm{d}W_r\right|
	+c_{\lambda} (t-s).
\end{align*}
As $s \rightarrow t$, the terms on the right-hand side converge to 0. The latter implies the existence of an event of probability one, under which the trajectory $[0,T]\ni t\mapsto \|X_t\|_2^2$ is continuous. Combining this with the fact that the limiting-process $(X_t)_{t\in [0,T]}$ is continuous with respect to the norm $\|\cdot\|_{2,-1}$~(see Remark \ref{remark 4.1}), we then deduce that, the process $(X_t)_{t\in [0,T]}$ has time-continuous trajectories with respect to the norm $\|\cdot\|_2$.
\end{proof}

\section{\textbf{Rearranged SHE with Entropy-Driven Drift}}
\label{se:6}
In this section, we rely on Proposition
\ref{Propo 5.13}
to rigorously define a version of the RSHE driven by the derivative of entropy. Proposition \ref{Propo 5.13} ensures the existence of weak solutions. Here, we prove strong uniqueness. We also establish the Feller regularity properties of the semigroup generated by the equation.

\subsection{Definition and Uniqueness of the solution} 
The following definition is an analog of Definition \ref{Defi 0.1}, but adapted to the scheme \eqref{scheme1}:

\begin{defi}\label{Defi 317} 
Given a probabilistic set-up $(\Omega,\mathfrak{F},{\mathbb F}= \{\mathfrak{F}_t\}_{t\geq 0},\mathbb{P},(W_t)_{t\geq 0})$ as defined in Subsection \ref{Notations}, and an ${\mathcal F}_0$--measurable initial condition $X_0$ with values in ${\mathcal U}^2({\mathbb S} )$ satisfying ${\mathbb E}[\| X_0\|_2^p] < + \infty$ for all $p \geq 1$, we say that a pair of  processes $(X_t,{\eta}_t)$ solves the equation \eqref{RHSE2}, initialized at $X_0$, if
\begin{enumerate}
\item[(i)] $(X_t)_{t\geq 0}$ is continuous $\mathbb{F}-$adapted with values in $\mathcal{U}^2(\mathbb{S})$; almost surely, the path $(X_t)_{t \geq 0}$ belongs to $L^2_{\rm loc}([0,+\infty),{\mathcal H}^1_{\rm Sym}({\mathbb S}))$ (i.e., $(X_t)_{0 \le t \le T} \in L^2([0,T],{\mathcal H}^1_{\rm Sym}({\mathbb S}))$). Moreover, 
\begin{equation*}
\forall T>0, \quad {\mathbb E} \int_0^T \| D_{\mathbf{x}} X_t \|_2^2 {\mathrm d} t < + \infty, \quad 
 {\mathbb E} \int_0^T \int_{\mathbb S} \frac1{\vert D_{\mathbf{x}} X_t({\mathbf x})\vert} {\mathrm d}{\mathbf x} < + \infty;
\end{equation*}
\item[(ii)] $({\eta}_t)_{t\geq 0}$ is a continuous $\mathbb{F}\text{-adapted}$ process that takes values in $\mathcal{H}_{\mathrm{Sym}}^{-2}(\mathbb{S})$, starting from $0$ at $0$ such that, with probability 1, for any $\varphi \in \mathcal{H}_{\mathrm{Sym}}^{2}(\mathbb{S})\cap \mathcal{U}^2(\mathbb{S})$ the path $t\mapsto\langle {\eta}_t, \varphi\rangle$ is non-decreasing;
\item[(iii)]  with probability one, for any $\varphi \in \mathcal{H}_{\mathrm{Sym}}^{2}(\mathbb{S})$, for all $t \geq 0$
\begin{align}\label{eq:4.28}
\langle X_t, \phi \rangle =
\langle X_0,\phi \rangle + \frac{1}{2}\int_0^t \Big\langle \frac{1}{D_{\mathbf{x}} X_r},D_{\mathbf{x}} \phi \Big\rangle \mathrm{d}r  + \int_0^t \langle X_r,\Delta_{\mathbf{x}} \phi \rangle \mathrm{d}r + \langle {\eta}_t,\phi\rangle +\langle W_t,\phi\rangle, \quad t \geq 0;
\end{align}
\item[(iv)] for any $t\geq 0$,
\[\lim_{\epsilon\searrow 0}\mathbb{E}\int_{0}^{t} \mathrm{e}^{\epsilon \Delta} X_r\cdot\mathrm{d}{\eta}_r =0, 
\quad
\lim_{\epsilon \searrow 0}
\mathbb{E} \biggl[\biggl\vert  \int_{0}^{t}  \int_{\mathbb S}\frac{ \mathrm{e}^{\epsilon \Delta} 
D_{\mathbf{x}} X_r({\mathbf x})}{D_{\mathbf{x}} X_r({\mathbf x})} \mathrm{d}{\mathbf x} -t  \biggr\vert \biggr]=0,
\]
with the first integral being interpreted as in Theorem \ref{thm:integral:varsigma}.\end{enumerate}
\end{defi}
Observe that, compared to the notation used in the statement of Proposition \ref{Propo 5.13}, we here use the letter $\eta$ to denote the reflection process in equation \eqref{eq:4.28}, whereas in Proposition \ref{Propo 5.13} the same process is denoted by $\nu$. This choice is made for consistency with the notation adopted in Definition \ref{Defi 0.1}. However, we emphasize that the process 
$\eta$ in the above definition does not necessarily coincide with the process 
$\eta$ appearing in  \ref{eq:limit:zeta}; rather, it corresponds to the process $\nu$ defined in \eqref{eq:def:nut}.

\begin{remark}
\label{rem:item (iv):Def:Solution}
The second condition in item \textit{(iv)} of the definition is required to ensure the uniqueness of the solutions. Of course, this property holds whenever $D_{\mathbf{x}}X$ is a regular function of $\mathbf{x}$ that remains bounded away from $0$. However, under the standing assumptions, we cannot guaranty this, and therefore it must be imposed.
\end{remark}

\begin{remark}[Distributional drift as the derivative of the entropy]\label{Remark 5.16}
We now clarify the connection between the drift that appears in \eqref{eq:4.28} 
and the notion of entropy. Let 
$\mu \in {\mathcal P}({\mathbb R})$ be a probability measure with a density $p$ that is strictly positive. Denote by $F_{\mu}^{-1}$ the corresponding quantile function, as defined by \eqref{inverse}. Then

\begin{equation*}
p(y) = \frac{2}{(D_y F_\mu^{-1})(\tilde F_{\mu}(y)/2)} , \quad y \in {\mathbb R},
\end{equation*}
where $\tilde F_{\mu}$ denotes the standard cumulative distribution function of $\mu$.
In particular, the relative entropy of 
$p$ with respect to the Lebesgue measure on 
${\mathbb R}$ reads
\begin{equation*}
\begin{split}
\int_{\mathbb R} p(y) \ln(p(y)) {\mathrm d} y 
= 
\int_{\mathbb S} \ln \bigl(  \frac{2}{D_{\mathbf{x}} F_{\mu}^{-1}(\tilde F_{\mu}(F_\mu^{-1}({\mathbf x}))/2)}
\bigr) {\mathrm d} {\mathbf x}
&=\int_{\mathbb S} \ln \bigl(  \frac{2}{\vert D_{\mathbf{x}} F_{\mu}^{-1}({\mathbf x}) \vert}
\bigr) {\mathrm d} {\mathbf x}
\\
&= \ln(2) - \int_{\mathbb S}
\ln\bigl( \vert D_{\mathbf{x}} F_{\mu}^{-1}({\mathbf x}) \vert\bigr) {\mathrm d} {\mathbf x}. 
\end{split}
\end{equation*}
Take now a smooth symmetric function $\varphi : {\mathbb S} \rightarrow {\mathbb R}$ such that $\varphi({\mathbf x})=0$ for ${\mathbf x}$ in a neighborhood of ${\mathbb Z}/2$. 
For $\varepsilon >0$ small enough, $F_{\mu}^{-1} + \varepsilon \varphi$ remains in 
${\mathcal U}^2({\mathbb S})$ and is in fact strictly increasing; as such, it can be regarded as the quantile of a probability measure
with a strictly positive density. The derivative of the entropy along the direction $\varphi$ at the measure $\mu$ is thus given by the following derivative (if it exists):
\begin{equation*}
{\frac{\mathrm d}{\mathrm d \varepsilon}}_{\vert \varepsilon =0}
\left( - \int_{\mathbb S}
\ln \bigl( \vert D_{\mathbf{x}} F_{\mu}^{-1}({\mathbf x}) 
+ \varepsilon D_{\mathbf{x}} \varphi({\mathbf x}) 
\vert
\bigr) {\mathrm d} {\mathbf x} \right)
\end{equation*}
Since $\varphi$ is equal to $0$ in the neighborhood of $ {\mathbb Z}/2$, and 
$D F_{\mu}^{-1}$ is smooth away from ${\mathbb Z}/2$ (as given by the inverse of $p(y)$, for 
$y$ in a compact set), we easily deduce that 
\begin{equation*}
{\frac{\mathrm d}{\mathrm d \varepsilon}}_{\vert \varepsilon =0}
\left( - \int_{\mathbb S}
\ln \bigl( \vert D_{\mathbf{x}} F_{\mu}^{-1}({\mathbf x}) 
+ \varepsilon D_{\mathbf{x}} \varphi({\mathbf x}) 
\vert
\bigr) {\mathrm d} {\mathbf x} \right)
=
- \int_{\mathbb S} \frac{D_{\mathbf{x}} \varphi({\mathbf x})}{D_{\mathbf{x}} F_{\mu}^{-1}({\mathbf x})} {\mathrm d} {\mathbf x}.
\end{equation*}
 Obviously, we can interpret 
 the right-hand side as the action of the function $\varphi$ onto the distribution $D_{\mathbf{x}}(1/ D_{\mathbf{x}} F_{\mu}^{-1})$.
 This means that we should regard the second term on the right-hand side of \eqref{eq:4.28} as (half) the opposite of
 the derivative of the entropy, calculated at the distribution ${\rm Leb}_{\mathbb S} \circ X_r^{-1}$.
 Of course, this interpretation should be the right one: as the step size $h$ goes to zero, the effect of 
the convolution appearing on the second line of 
\eqref{scheme1} is to force the system to follow the opposite of the gradient of the entropy perturbed by the original rearranged dynamics. 
\end{remark}

In the following result, we establish the pathwise uniqueness of \eqref{eq:4.28}.

\begin{prop}\label{propo:4.17}
Given a probabilistic set-up $(\Omega,\mathfrak{F},{\mathbb F}= \{\mathfrak{F}_t\}_{t\geq 0},\mathbb{P},(W_t)_{t\geq 0})$ as defined in 
Subsection \ref{Notations}, and an ${\mathcal F}_0-$measurable initial condition $X_0$ with values in $\mathcal{U}^2(\mathbb{S})$ satisfying $\mathbb{E}[\|X_0\|_2^{p}]< + \infty$ for all $p \geq 1$, the RSHE~\eqref{RHSE2}  admits at most one solution $(X_t,{\eta}_t)_{t\geq 0}$ that satisfies Definition \ref{Defi 317}.

Moreover, if $(X^x,\eta^x)$ and $(X^y,\eta^y)$ are two solutions to \eqref{eq:4.28} starting at $x \text{ and } y \in \mathcal{U}^2(\mathbb{S})$ respectively, then, $\mathbb{P}\mathrm{-a.s.}$,
\begin{align}\label{ineq:4.32}
	\|X_{t}^x-X_t^y\|_2^2 + 2\int_{0}^{t} \|D_{\mathbf{x}} (X_s^x-X_s^y)\|_2^2 \mathrm{d}s 
    + \int_0^t \int_{\mathbb S}
    \biggl\vert  \sqrt{\frac{D_{\mathbf{x}} X^x_s({\mathbf x})}{D_{\mathbf{x}} X^y_s({\mathbf x})}}- \sqrt{\frac{D_{\mathbf{x}} X^y_s({\mathbf x})}{D_{\mathbf{x}} X^x_s({\mathbf x})}} \biggr\vert^2 {\mathrm d}{\mathbf x}  {\mathrm d}s 
    \leq \|x-y\|_2^2.
\end{align}
In particular, the pair solution $(X_t,\eta_t)_{t\in [0,T]}$ has a version that is measurable, continuous in time and Lipschitz continuous in $x \in {\mathcal U}^2(\mathbb{S})$.
\end{prop}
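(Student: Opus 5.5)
The plan is to compute the It\^o expansion of $\|\mathrm{e}^{\epsilon\Delta}(X^x_t-X^y_t)\|_2^2$ with the smooth test functions $\mathrm{e}^{\epsilon\Delta}e_k$, as in the proof of Proposition \ref{Propo 5.13}, and then let $\epsilon\searrow 0$. Both solutions are driven by the same noise $W$, so the martingale term and the bracket cancel in the difference. Writing $Y_t:=X^x_t-X^y_t$, summing over $k$ gives
\begin{align*}
\|\mathrm{e}^{\epsilon\Delta}Y_t\|_2^2+2\int_0^t\|\mathrm{e}^{\epsilon\Delta}D_{\mathbf x}Y_r\|_2^2\,\mathrm{d}r
&=\|\mathrm{e}^{\epsilon\Delta}(x-y)\|_2^2
-\int_0^t\int_{\mathbb S}\Bigl(\frac{1}{D_{\mathbf x}X^x_r}-\frac{1}{D_{\mathbf x}X^y_r}\Bigr)\mathrm{e}^{2\epsilon\Delta}D_{\mathbf x}Y_r\,\mathrm{d}\mathbf x\,\mathrm{d}r
\\
&\quad+2\int_0^t\mathrm{e}^{2\epsilon\Delta}Y_r\cdot\mathrm{d}(\eta^x_r-\eta^y_r).
\end{align*}
The reflection integrals are defined through Theorem \ref{thm:integral:varsigma}; one has to check that $\eta$ satisfies {\bf C1}--{\bf C3}, which follows from item (ii) of Definition \ref{Defi 317} together with the equation.

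For the reflection terms I would follow the uniqueness argument of \cite{DelHam25}. Since $\mathrm{e}^{2\epsilon\Delta}X^y_r\in\mathcal U^2(\mathbb S)$, monotonicity gives $\int_0^t\mathrm{e}^{2\epsilon\Delta}X^y_r\cdot\mathrm{d}\eta^x_r\ge 0$, and likewise with $x$ and $y$ exchanged. Hence the cross terms have the favourable sign, and what remains is $\int_0^t \mathrm{e}^{2\epsilon\Delta}X^x_r\cdot\mathrm{d}\eta^x_r+\int_0^t \mathrm{e}^{2\epsilon\Delta}X^y_r\cdot\mathrm{d}\eta^y_r$. Its expectation vanishes as $\epsilon\to0$ by the first condition in item (iv). To obtain an almost-sure statement rather than one in expectation, I would localize by stopping times, or argue as in \cite{DelHam25} using that these integrals are non-negative. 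For the deterministic initial data $x,y$ the expectation bound is in any case enough for uniqueness.

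The entropy term is the key step. Expand the product
$$-\Bigl(\frac{1}{D X^x}-\frac{1}{DX^y}\Bigr)\mathrm{e}^{2\epsilon\Delta}(DX^x-DX^y)$$
into four pieces. The diagonal pieces $-\mathrm{e}^{2\epsilon\Delta}DX^x/DX^x$ and $-\mathrm{e}^{2\epsilon\Delta}DX^y/DX^y$ each converge in $L^1(\Omega)$, after integration, to $-t$ by the second condition in item (iv). The cross pieces $\mathrm{e}^{2\epsilon\Delta}DX^y/DX^x$ and $\mathrm{e}^{2\epsilon\Delta}DX^x/DX^y$ are non-negative, because all the derivatives involved share the sign of $E_\varepsilon$. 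By Fatou's lemma, along a subsequence converging almost everywhere, $\liminf_{\epsilon\searrow 0}$ of their integrals is at least $\int\!\!\int (DX^y/DX^x+DX^x/DX^y)$, which may be infinite a priori. I expect this to be the main obstacle: these cross terms enter the right-hand side with a plus sign, so Fatou gives an inequality in the wrong direction. The sign of the entropy term therefore has to be recomputed carefully. The drift is $-\tfrac12 D(1/DX)$ and it was moved to the left-hand side; it is monotone because $r\mapsto 1/r$ is decreasing on each half-circle, so
$$(1/a-1/b)(a-b)=-(a-b)^2/(ab)=-\bigl(\sqrt{a/b}-\sqrt{b/a}\bigr)^2\le0.$$
The correct bookkeeping therefore puts the quantity $\int\!\!\int|\sqrt{DX^x/DX^y}-\sqrt{DX^y/DX^x}|^2$, that is, the cross terms minus $2t$, on the left-hand side. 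Fatou's lemma then applies in the right direction, with the left-hand side controlled as a $\liminf$.

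Finally, I would pass to the limit $\epsilon\searrow0$ on the left-hand side using Fatou's lemma and the continuity of $X$ in $L^2$. This yields \eqref{ineq:4.32}, in expectation or almost surely after localization. Uniqueness follows by taking $x=y$ and then identifying $\eta$ from \eqref{eq:4.28}. The measurable, continuous and Lipschitz version in $x$ follows from \eqref{ineq:4.32} by a Kolmogorov-type or density argument on a countable dense set of initial conditions, together with the completeness of $\mathcal C([0,T],\mathcal U^2(\mathbb S))$.
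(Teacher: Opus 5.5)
Your proposal follows the paper's proof. You apply It\^o's formula to $\|\mathrm{e}^{\epsilon\Delta}(X^x_t-X^y_t)\|_2^2$ and discard the cross reflection integrals by monotonicity. You then kill the diagonal reflection and entropy terms through item \textit{(iv)}, passing through convergence in probability to almost sure convergence along a subsequence. Finally, you apply Fatou's lemma to the non-negative cross entropy terms placed on the left-hand side. One fix: in your first display the entropy integral should carry a $+$ sign, since testing the drift against $\varphi$ gives $+\tfrac12\langle 1/D_{\mathbf x}X,D_{\mathbf x}\varphi\rangle$. That is exactly the correction you make afterwards, so the diagonal pieces contribute $+2t$ on the right and the cross pieces sit on the left, as in the paper.
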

\begin{proof}
On the same probabilistic set-up as in the statement, let us consider two solutions $(X_t,\eta_t)_{t \geq 0}$ and $(X_t',\eta_t')_{t \geq  0}$ to the equation \eqref{eq:4.28}. We then let 
\begin{equation*}
\begin{split}
\zeta_t := \eta_t - \frac12 
D_{\mathbf{x}} \Bigl( \frac{1}{D_{\mathbf{x}} X_t} \Bigr), 
\quad
\zeta_t' := \eta_t' - \frac12 
D_{\mathbf{x}} \Bigl( \frac1{D_{\mathbf{x}} X_t'} \Bigr)
\quad t \geq 0.
\end{split}
\end{equation*}
By performing essentially the same computations as in the derivation of equation \eqref{254}, It\^o's formula applied to the continuous process $(\mathrm{e}^{\epsilon \Delta}X_t)_{t \geq 0}$, for a given $\epsilon >0$, yields:
\begin{align*}
	\|\mathrm{e}^{\epsilon \Delta}(X_{t}-X_t')\|_2^2 &+ 2\int_{0}^{t} \|D_{\mathbf{x}}\mathrm{e}^{\epsilon \Delta}(X_{s}-X_s')\|_2^2 \mathrm{d}s = 2\int_{0}^{t} \mathrm{e}^{2\epsilon \Delta} (X_{s}-X_s')\cdot\mathrm{d}(\zeta_s-\zeta_s'), \quad t \geq 0,
\end{align*}
the integral on the right-hand side being interpreted by means of Theorem 
\ref{thm:integral:varsigma}. And then, expanding the right-hand side, we obtain 
\begin{equation}
\label{eq:stability:000}
\begin{split}
&\|\mathrm{e}^{\epsilon \Delta}(X_{t}-X_t')\|_2^2 + 2\int_{0}^{t} \|D_{\mathbf{x}} \mathrm{e}^{\epsilon \Delta}(X_{s}-X_s')\|_2^2 \mathrm{d}s 
\\
&\leq 2\int_{0}^{t} \mathrm{e}^{2\epsilon \Delta} ( X_{s} - X _s') \cdot 
\mathrm{d}(\eta_s - \eta_s')  + \int_{0}^{t}\int_{\mathbb{S}}
\bigl[ \mathrm{e}^{2\epsilon \Delta}(D_{\mathbf{x}} X_s - D_{\mathbf{x}} X_s')\bigr](\mathbf{x})\left(\frac{1}{D_{\mathbf{x}} X_s(\mathbf{x})} - \frac{1}{D_{\mathbf{x}} X_s'(\mathbf{x})} \right) \mathrm{d}\mathbf{x}\mathrm{d}s.
\end{split}
\end{equation}
The first term on the right-hand side can be rewritten as 
\begin{equation*}
\begin{split}
&\int_0^t {\rm e}^{2 \epsilon \Delta} (X_s - X_s') \cdot \ud (\eta_s  - \eta_s')
 =
\int_0^t {\rm e}^{2 \epsilon \Delta} X_s \cdot \ud \eta_s + \int_0^t {\rm e}^{2 \varepsilon} X_s'\cdot \ud \eta_s' 
-
\int_0^t {\rm e}^{2 \epsilon \Delta } X_s 
\cdot \ud \eta_s'
- \int_0^t {\rm e}^{2 \epsilon \Delta } X_s' \cdot \ud \eta_s.
\end{split}
\end{equation*}
By Theorem \ref{thm:integral:varsigma}, the last two terms give a negative contribution. And, by item \textit{(iv)} in Definition \ref{Defi 317}, the first two ones tend to $0$ in probability as $\epsilon$ tend to $0$. This says that
\begin{equation}
\label{eq:conv:proba:1}
    \forall \delta >0, 
    \quad \lim_{\epsilon \searrow 0}
    {\mathbb P}
    \left( \left\{ \int_0^t {\rm e}^{2 \epsilon \Delta} (X_s - X_s') \cdot \ud (\eta_s  - \eta_s') \geq \delta \right\} 
    \right) =0.
\end{equation}
By item $(iv)$ in Definition \ref{Defi 317}, we also know that 
\begin{equation}
\label{eq:conv:proba:2}
\forall \delta >0, \quad \lim_{\epsilon \searrow 0}
    {\mathbb P}
    \left( \left\{ 
    \int_{0}^{t}\int_{\mathbb{S}}
    \biggl( 
\frac{ [\mathrm{e}^{2\epsilon \Delta}D_{\mathbf{x}} X_s ](\mathbf{x})}{D_{\mathbf{x}}X_s({\mathbf x})} + 
\frac{ [\mathrm{e}^{2\epsilon \Delta}D_{\mathbf{x}} X_s' ](\mathbf{x})}{D_{\mathbf{x}} X_s'({\mathbf x})}
\biggr)
\mathrm{d}\mathbf{x}\mathrm{d}s
    \geq 2t+ \delta \right\} 
    \right) =0.
    \end{equation}
Finally, using the fact that $[{\rm e}^{2 \epsilon \Delta} D_{\mathbf{x}} X_s]({\mathbf x})/D_{\mathbf{x}} X_s'({\mathbf x})$ and $[{\rm e}^{2 \epsilon \Delta} D_{\mathbf{x}} X_s']({\mathbf x})/D_{\mathbf{x}} X_s({\mathbf x})$ are non-negative together with Fatou's lemma, we get, up to a subsequence $(\epsilon_n)_{n \geq 1}$
{(converging to $0$)} along which $[{\rm e}^{2 \epsilon_n \Delta} D_{\mathbf{x}} X_s]({\mathbf x})/D_{\mathbf{x}} X_s'({\mathbf x})$ and $[{\rm e}^{2 \epsilon_n \Delta} D_{\mathbf{x}} X_s']({\mathbf x})/D_{\mathbf{x}} X_s({\mathbf x})$ converge (almost everywhere in $(\omega,s,{\mathbf x})$ to $D_{\mathbf{x}} X_s({\mathbf x})/D_{\mathbf{x}} X_s'({\mathbf x})$ and $D_{\mathbf{x}} X_s'({\mathbf x})/D_{\mathbf{x}} X_s({\mathbf x})$ respectively, for all $t \geq 0$, ${\mathbb P}-$almost surely, 
\begin{equation*}
\begin{split}
&\int_{0}^{t}\int_{\mathbb{S}}
    \biggl( 
\frac{  D_{\mathbf{x}} X_s' (\mathbf{x})}{D_{\mathbf{x}} X_s({\mathbf x})} + 
\frac{ D_{\mathbf{x}} X_s (\mathbf{x})}{D_{\mathbf{x}} X_s'({\mathbf x})}
\biggr)
\mathrm{d}\mathbf{x}\mathrm{d}s
\\
&=\int_{0}^{t}\int_{\mathbb{S}}
\lim_{n \rightarrow  \infty}
    \biggl( 
\frac{ [\mathrm{e}^{2\epsilon_n\Delta}D_{\mathbf{x}} X_s' ](\mathbf{x})}{D_{\mathbf{x}} X_s({\mathbf x})} + 
\frac{ [\mathrm{e}^{2\epsilon_n \Delta}D_{\mathbf{x}} X_s ](\mathbf{x})}{D_{\mathbf{x}} X_s'({\mathbf x})}
\biggr)
\mathrm{d}\mathbf{x}\mathrm{d}s
\\
&\leq \liminf_{n \rightarrow  \infty} \int_{0}^{t}\int_{\mathbb{S}}
    \biggl( 
\frac{ [\mathrm{e}^{2\epsilon_n\Delta}D_{\mathbf{x}} X_s' ](\mathbf{x})}{D_{\mathbf{x}} X_s({\mathbf x})} + 
\frac{ [\mathrm{e}^{2\epsilon_n \Delta}D_{\mathbf{x}} X_s ](\mathbf{x})}{D_{\mathbf{x}} X_s'({\mathbf x})}
\biggr)
\mathrm{d}\mathbf{x}\mathrm{d}s.
\end{split}
\end{equation*}
By \eqref{eq:conv:proba:1} and \eqref{eq:conv:proba:2}, we can change the definition of the sequence $(\epsilon_n)_{n \geq 1}$ in such a way that, ${\mathbb P}-$almost surely, 
\begin{equation*}
\begin{split}
&\limsup_{n \rightarrow + \infty} 
\biggl[ 
2 \int_0^t \mathrm{e}^{2 \epsilon_n \Delta} (X_s - X_s') \cdot \ud (\eta_s  - \eta_s')
+     \int_{0}^{t}\int_{\mathbb{S}}
    \biggl( 
\frac{ [\mathrm{e}^{2\epsilon_n \Delta}D_{\mathbf{x}} X_s ](\mathbf{x})}{D_{\mathbf{x}} X_s({\mathbf x})} + 
\frac{ [\mathrm{e}^{2\epsilon_n \Delta}D_{\mathbf{x}} X_s' ](\mathbf{x})}{D_{\mathbf{x}} X_s'({\mathbf x})}
\biggr)
\mathrm{d}\mathbf{x}\mathrm{d}s
\biggr] \leq 2 t.
\end{split}
\end{equation*}
Returning back to \eqref{eq:stability:000} with $\epsilon=\epsilon_n$ therein, and then letting 
$n$ tend to $+\infty$, we get, for all $t \geq 0$, ${\mathbb P}-$almost surely, 
\begin{equation*}
\begin{split}
&\|X_{t}-X_t'\|_2^2 + 2\int_{0}^{t} \|D_{\mathbf{x}} (X_{s}-X_s')\|_2^2 \mathrm{d}s 
+
\int_{0}^{t}\int_{\mathbb{S}}
    \biggl( 
\frac{  D_{\mathbf{x}} X_s' (\mathbf{x})}{D_{\mathbf{x}} X_s({\mathbf x})} + 
\frac{ D_{\mathbf{x}} X_s (\mathbf{x})}{D_{\mathbf{x}} X_s'({\mathbf x})}
\biggr)
\mathrm{d}\mathbf{x}\mathrm{d}s
 \leq  2t,
\end{split}
\end{equation*}
which we rewrite as 
\begin{equation*}
\begin{split}
&\|X_{t}-X_t'\|_2^2 + 2\int_{0}^{t} \|D_{\mathbf{x}} (X_{s}-X_s')\|_2^2 \mathrm{d}s 
+
\int_0^t \int_{\mathbb S}
    \biggl\vert  \sqrt{\frac{D_{\mathbf{x}} X_s({\mathbf x})}{D_{\mathbf{x}} X_s'({\mathbf x})}}- \sqrt{\frac{D_{\mathbf{x}} X_s'({\mathbf x})}{D_{\mathbf{x}} X_s({\mathbf x})}} \biggr\vert^2 {\mathrm d}{\mathbf x}  {\mathrm d}s 
 \leq  0.
\end{split}
\end{equation*}
This holds ${\mathbb P}-$ almost surely, for all $t \geq 0$. This proves uniqueness. 
    Inequality \eqref{ineq:4.32} can be established in the same way.
This completes the proof.
\end{proof}`

Combining Propositions \ref{Propo 5.13} and \ref{propo:4.17}
(and on the model of the proof of Theorem 1 in \cite{DelHam25}, based on Yamada-Watanabe's principle), we obtain the first main result of this paper:

\begin{thm}\label{Main 1}
    Given a probabilistic set-up $(\Omega,\mathfrak{F},{\mathbb F}= \{\mathfrak{F}_t\}_{t\geq 0},\mathbb{P},(W_t)_{t\geq 0})$ as defined in Subsection \ref{Notations}, and an ${\mathcal F}_0-$measurable initial condition $X_0$ with values in ${\mathcal U}^2({\mathbb S})$ satisfying ${\mathbb E}[\| X_0\|^p_2] < + \infty$ for all $p \geq 1$, there exists a unique solution $(X_t,{\eta}_t)_{t\geq 0}$ to the RSHE~\eqref{RHSE2}  in the sense of Definition \ref{Defi 317}.
For every $T>0$, the law of $(X_t,W_t)_{0 \le t \le T}$ over $\mathcal{C}([0,T],\mathcal{H}_{\mathrm{Sym}}^{-1}(\mathbb{S})\times L^2_{\rm Sym}(\mathbb{S}))$  is equal to the limit of the laws of the processes  $\{(X_t^{(h)},W_t)_{0 \le t \le T}\}_{{h \in T/{\mathbb N}^*}}$ given by \eqref{scheme0}, the space $\mathcal{C}([0,T],\mathcal{H}_{\mathrm{Sym}}^{-1}(\mathbb{S})\times L^2_{\rm Sym}(\mathbb{S}))$  being equipped with the topology of uniform convergence.
\end{thm}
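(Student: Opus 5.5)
The plan is to follow the Yamada--Watanabe scheme used in the proof of \cite[Theorem 1]{DelHam25}: weak existence from Proposition~\ref{Propo 5.13}, pathwise uniqueness from Proposition~\ref{propo:4.17}, and a Gy\"ongy--Krylov type argument to upgrade weak convergence of the schemes to existence of a strong solution on the given set-up.

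\textbf{Step 1 (weak existence).} Fix $T>0$. By Propositions~\ref{Prop 3.1}, \ref{propo 2.5} and \ref{propo:3.8}, the family $\{(X^{(h)},W,\zeta^{(h)},{\mathscr J}(X^{(h)}))\}_h$ is tight. By Skorokhod's representation, any subsequential limit can be realised on some probability space. Proposition~\ref{Propo 5.13} then shows that the limit, with $\nu$ defined by \eqref{eq:def:nut}, satisfies items (i)--(iv) of Definition~\ref{Defi 317}. Items (i) and (ii) use Proposition~\ref{remark 4.1} together with Corollary~\ref{cor 3.9}, which gives the integrability of $1/|D_{\mathbf x}X|$. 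Item (iii) comes from \eqref{RHSE2}, and item (iv) is the last display of Proposition~\ref{Propo 5.13}.

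One point needs checking here: adaptedness and the Brownian property of $W$ with respect to the filtration generated by $(X,W,\zeta)$. This is standard, since independence of future increments of $W$ from the past of $X^{(h)}$ passes to the limit. It gives a weak solution on $[0,T]$, and consistency in $T$ follows from uniqueness.

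\textbf{Step 2 (strong existence via Gy\"ongy--Krylov).} Take two subsequences $h_n,h'_n\to0$ and consider the joint laws of $(X^{(h_n)},X^{(h'_n)},W)$. These are tight, because each marginal is tight by Step 1. Any joint limit $(X,X',W)$ gives two solutions driven by the same $W$ with the same initial condition $X_0$. Proposition~\ref{propo:4.17} then forces $X=X'$ almost surely.

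Here I must make sure both limits are solutions with respect to a common filtration. I would take the filtration generated by $(X_0,X,X',W,\zeta,\zeta')$ and check, as in Step 1, that $W$ is still a $Q$-Brownian motion for it. By the Gy\"ongy--Krylov lemma, $X^{(h)}$ converges in probability on the original set-up, in $\mathbb D([0,T],\mathcal H^{-1}_{\rm Sym})$. Its limit $X$ is adapted to the original filtration, and passing to the limit in the identities defining $\zeta^{(h)}$ yields the reflection term $\eta=\nu$ on the original space. So a strong solution exists.

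\textbf{Step 3 (uniqueness and identification of the law).} Uniqueness is Proposition~\ref{propo:4.17}. Every subsequential weak limit of the schemes is a solution, so by pathwise uniqueness (and Yamada--Watanabe) its law is that of the unique solution. Hence the whole family converges in law, not just along subsequences. The limit laws live on $\mathcal C([0,T],\mathcal H^{-1}_{\rm Sym}\times L^2_{\rm Sym})$ by Proposition~\ref{remark 4.1}. On continuous limits, Skorokhod $J_1$ convergence coincides with uniform convergence, which gives the stated topology.

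\textbf{Main obstacle.} The hardest point is Step 2: checking that the joint limit $(X,X',W)$ consists of two solutions relative to the \emph{same} filtration. In particular, item (iv) of Definition~\ref{Defi 317}, a statement about expectations, has to hold for each component under the enlarged filtration. I would first try to obtain (iv) directly from the identities \eqref{eq2.53} and \eqref{eq:cv:L1:t/2}. These are derived only from the law of $(X,\zeta,W)$ and from integrals against $\zeta$ that are built pathwise (Theorem~\ref{thm:integral:varsigma}), so they should be insensitive to enlarging the filtration, as long as the stochastic integral $\int \mathrm e^{2\epsilon\Delta}X\cdot\mathrm dW$ remains a martingale.
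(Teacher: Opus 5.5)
Your proposal is correct and follows the paper's route. The paper gets weak existence from Proposition~\ref{Propo 5.13} and pathwise uniqueness from Proposition~\ref{propo:4.17}, then invokes the Yamada--Watanabe principle as in the proof of Theorem~1 of \cite{DelHam25}; you implement that same principle concretely through the Gy\"ongy--Krylov lemma, and your checks on the common filtration and on the law-level nature of item~(iv) of Definition~\ref{Defi 317} make explicit points the paper leaves implicit, resolving them correctly.
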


\begin{remark}
\label{rem:lp:integrability}
By \eqref{254} and item $(iv)$
in Definition \ref{Defi 317} and with the same notations as in the above statement, we deduce that, with probability 1, for all $t \geq 0$, 
\begin{equation*}
\| X_t \|_2^2 + \int_0^t \|D_{\mathbf{x}} X_s \|^2_2 \ud s \leq \| X_0 \|_2^2
+ (1+ c_\lambda) t + 2 \int_0^t X_r \cdot \ud W_r. 
\end{equation*}
By a straightforward induction on the exponent, we get, for any $p \geq 1$,
\begin{equation*}
\forall t \geq 0, \quad {\mathbb E}
\bigl[ \sup_{s \in [0,t]} \| X_s \|_2^{2p} \bigr] 
+ {\mathbb E}
\biggl[ \biggl( \int_0^t \| D_{\mathbf{x}} X_s \|_2^2 \ud s \biggr)^p \biggr]
< + \infty. 
\end{equation*}
\end{remark}

We end up this subsection with an alternative characterization of the solutions to 
\eqref{RHSE2}:

\begin{prop}
\label{prop:RSHE:altenative:charac}
    Given a probabilistic set-up $(\Omega,\mathfrak{F},{\mathbb F}= \{\mathfrak{F}_t\}_{t\geq 0},\mathbb{P},(W_t)_{t\geq 0})$ as defined in Subsection \ref{Notations}, and an ${\mathcal F}_0-$measurable initial condition $X_0$ with values in ${\mathcal U}^2({\mathbb S} )$ satisfying ${\mathbb E}[\| X_0\|_2^p] < + \infty$ for all $p \geq 1$, a pair of  processes $(X_t,{\eta}_t)_{t \geq 0}$ solves the equation \eqref{RHSE2} initialized at $X_0$, if and only if
    item $(i)$ in Definition \ref{Defi 317} holds true and there exists a continuous 
    ${\mathbb F}-$adapted process $(\zeta_t)_{t \geq 0}$ with values in ${\mathcal H}_{\rm Sym}^{-2}({\mathbb S})$ such that 
\begin{enumerate}
\item[(i)] $({\zeta}_t)_{t\geq 0}$ starts from $0$ at $0$ and, with probability 1, for any $\varphi \in \mathcal{H}_{\mathrm{Sym}}^{2}(\mathbb{S})\cap \mathcal{U}^2(\mathbb{S})$ the path $t\mapsto\langle {\zeta}_t, \varphi\rangle$ is non-decreasing;
\item[(ii)]  with probability 1, for any $\varphi \in 
\mathcal{H}_{\mathrm{Sym}}^{2}(\mathbb{S})$, for all $t \geq 0$
\begin{align*}
\langle X_t, \phi \rangle =
\langle X_0,\phi \rangle   + \int_0^t \langle X_r,\Delta_{\mathbf{x}} \phi \rangle \mathrm{d}r + \langle {\zeta}_t,\phi\rangle +\langle W_t,\phi\rangle, \quad t \geq 0;
\end{align*}
\item[(iii)] for any $t\geq 0$,
\[ 
\lim_{\epsilon \searrow 0}
\mathbb{E} \biggl[   \int_{0}^{t}  {\rm e}^{\epsilon \Delta} X_r \cdot 
\mathrm{d} \zeta_r    \biggr]=\frac{t}2 ,
\]
with the integral being interpreted as in Theorem \ref{thm:integral:varsigma};
\item[(iv)] with probability 1, for any $\varphi \in {\mathcal{H}_{\mathrm{Sym}}^{2}(\mathbb{S}) \cap \mathcal{U}^2(\mathbb{S})}$, the process 
\begin{equation}
\label{eq:item:v:def:zeta}
\biggl( \langle \varphi,\zeta_t \rangle -
\frac12 \int_0^t \int_{\mathbb S} \frac{D_{\mathbf{x}} \varphi({\mathbf x})}{D_{\mathbf{x}} X_r({\mathbf x})} 
 \ud r{\mathrm d} {\mathbf x} \biggr)_{t \geq 0}
\end{equation}
is non-decreasing. 
\end{enumerate}
    And then, $(\eta_t)_{t \geq 0}$ and $(\zeta_t)_{t \geq 0}$ are related through the formula
    \begin{equation}
    \label{eq:relation:between:eta:zeta}
\eta_t = \zeta_t +\frac12 D_{\mathbf{x}} \Bigl( \frac1{D_{\mathbf{x}} X_t} \Bigr), \quad t \geq 0. 
    \end{equation}
    
    \end{prop}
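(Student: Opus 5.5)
The proof is a direct translation between the two formulations, using \eqref{eq:relation:between:eta:zeta} as the dictionary. Throughout, I would rely on the fact (item $(i)$ of Definition \ref{Defi 317}) that $\int_0^T\int_{\mathbb S}1/\vert D_{\mathbf x}X_r\vert\,\mathrm d{\mathbf x}\,\mathrm dr<\infty$ almost surely. Hence, for $\varphi\in\mathcal H^2_{\rm Sym}$ (so that $\varphi'$ is bounded), the map $t\mapsto \frac12\int_0^t D_{\mathbf x}(1/D_{\mathbf x}X_r)\,\mathrm dr$, tested against $\varphi$, is a continuous adapted process with values in $\mathcal H^{-2}_{\rm Sym}$ (indeed in $\mathcal H^{-1-\delta}$). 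I read the relation \eqref{eq:relation:between:eta:zeta} in this time-integrated sense, exactly as in \eqref{eq:def:nut}.

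\emph{Direction ``solution $\Rightarrow$ characterization''.} Given $(X,\eta)$ as in Definition \ref{Defi 317}, define
\[
\zeta_t:=\eta_t-\frac12\int_0^t D_{\mathbf x}\Bigl(\frac1{D_{\mathbf x}X_r}\Bigr)\mathrm dr .
\]
Item $(ii)$ of the proposition is then a rewriting of \eqref{eq:4.28}. Item $(iv)$ holds because the process in \eqref{eq:item:v:def:zeta} equals $\langle\varphi,\eta_t\rangle$, which is non-decreasing. For item $(i)$, write $\langle\varphi,\zeta_t-\zeta_s\rangle=\langle \varphi,\eta_t-\eta_s\rangle+\frac12\int_s^t\int_{\mathbb S}\varphi'/D_{\mathbf x}X_r$. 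For $\varphi\in\mathcal U^2$, the functions $\varphi'$ and $D_{\mathbf x}X_r$ have the same sign almost everywhere (since $X_r\in\mathcal U^2\cap\mathcal H^1$), so the second term is non-negative.

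For item $(iii)$, integrate ${\rm e}^{\epsilon\Delta}X_r$ against $\zeta$ using Theorem \ref{thm:integral:varsigma}. The absolutely continuous part contributes
\[
\frac12\int_0^t\int_{\mathbb S}\frac{{\rm e}^{\epsilon\Delta}D_{\mathbf x}X_r}{D_{\mathbf x}X_r}\,\mathrm d{\mathbf x}\,\mathrm dr .
\]
Justifying this requires a Riemann-sum argument, as in item (4) of Theorem \ref{thm:integral:varsigma}, together with dominated convergence using the bound $|{\rm e}^{\epsilon\Delta}D X_r|/|DX_r|$, which holds for fixed $\epsilon$ since ${\rm e}^{\epsilon\Delta}DX_r$ is bounded. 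Item $(iv)$ of Definition \ref{Defi 317} then sends this term to $t/2$ in $L^1$, while the $\eta$-part tends to $0$ in expectation. This yields $(iii)$.

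\emph{Converse direction.} Given $\zeta$, define $\eta$ by \eqref{eq:relation:between:eta:zeta}. Items $(ii)$ and $(iii)$ of Definition \ref{Defi 317} follow from items $(iv)$ and $(ii)$ of the proposition: monotonicity of $\eta$ is \eqref{eq:item:v:def:zeta} for $\varphi\in\mathcal H^2\cap\mathcal U^2$, and continuity was noted above.

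The main obstacle is item $(iv)$ of Definition \ref{Defi 317}. I would split
\[
\int_0^t{\rm e}^{\epsilon\Delta}X_r\cdot\mathrm d\zeta_r=\int_0^t{\rm e}^{\epsilon\Delta}X_r\cdot\mathrm d\eta_r+\frac12\int_0^t\int_{\mathbb S}\frac{{\rm e}^{\epsilon\Delta}D_{\mathbf x}X_r}{D_{\mathbf x}X_r}.
\]
Both terms are non-negative: the first because ${\rm e}^{\epsilon\Delta}X_r\in\mathcal U^2$ and $\eta$ is non-decreasing (item (5) of Theorem \ref{thm:integral:varsigma}); the second by the sign argument. Their sum has expectation converging to $t/2$ by $(iii)$. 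So it suffices to show that the second term has $\liminf_{\epsilon}$ at least $t/2$ in a strong enough sense. I would reproduce the Fatou/$E_\varepsilon$ argument of the first and second steps of Proposition \ref{Propo 5.13}. Bounding $D_{\mathbf x}X_r/(E_\varepsilon+D_{\mathbf x}X_r)$ by $1$ gives, via \eqref{ineq.4.27}, a limit equal to $\mathbb E\int_0^t\mathrm{Leb}\{DX_r\neq0\}=t$, using the integrability of $1/|DX_r|$. This gives $\liminf\ge t/2$ in mean, so the $\eta$-term tends to $0$ in $L^1$ and the ratio term tends to $t/2$ in mean. The $L^1$ upgrade then follows from the positive-part argument used at the end of the first step of Proposition \ref{Propo 5.13}, which gives exactly the second condition in $(iv)$.
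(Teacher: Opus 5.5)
Your proposal is correct and follows essentially the same route as the paper. You define one reflection process from the other via the time-integrated form of \eqref{eq:relation:between:eta:zeta} and transfer the equation and the monotonicity directly. For the converse you split $\int_0^t {\rm e}^{\epsilon\Delta}X_r\cdot\mathrm{d}\zeta_r$ into two non-negative pieces, bound the mean of the ratio piece from below by $t/2$ via the $E_\varepsilon$ argument of \eqref{eq:cv:to:t/2}, and upgrade to $L^1$ with the positive-part trick from Proposition \ref{Propo 5.13}. You also make explicit two points the paper leaves implicit: the monotonicity of $\zeta$ in the direct implication (from the sign of $\varphi'/D_{\mathbf x}X_r$), and the identification of the integral against the absolutely continuous part through the Riemann sums of Theorem \ref{thm:integral:varsigma}.
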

\begin{proof}
{\ \ }
\vskip 4pt

\textit{First Step.} The necessary condition is established quite easily. Given a solution $(X_t,\eta_t)_{t \geq 0}$ in the sense of Definition \ref{Defi 317}, we can define $(\zeta_t)_{t \geq 0}$ through the relationship \eqref{eq:relation:between:eta:zeta}.  
Observing that, for any 
$\varphi \in {\mathcal H}^2_{\rm Sym}({\mathbb S})$
and any $0 \leq s \leq t$, 
\begin{equation*}
\biggl\vert \int_s^t \int_{\mathbb S} \frac{D_{\mathbf{x}} \varphi({\mathbf x})}{D_{\mathbf{x}} X_r({\mathbf x})} 
\ud {\mathbf x}\ud r 
\biggr\vert \leq \sup_{{\mathbf x} \in {\mathbb S}}
\vert D_{\mathbf{x}} \varphi({\mathbf x}) \vert \int_s^t \int_{\mathbb S} \frac1{\vert D_{\mathbf{x}} X_r({\mathbf x})\vert} \ud r \ud {\mathbf x} \leq c \| \varphi \|_{2,2} 
\int_s^t \int_{\mathbb S} \frac1{\vert D_{\mathbf{x}} X_r({\mathbf x})\vert}
 \ud r \ud {\mathbf x},
 \end{equation*}
for $c$ a universal constant, and using item $(i)$ in 
Definition \ref{Defi 317}, we deduce that $(\zeta_t)_{t \geq 0}$ has continuous trajectories with values in 
${\mathcal H}^{-2}_{\rm Sym}({\mathbb S})$.

Item $(iii)$ in the statement can be readily checked by means of item $(iv)$ in Definition \ref{Defi 317} (notice that, since
$\int_0^t {\rm e}^{\epsilon \Delta} X_r \cdot \ud \zeta_r \geq 0$, it holds
$\lim_{\epsilon \searrow 0} {\mathbb E}[ \vert \int_0^t {\rm e}^{\epsilon \Delta} X_r \cdot \ud \zeta_r \vert]=0$). Item $(iv)$ is obvious, since 
the generic item in \eqref{eq:item:v:def:zeta}
is equal to $(\langle \varphi,\eta_t \rangle)_{t \geq 0}$, with $(\eta_t)_{t \geq 0}$ as in Definition\ref{Defi 317}.
\vskip 4pt

\textit{Second Step.}
The proof of the converse is slightly more involved. Assume that $(X_t,\zeta_t)_{t \geq 0}$ satisfies item 
$(i)$ in Definition \ref{Defi 317} together with the
items $(i)$, $(ii)$, $(iii)$ and $(iv)$ in the statement. Define $(\eta_t)_{t \geq 0}$ as in \eqref{eq:relation:between:eta:zeta}. Proceeding as in the first step, we can prove that 
$(\eta_t)_{t \geq 0}$ has continuous trajectories with values in ${\mathcal H}^{-2}({\mathbb S})$. 
By $(iv)$ in the statement, $(\eta_t)_{t \geq 0}$ is non-decreasing. 

It remains to check that $(iv)$ in Definition \ref{Defi 317} is satisfied. We write 
\begin{equation*}
\begin{split}
\int_0^t {\rm e}^{\epsilon \Delta} X_r \cdot \ud \zeta_r &=
\int_0^t {\rm e}^{\epsilon \Delta} X_r \cdot \ud \eta_r 
+ 
\frac12 \int_0^t
\int_{\mathbb S}\frac{{\rm e}^{\epsilon \Delta} D_{\mathbf{x}} X_r({\mathbf x})}{D_{\mathbf{x}} X_r({\mathbf x})}
\ud {\mathbf x} \ud r
 \geq \frac12 \int_0^t
\int_{\mathbb S}\frac{{\rm e}^{\epsilon \Delta} D_{\mathbf{x}} X_r({\mathbf x})}{D_{\mathbf{x}} X_r({\mathbf x})}
\ud {\mathbf x} \ud r.
\end{split}
\end{equation*}
Using $(iii)$ in the statement and following
\eqref{eq:cv:to:t/2}, we deduce
\begin{equation*}
\lim_{\epsilon \searrow 0} {\mathbb E} \int_0^t
\int_{\mathbb S}
\frac{{\rm e}^{\epsilon \Delta} D_{\mathbf{x}} X_r({\mathbf x})}{D_{\mathbf{x}} X_r({\mathbf x})}
\ud {\mathbf x} \ud r = t,
\end{equation*}
which yields 
\begin{equation*}
{
\lim_{\epsilon \searrow 0}
{\mathbb E}
\int_0^t \textrm{\rm e}^{\epsilon \Delta} X_r \cdot \ud \eta_r 
=0.}
\end{equation*}
This is the first claim in item $(iv)$ of 
Definition \ref{Defi 317}. And then, following the derivation of \eqref{eq:cv:L1:t/2} in the proof of 
Proposition 
\ref{Propo 5.13}, we complete the proof of item $(iv)$ in Definition \ref{Defi 317}.
\end{proof}

\begin{remark}
\label{rem:30-06:1}
Similarly to \cite[Theorem 2]{DelHam25}, we could prove that the semigroup generated by \eqref{RHSE2}
is strong Feller and maps bounded functions, in positive time, onto Lipschitz continuous functions, with a Lipschitz constant blowing up at the same rate as time tends to zero.
The result could be stated in the same way. For this reason, we omit it here, and we only provide the main ideas that would allow us to adapt the proof.

One of the main points in \cite{DelHam25} is to translate, in a dynamic way, the initial condition of \eqref{RHSE2}. Here, for an integer $K \in {\mathbb N}^*$, we denote by $E^K$ the vector subspace of $L^2_{\rm Sym}({\mathbb S})$ spanned by $e_0,\ldots,e_K$; we identify it with ${\mathbb R}^{K+1}$ and equip it with the Lebesgue measure $\textrm{\rm Leb}_{K+1}$. For a fixed time horizon $T>0$, a parameter $\vartheta \in {\mathbb R}$, and a direction $u \in {\mathbb R}^K$, we define the following translation and its composition with the rearrangement:
\begin{equation}
    {\mathbf y} \in E^K \mapsto \mathbf{y}_t(u,\vartheta):= \mathbf{y} +\vartheta \frac{T-t}{T}u,\quad \text{ and } \quad  {\mathbf y} \in E^K \mapsto \mathbf{y}_t^{*}(u,\vartheta):= \left(\mathbf{y} +\vartheta \frac{T-t}{T}u\right)^{*}.
\end{equation}
A key point in the proof is to express the dynamics of $(X_t^{\mathbf{y}_t^{*}(u,\vartheta)})_{t\in [0,T]}$, which we do here by using the reformulation of \eqref{RHSE2} provided by Proposition \ref{prop:RSHE:altenative:charac}. The main idea is therefore to work with the process $(\zeta_t)_{0 \le t \le T}$ rather than with the process $(\eta_t)_{t \in [0,T]}$. The key advantage is that the regularity properties of $(\zeta_t)_{0 \le t \le T}$ are here the same as those of the reflection process $(\eta_t)_{t \in [0,T]}$ in \cite{DelHam25}. The difficulty is that $(\zeta_t)_{t \in [0,T]}$ is not a reflected process. The purpose of this remark is therefore to explain how this affects the proof.

Following \cite[(5.7)]{DelHam25}, we let
\begin{equation}\label{eq:4.47:zeta}
    \tilde{\zeta}_t^{{\mathbf y},(u,\vartheta)} :=  {\zeta}_t^{\mathbf{y}_t^{*}(u,\vartheta)}  + \frac{\vartheta}{T} \int_0^t \partial_{\mathbf{y}} \zeta_s^{\mathbf{y}^*}\mathrm{d}s, \quad t \in [0,T].
\end{equation}
This makes it possible to derive the analogue of \cite[(5.13)]{DelHam25} but
with $(\eta_t)_{t \in [0,T]}$ replaced by $(\zeta_t)_{ t \in [0,T]}$. It now remains to prove that 
$\tilde{\zeta}^{{\mathbf y},(u,\vartheta)}$ satisfies the properties listed in Proposition 
\ref{prop:RSHE:altenative:charac}.
We start with the verification of item $(iv)$ therein. 

Following \cite[(5.20)]{DelHam25} and 
letting 
$(\tilde \zeta_t^{\Upsilon,(u,\vartheta)}:=\int_{E^K} 
\tilde \zeta_t^{{\mathbf y},(u,\vartheta)}
\ud \Upsilon(y))_{t \in [0,T]}$, 
for a smooth $\Upsilon$ on $E^K$ with compact support, 
we get, for any process $(z_t)_{t \in [0,T]}$ with values in 
$L^2_{\rm Sym}({\mathbb S})$, with probability 1, 
\begin{equation*}
\begin{split}
\int_0^t \langle {\rm e}^{\varepsilon \Delta} z_s, 
\ud \tilde{\zeta}_s^{\Upsilon,(u,\vartheta)}\rangle
&= \int_{E^K}  \Upsilon({\mathbf y})\int_0^t 
\int_{\mathbb S} 
 \frac{[{\rm e}^{\epsilon \Delta} D_{\mathbf{x}}   z_s]({\mathbf x}) }{D_{\mathbf{x}} X_s^{{\mathbf y},(u,\vartheta)}({\mathbf x})} \ud {\mathbf x} \ud s   \ud {\mathbf y}
+ \int_E  \int_0^t  \Upsilon({\mathbf y}_s^*(u,-\vartheta))
\langle {\rm e}^{\epsilon \Delta}   z_s, 
\ud \eta_s^{{\mathbf y}^*} \rangle \ud {\mathbf y}.
\end{split}
\end{equation*}
Using the fact that $\eta^{{\mathbf y}^*}$ is non-decreasing, and following the second and third steps in the proof of Proposition 12 in \cite{DelHam25}, we deduce that, for almost every ${\mathbf y} \in E^K$, 
$(\tilde \zeta^{{\mathbf y},(u,\vartheta)}_t + \int_0^t D_{\mathbf{x}}(1/ D_{\mathbf{x}}X_s^{{\mathbf y}^*(u,\vartheta)}) \ud s)_{0 \le t \le T}$ is non-decreasing, which corresponds to item $(iv)$ in Proposition \ref{prop:RSHE:altenative:charac}. 
It remains to address item $(iii)$ in Proposition \ref{prop:RSHE:altenative:charac}
. In fact, it suffices to prove that there exists a sequence $(\epsilon_n)_{n \geq 1}$, converging to $0$, such 
for almost every $({\mathbf y},\omega)  \in E^K \times \Omega$, for all 
$t \in [0,T]$,
\begin{equation*}
\lim_{n \rightarrow + \infty}
\int_0^t 
({\rm e}^{\epsilon_n \Delta} X_s^{{\mathbf y}_s^*(u,\vartheta)}) \cdot 
\ud \tilde{\zeta}_s^{{\mathbf y},(u,\vartheta)} = \frac{t}2.
\end{equation*}
By Dini's theorem, convergence is necessarily uniform (in $t$). In fact, since the left-hand side is non-decreasing (in time), it suffices to prove that there exists a sequence $(\epsilon_n)_{n \geq 1}$ converging to $0$ such that, for all rational $t \in [0,T]$, the above holds true for almost every 
$({\mathbf y},\omega) \in E^K \times \Omega$. And then, by diagonal extraction, one can fix a rational $t \in [0,T]$ and then prove that there exists a sequence {$(\epsilon_n)_{n \geq 1}$}, converging to $0$, such that the above holds true for almost every $({\mathbf y},\omega)\in E^K \times \Omega$. 
Following the proof of \cite[(5.20)]{DelHam25} and for $\Upsilon$ as above,
\begin{equation}
\label{eq:smoothing:1}
\begin{split}
\int_{E^K} \Upsilon(\mathbf{y}) \biggl( \int_0^t 
({\rm e}^{\epsilon \Delta} X_s^{{\mathbf y}_s^*(u,\vartheta)}) \cdot 
\ud \tilde{\zeta}_s^{{\mathbf y},(u,\vartheta)} \biggr) \ud {\mathbf y}
&= \int_{E^K} \biggl( \int_0^t \int_{\mathbb S} \Upsilon ({\mathbf y} ) \frac{[{\rm e}^{\epsilon \Delta}  D_{\mathbf{x}} X_s^{{\mathbf y}^*(u,\vartheta)}]({\mathbf x})}{D_{\mathbf{x}} X_s^{\mathbf{y}^*(u,\vartheta)}({\mathbf x})} \ud {\mathbf x} \ud s \biggr) \ud {\mathbf y}
\\
&\hspace{15pt}
+
\int_{E^K} \biggl( \int_0^t \biggl[ \Upsilon\Bigl(\mathbf{y}_s(u,-\vartheta)\Bigr) {\rm e}^{\epsilon \Delta}  X_s^{{\mathbf y}^*}\biggr] \cdot 
\ud {\eta}_s^{{\mathbf y}^*} \biggr) \ud {\mathbf y}.
\end{split}
\end{equation} 
Arguing as the proof of \cite[Proposition 13]{DelHam25},
\begin{equation*}
\lim_{\epsilon \rightarrow 0} {\mathbb E} \int_{E^K} \biggl( \int_0^T \biggl[ \Upsilon\Bigl(\mathbf{y}_s(u,-\vartheta)\Bigr) {\rm e}^{\epsilon \Delta}  X_s^{{\mathbf y}^*}\biggr] \cdot 
\ud {\eta}_s^{{\mathbf y}*} \biggr) \ud {\mathbf y}
=0,
\end{equation*}
which permits us to handle the last term in 
\eqref{eq:smoothing:1}.
In order to handle the first term on the right-hand side therein, we proceed as follows. Assuming that $\Upsilon$ 
is non-negative valued, we get, for 
an arbitrary mesh $(0=s_0<s_1<\cdots<s_m=T)$ of $[0,t]$ (for a certain integer 
$m \in {\mathbb N}^*$), 
\begin{equation*}
\begin{split}
\int_{E^K} \biggl( \int_0^t \biggl[ \Upsilon\Bigl(\mathbf{y}_s(u,-\vartheta)\Bigr) {\rm e}^{\epsilon \Delta}  X_s^{{\mathbf y}^*}\biggr] \cdot 
\ud {\zeta}_s^{{\mathbf y}^*} \biggr) \ud {\mathbf y}
&= \sum_{i=0}^{m-1} \int_{E^K}
\biggl( 
\int_{s_i}^{s_{i+1}}
\biggl[ \Upsilon\Bigl(\mathbf{y}_r(u,-\vartheta)\Bigr) {\rm e}^{\epsilon \Delta}  X_r^{{\mathbf y}^*}\biggr] \cdot 
\ud {\zeta}_r^{{\mathbf y}^*} \biggr) \ud {\mathbf y}
\\
&\leq \sum_{i=0}^{m-1} \int_{E^K} 
\sup_{r \in [s_i,s_{i+1}]}
\Upsilon\Bigl(\mathbf{y}_r(u,-\vartheta)\Bigr) 
\biggl(
\int_{s_i}^{s_{i+1}}
  {\rm e}^{\epsilon \Delta}  X_r^{{{\mathbf y}^*}} \cdot 
\ud {\zeta}_r^{{\mathbf y}^*} \biggr) \ud {\mathbf y}
\end{split}
\end{equation*}
Using (iii) in Proposition \ref{prop:RSHE:altenative:charac}, we get 
\begin{equation*}
\begin{split}
&\limsup_{\epsilon \searrow 0}
{\mathbb E} \int_{E^K} \biggl( \int_0^t \biggl[ \Upsilon\Bigl(\mathbf{y}_s(u,-\vartheta)\Bigr) {\rm e}^{\epsilon \Delta}  X_s^{{\mathbf y}^*}\biggr] \cdot 
\ud {\zeta}_s^{{\mathbf y}^*}\biggr) \ud {\mathbf y} \leq \frac12 \sum_{i=0}^{m-1} (s_{i+1}-s_i)
{\mathbb E}
\int_{E^K} 
 \sup_{r \in [s_i,s_{i+1}]} \Upsilon\Bigl(\mathbf{y}_r(u,-\vartheta)\Bigr) \ud {\mathbf y}.
\end{split}
\end{equation*}
Letting the step of the mesh tend to $0$
and following \cite[(5.20)]{DelHam25},  we get 
\begin{equation*}
\limsup_{\epsilon \searrow 0}
\int_{E^K} \Upsilon(\mathbf{y}) {\mathbb E}  \biggl( \int_0^t 
({\rm e}^{\epsilon \Delta} X_s^{{\mathbf y}^*,(u,\vartheta)}) \cdot 
\ud \tilde{\zeta}_s^{{\mathbf y},(u,\vartheta)} \biggr) \ud {\mathbf y} \leq \frac{t}2 \int_{E^K} \Upsilon({\mathbf y}) \ud {\mathbf y}.
\end{equation*}
This implies 
\begin{equation}
\label{eq:smoothing:2}
\forall \delta >0, \quad 
\lim_{\epsilon \searrow 0}
\textrm{\rm Leb}_{K}
\left\{ {\mathbf y} \in E^K : {\mathbb E} \int_0^t 
({\rm e}^{\epsilon \Delta} X_s^{{\mathbf y}_s^*(u,\vartheta)}) \cdot 
\ud \tilde{\zeta}_s^{{\mathbf y},(u,\vartheta)} \geq \frac{t}{2}
+\delta 
\right\}=0.
\end{equation}
Moreover,
from \eqref{eq:smoothing:1} we recall that  
\begin{equation*}
\int_0^t 
({\rm e}^{\varepsilon \Delta} X_s^{{\mathbf y}_s^*(u,\vartheta)}) \cdot 
\ud \tilde{\zeta}_s^{{\mathbf y},(u,\vartheta)} \geq \frac12 
\int_0^t \int_{\mathbb S}  \frac{{\rm e}^{\varepsilon \Delta}  D_{\mathbf{x}} X_s^{{\mathbf y}^*(u,\vartheta)}}{D_{\mathbf{x}} X_s^{{\mathbf y}^*(u,\vartheta)}} \ud {\mathbf x} \ud s.
\end{equation*}
And then, following the proof of Proposition \ref{prop:RSHE:altenative:charac}, we get for almost every ${\mathbf y} \in E^K$,
\begin{equation*}
{\liminf_{\epsilon \rightarrow 0} 
{\mathbb E} \biggl[ \biggl(
\int_0^t 
({\rm e}^{\epsilon \Delta} X_s^{{\mathbf y}_s^*(u,\vartheta)}) \cdot 
\ud \tilde{\zeta}_s^{{\mathbf y},(u,\vartheta)} -
 \frac{t}2
 \biggr)_+ \biggr] \geq 0.}
\end{equation*}
Together with 
\eqref{eq:smoothing:2}, we get, for any $\delta >0$, 
\begin{equation*}
\lim_{\epsilon \searrow 0}
\textrm{\rm Leb}_{K}
\left\{ {\mathbf y} \in E^K :
{\mathbb E} \biggl[ \biggl\vert \int_0^t 
({\rm e}^{\epsilon \Delta} X_s^{{\mathbf y}_s^*(u,\vartheta)}) \cdot 
\ud \tilde{\zeta}_s^{{\mathbf y},(u,\vartheta)} - \frac{t}{2}
\biggr\vert \biggr]
 \geq \delta 
\right\}=0.
\end{equation*}
This completes the proof.
\end{remark}

\section{Stochastic Fokker-Planck equation satisfied by the density} 
\label{se:7}

In this section, we establish the existence of a density for the marginal laws, on ${\mathbb R}$, induced  by the solution
$(X_t)_{t \in [0,T]}$ to equation \eqref{eq:4.28}.
We also show that this density satisfies a mollified version of the Dean--Kawasaki equation.

\subsection{Existence of a density}

The existence of a density is a direct consequence of the presence of the additional convolution term in the scheme \eqref{scheme1} or, equivalently, of the presence of the additional term arising from the entropy in \eqref{eq:4.28}.

At first sight, the existence of a density may not seem surprising, given the regularizing effect of the heat equation (the heat appearing in the form of a convolution in \eqref{scheme1}
and through the derivative of the entropy in
\eqref{eq:4.28}).
Nevertheless, the result here is not entirely comparable to what is known in the standard theory of 
parabolic
partial differential equations: indeed, the law of $X_t$ 
on 
${\mathbb R}$ admits a density for every $t>0$, but this density has compact support only.
The mass therefore propagates at finite speed.
This phenomenon, which we consider rather subtle, stems from the competition between, on the one hand, the Laplacian regularizing the quantile function and, on the other hand, the heat flow generated by the additional convolution in \eqref{scheme1}.
The regularity of the quantile function forces the support of the density to be compact; at the same time, this helps guaranty the regularizing effect of the semigroup generated by \eqref{eq:4.28}, as explained in Remark \ref{rem:item (iv):Def:Solution}.
On the other hand, the additional convolution in \eqref{scheme1} tends to spread out the support of the law of $X_t$ (on  
{${\mathbb R}$}), while ensuring the existence of a density.
\begin{thm}\label{Prop5.1}
   Let $(X_t,\eta_t)_{t\geq 0}$ be the unique solution to \eqref{eq:4.28} in the sense of  Definition \ref{Defi 317}. Then, $\mathbb{P}\mathrm{-a.s.}$, for almost every $t > 0$, the law ${\mu_t}$ of ${\bf x} \in {\mathbb S} \mapsto X_t({\bf x})$ (with ${\mathbb S}$ equipped with $\textrm{\rm Leb}_{\mathbb S}$) has a square integrable density $p_t(\cdot)$. {For all $T >0$,} it satisfies
\begin{align}
\label{eq:08-07:10}
\mathbb{E}\int_0^T\int_{\mathbb{R}}{|p_{t}(z)|^2}\mathrm{d}z\mathrm{d}t < + \infty. %
\end{align}
Moreover, 
{${\mathbb P}$-almost surely and for almost every $t > 0$},
the density $p_t$ is supported by 
$[X_t(0),X_t(1/2)]$ and is almost everywhere positive on 
$(X_t(0),X_t(1/2))$.
{For $\textrm{\rm Leb}_{\mathbb R}$-almost every $z \in {\mathbb R}$, $X_t$ is differentiable at $\tilde{F}_{\mu_t}(z)$, where $\tilde{F}_{\mu_t}$ is the cumulative distribution function of $\mu_t$, and for almost every 
$z \in (X_t(0),X_t(1/2))$,
\begin{equation}
\label{eq:08-07:11}
D_{\mathbf x} X_t \bigl(\tfrac12 \tilde F_{\mu_t}(z) \bigr) = \frac1{2 p_t(z)}.
\end{equation}
In particular, 
for all $T>0$, 
\begin{equation}
{\mathbb E}
\int_0^T \int_{\mathbb R}
{\mathbbm 1}_{(X_t(0),X_t(1/2))}(z)
\frac1{p_t(z)}
\ud z  \ud t =
\frac14 {\mathbb E}
\int_0^T \int_{\mathbb S}
\vert D_{\mathbf{x}}
X_t({\mathbf x}) \vert^2
\ud {\mathbf x}  \ud t
< + \infty.
\label{eq:08-07:12}
\end{equation}
Lastly, 
${\mathbb P}$-almost surely and for almost every $t>0$, 
$X_t(\cdot /2)$ is a continuous one-to-one mapping from $(0,1)$ onto $(X_t(0),X_t(1/2))$, with $\tilde F_{\mu_t}$ as inverse. 
}
\end{thm}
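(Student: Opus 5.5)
Throughout, we work on the full-measure set of $(\omega,t)$ given by item (i) of Definition \ref{Defi 317}, on which both
\[
X_t\in\mathcal H^1_{\rm Sym}(\mathbb S)
\qquad\text{and}\qquad
\int_{\mathbb S}\frac{1}{|D_{\mathbf x}X_t(\mathbf x)|}\,\mathrm d\mathbf x<\infty
\]
hold. The proof is purely deterministic once $(\omega,t)$ is fixed in this set. The constants in \eqref{eq:08-07:11} and \eqref{eq:08-07:12} then come out of a change of variables.

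\emph{Step 1: structure of the quantile.} Since $X_t\in\mathcal H^1$, it is absolutely continuous, so $y\mapsto X_t(y/2)$ is continuous and non-decreasing on $[0,1]$. Its derivative $\tfrac12 D_{\mathbf x}X_t(y/2)$ is non-negative a.e. Because $1/|DX_t|$ is integrable, this derivative is nonzero a.e. Hence $X_t(\cdot/2)$ is not constant on any interval: if it were constant on an interval, the derivative would vanish there on a set of positive measure. It is therefore strictly increasing, and so a continuous bijection from $(0,1)$ onto $(X_t(0),X_t(1/2))$. The law $\mu_t$ of $\mathbf x\mapsto X_t(\mathbf x)$ equals the law of $X_t(U/2)$ with $U$ uniform on $[0,1]$, using symmetry as in the proof of Lemma \ref{lemm1.11}. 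Consequently $\tilde F_{\mu_t}$ is the inverse of this bijection, and $\mu_t$ is supported by $[X_t(0),X_t(1/2)]$. This proves the last assertion and the support claim.

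\emph{Step 2: existence of the density.} Set $G(y)=X_t(y/2)$, which is absolutely continuous and strictly increasing with $G'>0$ a.e. A standard real-analysis fact says that the inverse of such a map is absolutely continuous: the image under $G$ of the null set $\{G'=0\}$ has measure zero, by the area formula $\mathrm{Leb}(G(A))=\int_A G'$. Therefore $\tilde F_{\mu_t}=G^{-1}$ is absolutely continuous on $(X_t(0),X_t(1/2))$, which gives a density $p_t=D_z\tilde F_{\mu_t}$.

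\emph{Step 3: the pointwise identity.} At a.e. $z$, the point $y=\tilde F_{\mu_t}(z)$ is a point where $G$ is differentiable with $G'(y)>0$. Indeed, the exceptional $y$-set is null, and its image under $\tilde F_{\mu_t}$ corresponds, through the area formula again, to a null $z$-set where $p_t=0$, up to the identity $\int_{G(A)}p_t=|A|$. At such $z$ the inverse-function rule gives
\[
p_t(z)=\frac{1}{G'(y)}=\frac{2}{D_{\mathbf x}X_t(y/2)},
\]
so $p_t>0$ a.e. on the support. I must reconcile this with the factor in \eqref{eq:08-07:11} by carefully tracking the convention in \eqref{inverse}. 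The paper's normalization (cf. Remark \ref{Remark 5.16}) gives $2/p$ here, and the stated $1/(2p)$ may correspond to a different scaling, so I would check this convention rather than force a constant.

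\emph{Step 4: integrability bounds.} Change variables $z=G(y)$ in the $z$-integrals, with $\mathrm dz=G'(y)\,\mathrm dy$, valid by the area formula. This gives
\[
\int p_t^2\,\mathrm dz=\int_0^1\frac{1}{G'(y)}\,\mathrm dy
\qquad\text{and}\qquad
\int\frac{\mathbbm 1}{p_t}\,\mathrm dz=\int_0^1 G'(y)^2\,\mathrm dy.
\]
Rewriting these in the variable $\mathbf x\in\mathbb S$, the first becomes a constant multiple of $\int_{\mathbb S}1/|D_{\mathbf x}X_t|$ and the second a constant multiple of $\|D_{\mathbf x}X_t\|_2^2$. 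Taking $\mathbb E\int_0^T$ and invoking Corollary \ref{cor 3.9} (equivalently item (i) of Definition \ref{Defi 317}) gives \eqref{eq:08-07:10}. The bound \eqref{eq:08-07:12} follows from the finiteness of $\mathbb E\int_0^T\|D_{\mathbf x}X_t\|_2^2$.

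\emph{Main obstacle.} The step requiring the most care is Step 2/3, namely the absolute continuity of the inverse and the a.e. validity of the inverse-derivative formula. I would handle it through the area formula applied to the absolutely continuous, strictly increasing $G$.
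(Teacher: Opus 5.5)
Your argument is correct, and it takes a genuinely different and much more direct route than the paper.

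\textbf{How the two routes differ.} The paper never works with $X_t$ itself until the very end. It mollifies, setting $Y^\varsigma_t=X_t\star g$, and uses Jensen's inequality to transfer the bound $\mathbb E\int_0^T\int_{\mathbb S}1/|D_{\mathbf x}X_t|<\infty$ to the $\mathcal C^1$, strictly monotone quantiles $Y^\varsigma_t$ (away from $\mathbb Z/2$). For these, the density and the identity relating $\int|p^\varsigma_t|^2$ to $\int 1/|D_{\mathbf x}Y^\varsigma_t|$ are classical. It then extracts a weak limit in $L^2(\Omega\times[0,T]\times\mathbb R)$, identifies it as the density of $\mu_t$, and proves a.e.\ positivity by an Egorov/Mazur/Fatou argument on $1/p^\varsigma$. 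Only afterwards does it use strict monotonicity of $X_t(\cdot/2)$ and the chain rule on $X_t\circ(\tfrac12\tilde F_{\mu_t})=\mathrm{id}$.

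You instead note that item (i) of Definition \ref{Defi 317} already gives, for a.e.\ $(\omega,t)$, everything needed pointwise. This reduces the whole statement to one-dimensional real analysis for the absolutely continuous, strictly increasing map $G=X_t(\cdot/2)$ with $G'>0$ a.e.

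\textbf{What each approach buys.} Your route gives an explicit density $p_t=1/(G'\circ G^{-1})$ and exact identities, e.g.\ $\int_{\mathbb R}p_t^2=2\int_{\mathbb S}1/|D_{\mathbf x}X_t|$, rather than bounds obtained by lower semicontinuity. Its cost is reliance on two facts: that the inverse of such a $G$ is absolutely continuous, and the change-of-variables formula for absolutely continuous monotone maps. You also need to choose a jointly measurable version of $p_t$, e.g.\ $\limsup_n n[\tilde F_{\mu_t}(z+1/n)-\tilde F_{\mu_t}(z)]$, which is easy. The paper gets that measurability for free from its $L^2(\Omega\times[0,T]\times\mathbb R)$ construction, but its argument is much longer.

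\textbf{Step 2: the justification is misdirected.} The fact that $G(\{G'=0\})$ is null holds for any absolutely continuous $G$, whatever the size of $\{G'=0\}$, so it does not give absolute continuity of $G^{-1}$. What you need is that preimages of null sets are null. If $|E|=0$, then $0=|E\cap G([0,1])|=\int_{G^{-1}(E)}G'$, and since $G'>0$ a.e., $|G^{-1}(E)|=0$; this is exactly $\mu_t\ll\mathrm{Leb}$. The fact you did state, that $G$ maps null sets to null sets, belongs in Step 3. There it ensures that, for a.e.\ $z$ in the support interval, $G$ is differentiable at $\tilde F_{\mu_t}(z)$ with positive derivative.

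\textbf{The constant.} Your suspicion is right: the correct identity is $D_{\mathbf x}X_t(\tfrac12\tilde F_{\mu_t}(z))=2/p_t(z)$. The paper's own proof ends with $D\tilde F_{\mu_t}(z)\,DX_t(\tilde F_{\mu_t}(z)/2)=2$. Both the factor $\tfrac14$ in \eqref{eq:08-07:12} and the coefficient $-4$ in the Dean--Kawasaki equation are consistent only with $2/p_t$. So $\frac{1}{2p_t(z)}$ in \eqref{eq:08-07:11} is a misprint. The differentiability point in the statement should also read $\tfrac12\tilde F_{\mu_t}(z)$, for $z$ in $(X_t(0),X_t(1/2))$.
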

\begin{proof} {Throughout the proof, the value of $T>0$ is fixed.}
\vskip 4pt

\textit{First Step.} 
{We proceed as in the proof of Corollary \ref{cor:E_eps}.}
Let $\varsigma \in (0,1/4)$ and $g$ be a smooth symmetric density on ${\mathbb R}$ with support included in $(-\varsigma,\varsigma)$. Then, for ${\mathbf x} \in (\varsigma,1/2-\varsigma) + {\mathbb Z}$
\begin{equation*}
  \int_{\mathbb R} \frac1{D_{\mathbf{x}} X_r({\mathbf x}-y)}
  g(y)
\ud y 
\geq 
\biggl(   \int_{\mathbb R} D_{\mathbf{x}} X_r({\mathbf x} - y) g(y) \ud y \biggr)^{-1},
\end{equation*}
where we used Jensen's inequality together with the fact that, for almost every $y \in {\mathbb R}$, $(g(y)>0)  \Rightarrow y \in (-\varsigma,\varsigma) \Rightarrow {\mathbf x} + y \in (0,1/2) + {\mathbb Z} \Rightarrow D_{\mathbf{x}} X_r({\mathbf x}+y)\geq 0$. Proceeding in a similar way when ${\mathbf x} \in (-1/2+\varsigma,-\varsigma)+ {\mathbb Z}$, we deduce that, for 
${\mathbf x} \in (\varsigma,1/2-\varsigma) + {\mathbb Z}/2$,
\begin{equation*}
\int_{\mathbb R}
\frac1{\vert D_{\mathbf{x}} X_r({\mathbf x} - y) \vert} 
g(y) \ud y 
\geq \frac1{\vert (D_{\mathbf{x}} X_r \star g)({\mathbf x})\vert}.
\end{equation*}
And then, 
\begin{equation*}
\begin{split}
\int_{(\varsigma,1/2-\varsigma) + {\mathbb Z}/2} \frac1{\vert ( D_{\mathbf{x}} X_r \star g )({\mathbf x})\vert} \ud {\mathbf x} &\leq \int_{\mathbb S}
\int_{\mathbb R} \frac1{\vert D_{\mathbf{x}} X_r({\mathbf x} - y) \vert} g(y) 
\ud {\mathbf x}  
\ud y 
\\
&=
\int_{\mathbb R}
\biggl( \int_{\mathbb S}
 \frac1{\vert D_{\mathbf{x}} X_r({\mathbf x} - y) \vert} 
 \ud {\mathbf x} \biggr) g(y) 
\ud y
=
\int_{\mathbb S} \frac1{\vert D_{\mathbf{x}} X_r({\mathbf x}) \vert}
\ud {\mathbf x}. 
\end{split}
\end{equation*}
From  \eqref{ineq:420}, we deduce that
\begin{align}\label{ineq:4.21}
 \mathbb{E}\int_{0}^{T} \int_{(\varsigma,1/2-\varsigma)+{\mathbb Z}/2}  \frac{1}{|D_{\mathbf{x}}({X}_{r}\star g)(\mathbf{x})|} \mathrm{d}\mathbf{x}  \mathrm{d}r \leq C,
\end{align}
for a constant $C$ independent of $g$ and $\varsigma$. 

Set, for all $t\in [0,T],$ $Y_t^\varsigma := X_t\star g$ and denote by $\mu_t^{\varsigma}$ its law, so that, with the same notation as in \eqref{inverse}, $Y_t^\varsigma = F_{\mu_t^{\varsigma}}^{-1}$.
{Because, almost surely and for almost every 
$t \in [0,T]$, $X_t$ takes values in 
$\mathcal{H}_{\mathrm{Sym}}^{1}(\mathbb{S})$
and is thus continuous (and therefore) bounded on ${\mathbb S}$, 
the process $(Y_t^{\varsigma})_{0 \le t \le T}$  takes values,
almost surely and for almost every $t \in [0,T]$, in the space of $\mathcal{C}^1$ quantile functions on ${\mathbb S}$}.
With probability 1, for almost every $t \in [0,T]$, 
$\textrm{\rm Leb}_{\mathbb S}(\{ {\mathbf x} \in {\mathbb S} : \vert D_{\mathbf{x}} X_t({\mathbf x}) \vert >0\})=1$ {(see the first item in Definition \ref{Defi 317})},  which implies
that $D_{\mathbf{x}} Y^{\varsigma}_t({\mathbf x})>0$, for ${\mathbf x} \in (\varsigma,1/2-\varsigma) + {\mathbb Z}$,
and $D_{\mathbf{x}} Y_t^{\varsigma}({\mathbf x}) <0$ for ${\mathbf x} \in (-1/2+\varsigma,-\varsigma)+ {\mathbb Z}$.
For ${\mathbf x} = x+{\mathbb Z}$, with $x \in (0,\varsigma]$, 
\begin{equation*}
    \begin{split}
&D_{\mathbf{x}} Y_t^{\varsigma}({\mathbf x}) 
\\
&= \int_{\mathbb R} D_{\mathbf{x}} X_t({\mathbf x}-y) g(y) 
\ud y =   \int_{\mathbb R} D_x X_t(x-y) g(y) 
\ud y
\\
&= \int_{\mathbb R} D_x X_t(y) g(x-y) \ud y
= \int_{-1/2}^{1/2} D_{x} X_t(y) g(x-y) \ud y 
= \int_0^{1/2} D_x X_t(y) \bigl( g(x-y) - g(x+y) \bigr)\ud y. 
    \end{split}
\end{equation*}
Assuming without any loss of generality that $g$ is non-increasing on $(0,+\infty)$ and strictly decreasing on 
$(0,\varsigma)$, we deduce that $ DY_t^{\varsigma}({\mathbf x})>0$. Similarly, 
$DY_t^{\varsigma}({\mathbf x})<0$ for ${\mathbf x} = x + {\mathbb Z}$ with $x \in [-\varsigma,0)$. This means that, with probability 1, for almost every 
$t \in [0,T]$, $Y_t^{\varsigma}$ is strictly increasing on $(0,1/2)+{\mathbb Z}$ (and strictly decreasing on $(-1/2,0)$), which makes it possible to consider its inverse. Equivalently, the quantile function $\tilde{F}_{\mu_t^\varsigma}^{-1}$ (defined on $(0,1)$) is strictly increasing; its (true) inverse is $\tilde F_{\mu_t^\varsigma}$, the cumulative distribution function of $\mu_t^\varsigma$ . 
Since $D_{\mathbf{x}}Y_t^{\varsigma}>0$ on $(0,1/2)$, we deduce that 
$D_{\mathbf{x}} \tilde{F}_{\mu_t^{\varsigma}}^{-1}>0$ on $(0,1)$. Therefore, 
$\tilde F_{\mu_t^{\varsigma}}$ is continuously differentiable on $(a^{\varsigma}_t,b^{\varsigma}_t):=(\tilde F^{-1}_{\mu_t^{\varsigma}}(0),\tilde F^{-1}_{\mu_t^{\varsigma}}(1))$. The function 
\begin{equation*}
p_t^{\varsigma} : y \in {\mathbb R} \mapsto  \left\{\begin{array}{ll}
\frac{\ud}{\ud y} \tilde F_{\mu_t^{\varsigma}}(y), &\quad y \in (a_t^\varsigma,b_t^\varsigma),
\\
0 &\quad y \not \in (a_t^\varsigma,b_t^\varsigma)
\end{array}
\right.
\end{equation*}
is the density of 
$\mu_t^\varsigma$ (of course, 
it  is random). 

Hence,  by \eqref{inverse}, we have 
\begin{align*}
\int_{(\varsigma,1/2-\varsigma)+{\mathbb Z}/2}\frac{1}{\vert DY_t^{\varsigma}(\mathbf{x})\vert}\mathrm{d}\mathbf{x} &= - \int_{-1/2+\varsigma}^{-\varsigma} \frac{1}{D{F}_{\mu_t^{\varsigma}}^{-1}(x)}\mathrm{d}{x} + \int_\varsigma^{1/2-\varsigma}\frac{1}{D{F}_{\mu_t^{\varsigma}}^{-1}({x})}\mathrm{d}{x}
\\
&= \int_{-1/2+\varsigma}^{-\varsigma} \frac{1}{2D\tilde{F}_{\mu_t^{\varsigma}}^{-1}(-2{x})}\mathrm{d}{x} + \int_\varsigma^{1/2-\varsigma}\frac{1}{2D\tilde{F}_{\mu_t^{\varsigma}}^{-1}(2{x})}\mathrm{d}{x}
= \frac{1}{2}\int_{2 \varsigma}^{1-2 \varsigma} \frac{1}{D\tilde{F}_{\mu_t^{\varsigma}}^{-1}({y})}\mathrm{d}{y}.
\end{align*}
Using the change of variable $\tilde{F}_{\mu_t^{\varsigma}}^{-1}({y})= z \Leftrightarrow {y} = \tilde{F}_{\mu_t^{\varsigma}}(z) $,  we obtain 
\begin{align*}
     \int_{\tilde{F}_{\mu_t^{\varsigma}}^{-1}(2 \varsigma)}^{\tilde{F}_{\mu_t^{\varsigma}}^{-1}(1-2\varsigma)}  | p_t^{\varsigma}(z)|^2\mathrm{d}z =
     \int_{\tilde{F}_{\mu_t^{\varsigma}}^{-1}(2 \varsigma)}^{\tilde{F}_{\mu_t^{\varsigma}}^{-1}(1-2\varsigma)}  |D\tilde{F}_{\mu_t^{\varsigma}}(z)|^2\mathrm{d}z = \int_{2 \varsigma}^{1-2 \varsigma}\frac{1}{D\tilde{F}_{\mu_t^{\varsigma}}^{-1}(y)}\mathrm{d}{y} <\infty, \,\,\, \mathrm{d}t\otimes\mathrm{d}\mathbb{P}\mathrm{-a.s.}
\end{align*}
From \eqref{ineq:4.21}, we get
\begin{equation}
\label{eq:bound:L2bound:density}
{\mathbb E}
\int_0^T \int_{{\mathbb R}} {\mathbbm 1}_{(2 \varsigma,1- 2 \varsigma)}(  \tilde F_{\mu_t^\varsigma}(z)) 
\vert p_t^{\varsigma}(z) \vert^2 \ud z   \ud t \leq C. 
\end{equation}
{\ \ }
\vskip 4pt

\textit{Second Step.} Inequality 
\eqref{eq:bound:L2bound:density}says that the family $p^{\varsigma} {\mathbbm 1}_{(2 \varsigma,1- 2 \varsigma)}(\tilde F_{\mu_t^{\varsigma}})$ is weakly compact in 
$L^2(\Omega \times [0,T] \times {\mathbb R})$ (equipped with the product measure ${\mathbb P} \otimes \textrm{\rm Leb}_{\mathbb S} \otimes \textrm{\rm Leb}_{\mathbb R}$). We call 
$p$ a limit point. 

At the same time, we know that, for any bounded and continuous  function $f : {\mathbb R} \rightarrow {\mathbb R}$ and any bounded and jointly measurable 
process $(H_t)_{t \in [0,T]}$, 
\begin{equation*}
\begin{split}
&{\mathbb E}
\biggl[
\int_0^T 
H_t \biggl( \int_{\mathbb R}
 f(z) \ud
\mu_t^{\varsigma} (z) \biggr) \ud t 
\biggr] 
 = {\mathbb E}
\biggl[
\int_0^T H_t \biggl( \int_{\mathbb R}
f(z) p_t^\varsigma(z) \ud
z \biggr)  \ud t 
\biggr] 
\\
&= {\mathbb E}
\biggl[
\int_0^T 
H_t
\biggl( 
\int_{\mathbb R}
f(z) 
{\mathbbm 1}_{(2 \varsigma,1-2 \varsigma)}(\tilde F_{\mu_t^\varsigma}(z))
p_t^\varsigma(z) \ud
z \biggr) \ud t 
\biggr] +
{\mathbb E}
\biggl[
\int_0^T 
H_t
\biggl( \int_{\mathbb R}
f(z) 
{\mathbbm 1}_{{\mathbb R} \setminus (2 \varsigma,1-2 \varsigma)}(\tilde F_{\mu_t^\varsigma}(z))
p_t^\varsigma(z) \ud
z \biggr) \ud t 
\biggr]
\\
&= : T_1(\varsigma) + T_2(\varsigma). 
\end{split}
\end{equation*}
Since $(H_t)_{t \in [0,T]}$ and $f$ are bounded, we can find a constant $C$, independent of 
$\varsigma$ such that 
\begin{equation*}
\begin{split}
& \vert T_2(\varsigma) 
\vert 
 \leq C {\mathbb E} \int_0^T \int_{-\infty}^{\tilde F_{\mu_t^\varsigma}^{-1}(2 \varsigma)}
  p_t^\varsigma(z) \ud
z  \ud t +  {\mathbb E} \int_0^T \int_{\tilde F_{\mu_t^\varsigma}^{-1}(1-2 \varsigma)}^{+\infty}
  p_t^\varsigma(z) \ud
z  \ud t
\leq 4 C T \varsigma. 
\end{split}
\end{equation*}
This says that $\lim_{\varsigma \searrow 0} T_2(\varsigma)=0$.  Observe further that, with probability 1, $\mu_t^{\varsigma} := \mathrm{Leb}_{\mathbb{S}}\circ(X_t\star g)^{-1}$ converges in law to $\mu_t:= \mathrm{Leb}_{\mathbb{S}}\circ(X_t)^{-1}$  as $\varsigma\searrow 0$, for almost every $t \in [0,T]$.  We deduce that 
\begin{equation*}
\lim_{\varsigma \searrow 0}
{\mathbb E}
\biggl[
\int_0^T 
H_t 
\biggl( 
\int_{\mathbb R} f(z) \ud
\mu_t^{\varsigma} (z) \biggr) \ud t 
\biggr] =
{\mathbb E}
\biggl[
\int_0^T 
H_t 
\biggl( \int_{\mathbb R}
 f(z) \ud
\mu_t (z) \biggr) \ud t 
\biggr].
\end{equation*}
By weak convergence (up to a subsequence) of $p^\varsigma$ to $p$ (within the space $L^2(\Omega \times [0,T] \times {\mathbb R})$), we have 
\begin{equation*}
    \lim_{\varsigma \searrow 0} T_1(\varsigma) = {\mathbb E} \biggl[ \int_0^T 
      H_t  \biggl( \int_{\mathbb R} 
  f(z) p_t(z) \ud z \biggr) \ud t \biggr].
\end{equation*}
By combining the last three displays, we get 
\begin{equation*}
{\mathbb E}
\biggl[
\int_0^T 
H_t \biggl( \int_{\mathbb R} f(z) \ud
\mu_t (z) \biggr)\ud t 
\biggr] =
{\mathbb E} \biggl[ \int_0^T 
    H_t \biggl( \int_{\mathbb R} f(z) p_t(z) \ud z \biggr) \ud t \biggr],
\end{equation*}
which proves that, for any bounded and continuous function 
$f : {\mathbb R} \rightarrow {\mathbb R}$, with probability 1, for almost every $t \in [0,T]$, 
\begin{equation*}
\int_{\mathbb R} f(z) \ud \mu_t (z)= 
\int_{\mathbb R} f(z) p_t(z) \ud z.
\end{equation*}
By separability of the space ${\mathcal C}_0({\mathbb R})$ of continuous functions that tend to $0$ at infinity, we deduce that, with probability 1, for almost every $t \in [0,T]$, $\mu_t$ has $p_t$ as density. By lower semi-continuity of the square norm for the weak convergence, we deduce from \eqref{eq:bound:L2bound:density}
that 
\begin{equation*}
{\mathbb E}
\int_0^T \int_{\mathbb R}
{{\mathbbm 1}_{(0,1)}(\tilde F_{\mu_t}(z))}
\vert p_t(z) \vert^2 \ud z  \ud t \leq C.
\end{equation*}
{It is standard to check that 
$\int_{\mathbb R} {\mathbbm 1}_{\{\tilde F_{\mu_t}(z)=0\}} p_t(z) \ud z = 0$
and 
$\int_{\mathbb R} {\mathbbm 1}_{\{ \tilde F_{\mu_t}(z)=1\}} p_t(z) \ud z = 0$. We deduce that the indicator function in the above display can be easily removed. This gives the $L^2$ bound \eqref{eq:08-07:10}.}
\vskip 4pt

\textit{Third Step.}
We now address the support of the density. 
With the same notations as in the first step, 
\begin{equation*}
{\mathbb E}
\int_0^T
\int_{\mathbb S}
\vert D_{{\mathbf x}} Y_t^{\varsigma}({\mathbf x}) 
\vert^2 \ud {\mathbf x}
 \ud t 
\leq 
{\mathbb E}
\int_0^T
\int_{\mathbb S}
\vert D_{{\mathbf x}} X_t({\mathbf x}) 
\vert^2 \ud {\mathbf x}
 \ud t 
< + \infty, 
\end{equation*}
and then, there exists a constant $C$, independent of $\varsigma$, such that 
\begin{equation*}
{\mathbb E}
\int_0^T
\int_{2 \varsigma}^{1-2\varsigma}
\vert D
\tilde{F}_{\mu_t^\varsigma}^{-1}(y) \vert^2 \ud y
 \ud t 
\leq 
C.
\end{equation*}
By the same change of variable as in the first step, 
\begin{equation}
\label{eq:08-07:1}
{\mathbb E}
\int_0^T \int_{\mathbb R}
{\mathbbm 1}_{\{2 \varsigma \leq  \tilde F_{\mu_t^\varsigma}(z) \leq 1 - 2 \varsigma\}}
\frac1{p_t^\varsigma(z)}\ud z  \ud t
=
{\mathbb E}
\int_0^T \int_{\tilde{F}_{\mu_t^\varsigma}^{-1}( 2\varsigma)}^{{\tilde{F}_{\mu_t^\varsigma}^{-1}(1-2 \varsigma)}}
\frac1{p_t^\varsigma(z)}\ud z  \ud t \leq C.
\end{equation}
Since, almost surely and for almost every $t \in [0,T]$, $\mu_t$
has $p_t$ as density, the cumulative distribution function $\tilde F_{\mu_t}$ of $\mu_t$ is continuous, and by Dini's theorem, the convergence of $\tilde F_{\mu_t^\varsigma}$ to 
$\tilde F_{\mu_t}$ is uniform, in the sense that, 
${\mathbb P}$-almost surely and for almost every 
$t \in [0,T]$, 
\begin{equation*}
\lim_{\varsigma \searrow 0}
\sup_{z \in {\mathbb R}}
\vert \tilde F_{\mu_t^\varsigma}
(z) - 
\tilde F_{\mu_t}(z) \vert =0.
\end{equation*}
We now fix $\varepsilon >0$
and $\varsigma_0 \in (0,1/4)$. By Egorov's theorem, we can find a real 
$\varsigma_\varepsilon \in (0,\varsigma_0/2)$ and 
a subset $E$ in the product $\sigma$-field
$\mathfrak{F} \otimes 
{\mathcal B}([0,T])$
(with ${\mathcal B}([0,T])$ denoting the Borel subsets of $[0,T]$) such that 
${\mathbb P} \otimes \textrm{\rm Leb}_{[0,T]}(E) \geq T - \varepsilon$ and, for $(\omega,t) \in E$, 
\begin{equation*}
\sup_{z \in {\mathbb R}}
\vert \tilde F_{\mu_t^\varsigma}
(z) - 
\tilde F_{\mu_t}(z) \vert
\leq  \varsigma_0.
\end{equation*}
For $\varsigma \leq \varsigma_{\varepsilon}$, we have the following inclusion: 
\begin{equation*}
\bigl\{ (\omega,t,z) \in E \times {\mathbb R}; 
\quad 2 \varsigma_0 \leq \tilde F_{\mu_t}(z) \leq 1 - 2 \varsigma_0
\bigr\} 
\subset 
\bigl\{ (\omega,t,z) \in E \times {\mathbb R}; 
\quad 2 \varsigma \leq \tilde F_{\mu_t^{\varsigma}}(z) \leq 1 - 2 \varsigma
\bigr\}.
\end{equation*}
Together with 
\eqref{eq:08-07:1}, this yields
(still for $\varsigma \leq \varsigma_\varepsilon$)
\begin{equation}
\label{eq:08-07:2}
{\mathbb E}
\int_0^T {\mathbbm 1}_E
\int_{\mathbb R}
{\mathbbm 1}_{\{ 2 \varsigma_0 \le \tilde F_{\mu_t}(z) \leq 1 - 2 \varsigma_0\}}
\frac1{q^\varsigma_t(z)} \ud z  \ud t \leq C,
\end{equation}
where we let
$q_t^{\varsigma}(z) = 
p_t^{\varsigma}(z) {\mathbbm 1}_{(2\varsigma,1-2\varsigma)}(\tilde F_{\mu_t^{\varsigma}}(z))$.

Recall now from the second step that $(\omega,t,z) \mapsto q_t^\varsigma(z)$ has 
$(\omega,t,z) \mapsto p_t(z)$ as weak limit in $L^2(\Omega \times [0,T] \times {\mathbb R})$. 
By Mazur's theorem, we can find a sequence 
$(\tilde q^n)_{n \geq 1}$ of (finite) convex combinations of
the family 
$(q^\varsigma)_{\varsigma \in (0,\varsigma_\varepsilon)}$
that strongly converges to $p$ in $L^2(\Omega \times [0,T] \times {\mathbb R})$. Up to a subsequence the convergence holds almost everywhere on $\Omega \times [0,T] \times {\mathbb R}$. By Jensen's inequality, the reciprocal of 
the convex combination 
$\tilde q^n_t$ is dominated by the convex 
combination of the reciprocals. Thus, by \eqref{eq:08-07:2}, 
we get, for any $n \geq 1$, 
\begin{equation*}
{\mathbb E}
\int_0^T {\mathbbm 1}_E
\int_{\mathbb R}
{\mathbbm 1}_{\{ 2 \varsigma_0 \le \tilde F_{\mu_t}(z) \leq 1 - 2 \varsigma_0\}}
\frac1{\tilde q^n_t(z)} \ud z  \ud t \leq C.
\end{equation*}
And then, by Fatou's lemma, we obtain 
\begin{equation*}
{\mathbb E}
\int_0^T {\mathbbm 1}_E
\int_{\mathbb R}
{\mathbbm 1}_{\{ 2 \varsigma_0 \le \tilde F_{\mu_t}(z) \leq 1 - 2 \varsigma_0\}}
\frac1{p_t(z)} \ud z  \ud t \leq C.
\end{equation*}
Therefore, 
\begin{equation*}
\begin{split}
&{\mathbb E}
\int_0^T \int_{\mathbb R}
{\mathbbm 1}_{\{ 2 \varsigma_0 \le \tilde F_{\mu_t}(z) \leq 1 - 2 \varsigma_0\}}
{\mathbbm 1}_{\{ p_t(z) \leq  \varepsilon\}}
\ud z  \ud t 
\\
&\leq 
\bigl( {\mathbb P} \otimes 
\textrm{\rm Leb}_{[0,T]}\bigr)\bigl( E^\complement\bigr)  +
\varepsilon
{\mathbb E}
\int_0^T {\mathbbm 1}_E
\int_{\mathbb R}
{\mathbbm 1}_{\{ 2 \varsigma_0 \le \tilde F_{\mu_t}(z) \leq 1 - 2 \varsigma_0\}}
\frac1{p_t(z)} \ud z  \ud t
\\
&\leq \varepsilon 
+ C \varepsilon. 
\end{split}
\end{equation*}
Letting $\varepsilon$ and $\varsigma_0$ tend to $0$, we deduce that 
\begin{equation}
\label{eq:08-07:3}
\begin{split}
&{\mathbb E}
\int_0^T \int_{\mathbb R}
{\mathbbm 1}_{\{ 0 < \tilde F_{\mu_t}(z) < 1\}}
{\mathbbm 1}_{\{ p_t(z) =0\}}
\ud z  \ud t 
=0. 
\end{split}
\end{equation}
Since $DX_t>0$ almost everywhere on $(0,1/2)$ (with probability 1, for almost every $t$), 
the function 
$\tilde F_{\mu_t}^{-1}(\cdot) = X_t( \cdot \, /2) $ is strictly increasing on $(0,1)$. Since $\tilde F_{\mu_t}^{-1}$ is also continuous, it is a one-to-one mapping from $(0,1)$ onto $(\tilde F_{\mu_t}^{-1}(0),\tilde F_{\mu_t}^{-1}(1))=(X_t(0),X_t(1/2))$, with $\tilde F_{\mu_t}$ as inverse. Therefore, 
$\tilde F_{\mu_t}(z) \in (0,1) \Leftrightarrow z \in (X_t(0),X_t(1))$. Therefore, 
$p_{\mu_t}$ is supported by $[X_t(0),X_t(1/2)]$ and \eqref{eq:08-07:3} says that 
$p_{\mu_t}$ is almost everywhere (strictly) positive 
on $(X_t(0),X_t(1/2))$.
\vskip 4pt

\textit{Fourth Step.}
 Since $\tilde F_{\mu_t}$ is continuous, it is a standard fact that the law of $\tilde F_{\mu_t}$ under 
$\mu_t$ is the Lebesgue measure on $[0,1]$. 
Moreover, because there exists a density,  Lebesgue's theorem implies that, with probability 1, for almost every $t \geq 0$, $ \tilde F_{\mu_t}$ is differentiable almost everywhere on ${\mathbb R}$, i.e., 
\begin{equation*}
\textrm{\rm Leb}_{\mathbb R}
\bigl({\mathcal D}_{\tilde{F}_{\mu_t}}^{\complement}\bigr)
=0, 
\quad \text{with}\quad \mathcal{D}_{\tilde{F}_{\mu_t}}:= \Big\{ z\in \mathbb{R}; \ \tilde{F}_{\mu_t} \text{ is differentiable at } z \Big\}.
\end{equation*}
Using once again the fact that $\mu_t$ has a density, we deduce that 
\begin{equation*}
\mu_t\bigl(  {\mathcal D}_{\tilde{F}_{\mu_t}} \bigr) =1.
\end{equation*}

Now, since it belongs to ${\mathcal H}^1({\mathbb S})$, 
$X_t$ is also (with probability 1, for almost every $t \in [0,T]$) almost everywhere differentiable on ${\mathbb S}$ (or equivalently on $[0,1]$). In particular, letting (for fixed $\omega$ and $t$)
        $$\mathcal{D}_{X_t}:= \Big\{ y\in [0,\tfrac12]; \  X_t \text{ is differentiable at } y \Big\},$$
we have
            \begin{equation*}
             \begin{split}
            \mu_t\left(\bigl\{z \in \mathbb{R};
            \ \tfrac12  \tilde{F}_{\mu_t}(z) \in \mathcal{D}_{X_t}  \bigr\}\right) &= \int_{\mathbb{R}}{\mathbbm  1}_{\mathcal{D}_{X_t}} \bigl(\tfrac12 \tilde{F}_{\mu_t}(z)\bigr)\mathrm{d}\mu_{t}(z) 
            \\
            &= \int_0^1 {\mathbbm 1}_{\mathcal{D}_{X_t}}\bigl(\tfrac12 y\bigr)\mathrm{d}y= 2\mathrm{Leb}({\mathcal{D}_{X_t}})=1.
            \end{split}
            \end{equation*}
   In the end, we have   $\mu_t( {\mathcal D}_{\tilde{F}_{\mu_t}}  \cap \{z \in \mathbb{R}; \ \tilde{F}_{\mu_t}(z)/2 \in \mathcal{D}_{X_t}  \})=1$. Since $\mu_t$ is supported by $[X_t(0),X_t(1/2)]$ (by the third step, $p_t$ is almost everywhere positive on the interval), we deduce that, for almost every $z \in [X_t(0),X_t(1/2)]$, $\tilde F_{\mu_t}$ is differentiable at $z$ and $X_t$ is differentiable at $\tilde{F}_{\mu_t}(z)/2$. By composition, $X_t \circ (\tilde F_{\mu_t}/2)$ is differentiable at $z$, and the derivative is equal to $(1/2) D \tilde{F}_{\mu_t}(z) DX_t(\tilde{F}_{\mu_t}(z)/2)$.
   Since the composition is equal to the identity, we get, for almost every $z \in [X_t(0),X_t(1/2)]$, 
    $D\tilde{F}_{\mu_t}(z) DX_t (\tilde{F}_{\mu_t}(z)/2)=2$.
This proves \eqref{eq:08-07:11}. 

We now prove \eqref{eq:08-07:12}. We get 
\begin{equation*}
\begin{split}
{\mathbb E}
\int_0^T \int_{\mathbb R}
{\mathbbm 1}_{(X_t(0),X_t(1/2))}(z)
\frac1{p_t(z)}
\ud z  \ud t &= 
{\mathbb E}
\int_0^T \int_{\mathbb R}
{\mathbbm 1}_{(X_t(0),X_t(1/2))}(z)
\frac1{p_t^2(z)} p_t(z)
\ud z  \ud t
\\
&=\frac14 {\mathbb E}
\int_0^T \int_{\mathbb R}
{\mathbbm 1}_{(X_t(0),X_t(1/2))}(z)
\vert D_{\mathbf x} X_t \bigl(\tfrac12 \tilde F_{\mu_t}(z) \bigr) 
\vert^2 
p_t(z) 
\ud z  \ud t
\\
&=\frac14 {\mathbb E}
\int_0^T \int_{\mathbb R}
\bigl\vert D_{\mathbf x} X_t \bigl(\tfrac12 \tilde F_{\mu_t}(z) \bigr) 
\bigr\vert^2 
p_t(z) 
\ud z  \ud t
\\
&= \frac14 {\mathbb E}
\int_0^T \int_0^1
\bigl\vert D_{\mathbf x} X_t \bigl(\tfrac12 x\bigr)
\bigr\vert^2
\ud x  \ud t.
\end{split}
\end{equation*}
Letting $y=x/2$, we deduce that 
\begin{equation*}
{\mathbb E}
\int_0^T \int_{\mathbb R}
{\mathbbm 1}_{(X_t(0),X_t(1/2))}(z)
\frac1{p_t(z)}
\ud z  \ud t = \frac12  
{\mathbb E}
\int_0^T \int_0^{1/2}
\bigl\vert D_{\mathbf x} X_t (y)
\bigr\vert^2
\ud y  \ud t
= \frac14 {\mathbb E}
\int_0^T \int_{\mathbb S}
\bigl\vert D_{\mathbf x} X_t ({\mathbf x})
\bigr\vert^2
\ud {\mathbf x}  \ud t,
\end{equation*}
which completes the proof of \eqref{eq:08-07:12}.
\end{proof}
\subsection{Fokker-Planck equation}

The next step consists in deriving the Fokker-Planck equation associated with the density obtained above. To this end, we should apply an extended version of  It\^o's formula for the {\bf rearranged stochastic heat equation}, as in developed in \cite{DelHam25_2}, together with a suitable choice of the  function
with respect to which 
$(\mu_t)_{t \geq 0}$ is tested.

The key advantage of this approach is that the derivative -understood in the sense made precise below- of any sufficiently regular {\it mean-field} functional, evaluated along the law of the solution $(X_t)_{t\geq 0}$ to the {\bf rearranged stochastic heat equation with distributional drift} \eqref{eq:4.28} is expected to remain orthogonal to the reflection term $\mathrm{d}{\eta}_t$ appearing in \eqref{eq:4.28}. As a consequence, the latter does not contribute to the dynamics satisfied by the law of the process.

It is worth noting that an extension of  It\^o's formula to a drifted version of the {\bf rearranged stochastic heat equation} was already addressed in \cite{DelHam25_2}. However, the regularity assumptions imposed there on the drift term- even in their most general form- do not cover the situation considered in the present settings. Our objective is therefore to further extend this formula to the case where the drift is no longer given by a classical function, but only by a distribution.

In this context, differentiability is understood in the sense of Lions. For clarity, we recall that a functional $\psi:\mathcal{P}_2(\mathbb{R})\rightarrow \mathbb{R}$ is said to be Lions continuously differentiable if the map $v\in L^2( \mathbb{S})\mapsto \psi(\mathrm{Leb}_{\mathbb{S}}\circ v^{-1}) $ is Fr\'echet continuously differentiable. In that case, the Fr\'echet derivative at $v \in L^2(\mathbb{S})$ can be written in the form ${\bf x} \in \mathbb{S} \mapsto \partial_{\mu}\psi(\mathrm{Leb}_{\mathbb{S}}\circ v^{-1},v({\bf x}))$, where $\partial_{\mu}\psi(\mu,y)$ is a function of the variable $y\in \mathbb{R}$ belonging to $L^2(\mathbb{R},\mu)$ and depending only on the law $\mu=\mathrm{Leb}_{\mathbb{S}}\circ v^{-1}$ of $v$. For a complete reference on this topic, we refer the reader to \cite{CarDel1}.

Here is the version of It\^o's formula tailor-made for our needs:
\begin{prop}\label{prop5.2} For any smooth mean-field function $\psi: \mathcal{P}_2(\mathbb{R})\rightarrow \mathbb{R}$ with bounded, and Lipschitz continuous derivatives 
$\partial_{\mu}\psi : \mathcal{P}(\mathbb{R})\times \mathbb{R} {\rightarrow} \mathbb{R}$, $\nabla_y\partial_{\mu}\psi : \mathcal{P}(\mathbb{R})\times \mathbb{R} {\rightarrow}  \mathbb{R}$, $\partial_{\mu}^2\psi : \mathcal{P}(\mathbb{R})\times \mathbb{R} \times \mathbb{R} {\rightarrow}  \mathbb{R}$ with $\mathcal{P}(\mathbb{R})$ equipped with the $2\text{-Wasserstein}$ distance, the following holds with probability one
    \begin{equation}
    \begin{split}\label{eq:5.2}
        \psi(\mu_t) &= \psi(\mu_0) +\frac{1}{2}\int_0^t\int_{\mathbb{S}}\nabla_y\partial_{\mu}\psi(\mu_s,X_s(\mathbf{x}))\mathrm{d}\mathbf{x}\mathrm{d}s  \\
        &\quad -\int_0^t\int_{\mathbb{S}}\nabla_y\partial_{\mu}\psi(\mu_s,X_s(\mathbf{x}))[DX_s(\mathbf{x})]^2\mathrm{d}\mathbf{x}\mathrm{d}s
        +\frac{1}{2}\int_0^t\int_{\mathbb{S}}\nabla_y\partial_{\mu}\psi(\mu_s,X_s(\mathbf{x}))G_1(\mathbf{x})\mathrm{d}\mathbf{x}\mathrm{d}s \\
        &\quad  + \frac{1}{2}\int_0^t\int_{\mathbb{S}}\int_{\mathbb{S}}\partial_{\mu}^2\psi(\mu_s,X_s(\mathbf{x}),X_s(\mathbf{x}'))G_2(\mathbf{x},\mathbf{x}')\mathrm{d}\mathbf{x}\mathrm{d}\mathbf{x}'\mathrm{d}s +\int_0^t\int_{\mathbb{S}}\partial_{\mu}\psi(\mu_s,X_s(\mathbf{x}))\mathrm{d}W_s(\mathbf{x}),
        \end{split}
    \end{equation}
where $\mu_t:= \mathrm{Leb}_{\mathbb{S}}\circ X^{-1}_t$ and $X_t$ is the solution to \eqref{eq:4.28} and the functions $G_1$ and $G_2$ are given below:
    \begin{align}\label{eq:5.3}
        G_1(\mathbf{x}):= \sum_{k\in \mathbb{N}}\lambda_k^2e_k^2(\mathbf{x}), \quad G_2(\mathbf{x},\mathbf{x}'):= \sum_{k\in \mathbb{N}}\lambda_k^2e_k(\mathbf{x})e_k(\mathbf{x}'), \quad \forall \mathbf{x},\mathbf{x}' \in \mathbb{S}.
    \end{align}
and the noise $(W_t)_{t\geq 0}$ is given by \eqref{eq:noise:1.8}.
\end{prop}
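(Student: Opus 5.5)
We prove the formula first on a smoothed version of the dynamics and then remove the smoothing. The natural route is through the finite-dimensional projections and the mollified quantile $X^\epsilon_t:={\rm e}^{\epsilon\Delta}X_t$. Following \cite{DelHam25_2}, we set $\Psi(v):=\psi(\mathrm{Leb}_{\mathbb S}\circ v^{-1})$ for $v\in L^2(\mathbb S)$. Its Fréchet derivative is $\mathbf x\mapsto\partial_\mu\psi(\mu,v(\mathbf x))$ and its second derivative is
\[
(h,k)\mapsto\int\nabla_y\partial_\mu\psi(\mu,v)hk+\iint\partial^2_\mu\psi(\mu,v,v')h k'.
\]
By item (iii) of Definition \ref{Defi 317}, $X^\epsilon$ is a smooth continuous semimartingale in $L^2_{\rm Sym}(\mathbb S)$:
\[
\mathrm dX^\epsilon_t=\Delta X^\epsilon_t\,\mathrm dt-\tfrac12{\rm e}^{\epsilon\Delta}D\bigl(1/DX_t\bigr)\mathrm dt+{\rm e}^{\epsilon\Delta}\mathrm d\eta_t+{\rm e}^{\epsilon\Delta}\mathrm dW_t .
\]
Applying the standard Hilbert-space Itô formula to $\Psi(X^\epsilon_t)$, with the $\eta$-term handled through Theorem \ref{thm:integral:varsigma} (approximate by Riemann sums as in item (4)), gives the following terms:
\begin{itemize}
\item $\int\partial_\mu\psi(\mu^\epsilon_s,X^\epsilon_s)\Delta X^\epsilon_s$, which after integration by parts is $-\int\nabla_y\partial_\mu\psi\,(DX^\epsilon_s)^2$;
\item the entropy term $\tfrac12\int\nabla_y\partial_\mu\psi(\mu^\epsilon_s,X^\epsilon_s)\,DX^\epsilon_s\cdot{\rm e}^{\epsilon\Delta}(1/DX_s)$, again after integration by parts;
\item the reflection integral $\int_0^t\partial_\mu\psi(\mu^\epsilon_s,X^\epsilon_s)\cdot\mathrm d\eta_s$ against ${\rm e}^{\epsilon\Delta}$;
\item the bracket terms, which are exactly the $G_1,G_2$ terms with ${\rm e}^{\epsilon\Delta}e_k$ in place of $e_k$;
\item the martingale term.
\end{itemize}

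Next we let $\epsilon\searrow0$. On the left-hand side, $\psi(\mu_t^\epsilon)\to\psi(\mu_t)$ by $\mathscr W_2$-continuity, since $\|X^\epsilon_t-X_t\|_2\to0$. The Laplacian, bracket and martingale terms converge by dominated convergence. We use that $\nabla_y\partial_\mu\psi$ and $\partial^2_\mu\psi$ are bounded and Lipschitz, that $\int_0^T\|DX_s\|_2^2\,\mathrm ds<\infty$, and that $\sum\lambda_k^2<\infty$; this yields the second line and the $G_1$, $G_2$ and $\mathrm dW$ terms of \eqref{eq:5.2}. For the entropy term we write
\[
DX^\epsilon_s\,{\rm e}^{\epsilon\Delta}(1/DX_s)=\frac{{\rm e}^{\epsilon\Delta}DX_s}{DX_s}+\text{a cross remainder}.
\]
Item (iv) of Definition \ref{Defi 317} gives $\int_0^t\int_{\mathbb S}\frac{{\rm e}^{\epsilon\Delta}DX_s}{DX_s}\to t$ in $L^1$, and the integrand is nonnegative. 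An $L^1$ convergence together with a Scheffé-type argument then identifies the limit of the weighted integral as $\tfrac12\int\nabla_y\partial_\mu\psi(\mu_s,X_s)\,\mathrm d\mathbf x\,\mathrm ds$, which is the first drift term. To handle the weight $\nabla_y\partial_\mu\psi(\mu^\epsilon_s,X^\epsilon_s(\mathbf x))$ we use its boundedness and its a.e. convergence. Concretely, we decompose $\frac{{\rm e}^{\epsilon\Delta}DX}{DX}-1$ into positive and negative parts: the negative part tends to $0$ by dominated convergence, and then the positive part does too, since the total integral tends to $0$.

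The main obstacle is twofold. First, the cross remainder in the entropy term involves ${\rm e}^{\epsilon\Delta}$ acting on $1/DX_s$, which is only in $L^1$; I would avoid it altogether by writing the drift in weak form with test function ${\rm e}^{\epsilon\Delta}\partial_\mu\psi(\cdot)$, so that the term reads $\int\frac{{\rm e}^{\epsilon\Delta}[\nabla_y\partial_\mu\psi(\mu^\epsilon,X^\epsilon)DX^\epsilon]}{DX}$, and compare it with $\nabla_y\partial_\mu\psi\cdot\frac{{\rm e}^{2\epsilon\Delta}DX}{DX}$ using a commutator estimate based on Lipschitz continuity. Second, the reflection contribution must vanish. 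Here we argue that $\mathbf x\mapsto\partial_\mu\psi(\mu,X_s(\mathbf x))$ is, up to a constant, a monotone function of $X_s$, so it is constant on flat pieces of $X_s$, i.e.\ where $\eta$ acts. Following \cite{DelHam25_2}, we bound
\[
\Bigl|\int{\rm e}^{\epsilon\Delta}\partial_\mu\psi(\mu^\epsilon,X^\epsilon)\cdot\mathrm d\eta\Bigr|
\]
through a Lipschitz comparison with $\int{\rm e}^{\epsilon\Delta}X_s\cdot\mathrm d\eta_s$, using a decomposition of $\partial_\mu\psi(\mu,\cdot)$ into increasing and decreasing parts. This quantity tends to $0$ in $L^1$ by item (iv) of Definition \ref{Defi 317}. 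Finally, the formula holds for each fixed $t$ almost surely, and by continuity in $t$ of both sides it holds for all $t$ simultaneously.
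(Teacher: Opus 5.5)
Your approach is sound, and it takes a genuinely different route from the paper.

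\textbf{How the two proofs differ.} The paper never applies It\^o's formula to the limiting equation. It expands $\psi(\mu^{(h)}_t)$ along the splitting scheme, using two ingredients:
\begin{itemize}
\item on each subinterval, the It\^o formula of \cite[Theorem 1.1]{DelHam25_2} for the drift-free RSHE, where the reflection is already known not to contribute;
\item at each node $t_n$, the It\^o formula for the heat flow on $\mathcal P_2(\mathbb R)$, which is where $\frac12\int\nabla_y\partial_\mu\psi$ comes from.
\end{itemize}
It then lets $h\searrow0$ in law. This requires Skorokhod's representation, the strong $L^2$ convergence of $DX^{(h)}$ (Lemma \ref{lemm.7.4}), the Riemann-sum Lemma \ref{lemm 5.5}, stability of stochastic integrals, and Theorem \ref{Main 1} to identify the limit.

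You instead mollify the solution itself and use only Definition \ref{Defi 317}. What this buys:
\begin{itemize}
\item your proof applies to any solution in that sense;
\item it bypasses the weak-convergence machinery;
\item it shows what the two conditions in item (iv) are for: the second one produces the entropy term, the first one kills the reflection.
\end{itemize}

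\textbf{The reflection term.} The correct formulation is not that $\partial_\mu\psi(\mu,\cdot)$ is monotone up to a constant; that is false in general. What you need is that $y\mapsto Ly\pm\partial_\mu\psi(\mu,y)$ are non-decreasing, with $L=\|\nabla_y\partial_\mu\psi\|_\infty$. Since $e^{\epsilon\Delta}$ preserves $\mathcal U^2(\mathbb S)$, item (5) of Theorem \ref{thm:integral:varsigma} then gives $\bigl|\int_0^t e^{\epsilon\Delta}[\partial_\mu\psi(\mu^\epsilon_s,X^\epsilon_s)]\cdot\ud\eta_s\bigr|\le L\int_0^t e^{2\epsilon\Delta}X_s\cdot\ud\eta_s$, which tends to $0$ in $L^1$.

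\textbf{Cost of your route: the It\^o formula.} You must justify It\^o's formula for the lift along $X^\epsilon$. The lift is not $C^2$ on $L^2(\mathbb S)$, so the finite-dimensional projections are genuinely needed. The justification also uses two facts:
\begin{itemize}
\item $e^{\epsilon\Delta}\eta$ has finite variation in $L^2(\mathbb S)$;
\item $e^{\epsilon\Delta}D(1/DX)\in L^1([0,T],L^2(\mathbb S))$.
\end{itemize}

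\textbf{The entropy term needs no commutator estimate.} The commutator route is delicate. Putting absolute values inside the heat kernel only gives domination by $e^{\epsilon\Delta}|DX^\epsilon_s|/|DX_s|$. Item (iv) does not control this near $\{0,1/2\}$, where $DX_s$ changes sign, so you would have to fold the kernel using the oddness of $DX_s$.

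The first form you wrote works directly. Set $a_\epsilon:=DX^\epsilon_s\,e^{\epsilon\Delta}(1/DX_s)$.
\begin{itemize}
\item $a_\epsilon\ge0$. Indeed, $1/DX_s$ is odd, integrable and non-negative on $(0,1/2)$, so it is the derivative of some $U\in\mathcal U^2(\mathbb S)$. Hence $e^{\epsilon\Delta}(1/DX_s)=D\,e^{\epsilon\Delta}U$ has the same sign as $DX^\epsilon_s$.
\item $a_\epsilon\to1$ almost everywhere, at common Lebesgue points of $DX_s\in L^2$ and $1/DX_s\in L^1$.
\item $\int_{\mathbb S}a_\epsilon=\int_{\mathbb S}e^{2\epsilon\Delta}DX_s/DX_s$, by self-adjointness of $e^{\epsilon\Delta}$.
\end{itemize}
Item (iv) and your Scheff\'e argument then give $a_\epsilon\to1$ in $L^1(\Omega\times[0,t]\times\mathbb S)$. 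The weighted term converges because $\nabla_y\partial_\mu\psi$ is bounded and continuous. So your concern that $1/DX_s$ is only in $L^1$ is harmless.
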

{Compared to the version established in \cite{DelHam25_2}, the novelty here comes from the last term on the first line: as one may expect, it coincides exactly with the generator induced in the space of measures by the heat flow; see, for instance, \cite[Chapter 5]{CarDel1}.

The proof of Proposition \ref{prop5.2} consists in expanding the function $\psi$ along the scheme
$\{(X^{(h)}_t)_{0 \le t \le T}\}_{{h \in T/{\mathbb N}^*}}$, {for a fixed $T>0$}, and then passing to the limit as the parameter $h$ tends to $0$. In this respect, the proof bears some similarities with that of \cite[Theorem 1.1]{DelHam25_2}. However, it is not a mere copy-and-paste argument: first, because, in order to expand the scheme, we explicitly use \cite[Theorem 1.1]{DelHam25_2}; and second, because the main difficulty comes precisely from the jumps induced by the addition of the convolution in the scheme. In particular, we are not able here to prove an analog of \cite[Lemma 2.3]{DelHam25_2}, which is used throughout the proof of \cite[Theorem 1.1]{DelHam25_2} to ensure several properties of strong compactness in $L^2(\mathbb S)$. Nevertheless, in the following result, we can establish a version of \cite[Lemma 3.2]{DelHam25_2} which is sufficient for our purposes:
\begin{lemm}\label{lemm.7.4}
Consider a smooth mean-field function $\psi: \mathcal{P}_2(\mathbb{R})\rightarrow \mathbb{R}$ with bounded and jointly continuous derivative $\partial_{\mu}\psi: \mathcal{P}_2(\mathbb{R}) \times \mathbb{R}\rightarrow \mathbb{R}$. 
Then, for $T>0$ as above and for almost every $S \in [0,T]$, the followings hold:
\begin{enumerate}
    \item  the family $\left(\psi(\mu_{S}^{(h)})\right)_{{h \in T/{\mathbb N}^*}}$ converges to $\psi(\mu_{S})$, as $h\searrow 0$;
 \item the random variables
  $$ \left(\int_{0}^{S}\int_{\mathbb{S}}\partial_{\mu}\psi(\mu_s^{(h)},X_s^{(h)}(\mathbf{x}))[{DX_s^{(h)}(\mathbf{x})}]^2\mathrm{d}\mathbf{x} \mathrm{d}s\right)_{{h \in T/{\mathbb N}^*}}$$
converge in law as $h\searrow 0$ to the random variable
$\int_{0}^{S}\int_{\mathbb{S}}\partial_{\mu}\psi(\mu_s,X_s(\mathbf{x}))[{DX_s(\mathbf{x})}]^2\mathrm{d}\mathbf{x} \mathrm{d}s,$
\end{enumerate}
where $\{(X_t^{(h)})_{0 \le t \le T}\}_{h \in T/{\mathbb N}^*}$ is given by \eqref{scheme1}, and $(X_t)_{t\in[0,T]}$ is its weak limit in $\mathcal{C}([0,T],{
\mathcal{H}^{-1}_{{\rm Sym}}(\mathbb{S})})$, and where, for all $t \in [0,T]$, 
$\mu_t^{(h)}=\textrm{\rm Leb}_{\mathbb S} \circ (X_t^{(h)})^{-1}$
and
$\mu_t=\textrm{\rm Leb}_{\mathbb S} \circ (X_t)^{-1}$.
\end{lemm}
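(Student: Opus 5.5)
We work on a probability space on which, along a subsequence, $(X^{(h)},W)$ converges in law to $(X,W)$. By Skorokhod's representation we may assume almost sure convergence in $\mathbb D([0,T],\mathcal H^{-1}_{\rm Sym}(\mathbb S))$. Since the limit is continuous in time (Proposition \ref{remark 4.1}), this convergence is uniform in $t$ in the $\mathcal H^{-1}$ norm. By uniqueness (Theorem \ref{Main 1}) the limit law does not depend on the subsequence, so convergence in law for the full family will follow.

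\emph{Strategy.} The whole argument rests on upgrading the weak $\mathcal H^{-1}$ convergence to strong $L^2(\mathbb S)$ convergence at fixed times, and to strong $L^2([0,S]\times\mathbb S)$ convergence of the derivatives $DX^{(h)}$.

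\emph{Item (1).} By \eqref{bound 2.8}, $\sup_t\|X^{(h)}_t\|_2$ is tight. Together with $\mathcal H^{-1}$ convergence and the quantile structure, this gives $X^{(h)}_S\to X_S$ weakly in $L^2$. It also gives $\mu^{(h)}_S\to\mu_S$ in every $\mathscr W_p$ with $p<2$, since $\mathcal H^{-1}$ convergence of monotone functions bounded in $L^2$ yields a.e. convergence.

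For $\mathscr W_2$ convergence, which is what $\psi$ needs, we need $\|X^{(h)}_S\|_2\to\|X_S\|_2$. We get this from the energy identities. For the scheme, \eqref{bound 2.9} is in fact an equality. For the limit, we use the identity \eqref{254} together with item (iv) of Definition \ref{Defi 317}, which gives
\begin{equation*}
\|X_t\|_2^2+2\int_0^t\|DX_r\|_2^2\,\ud r=\|X_0\|_2^2+(1+c_\lambda)t+2\int_0^t X_r\cdot\ud W_r .
\end{equation*}
The stochastic integrals converge in probability, thanks to weak $L^2$ convergence plus the $L^2$ bounds. Hence
\begin{equation*}
\|X^{(h)}_t\|_2^2+2\int_0^t\|DX^{(h)}_r\|_2^2\,\ud r\to\|X_t\|_2^2+2\int_0^t\|DX_r\|_2^2\,\ud r .
\end{equation*}
Both terms on each side are weakly lower semicontinuous. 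Therefore each converges separately, at least along a further subsequence and for almost every $t$; the "almost every $S$" in the statement absorbs this.

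Norm convergence plus weak convergence gives strong $L^2$ convergence $X_S^{(h)}\to X_S$, hence $\mathscr W_2$ convergence of $\mu^{(h)}_S$ by \eqref{eq1.7}. The continuity of $\psi$ then gives item (1).

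\emph{Item (2).} The same argument at time $S$ gives
\begin{equation*}
\int_0^S\|DX^{(h)}_r\|_2^2\,\ud r\to\int_0^S\|DX_r\|_2^2\,\ud r .
\end{equation*}
Combined with weak convergence of $DX^{(h)}$ in $L^2([0,S]\times\mathbb S)$, this yields strong convergence $DX^{(h)}\to DX$ in $L^2([0,S]\times\mathbb S)$.

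Strong convergence implies that $[DX^{(h)}]^2\to[DX]^2$ in $L^1$. Moreover, $X^{(h)}_s\to X_s$ in $L^2$ for almost every $s$, since $\int_0^S\|X^{(h)}_s-X_s\|_2^2\,\ud s\to0$ follows from the $H^1$ compactness. Consequently $\partial_\mu\psi(\mu^{(h)}_s,X^{(h)}_s(\mathbf x))\to\partial_\mu\psi(\mu_s,X_s(\mathbf x))$ in measure in $(s,\mathbf x)$, because $\partial_\mu\psi$ is jointly continuous. Since $\partial_\mu\psi$ is bounded, the product of a bounded, in-measure convergent factor with an $L^1$ convergent factor converges in $L^1$. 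This gives almost sure convergence along the Skorokhod coupling, hence convergence in law.

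\emph{Main obstacle.} The hardest step is the passage to the limit in the energy identity. It requires convergence of $\int_0^t X^{(h)}_r\cdot\ud W_r$ while only weak $L^2$ convergence is available a priori. I would handle this by writing the stochastic integral as
\begin{equation*}
\int_0^t X^{(h)}_r\cdot\ud W_r=\sum_k\lambda_k\int_0^t\langle X^{(h)}_r,e_k\rangle\,\ud B^k_r .
\end{equation*}
Each coordinate $\langle X^{(h)}_r,e_k\rangle$ converges uniformly in $r$ from the $\mathcal H^{-1}$ convergence. The tail in $k$ is controlled via the summability of $\lambda_k^2$ and the bound \eqref{bound 2.8}. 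Convergence of stochastic integrals under such uniform coordinatewise convergence is standard (Kurtz–Protter type).

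A second subtle point is the jump contribution $h$ per step. It is exactly accounted for by the $+t$ term, and \eqref{eq1.16} shows that it produces no additional defect.
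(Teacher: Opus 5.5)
Your argument is correct, but it takes a different route from the paper's. The paper never passes to the limit pathwise in the energy identity. Instead it proceeds in three stages:
\begin{enumerate}
\item It proves $\mathbb{E}\|X^{(h)}_S\|_2^2\to\mathbb{E}\|X_S\|_2^2$ for a.e.\ $S$ directly. It mollifies with ${\rm e}^{\epsilon\Delta}$, gets uniformity in $h$ from \eqref{bound 1.26}, and uses weak convergence at fixed $\epsilon$.
\item It compares the \emph{expected} energy identities \eqref{eq:ito:square:Xth:45} and \eqref{261}, in which the martingale terms have disappeared. This gives $\mathbb{E}\int_0^S\|DX^{(h)}_s\|_2^2\,\mathrm{d}s\to\mathbb{E}\int_0^S\|DX_s\|_2^2\,\mathrm{d}s$.
\item Only then does it take a Skorokhod coupling of $X^{(h)}$ alone. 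It upgrades to strong convergence in $L^2(\Pi\times[0,S]\times\mathbb{S})$ by treating the cross term $\int DY^{(h)}_s\cdot DY_s$ with mollification and uniform integrability.
\end{enumerate}

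You instead use the exact pathwise energy equalities for the scheme and for the limit, the latter coming from \eqref{254} and item (iv) of Definition \ref{Defi 317}. You pass to the limit in $\int_0^t X^{(h)}_r\cdot\mathrm{d}W_r$ by Kurtz--Protter on a coupling of $(X^{(h)},W)$, and split the two energy terms by lower semicontinuity.

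The paper's route needs no convergence of stochastic integrals. Yours gives almost sure convergence on the coupling and works at every $S$. In item (2) it only uses joint continuity of $\partial_\mu\psi$, via convergence in measure. The paper's fourth step instead invokes Lipschitz continuity of $\partial_\mu\psi$, which is more than the statement assumes.

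Two remarks.

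\emph{Initial data.} After the Skorokhod representation, the initial data $Y^{(h)}_0$ depend on $h$, so you must also check $\|Y^{(h)}_0\|_2^2\to\|Y_0\|_2^2$. This holds: all of them have the law of $X_0$, and $\liminf_h\|Y^{(h)}_0\|_2^2\geq\|Y_0\|_2^2$ almost surely. The negative part of $\|Y^{(h)}_0\|_2^2-\|Y_0\|_2^2$ therefore tends to $0$ and is dominated by $\|Y_0\|_2^2$. Since the expectations are equal, this forces convergence in $L^1$.

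\emph{Your ``main obstacle''.} The same device removes it altogether. Take expectations in both energy identities; the right-hand sides then converge trivially, since $hN_t\to t$. Almost sure lower semicontinuity plus Fatou's lemma then give convergence of each expected term separately. From there you deduce $L^1$ convergence of $\|Y^{(h)}_t\|_2^2$ and of $\int_0^t\|DY^{(h)}_r\|_2^2\,\mathrm{d}r$ exactly as for the initial data, for every $t$. The Kurtz--Protter step is then unnecessary.
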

\begin{proof} The proof is divided in four steps.
\vskip 4pt
{\it First Step.} We first establish the convergence
of the $L^2$ norms (in $\omega$, $t$ and $\mathbf{x}$) 
of $\{(D X_t^{(h)})_{0 \le t \le T}\}_{h \in T/{\mathbb N}^*}$. 
Letting $\epsilon >0$ and using the triangle inequality, we have
\begin{align*}
& \int_0^T \left\vert \mathbb{E}   \|X_s^{(h)}\|_2^2 -   \mathbb{E}    \|X_s \|_2^2
\right\vert \mathrm{d}s
\\
&\leq \mathbb{E} \int_0^T \left|\|X_s^{(h)}\|_2^2- \|{\rm{e}}^{\epsilon \Delta}X_s^{(h)}\|_2^2\right|\mathrm{d}s + \mathbb{E} \int_0^T \left|\|X_s\|_2^2- \|{\rm{e}}^{\epsilon \Delta}X_s\|_2^2\right|\mathrm{d}s +  \int_0^T \left|
\mathbb{E}
\|{\rm{e}}^{\epsilon \Delta}X_s^{(h)}\|_2^2- \mathbb{E}\|{\rm{e}}^{\epsilon \Delta}X_s\|_2^2 \right|\mathrm{d}s.
\end{align*}  
Thanks to \eqref{bound 1.26} and to \eqref{260},
the sum of the first two terms on the last line tends to $0$ with $\epsilon$, uniformly in 
${h \in T/{\mathbb N}^*}$. Moreover, using the weak convergence of 
$\{(X_t^{(h)})_{0 \le t \le T}\}_{h \in T/{\mathbb N}^*}$
to 
$\{(X_t)_{0 \le t \le T}\}_{h \in T/{\mathbb N}^*}$, as stated in Proposition \ref{remark 4.1}, we deduce that the third term goes to $0$ with $h$ when $\epsilon$ is fixed. 
Hence,
the term on the left-hand side goes to $0$ with $h$, from which we deduce that, for almost every $S \in [0,T]$, 
\begin{equation}\label{eq:7:5}
    \lim_{h \searrow 0}\mathbb{E}\|X_S^{(h)}\|_2^2 = \mathbb{E}\|X_S\|_2^2.
\end{equation}
Now returning to \eqref{eq:ito:square:Xth:45}, we claim that 
\begin{equation}\label{eq:7:6}
	 2\mathbb{E}\int_{0}^{S} \|DX_{s}^{(h)}\|_2^2 \mathrm{d}s = \mathbb{E}\left(\|X_0\|_2^2 -\|X_{S}^{(h)}\|_2^2 \right)  +
    \Bigg( 1 +\sum_{k \in \mathbb{N}} \lambda_k^2\Bigg) S+ \mathcal{O}_h(1).
\end{equation}
Hence, combining \eqref{eq:7:5} and \eqref{eq:7:6} we obtain, for almost every $S \in [0,T],$
\begin{equation}
	 2\lim_{h\searrow 0}\mathbb{E}\int_{0}^{S} \|DX_{s}^{(h)}\|_2^2 \mathrm{d}s = \mathbb{E}\left(\|X_0\|_2^2 -\|X_S\|_2^2 \right)  +
    \Bigg( 1 +\sum_{k \in \mathbb{N}} \lambda_k^2\Bigg) S.
\end{equation}
Observe that the term on the right-hand side above corresponds to $2\mathbb{E}\int_{0}^{S} \|DX_{s}\|_2^2 \mathrm{d}s$ (see \eqref{261}). Hence, for almost every $S \in [0,T]$,
\begin{equation}\label{eq:7.8}
\mathbb{E}\int_{0}^{S} \|DX_{s}^{(h)}\|_2^2 \mathrm{d}s \rightarrow \mathbb{E}\int_{0}^{S} \|DX_{s}\|_2^2 \mathrm{d}s, \quad \text{ as } {h\searrow 0}.
\end{equation}
\vskip 4pt

{\it Second Step.} Fix $T>0$ as in the first step. By invoking Skorokhod's representation theorem, we can find a family of processes $\{\{(Y_s^{(h)})_{0 \leq s\leq {T}}\}_{{h \in T/{\mathbb N}^*}},(Y_s)_{0 \le s \le T}\}$,  constructed on a probability space $(\P,\mathfrak{G},\nu)$, with trajectories in $\mathbb{D}([0,{T}],\mathcal{H}^{-1}_{{\rm Sym}}(\mathbb{S}))$, such that
\begin{itemize}
\item for any {$h\in T/{\mathbb N}^*$}, $\mathscr{L}((Y_s^{(h)})_{0 \le s \le T})= \mathscr{L}((X_s^{(h)})_{0 \le s \le T})$ in $\mathbb{D}([0,T],\mathcal{H}^{-1}_{{\rm Sym}}(\mathbb{S}))$;
\item $\mathscr{L}\left((Y_s)_{0 \le s \le T}\right)= \mathscr{L}\left((X_s)_{0 \le s \le T}\right)$ in $\mathbb{D}([0,{T}],\mathcal{H}^{-1}_{{\rm Sym}}(\mathbb{S}))$, where $(X_s)_{0 \le s \le T}$ stands for the solution to \eqref{RHSE2} as given by Theorem \ref{Main 1} (it also corresponds to the limit in law of the processes $(X^{(h)}_s)_{0 \le s \le T}$ when $h \searrow 0$);
\item as $h \searrow 0$, $(Y^{(h)}_s)_{0 \le s \le T} \rightarrow (Y_s)_{0 \le s \le T} \quad \nu\text {-a.s.}$ in $\mathbb{D}([0,{T}],\mathcal{H}^{-1}_{{\rm Sym}}(\mathbb{S}))$. 
\end{itemize}
In addition, from \eqref{bound 2.8} and \eqref{bound 1.26}, 
we deduce that (we use the symbol $\varsigma$ to denote a generic element of $\Pi$): 
\begin{equation}\label{eq:7:9}
 \sup_{{h \in T/{\mathbb N}^*}} \Biggl( \int_{\P} \sup_{s\in [0,{T}]}\|Y_s^{(h)}\|_2^2(\varsigma)\mathrm{d}\nu(\varsigma)  +\int_{\P} \int_0^{{T}} \|DY_s^{(h)}\|_2^2(\varsigma){\rm d}s\mathrm{d}\nu(\varsigma)\Biggr)<\infty. 
\end{equation}
Moreover, from \eqref{eq:7:5}, we have, for almost every {$S \in [0,T]$},
\begin{equation}
    \int_{\P} \|Y_S^{(h)}\|_2^2(\varsigma)\mathrm{d}\nu(\varsigma) \rightarrow  \int_{\P}  \|Y_S\|_2^2(\varsigma)\mathrm{d}\nu(\varsigma)\,\, \text{ as } h\searrow 0, 
\end{equation}
and, from \eqref{eq:7.8}, we also have, for almost every {$S \in [0,T]$},
\begin{equation}
\label{eq:7.11}
    \int_{\P} \int_0^S \|DY_s^{(h)}\|_2^2(\varsigma){\rm d}s\mathrm{d}\nu(\varsigma) \rightarrow  \int_{\P} \int_0^S \|DY_s\|_2^2(\varsigma){\rm d}s\mathrm{d}\nu(\varsigma)\,\, \text{ as } h\searrow 0.
\end{equation}
Next, we want to prove that, for almost every {$S \in [0,T]$},
\begin{equation}\label{eq:7:12}
   \int_{\P}{\rm d}\nu(\varsigma) \|Y_S^{(h)}- Y_S \|_2^2(\varsigma) \rightarrow 0  \,\, \text{ as } {h\searrow 0},
\end{equation}
as well as 
\begin{equation}\label{eq:7.9}
    \int_{\P}{\rm d}\nu(\varsigma)\int_0^S \|DY_s^{(h)}- DY_s \|_2^2(\varsigma) \mathrm{d}s\rightarrow 0  \,\, \text{ as } {h\searrow 0}.
\end{equation}
We will only prove the second statement, since the first one can be established using the same arguments.

From the identity $\|DY_s^{(h)}- DY_s \|_2^2 = \|DY_s^{(h)}\|_2^2 -2 DY_s^{(h)}\cdot DY_s + \| DY_s \|_2^2$ together with \eqref{eq:7.11}, it is equivalent to prove that
\[ \int_{\P}{\rm d}\nu(\varsigma)\int_0^S (DY_s^{(h)}\cdot DY_s)(\varsigma)  \mathrm{d}s \rightarrow   \int_{\P}{\rm d}\nu(\varsigma)\int_0^S \| DY_s \|_2^2(\varsigma)  \mathrm{d}s \,\, \text{ as } {h\searrow 0}.\]
Let $\epsilon >0$. Notice that, for almost all $(\varsigma,s)\in \P\times [0,{T}]$,
$$ \langle DY_s^{(h)}, {\rm e}^{\epsilon\Delta}DY_s)\rangle(\varsigma) = -\langle Y_s^{(h)}, D{\rm e}^{\epsilon\Delta}DY_s)\rangle(\varsigma) \rightarrow -\langle Y_s,D {\rm e}^{\epsilon\Delta}DY_s\rangle(\varsigma) \,\, \text{ as } {h\searrow 0},$$ 
{which} follows from the fact that $(Y^{(h)}_s)_{0 \le s \leq T} \xrightarrow{h\searrow 0} (Y_s)_{0 \le s \le T} \quad \nu\text {-a.s.}$ in $\mathbb{D}([0,{T}],\mathcal{H}^{-1}_{{\rm Sym}}(\mathbb{S}))$
{(see (12.14) in \cite{Billingsley})}.
Moreover, 
by 
combining Cauchy-Schwartz's inequality
and 
\eqref{bound 1.26}, we get, 
for any $A \in \mathfrak{G}\otimes \mathcal{B}([0,S])$,  
\begin{align*}
    &\int_{\P}{\rm d}\nu(\varsigma)\int_0^S \mathbbm{1}_{A} \vert DY_s^{(h)}\cdot {\rm e}^{\epsilon\Delta}DY_s \vert(\varsigma) \mathrm{d}s\\
    &\leq \left( \int_{\P}{\rm d}\nu(\varsigma)\int_0^S \|DY_s^{(h)}\|_2^2(\varsigma) \mathrm{d}s \right)^{1/2} \left(\int_{\P}{\rm d}\nu(\varsigma)\int_0^S \mathbbm{1}_{A}\|{\rm e}^{\epsilon\Delta}DY_s\|_2^2(\varsigma) \mathrm{d}s\right)^{1/2}\\
    &\leq C \left(\int_{\P}{\rm d}\nu(\varsigma)\int_0^S \mathbbm{1}_{A}\|DY_s\|_2^2(\varsigma) \mathrm{d}s\right)^{1/2},
\end{align*}
where the constant $C$ above does not depend on $h$.
Obviously, due to the integrability of $\P\times [0,S]\ni (\varsigma,s)\mapsto \|DY_s\|_2^2(\varsigma)$, the last term 
tends to $0$ with $\nu \otimes \textrm{\rm Leb}_{\mathbb R}(A)$. This in turn implies the uniform integrability of $ \{(\varsigma,s) \mapsto (DY_s^{(h)}\cdot {\rm e}^{\epsilon\Delta}DY_s)(\varsigma)\}_{{h \in T/{\mathbb N}^*}}$ on $\P\times [0,S]$. Consequently, by the last two displays, we obtain
\begin{equation}
\label{eq:7.14}
    \lim_{h\searrow 0} \int_{\P}{\rm d}\nu(\varsigma)\int_0^S (DY_s^{(h)}\cdot {\rm e}^{\epsilon\Delta}DY_s)(\varsigma) \mathrm{d}s = \int_{\P}{\rm d}\nu(\varsigma)\int_0^S (DY_s\cdot {\rm e}^{\epsilon\Delta}DY_s)(\varsigma) \mathrm{d}s. 
\end{equation} 
On the other hand, from {Cauchy-Schwarz}' inequality and \eqref{bound 1.26}, we derive
\begin{equation}
\label{eq:7.15}
 \int_{\P}{\rm d}\nu(\varsigma)\int_0^S |DY_s^{(h)}\cdot (1-{\rm e}^{\epsilon \Delta}) DY_s|(\varsigma)  \mathrm{d}s \leq C \left(\int_{\P}{\rm d}\nu(\varsigma)\int_0^S\|(1-{\rm e}^{\epsilon \Delta})DY_s\|_{2}^2(\varsigma) {\rm d}s \right)^{1/2}. 
\end{equation}
By invoking the dominated convergence theorem and \eqref{259}, the term on the right hand side goes to 0 as $\epsilon \searrow 0$. Hence, combining 
\eqref{eq:7.14} and
\eqref{eq:7.15}, we  get
 \eqref{eq:7.9}. As noted earlier, \eqref{eq:7:12} follows from the same arguments.
\vskip 4pt
{\it Third Step.} {For $S \in [0,T]$}, write
\begin{equation}
    \psi(\mu_S^{(h)})= \psi({\rm Leb}_{\mathbb{S}}\circ(X_S^{(h)})^{-1}).
\end{equation}
We claim that the latter has the same law as the random variable $\psi({\rm Leb}_{\mathbb{S}}\circ(Y_S^{(h)})^{-1})$, where the process $\{(Y_s^{(h)})_{s\in [0,T]}\}_{{h \in T/{\mathbb N}^*}}$ is as in the second step of the proof; we recall that $(Y_s)_{s\in [0,{T}]}$ denotes {the almost sure limit of the latter in $\mathbb{D}([0,T],\mathcal H^{-1}_{{\rm Sym}}(\mathbb{S}))$}. Therefore,
{in order to establish the first item in the statement of the lemma}, 
it is enough to prove that 
$\{\psi({\rm Leb}_{\mathbb{S}}\circ(Y_S^{(h)})^{-1})\}_{{h \in T/{\mathbb N}^*}}$
converges to $\psi({\rm Leb}_{\mathbb{S}}\circ(Y_S)^{-1})$ in $\Pi$ probability as $h\searrow 0$. 
{Combining the fact that $\Psi$ is continuous with respect to the Wasserstein distance with the $L^2$ convergence established in \eqref{eq:7:12}, we deduce that, for a.e. {$S \in [0,T]$}, in $\Pi$-probability, }
\begin{align}
    \psi({\rm Leb}_{\mathbb{S}}\circ(Y_S^{(h)})^{-1}) = \tilde{\psi}(Y_S^{(h)}) \xrightarrow{h\searrow 0} \tilde{\psi}(Y_S) = \psi({\rm Leb}_{\mathbb{S}}\circ(Y_S)^{-1})
\end{align}
where $\tilde{\psi}$ stands for the Lions' lift of the functional $\psi: \mathcal{P}_2(\mathbb{R})\rightarrow \mathbb{R}$.
\vskip 4pt

{\it Fourth Step.} {Still for $S \in [0,T]$}, write
\begin{align*}
\int_{0}^{S}\int_{\mathbb{S}}\partial_{\mu}\psi(\mu_s^{(h)},X_s^{(h)}(\mathbf{x}))[{DX_s^{(h)}}({\bf x})]^2\mathrm{d}\mathbf{x}\mathrm{d}s= \int_{0}^{S}\int_{\mathbb{S}}\partial_{\mu}\psi({\rm Leb}_{{\bf S}}\circ(X_s^{(h)})^{-1},X_s^{(h)}(\mathbf{x}))[{DX_s^{(h)}}({\bf x})]^2\mathrm{d}\mathbf{x }\mathrm{d}s.
\end{align*}
Arguing as in the previous step, it is sufficient to prove that,
{for almost every $S \in [0,T]$}, the random variable 
$$\int_{0}^{S}\int_{\mathbb{S}}\partial_{\mu}\psi({\rm Leb}_{{\bf S}}\circ(Y_s^{(h)})^{-1},Y_s^{(h)}(\mathbf{x}))[{DY_s^{(h)}}({\bf x})]^2\mathrm{d}\mathbf{x}  \mathrm{d}s,$$ 
converges almost surely to $\int_{0}^{S}\int_{\mathbb{S}}\partial_{\mu}\psi({\rm Leb}_{{\bf S}}\circ(Y_s)^{-1},Y_s(\mathbf{x}))[{DY_s}({\bf x})]^2\mathrm{d}\mathbf{x}  \mathrm{d}s$, as $h\searrow 0$, in the probability space $(\P,\mathfrak{G},\nu)$. Since $\psi$ has bounded and Lipschitz-continuous derivatives, we obtain, for any $\epsilon >0$,
\begin{align}
   &\int_{\P}{\rm d}\nu\left| \int_{0}^{S}\!\!\!\int_{\mathbb{S}}\big[\partial_{\mu}\psi({\rm Leb}_{{\bf S}}\circ(Y_s^{(h)})^{-1},Y_s^{(h)}(\mathbf{x}))[{DY_s^{(h)}}({\bf x})]^2-\partial_{\mu}\psi({\rm Leb}_{{\bf S}}\circ(Y_s)^{-1},Y_s(\mathbf{x}))[{DY_s}({\bf x})]^2\big]\mathrm{d}\mathbf{x}  \mathrm{d}s \right|
   \nonumber
   \\
   &\leq C \int_{\P}{\rm d}\nu(\varsigma)\int_{0}^{S}\int_{\mathbb{S}}\big|\big([{DY_s^{(h)}}]^2 - [{DY_s}]^2\big)({\bf x})\big|(\varsigma)\mathrm{d}\mathbf{x} \mathrm{d}s
   \nonumber
   \\
   &\quad + C\int_{\P}{\rm d}\nu(\varsigma)\int_0^S \int_{\mathbb{S}} \left(1\wedge\big[\mathscr{W}_2({\rm Leb}_{\mathbb{S}}\circ(Y_s^{(h)})^{-1},{\rm Leb}_{{\bf S}}\circ(Y_s)^{-1}) + | Y_s^{(h)}-Y_s|(\mathbf{x})\big]\right) {[DY_s({\bf x})]^2}\mathrm{d}{\bf x} \mathrm{d}s
   \nonumber
   \\
   &\leq C \biggl(\int_{\P}{\rm d}\nu(\varsigma)\int_{0}^{S}\|{DY_s^{(h)}} - {DY_s}\|_2^2(\varsigma) \mathrm{d}s\biggr)^{1/2} + 
   {C} \epsilon \int_{\P}{\rm d}\nu(\varsigma)\int_0^S \|{DY_s}\|_2^2(\varsigma) \mathrm{d}s
   \nonumber
   \\ 
  &\quad +{C} \int_{\P}{\rm d}\nu(\varsigma)\int_0^S \int_{{\bf S}} \mathbbm{1}_{\{\mathscr{W}_2({\rm Leb}_{{\bf S}}\circ(Y_s^{(h)})^{-1},{\rm Leb}_{{\bf S}}\circ(Y_s)^{-1}) + | Y_s^{(h)}-Y_s|(\mathbf{x})\geq \epsilon\}}{[DY_s({\bf x})]^2}{\rm d}{\bf x} \mathrm{d}s,
\label{eq:09-07:1}
\end{align}
where we used Cauchy-Schwartz's inequality 
and \eqref{eq:7:9} to derive the first term on the {fourth} line, and where 
we omitted the variable $\varsigma$ 
in the last and ante-penultimate terms (to alleviate the notation). 
By \eqref{eq:7.9}, we know that, {for almost every $S \in [0,T]$}, $\int_{\P}{\rm d}\nu(\varsigma)\int_{0}^{S}\|{DY_s^{(h)}} - {DY_s}\|_2^2(\varsigma) \mathrm{d}s$ tends to $0$ with $h$. Obviously, from \eqref{eq:7:9}, the second term on the {fourth line} is bounded by
$C \epsilon$, with $C \geq 0$ independent of $\epsilon$.
As for the last term, we use a uniform integrability argument: since {$\int_{\P}{\rm d}\nu(\varsigma)\int_0^S \|DY_s\|_2^2(\varsigma)\mathrm{d}s <\infty$}, we know that, for all $\delta_0 > 0$, there exists  $\delta_1 >0$ such that, for any $A  \in \mathfrak{G}\otimes \mathcal{B}([0,S]) \otimes \mathcal{B}({\mathbb S})$, 
\begin{equation}\label{eq:7:13}
    \nu \otimes{\rm Leb}_{[0,T]}\otimes {\rm Leb}_{{\bf S}}(A) \leq \delta_1 \Rightarrow \int_{\P}{\rm d}\nu(\varsigma)\int_0^S \int_{\mathbb{S}} \mathbbm{1}_{A}[{DY_s}({\bf x})]^2(\varsigma){\rm d}{\bf x}\mathrm{d}s \leq \delta_0.
\end{equation}
In particular, for $A:= {\{(\varsigma,s,{\bf x}):\mathscr{W}_2({\rm Leb}_{\mathbb{S}}\circ(Y_s^{(h)})^{-1},{\rm Leb}_{\mathbb{S}}\circ(Y_s)^{-1}) + | Y_s^{(h)}-Y_s|({\bf x})\geq \epsilon\}}$,
Markov's inequality yields
\begin{align*}
  &\nu \otimes{\rm Leb}_{[0,T]}\otimes {\rm Leb}_{\mathbb{S}}\left({\{(\varsigma,s,{\bf x}):\mathscr{W}_2({\rm Leb}_{\mathbb{S}}\circ(Y_s^{(h)})^{-1},{\rm Leb}_{\mathbb{S}}\circ(Y_s)^{-1}) + | Y_s^{(h)}-Y_s|\geq \epsilon\}}\right)\\
  &\leq \frac{1}{\epsilon^2 S} \int_{\P}{\rm d}\nu(\varsigma)\int_0^S \int_{\mathbb{ S}}\left(\mathscr{W}_2({\rm Leb}_{\mathbb{S}}\circ(Y_s^{(h)})^{-1},{\rm Leb}_{\mathbb{ S}}\circ(Y_s)^{-1}) + | Y_s^{(h)}({\bf x})-Y_s({\bf x})|\right)^2{\rm d}{\bf x}{\rm d}s \\ 
  &\leq \frac{4}{\epsilon^2 S} \int_{\P}{\rm d}\nu(\varsigma)\int_0^S \|Y_s^{(h)}-Y_s\|_2^2(\varsigma){\rm d}s 
\end{align*} 
Hence, \eqref{eq:7:12} says that,
{for 
almost every $S \in [0,T]$
and for 
$\delta_1$ as in \eqref{eq:7:13}, 
we can choose $h$ small enough such that $A$ given above satisfies
\eqref{eq:7:13}}. This implies that the last term on
\eqref{eq:09-07:1}
can be made as small as possible as $h\searrow 0.$ This concludes the proof.
\end{proof}


The second intermediate result is given below:
\begin{lemm}\label{lemm 5.5} 
    Let $\psi$ be any bounded and Lipschitz continuous function from $\mathcal{P}_2(\mathbb{R})\times \mathbb{R} \rightarrow \mathbb{R}$. 
    For $T>0$ and $N \in {\mathbb N}^*$,  
    consider the 
scheme \eqref{scheme1} with step $h=T/N$, 
{together with some time $S \in [0,T]$.}
Then,
{letting {$N_S:=\lfloor S/h \rfloor \in \{1,\cdots,N\}$}, and}
denoting by 
$(t_l^{(h)}=l h)_{l=0,\cdots,N}$ the subdivision supporting the scheme, 
the family of random variables
 (to alleviate the notation, we omit the superscript $(h)$ in
 $t_l^{(h)}$ but it should be clear that the subdivision depends on $h$)
    \begin{align*}
        \Bigg({\sum_{l=0}^{N_S-1}} h \int_{\mathbb{S}}\psi\Bigl(\mu_{t_{l+1}}^{(h)},X_{t_{l+1}}^{(h)}(\mathbf{x})\Bigr)\mathrm{d}\mathbf{x}\Bigg)_{h \in T/{\mathbb N}^*}, \quad 
        \textrm{\rm with} \quad
        \mu_{t_{l+1}}^{(h)} = \mathscr{L}(X_{t_{l+1}}^{(h)}) = {\rm Leb}_{\mathbb{S}}\circ (X_{t_{{l+1}}^{-}}^{(h)})^{-1} \star \Gamma_h,
    \end{align*}
converges in law to $\int_0^{{S}} \int_{\mathbb{S}}\psi(\mu_{s},X_s(\mathbf{x}))\mathrm{d}\mathbf{x}\mathrm{d}s$ as $h$ tends to $0$, where
$(X_s)_{0 \leq s \leq T}$
is given by Theorem \ref{Main 1}.
  \end{lemm}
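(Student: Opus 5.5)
The plan is to compare the Riemann-type sum with the time integral $\int_0^S\int_{\mathbb S}\psi(\mu^{(h)}_s,X^{(h)}_s(\mathbf x))\,\mathrm d\mathbf x\,\mathrm ds$ of the scheme itself, and then pass to the limit in that integral by continuity. Concretely, we write
\begin{equation*}
\sum_{l=0}^{N_S-1} h\int_{\mathbb S}\psi\bigl(\mu^{(h)}_{t_{l+1}},X^{(h)}_{t_{l+1}}(\mathbf x)\bigr)\mathrm d\mathbf x
-\int_0^{S}\int_{\mathbb S}\psi\bigl(\mu^{(h)}_{s},X^{(h)}_{s}(\mathbf x)\bigr)\mathrm d\mathbf x\,\mathrm ds
= \sum_{l=0}^{N_S-1}\int_{t_l}^{t_{l+1}}\Delta_l(s)\,\mathrm ds - \int_{t_{N_S}}^{S}\int_{\mathbb S}\psi\bigl(\mu^{(h)}_{s},X^{(h)}_{s}(\mathbf x)\bigr)\mathrm d\mathbf x\,\mathrm ds ,
\end{equation*}
with $\Delta_l(s)$ the difference of the integrands at $t_{l+1}$ and $s$. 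The last term is bounded by $\|\psi\|_\infty h$. For the first, the Lipschitz property of $\psi$ together with the isometry \eqref{eq1.7} (which gives $\mathscr W_2(\mu^{(h)}_{t_{l+1}},\mu^{(h)}_s)\le\|X^{(h)}_{t_{l+1}}-X^{(h)}_s\|_2$) yields $|\Delta_l(s)|\le C(1\wedge\|X^{(h)}_{t_{l+1}}-X^{(h)}_s\|_2)$, so it remains to show that $\mathbb E\int_0^S(1\wedge\|X^{(h)}_{\lceil s/h\rceil h}-X^{(h)}_s\|_2)\,\mathrm ds\to0$.

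This is the main obstacle: we only control time increments in $\mathcal H^{-1}$ (via \eqref{216}), not in $L^2$. I would interpolate: for $\epsilon>0$, split $\|X^{(h)}_{t}-X^{(h)}_s\|_2\le \|(1-\mathrm e^{\epsilon\Delta})X^{(h)}_t\|_2+\|(1-\mathrm e^{\epsilon\Delta})X^{(h)}_s\|_2+\|\mathrm e^{\epsilon\Delta}(X^{(h)}_t-X^{(h)}_s)\|_2$. The last piece is bounded by $C_\epsilon\|X^{(h)}_t-X^{(h)}_s\|_{2,-1}$, whose expectation is $O(\sqrt h)+\mathcal O_h(1)$ by \eqref{216} applied with the deterministic time $\tau=s$ (and $\theta\le h$). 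The first two pieces are controlled, uniformly in $h$, by $C\sqrt\epsilon\,\|DX^{(h)}_r\|_2$, whose square is integrable in $(\omega,r)$ uniformly in $h$ thanks to \eqref{bound 1.26}. The only subtlety is that the time $t=t_{l+1}$ is a grid point, so $\int_0^S\|DX^{(h)}_{\lceil s/h\rceil h}\|_2^2\,\mathrm ds$ is a Riemann sum rather than an integral. I would handle this by noting that, on $[t_l,t_{l+1})$, the scheme is a rearranged SHE started from $X^{(h)}_{t_l}$, and that $\|(1-\mathrm e^{\epsilon\Delta})\Phi^{(h)}(Y)\|_2\le\|(1-\mathrm e^{\epsilon\Delta})Y\|_2+2\sqrt h$ by \eqref{eq:1.16:bis}. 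If this fails, I would instead use the $L^2$-compactness in $(\omega,t)$ coming from \eqref{eq:7:12} together with the convergence \eqref{eq:7:5} established in the proof of Lemma \ref{lemm.7.4}, applied at grid times. Letting $h\searrow0$ and then $\epsilon\searrow0$ makes this term vanish.

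Finally, the map $\mathcal Y\mapsto\int_0^S\int_{\mathbb S}\psi(\mathrm{Leb}_{\mathbb S}\circ\mathcal Y_s^{-1},\mathcal Y_s(\mathbf x))\,\mathrm d\mathbf x\,\mathrm ds$ is not continuous for the $\mathcal H^{-1}$ Skorokhod topology. So, as in the second to fourth steps of the proof of Lemma \ref{lemm.7.4}, I would use Skorokhod's representation $Y^{(h)}\to Y$ together with the $L^2$ convergence \eqref{eq:7:12} (valid for a.e.\ $s$, with the uniform bound \eqref{eq:7:9}). The Lipschitz and boundedness properties of $\psi$ and dominated convergence then give the convergence in $L^1(\nu)$ of the integrals, hence the convergence in law to $\int_0^S\int_{\mathbb S}\psi(\mu_s,X_s(\mathbf x))\,\mathrm d\mathbf x\,\mathrm ds$, with the limit law identified by Theorem \ref{Main 1}. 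Combining this with the vanishing difference from the first two paragraphs via Slutsky's lemma concludes.
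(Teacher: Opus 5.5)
Your architecture is sound: compare the Riemann sum with the continuous-time integral $\int_0^S\int_{\mathbb S}\psi(\mu^{(h)}_s,X^{(h)}_s(\mathbf x))\,\mathrm d\mathbf x\,\mathrm ds$ of the scheme, then pass to the limit in that integral via Skorokhod's representation, \eqref{eq:7:12} and dominated convergence. Your last paragraph is fine. The gap is in the step you call the ``only subtlety'', which is in fact the crux.

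After your $\epsilon$-splitting you need $\sum_{l}h\,\mathbb E\|(1-\mathrm e^{\epsilon\Delta})X^{(h)}_{t_{l+1}}\|_2=O(\sqrt\epsilon)$ uniformly in $h$. That is, you need regularity of the scheme \emph{at the grid times}, and neither of your suggestions delivers it.
\begin{itemize}
\item The jump bound \eqref{eq:1.16:bis} only trades $X^{(h)}_{t_{l+1}}$ for $X^{(h)}_{t_{l+1}^-}$. Controlling $\sum_l h\,\mathbb E\|DX^{(h)}_{t_{l+1}^-}\|_2^2$ by $\int_0^T\mathbb E\|DX^{(h)}_s\|_2^2\,\mathrm ds$ is a genuine end-of-step regularity estimate for the RSHE. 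The paper does not derive it; it takes it from \cite[Lemma 2.3]{DelHam25_2} (this is \eqref{eq 5.4}). Remarking that the scheme is an RSHE on $[t_l,t_{l+1})$ does not supply it.
\item The fallback fails as stated. \eqref{eq:7:5} and \eqref{eq:7:12} hold for a.e.\ fixed $S$. The grid points move with $h$ and all lie in a countable, hence Lebesgue-null, set, so those statements cannot simply be ``applied at grid times''.
\end{itemize}

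The missing observation is that your premise ``we only control increments in $\mathcal H^{-1}$'' is wrong. \eqref{bound:2.16} is an $L^2(\mathbb S)$ bound on $X^{(h)}_{\tau+\theta}-\mathrm e^{\theta\Delta}X^{(h)}_\tau$. The paper passes to $\mathcal H^{-1}$ in \eqref{216} only to handle $(\mathrm e^{\theta\Delta}-I)X^{(h)}_\tau$ at a general stopping time, where only $\|X^{(h)}_\tau\|_2$ is controlled.

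Take instead the deterministic, non-grid time $\tau=s$ and $\theta=\lceil s/h\rceil h-s\le h$. Since $\|(\mathrm e^{\theta\Delta}-I)X^{(h)}_s\|_2\le\sqrt\theta\,\|DX^{(h)}_s\|_2$, you get
\begin{equation*}
\mathbb E\int_0^{t_{N_S}}\bigl\|X^{(h)}_{\lceil s/h\rceil h}-X^{(h)}_s\bigr\|_2\,\mathrm ds\le T\bigl((c_\lambda+1)h+\mathcal O_h(1)\bigr)^{1/2}+\sqrt{hT}\,\Bigl(\mathbb E\int_0^T\|DX^{(h)}_s\|_2^2\,\mathrm ds\Bigr)^{1/2}.
\end{equation*}
This vanishes by \eqref{bound 1.26}, with no mollification and no information at grid times.

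With this replacement your proof is correct and genuinely different from the paper's. The paper mollifies the grid values themselves (the terms $R_1^{(h)},\dots,R_5^{(\epsilon)}$), which is exactly why it needs the grid-time bound \eqref{eq 5.4} imported from \cite{DelHam25_2}. Your comparison with the continuous-time integral, taken at the non-grid endpoint, avoids that external lemma.
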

\begin{proof} 
\textit{First Step.}
In this step, the value of $h \in T/{\mathbb N}^*$
is fixed.
By \eqref{bound 1.26}, we first observe that, for almost every
 $s\in [t_l,t_{l+1})$, $X_s^{(h)}$ takes values in $\mathcal{H}^1_{\mathrm{Sym}}(\mathbb{S})$.  {In fact, from \cite[Lemma 2.3]{DelHam25_2}, this holds true
 for all $s \in (t_l,t_{l+1})$, and
  $X_{t_{l+1}-}^{(h)}$
 also takes values  
 in ${\mathcal H}^1_{\rm Sym}({\mathbb S})$. Moreover, for every $s \in [t_l, t_{l+1})$},
\begin{align*}
    \mathbb{E}\|DX_{t_{l+1}^{-}}^{(h)}\|_2^2 \leq C\Big(1+ \mathbb{E}\|DX_s^{(h)}\|_2^2\Big),
\end{align*}
for a constant $C$ independent of the discretization parameters.
Integrating on $[t_l,t_{l+1}]$ both sides of the latter inequality and then summing over {$l \in \{0,\cdots,N_S-1\}$},  we obtain
\begin{equation}\label{eq 5.4}
   \sum_{l=0}^{{N_S}-1} (t_{l+1}-t_{l})\mathbb{E}\|DX_{t_{l+1}^{-}}^{(h)}\|_2^2 \leq  CT + \int_{0}^{T}\mathbb{E}\|DX_s^{(h)}\|_2^2\mathrm{d}s \leq C(1+ \mathbb{E}\|X_0\|_2^2),
\end{equation}
where the last inequality follows from \eqref{bound 1.26}. 
\vskip 4pt

{\it Second Step.}
In order to simplify the analysis, we let, for any 
mapping $\varphi \in L^2({\mathbb S})$
\begin{equation*}
G(\varphi) = \int_{{\mathbb S}}
\psi\bigl( \textrm{\rm Leb}_{\mathbb S} \circ \varphi^{-1},\varphi({\mathbf x}) \bigr) \ud {\mathbf x}.
\end{equation*}
With this notation, 
\begin{equation*}
\sum_{l=0}^{{N_S}-1} h \int_{\mathbb{S}}\psi\Bigl(\mu_{t_{l+1}}^{(h)},X_{t_{l+1}}^{(h)}(\mathbf{x})\Bigr)\mathrm{d}\mathbf{x}
=
\sum_{l=0}^{{N_S}-1} h G(X_{t_{l+1}}^{(h)}).
\end{equation*}
We now  use the family of processes $\{(Y^{(h)})_{s\in [0,T]}\}_{h \in T/{\mathbb N}^*}$ constructed in the second step of the proof of Lemma \ref{lemm.7.4}. It converges almost surely to $(Y_s)_{s\in [0,T]}$ in $\mathbb{D}([0,T],\mathcal H^{-1}_{{\rm Sym}}(\mathbb{S}))$.
We aim to show that $\sum_{l}hG(Y_{t_{l+1}}^{(h)})$ converges to $\int_0^{{S}} G(Y_t)\mathrm{d}t$, as $h \searrow 0$, a.s.  on 
$(\P,\mathfrak{G},\nu).$ 

Before proceeding further, let us first emphasize from \eqref{eq 5.4} that 
\begin{equation}\label{eq 5.4 bis}
  \int_{\P}\mathrm{d}\nu(\varsigma) \sum_{l=0}^{{N_S}-1} (t_{l+1}-t_{l})\|DY_{t_{l+1}^{-}}^{(h)}\|_2^2(\varsigma) \leq C.
\end{equation}
From the triangle inequality, the following decomposition holds for any $\epsilon >0$ (on the right-hand side below, we omit the integration variable $\varsigma$):
\begin{equation*}
\begin{split}
    \int_{\P}\mathrm{d}\nu(\varsigma) \Big| \sum_{l=0}^{
    {N_S}-1} h G(Y_{t_{l+1}}^{(h)}) -\int_0^{{S}} G(Y_s)\mathrm{d}s \Big|(\varsigma)
    &\leq \int_{\P}\mathrm{d} \nu \Big| \sum_{l=0}^{ {N_S}-1} h G(Y_{t_{l+1}}^{(h)}) -\sum_{l=0}^{ {N_S}-1} h G(Y_{t_{l+1}^{-}}^{(h)}) \Big|
    \\
&\quad    + \int_{\P}\mathrm{d}\nu \Big| \sum_{l=0}^{ {N_S}-1} h G(Y_{t_{l+1}^{-}}^{(h)}) -
\sum_{l=0}^{ {N_S}-1} h G({\rm{e}}^{\epsilon \Delta} Y_{t_{l+1}^{-}}^{(h)})
\Big| 
\\
    &\quad + \int_{\P}\mathrm{d}\nu \Big| 
    \sum_{l=0}^{ {N_S}-1} h 
    \bigl[ G({\rm{e}}^{\epsilon \Delta} Y_{t_{l+1}^{-}}^{(h)}) - 
G({\rm{e}}^{\epsilon \Delta} Y_{t_{l+1}^{-}})
\bigr]\Big|
    \\
    &\quad + \int_{\P}\mathrm{d}\nu \bigg| 
    \sum_{l=0}^{ {N_S}-1} h 
    \bigl[ G({{\rm{e}}^{\epsilon \Delta} Y_{t_{l+1}^-}}) - 
    \int_0^{{S}} 
G({\rm{e}}^{\epsilon \Delta} Y_s) \ud s
\bigr] \biggr|
\\
    &\quad + \int_{\P}\mathrm{d}\nu(\varsigma) \bigg| \int_0^{{S}} G({\rm{e}}^{\epsilon \Delta} Y_s) \mathrm{d}s  -\int_0^{{S}} G(Y_s)\mathrm{d}s \bigg|
    \\
    &=: {R}_1^{(h)} + {R}_2^{(h,\epsilon)} + {R}_3^{(h,\epsilon)} +{R}_4^{(h,\epsilon)}+{R}_5^{(\epsilon)}.
\end{split}
\end{equation*}
From the Lipschitz continuity of $\psi$ together with \eqref{eq:1.16:bis}, it is easily seen that 
{there is a constant $C> 0$, independent of $h$ and $N$,} such that
\begin{equation}
\label{eq:26-06:1}
   {R_1^{(h)}} \leq  C\sqrt{h}.
\end{equation}
As for {$R_2^{(h,\epsilon)}$}, 
we obtain the following bound by  using 
the elementary inequality $1-\mathrm{e}^{-x} \leq x$, for any $x\geq 0$,
\begin{equation}
\label{eq:26-06:2}
\begin{split}
{R_2^{(h,\epsilon)}} &\leq C\int_{\P}\mathrm{d}\nu \sum_{l=0}^{{N_S}-1} (t_{l+1}-t_l) \Big(\sum_{k\in \mathbb{N}}(1- \mathrm{e}^{-4\pi^2k^2\epsilon})\langle Y_{t_{l+1}^{-}}^{(h)}, e_k\rangle^2\Big)^{1/2}
\\
&\leq C \int_{\P}\mathrm{d}\nu \sum_{l=0}^{{N_S}-1} (t_{l+1}-t_l) \|Y_{t_{l+1}^{-}}^{(h)}\|_{2,1} \sqrt{\epsilon},
\end{split}
\end{equation}
the constant $C> 0$ above is independent of $h$, $N$ and $\epsilon >0.$
Hence, combining \eqref{eq:7:9} and \eqref{eq 5.4 bis}, we deduce thereof: ${R}_2^{(h,\epsilon)} \leq C \sqrt{\epsilon}$.

Regarding $R_3^{(h,\epsilon)}$, we use the fact that the process $({\rm{e}}^{\epsilon \Delta} Y_s)_{0 \le s \le T}$ has continuous trajectories with values in $L^2({\mathbb S})$. Therefore, the convergence of 
$({\rm e}^{\epsilon \Delta} Y_s^{(h)})_{0 \le s \leq T}$ to 
$({\rm{e}}^{\epsilon \Delta} Y_s)_{0 \le s \le T}$ is uniform in $L^2({\mathbb S)})$ on $[0,T]$ {(see (12.14) in \cite{Billingsley})}. We deduce that, for $\epsilon$ fixed, 
$R_3^{(h,\epsilon)}$ tends to $0$ with $h$.

The term $R_4^{(h,\epsilon)}$
can be also treated by using the 
continuity of the trajectories of 
$({\rm e}^{\epsilon \Delta} Y_s)_{s \in [0,T]}$. This shows that, for $\epsilon$ fixed, 
$R_4^{(h,\epsilon)}$ tends to $0$ with $h$. 

Finally, for 
$R_5^{(\epsilon)}$, we have 
\begin{equation}
\label{eq:26-06:3}
  R_5^{(\epsilon)} \leq C 
  \int_{\P} \ud \nu(\varsigma) \int_0^T \|{\rm{e}}^{\epsilon\Delta}(Y_s-Y_s^{(h)})\|_2(\varsigma) \mathrm{d}s,    
\end{equation}
which tends to $0$ with $\epsilon$.

The proof then follows by choosing first $\epsilon$
small enough, which makes it possible to render $R_2^{(h,\epsilon)}$
and $R_5^{(\epsilon)}$
small (see \eqref{eq:26-06:1}, 
\eqref{eq:26-06:2}
and
\eqref{eq:26-06:3}). 
We then let $h$ tend to zero 
to render the remaining terms 
as small as needed.
\end{proof}
We are now in position to provide a proof of Proposition \ref{prop5.2}
\begin{proof}[Proof of Proposition \ref{prop5.2}] Fix $T>0$ and choose $h=T/N$ for an integer $N \in{\mathbb N}^*$. We denote by $(\mu_t^{(h)}:= \mathrm{Leb}_{\mathbb{S}}\circ (X_t^{(h)})^{-1})_{0 \le t \le T}$ the flow of marginal laws (on ${\mathbb  S}$) of the scheme $\{(X_t^{(h)})_{0 \le t \le T}\}$ given by \eqref{scheme0}. 
Denoting by $(t_n)_{n=0,\cdots,N}$ the subdivision supporting the scheme, the key step of the proof is to first expand $$\psi(\mu_{t_{n+1}}^{(h)}) = \psi(\mu_{t_{n+1}}^{(h)}) -\psi(\mu_{t_{n+1}^{-}}^{(h)}) + \psi(\mu_{t_{n+1}^{-}}^{(h)})$$ and then apply It\^o's formula (see \cite[Theorem 1.1]{DelHam25_2}) to the term $\psi(\mu_{t_{n+1}^{-}}^{(h)})$. For $S \in [t_n,t_{n+1}]$, we obtain
\begin{align*}
\psi(\mu_{{S}}^{(h)}) - \psi(\mu_{t_{n}}^{(h)}) &= \psi(\mu_{{S}}^{(h)}) -\psi(\mu_{{S^{-}}}^{(h)})  - \int_{t_n}^{{S}}\int_{\mathbb{S}}\partial_{\mu}\psi(\mu_s^{(h)},X_s^{(h)}(\mathbf{x}))[{DX_s^{(h)}(\mathbf{x})}]^2\mathrm{d}\mathbf{x}   \mathrm{d}s\\
    &\quad +\frac{1}{2}\int_{t_n}^{{S}}\int_{\mathbb{S}}\nabla_y\partial_{\mu}\psi(\mu_s^{(h)},X_s^{(h)}(\mathbf{x}))G_1(\mathbf{x}) \,  \mathrm{d}\mathbf{x}    \mathrm{d}s \\
        &\quad  + \frac{1}{2}\int_{t_n}^{{S}}\int_{\mathbb{S}}\int_{\mathbb{S}}\partial_{\mu}^2\psi(\mu_s^{(h)},X_s^{(h)}(\mathbf{x}),X_s^{(h)}(\mathbf{x}'))G_2(\mathbf{x},\mathbf{x}')\mathrm{d}\mathbf{x}  \mathrm{d}\mathbf{x}'    \mathrm{d}s\\
        &\quad + \int_{t_n}^{{S}}\int_{\mathbb{S}}\partial_{\mu}\psi(\mu_s^{(h)},X_s^{(h)}(\mathbf{x}))\mathrm{d}W_s(\mathbf{x}).
\end{align*}
Of course, when 
$S \in [t_n,t_{n+1})$, 
$\psi(\mu_S^{(h)})=\psi(\mu_{S^-}^{(h)})$.
Hence summing the above identity over 
the intervals $[0,t_1]$, 
$[t_1,t_2]$, $\cdots$, 
$[t_{N_S},S]$,  
where $N_S:=\lfloor S/h \rfloor$, 
we get
\begin{equation}\label{eq:Ito:5.6}
\begin{split}
\psi(\mu_{{S}}^{(h)}) - \psi(\mu_0) &= \sum_{n=0}^{{N_S}-1} \Bigl[\psi(\mu_{t_{n+1}}^{(h)}) -\psi(\mu_{t_{n+1}^{-}}^{(h)})\Bigr]  - \int_{0}^{{S}}\int_{\mathbb{S}}\partial_{\mu}\psi(\mu_s^{(h)},X_s^{(h)}(\mathbf{x}))[{DX_s^{(h)}(\mathbf{x})}]^2\mathrm{d}\mathbf{x}  \mathrm{d}s\\
    &\quad +\frac{1}{2}\int_{0}^{{S}}\int_{\mathbb{S}}\nabla_y\partial_{\mu}\psi(\mu_s^{(h)},X_s^{(h)}(\mathbf{x}))G_1(\mathbf{x})\mathrm{d}\mathbf{x}  \mathrm{d}s \\
        &\quad  + \frac{1}{2}\int_0^{{S}}\int_{\mathbb{S}}\int_{\mathbb{S}}\partial_{\mu}^2\psi(\mu_s^{(h)},X_s^{(h)}(\mathbf{x}),X_s^{(h)}(\mathbf{x}'))G_2(\mathbf{x},\mathbf{x}')\mathrm{d}\mathbf{x}  \mathrm{d}\mathbf{x}' \mathrm{d}s\\
        &\quad + \int_{0}^{{S}}\int_{\mathbb{S}}\partial_{\mu}\psi(\mu_s^{(h)},X_s^{(h)}(\mathbf{x}))\mathrm{d}W_s(\mathbf{x}).
\end{split}
\end{equation}
We now pass to the limit as $h\searrow0$ in the above identity. By the first statement of Lemma \ref{lemm.7.4}, for almost every $S \in [0,T]$, the left-hand side converges in law to
\[
\psi(\mu_{S})-\psi(\mu_0),
\]
where $\mu_{S}:=\mathscr{L}(X_{S})$. Furthermore, Lemma \ref{lemm.7.4} yields the convergence in law of the second term on the right-hand side to its limiting counterpart. We point out the fact that, the arguments used in the proof of Lemma \ref{lemm.7.4} can be adapted to establish the convergence in law of the third and fourth terms to their respective limiting counterparts.

Let us now focus on the second term on the right-hand side.
By invoking  It\^o's formula in \cite[Theorem 5.92]{CarDel1}, one has:
\begin{align*}
    \frac{\mathrm{d}}{\mathrm{d}s}\psi(\mu_{t_{n+1}^-}^{(h)}\star \Gamma_s)= \frac{1}{2} \int_{\mathbb{R}} \nabla_z\partial_{\mu}\psi(\mu_{t_{n+1}^{-}}^{(h)} \star \Gamma_s,z)\mathrm{d}(\mu_{t_{n+1}^{-}}^{(h)} \star \Gamma_s)(z),
\end{align*}
which in turn implies that the difference $\psi(\mu_{t_{n+1}}^{(h)}) -\psi(\mu_{t_{n+1}^{-}}^{(h)})$ can be written as follows: 
\begin{align*}
    \psi(\mu_{t_{n+1}}^{(h)}) -\psi(\mu_{t_{n+1}^{-}}^{(h)}) &= \frac{1}{2} \int_0^h\int_{\mathbb{R}} \nabla_z\partial_{\mu}\psi(\mu_{t_{n+1}^{-}}^{(h)} \star \Gamma_s,z)\mathrm{d}(\mu_{t_{n+1}^{-}}^{(h)} \star \Gamma_s)(z)\mathrm{d}s.
\end{align*}
Notice further that, 
for all $s \in [0,h]$: 
\begin{align*}
    \mathscr{W}_2^2(\mu_{t_{n+1}^{-}}^{(h)} \star \Gamma_s, \mu_{t_{n+1}^{-}}^{(h)}\star \Gamma_h) = \mathscr{W}_2^2(\mu_{t_{n+1}^{-}}^{(h)} \star \Gamma_s, \mu_{t_{n+1}^{-}}^{(h)}\star \Gamma_s \star \Gamma_{h-s}) 
    \leq C h,
\end{align*}
which implies that $\mathscr{W}_2^2(\mu_{t_{n+1}^{-}}^{(h)} \star \Gamma_s, \mu_{t_{n+1}^{-}}^{(h)}\star \Gamma_h) = \mathcal{O}_h(1)$ and, therefore,
\begin{equation}\label{eq:7:27}
\begin{split}
   \sum_{n=0}^{{N_S}-1}
   \Bigl[\psi(\mu_{t_{n+1}}^{(h)}) -\psi(\mu_{t_{n+1}^{-}}^{(h)})\Bigr] &=\frac{1}{2}\sum_{n=0}^{{N_S}-1} h\int_{\mathbb{R}} \nabla_z\partial_{\mu}\psi(\mu_{t_{n+1}^{-}}^{(h)}\star \Gamma_h,z)\mathrm{d}(\mu_{t_{n+1}^{-}}^{(h)}\star \Gamma_h)(z)+ \mathcal{O}_h(1)\\
   &=\frac{1}{2}\sum_{n=0}^{{N_S}-1} h\int_{\mathbb{S}} \nabla_z\partial_{\mu}\psi(\mu_{t_{n+1}}^{(h)},X^{(h)}_{t_{n+1}}(\mathbf{x}))\mathrm{d}\mathbf{x}+ \mathcal{O}_h(1),
   \end{split}
\end{equation}
where we used a change of variables to move from the first to the second line above together with the fact that $\mu_{t_{n+1}}^{(h)}:= \mu_{t_{n+1}^{-}}^{(h)}\star \Gamma_h$. The Landau symbol above is independent of the parameters $N$, $S$ and tends to $0$ with $h$. By invoking Lemma \ref{lemm 5.5}, we finally obtain the convergence in law as $h$ tends to 0, of the first term on the right hand side of the identity \eqref{eq:Ito:5.6} to the term $$\frac{1}{2}\int_0^{{S}}\int_{\mathbb{S}} \nabla_z\partial_{\mu}\psi(\mu_{s},X_s(\mathbf{x}))\mathrm{d}\mathbf{x}.$$
Now, concerning the term involving the stochastic integral in \eqref{eq:Ito:5.6}, it can be rewritten as follows
\[\int_{0}^{{S}}\int_{\mathbb{S}}\tilde{\psi}(X_s^{(h)},X_s^{(h)}(\mathbf{x}))\mathrm{d}W_s(\mathbf{x}) \]
where, $\tilde{\psi}$ is the Lions' lift of the functional $\partial_{\mu}\psi: \mathcal{P}_2(\mathbb{R})\times\mathbb{R} \rightarrow \mathbb{R}.$ In addition, from the Lipschitz assumption on $\tilde{\psi}$ and for any  $\epsilon >0$, one obtains similarly to \eqref{eq:26-06:2} (now stated in continuous time $s$, which is possible thanks to the second inequality in \eqref{eq 5.4}):
\begin{align}\label{bound:sto}
    \mathbb{E}\left| \int_{0}^{{S}}\int_{\mathbb{S}}\biggl(  \tilde{\psi}\big(X_s^{(h)},X_s^{(h)}(\mathbf{x}))\big)-  \tilde{\psi}\big({\rm e}^{\epsilon \Delta}X_s^{(h)},{\rm e}^{\epsilon \Delta}X_s^{(h)}(\mathbf{x})\big)
        \biggr)
    \mathrm{d}W_s({\mathbf x})
      \right|^2
    &\leq C_{T,\lambda}(1+\mathbb{E}[\|X_0\|_2^2])\epsilon.
\end{align}
Furthermore, by using the joint convergence of $({\rm e}^{\epsilon \Delta}X_t^{(h)},W_t)_{0 \le t \le T}$, for any $\epsilon>0$ fixed, we can adapt the results from \cite{Kurtz1996_0,Kurtz1996} to pass to the limit. That is, in law, 
\begin{equation*}
    \int_{0}^{{S}}\int_{\mathbb{S}}\tilde{\psi}({\rm e}^{\epsilon \Delta}X_s^{(h)},{\rm e}^{\epsilon\Delta}X_s^{(h)}(\mathbf{x}))\mathrm{d}W_s(\mathbf{x})
    \underset{h \searrow 0}{\longrightarrow}\int_{0}^{{S}}\int_{\mathbb{S}}\tilde{\psi}({\rm e}^{\epsilon \Delta}X_s,{\rm e}^{\epsilon\Delta}X_s(\mathbf{x}))\mathrm{d}W_s(\mathbf{x}).
\end{equation*}
{In turn, 
as $\epsilon \searrow 0$, 
the right-hand side can be shown to  tend to the desired limiting term, thanks to 
an appropriate version of 
\eqref{bound:sto} with $X^{(h)}$ being replaced by $X$.}

To finish, 
we notice that all the aforementioned results of weak convergence, which we stated separately, hold in fact jointly. This makes it possible to pass to the (weak) limit in 
\eqref{eq:Ito:5.6}, at least for almost every {$S \in [0,T]$}.
In this way, we get that, for almost every {$S \in [0,T]$} and ${\mathbb P}$-almost surely, \eqref{eq:5.2}
holds true (at time $t=S$). By continuity (in time) 
of the functionals appearing in 
\eqref{eq:5.2}, we get the result, 
${\mathbb P}$-almost surely, for every time $t\in [0,T]$, {and thus for every $t \geq 0$ (by choosing $T$ as large as needed)}.
This completes the proof.
\end{proof}
We now derive the equation satisfied by the density function {$z \mapsto p_t(z)$} obtained above
\begin{thm}\label{thm:eq:density}
Let $(X_t)_{t\geq 0}$ be the unique solution to the distribution-drifted RSHE~\eqref{eq:4.28} given by Theorem \ref{Main 1}. Then, the flow of density functions $(p_{t} : x \mapsto p_t(x))_{t > 0}$
of the flow of measures $(\mu_t = \textrm{\rm Leb}_{\mathbb S} \circ X_t^{-1})_{t \geq 0}$, 
starting from SPDE
the (possibly non absolutely-continuous) measure 
$\mu_0$,
satisfies (in distribution) the following corrected Dean-Kawasaki SPDE: 
\begin{equation}
\begin{split}
    {\rm d}_t p_t(x)
&= \frac{1}{2}\Delta_x p_t(x)
 - 4 \Delta_x\!\left( \mathbbm{1}_{\{p_t(x)>0\}}\frac{1}{p_t(x)}\right)
 + \frac{1}{2}\sum_{k \in \mathbb N}
 \Delta_x\!\left(\lambda_k^2\, e_k^2\!\left(\frac{\tilde F_t(x)}{2}\right) p_t(x)\right)  \\
&\quad
 - \frac{1}{2}\sum_{k \in \mathbb N}
 D_x\!\left(\lambda_k\, e_k\!\left(\frac{\tilde F_t(x)}{2}\right) p_t(x)\right)
 \,\mathrm{d}B_t^k 
    \end{split}
\end{equation}
where, 
for each $t \geq 0$, $\tilde{F_t}:= \tilde{F}_{\mu_t}$ stands for the cumulative distribution function of $\mu_t$.
\end{thm}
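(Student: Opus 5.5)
The plan is to apply the It\^o formula of Proposition \ref{prop5.2} to linear functionals $\psi(\mu)=\int_{\mathbb R} f(z)\,\mathrm d\mu(z)$ with $f\in\mathcal C_0^\infty(\mathbb R)$. Since such $\psi$ only have a compactly supported test function, their derivatives are bounded and Lipschitz, as Proposition \ref{prop5.2} requires. For these functionals we have $\partial_\mu\psi(\mu,y)=f'(y)$, $\nabla_y\partial_\mu\psi(\mu,y)=f''(y)$ and $\partial^2_\mu\psi\equiv 0$, so the $G_2$ term disappears. Each remaining term of \eqref{eq:5.2} will then be rewritten as an integral against $p_t$, using Theorem \ref{Prop5.1}. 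The conclusion is the weak form of the stated SPDE, tested against $f$.

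Term by term, the change of variables ${\mathbf x}\mapsto z=X_t({\mathbf x})$ on $(0,1/2)$ (and symmetrically on $(-1/2,0)$) sends $\mathrm d{\mathbf x}$ to $p_t(z)\,\mathrm dz$ and ${\mathbf x}$ to $\tfrac12\tilde F_t(z)$. This is justified for a.e. $t$ by the last assertion of Theorem \ref{Prop5.1}, namely that $X_t(\cdot/2)$ is a bijection onto $(X_t(0),X_t(1/2))$ with inverse $\tilde F_t$. It yields:
\begin{itemize}
\item[(a)] $\tfrac12\int_{\mathbb S} f''(X_s)\,\mathrm d{\mathbf x}=\tfrac12\int f''p_s\,\mathrm dz$, which is the weak form of $\tfrac12\Delta p$;
\item[(b)] the $G_1$ term becomes $\tfrac12\int f''(z)\sum_k\lambda_k^2e_k^2(\tilde F_s(z)/2)\,p_s(z)\,\mathrm dz$, using symmetry of $e_k$;
\item[(c)] for the stochastic term, $\int_{\mathbb S} f'(X_s)\,\mathrm dW_s=\sum_k\lambda_k\int f'(z)\,e_k(\tilde F_s(z)/2)\,p_s(z)\,\mathrm dz\,\mathrm dB^k_s$, which after integration by parts is the divergence-form noise.
\end{itemize}
Terms (a)--(c) are bounded since $f$ has compact support, and each identity holds for a.e. $s$, which suffices inside the time integrals.

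The main term is the gradient term $-\int_{\mathbb S} f''(X_s)(DX_s)^2\,\mathrm d{\mathbf x}$. By \eqref{eq:08-07:11}, $DX_s(\tfrac12\tilde F_s(z))=1/(2p_s(z))$ on $\{p_s>0\}$. The change of variables then gives $-\int f''(z)\,\frac{1}{4p_s(z)^2}\,p_s(z)\,\mathrm dz=-\tfrac14\int f''\mathbbm 1_{\{p_s>0\}}p_s^{-1}\,\mathrm dz$. This is finite in $L^1(\mathrm d\mathbb P\otimes\mathrm ds)$ by \eqref{eq:08-07:12}, so the singular term is well defined in the weak formulation. I expect this step to be the main obstacle for two reasons. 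First, one must check carefully the constant in front of $\Delta(\mathbbm 1_{\{p>0\}}/p)$ as written in the statement; the computation above gives $1/4$ from \eqref{eq:08-07:11}, so the Jacobian factors of $2$ from \eqref{inverse} need to be tracked on both halves of $\mathbb S$ and reconciled with the normalization used. Second, the change of variables needs the a.e. differentiability coupling of Theorem \ref{Prop5.1} on a set of full measure in $(\omega,t)$, together with the fact that $\{p_s=0\}\cap(X_s(0),X_s(1/2))$ is null.

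Finally, the resulting identity holds $\mathbb P$-a.s. for each fixed $f$ and all $t$, by the continuity in time asserted in Proposition \ref{prop5.2}. A separability argument over a countable dense family in $\mathcal C_0^\infty(\mathbb R)$ then makes it hold simultaneously for all $f$. This is precisely the distributional meaning of the corrected Dean--Kawasaki SPDE started from $\mu_0$, with $p_t$ defined for a.e. $t>0$.
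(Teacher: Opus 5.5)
Your route is the same as the paper's. You apply Proposition~\ref{prop5.2} to $\psi(\mu)=\int f\,\mathrm d\mu$, so the $G_2$ term drops, and then push forward by $y\mapsto X_s(y/2)$ (whose law is $\mu_s$, with density $p_s$) using Theorem~\ref{Prop5.1}. Terms (a)--(c) are handled correctly. The gap is the singular term. Your computation gives $-\tfrac14\int f''\mathbbm 1_{\{p_s>0\}}p_s^{-1}\,\mathrm dz$, and you leave the mismatch with the stated factor $4$ unresolved, attributing it to bookkeeping over the two halves of $\mathbb S$. That is not its source: for even integrands, $\int_{\mathbb S}g\,\mathrm d{\mathbf x}=\int_0^1 g(y/2)\,\mathrm dy$ handles both halves at once. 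As written, your argument therefore produces an incorrect equation rather than the stated one.

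The error is the Jacobian, which you take from \eqref{eq:08-07:11} as printed. By \eqref{inverse}, $X_s({\mathbf x})=\tilde F_{\mu_s}^{-1}(2x)$ on $(0,1/2)$. The chain rule then gives $D_{\mathbf x}X_s(\tfrac12\tilde F_s(z))=2\,(\tilde F_{\mu_s}^{-1})'(\tilde F_s(z))=2/p_s(z)$, not $1/(2p_s(z))$. This is exactly what the fourth step of the proof of Theorem~\ref{Prop5.1} actually establishes, namely $D\tilde F_{\mu_t}(z)\,DX_t(\tilde F_{\mu_t}(z)/2)=2$. It is also the only value compatible with \eqref{eq:08-07:12} and with Remark~\ref{Remark 5.16}; the displayed \eqref{eq:08-07:11} has the factor inverted. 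With the correct value,
\[
\int_{\mathbb S} f''(X_s)\,(D_{\mathbf x}X_s)^2\,\mathrm d{\mathbf x}=\int_0^1 f''\bigl(X_s(\tfrac y2)\bigr)\,\frac{4}{p_s(X_s(y/2))^2}\,\mathrm dy=4\int_{\mathbb R}f''(z)\,\mathbbm 1_{\{p_s>0\}}(z)\,\frac{1}{p_s(z)}\,\mathrm dz .
\]
This yields $-4\Delta_x(\mathbbm 1_{\{p_t>0\}}/p_t)$, which is the constant the paper's proof obtains.

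One further point, so that you do not try to adjust (c) to fit the display. Your term (c) gives the noise $-\sum_k D_x\bigl(\lambda_k e_k(\tilde F_t/2)\,p_t\bigr)\,\mathrm dB^k_t$ with coefficient $1$. This agrees with the last display of the paper's proof. It is also the coefficient consistent, through the quadratic variation, with the $\tfrac12$ in the bracket term. The $\tfrac12$ in front of the noise in the stated SPDE is not produced by this argument.
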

\begin{proof} Let $\psi$ be a smooth mean-field function satisfying the conditions of Proposition \ref{prop5.2}, whose second derivative with respect to the measure vanishes. Then, from It\^o's formula \eqref{eq:5.2} (now representing 
each $\mu_t$ by the law of $[0,1] \ni y \mapsto X_t(y/2)$), one has 
\begin{equation*}
 \begin{split}
        \psi(\mu_t) &= \psi(\mu_0) + \frac{1}{2} \int_0^t\int_0^1\partial_y\partial_{\mu}\psi(\mu_s,X_s(y/2))\mathrm{d}y\mathrm{d}s
        -\int_0^t\int_0^1\partial_y\partial_{\mu}\psi(\mu_s,X_s(y/2))[DX_s(y/2)]^2\mathrm{d}y\mathrm{d}s 
        \\
        &  
        +\frac{1}{2}\int_0^t\int_0^1\partial_y\partial_{\mu}\psi(\mu_s,X_s(y/2))G_1(y/2)\mathrm{d}y\mathrm{d}s +
        \sum_{k \in {\mathbb N}} \lambda_k \int_0^t \biggl(\int_0^1\partial_{\mu}\psi(\mu_s,X_s(y/2))e_k(y/2) \ud y \biggr) \ud B_s^k.
    \end{split}
    \end{equation*}
{We now recall 
from Proposition 
\ref{Prop5.1}
that, almost surely, and for a.e. $s$, $X_s(\cdot /2)$ is a  one-to-one mapping from $(0,1)$ onto $(X_s(0),X_s(1/2))$, with $\tilde F_{\mu_s}$ as inverse. In particular, 
we can write 
$y=\tilde F_{\mu_s}(X_s(y/2))$
for $y \in (0,1)$. Moreover, 
from 
\eqref{eq:08-07:11},
$D_{\mathbf x} X_s ( \tilde F_{\mu_s}(z)/2) = 1/(2 p_s(z))$, 
for 
$\textrm{\rm Leb}_{\mathbb R}$-a.e.
$z \in (X_s(0),X_s(1/2))$, and thus 
also
for $\mu_t$-a.e. $z \in (X_s(0),X_s(1/2))$. In particular, 
setting $z =X_s(y/2) \Leftrightarrow 
y = \tilde{F}_{\mu_s}(z)
$, we deduce that, 
for $\textrm{\rm Leb}_{[0,1]}$-a.e. $y \in (0,1)$, 
$$D_{\mathbf x} X_s (y/2) = \frac1{2 p_s(X_s(y/2))}.$$ This makes it possible to express all the functions of $y$ appearing above as functions of $X_s(y/2)$. And then using the fact that the law of $X_s(\cdot/2)$ on $((0,1),\textrm{\rm Leb}_{\mathbb S})$
is equal to $\mu_s$ (which has $p_s$ as density, with $p_s$ being supported by $[X_s(0),X_s(1/2)]$), we get}
 \begin{equation*}
    \begin{split}
       \psi(\mu_t) &= \psi(\mu_0) + \frac{1}{2} \int_0^t\int_{\mathbb{R}}\partial_y\partial_{\mu}\psi(\mu_s,z) {p_s(z)}\mathrm{d}z\mathrm{d}s 
       \\
       &\quad - {4}\int_0^t\int_{\mathbb{R}}\partial_y\partial_{\mu}\psi(\mu_s,z)\mathbbm{1}_{[a_s,b_s]}(z)\frac{1}{ p_s(z)}\mathrm{d}z\mathrm{d}s 
       \\
        &\quad +\frac{1}{2}\int_0^t\int_{\mathbb{R}}\partial_y\partial_{\mu}\psi(\mu_s,z)G_1(\tilde F_{\mu_s}(z)/2) {p_s(z)}\mathrm{d}z\mathrm{d}s
        \\
        &\quad + \sum_{k \in{\mathbb N}} \lambda_k\int_0^t \biggl( \int_{\mathbb{R}}\partial_{\mu}\psi(\mu_s,z)
        e_k(\tilde F_{\mu_s}(z)/2)
        {p_s(z)}
    \ud z \biggr)
        \mathrm{d}B_s^k, 
        \end{split}
    \end{equation*}
    where $[a_s,b_s] = [X_s(0), X_s(1/2)]$.
    Choose now $\psi(\mu_t):= \int_{\mathbb{R}}\Upsilon(z)\mathrm{d}\mu_t(z)$, for a smooth test function $\Upsilon$, which implies that $\partial_{\mu}\psi(\mu_t,z)= \Upsilon'(z)$ and $\partial_y\partial_{\mu}\psi(\mu_t,z)= \Upsilon''(z)$ (see \cite[Chapter 5, example 1]{CarDel1}). 
    And then,
    \begin{equation*}
    \begin{split}
\int_{\mathbb{R}}\Upsilon(z)p_t(z)\mathrm{d}z &=  \int_{\mathbb{R}}\Upsilon(z)p_0(z)\mathrm{d}z+ \frac{1}{2}\int_0^t\int_{\mathbb{R}}\Delta_z\Upsilon(z) p_s(z)\mathrm{d}z\mathrm{d}s 
        \\
        &\quad - {4} \int_0^t\int_{\mathbb{R}}\mathbbm{1}_{\{p_t(z)>0\}}\Delta_z\Upsilon(z)\frac{1}{p_s(z)}\mathrm{d}z\mathrm{d}s\\
        &\quad  
        +\frac{1}{2}\int_0^t\int_{\mathbb{R}}\Delta_z\Upsilon(z)
        \biggl(
        \sum_{k\in \mathbb{N}}\lambda_k^2 
        e_k^2( \tilde F_{\mu_s}(z)/2)
        \biggr)
        p_s(z) \mathrm{d}z\mathrm{d}s \\
        &\quad + \int_0^t\int_{\mathbb{R}}D_z\Upsilon(z)
        {\biggl(
        \sum_{k\in \mathbb{N}}\lambda_k e_k(\tilde F_{\mu_s}(z)/2)\biggr)}p_s(z) \mathrm{d}z \mathrm{d}B_s^k,
        \end{split}
    \end{equation*}
where we used \eqref{eq:5.3}
to get the last two lines.
This leads to the desired result.
\end{proof}

		\bibliographystyle{plain}
		\bibliography{Biblio,Biblio_2}

\end{document}